\newcommand\tP{{\mathcal P}}
\newcommand\tQ{{\mathcal Q}}
\newcommand{\tX}{{\mathcal X}}
\newcommand\C{{\mathcal C}}
\newcommand\F{{\mathcal F}}
\newcommand\G{{\mathcal G}}
\newcommand\tS{{\mathcal S}}
\newcommand{\Oo}{\mathcal O}
\newcommand{\Ff}{\mathcal F}
\newcommand{\Ee}{\mathcal E}
\newcommand{\Ll}{\mathcal L}
\newcommand{\Mmm}{\mathcal M}
\newcommand{\N}{\mathcal N}
\newcommand{\D}{\mathcal D}
\newcommand{\Rr}{\mathrm R}
\newcommand{\subsetq}{\subset}
\newcommand{\Spec}{\mathrm{Spec}}
\newcommand{\Tor}{\mathrm{Tor}}
\newcommand{\RR}{\mathrm R}
\newcommand{\rep}{{\mathrm{rep}}}
\newcommand{\Char}{{\mathrm{Char}}}
\newcommand{\ind}{{\mathrm{ind}}}
\newcommand{\hgt}{{\mathrm {ht}}}
\newcommand{\res}{{\mathrm{res}}}
\newcommand{\triv}{{\mathrm{triv}}}
\newcommand{\Rind}{{\mathrm{Rind}}}
\newcommand{\hull}{{\mathrm{hull}}}
\newcommand{\id}{{\mathrm{id}}}
\newcommand{\pre}{{\mathrm{pre}}}
\newcommand{\Ext}{{\mathrm{Ext}}}
\newcommand{\Rep}{{\mathrm{Rep}}}
\newcommand{\fr}{{\mathrm {fr}}}
\newcommand*{\Dd}{\mathop{\mathrm D\kern0pt}\nolimits}
\newcommand*{\Hom}{\mathop{\mathrm {Hom}}\nolimits}
\newcommand*{\RHom}{\mathop{\mathrm {RHom}}\nolimits}
\newcommand*{\Coh}{\mathop{\sf {Coh}}\nolimits}
\newcommand{\QCoh}{\mathsf{QCoh}}
\newcommand{\coker}{{\mathsf{coker}}}
\newcommand{\cone}{{\mathsf{cone}}}
\newcommand{\kernel}{{\mathsf{ker}}}
\newcommand{\A}{{\sf A}}
\newcommand{\sk}{{\sf k}}
\newcommand{\sK}{{\sf K}}
\newcommand{\sX}{{\sf X}}
\newcommand{\sH}{{\sf H}}
\newcommand{\onto}{{\twoheadrightarrow}}
\newcommand{\Gm}{\mathbb G}
\newcommand{\Bm}{\mathbb B}
\newcommand{\Tm}{\mathbb T}
\newcommand{\Pm}{\mathbb P}
\newcommand{\Pp}{\mathbb P}
\newcommand{\Z}{{\mathbb Z}}
\newcommand{\cC}{{\mathbb C}}
\newcommand{\kmod}{\sk\textrm{-}\mathrm{mod}}
\newcommand{\kMod}{\sk\textrm{-}\mathrm{Mod}}
\newcommand*{\vectk}{\mathop{\sf {vect}\textrm-\sk}\nolimits}
\newcommand{\Cc}{\mathbf C}
\newcommand{\bg}{\mathbf G}
\newcommand{\fb}{\mathbf B}
\newcommand{\bt}{\mathbf T}
\newcommand{\bp}{\mathbf P}
\newcommand{\bh}{\mathbf H}
\newcommand{\bu}{\mathbf U}
\newcommand{\bl}{\mathbf L}
\newcommand{\fX}{{ X}}
\def\text#1{\mbox{#1}}
\newtheorem{theorem}{Theorem}[section]
\newtheorem{corollary}[theorem]{Corollary}
\newtheorem{lemma}[theorem]{Lemma}
\newtheorem{proposition}[theorem]{Proposition}
\newtheorem{conjecture}[theorem]{Conjecture}
\theoremstyle{definition}
\newtheorem{definition}[theorem]{Definition}%
\newtheorem{example}[theorem]{Example}
\newtheorem{remark}[theorem]{Remark}
\newtheorem{aside}[theorem]{Aside}
\newtheorem{notation}[theorem]{Notation}
\long\def\comment#1{}
\begin{document}
\setlength{\baselineskip}{12.5pt}\topsep=.5\baselineskip

\title[Highest weight category structures and full exceptional collections]{Highest weight category structures on rep($\fb$) and full exceptional collections on generalized flag varieties over $\mathbb Z$}

\author*[1]{\fnm{Alexander} \sur{Samokhin}}\email{alexander.samokhin@math.uni-bielefeld.de}

\author[2]{\fnm{Wilberd} \spfx{van der}\sur{Kallen}}\email{W.vanderKallen@uu.nl}
%\equalcont{These authors contributed equally to this work.}

%\author[1,2]{\fnm{Third} \sur{Author}}\email{iiiauthor@gmail.com}
%\equalcont{These authors contributed equally to this work.}

\affil[1]{\orgdiv{Fakult\"at f\"ur Mathematik}, \orgname{Universit\"at Bielefeld}, \orgaddress{%\street{}, 
\postcode{33501}
\city{Bielefeld},  \country{Germany}}}

\affil[2]{\orgdiv{Mathematical Institute}, \orgname{Utrecht University}, \orgaddress{\street{P.O.Box 80.010}, \postcode{3508 TA} \city{Utrecht},  \country{The Netherlands}}}

\abstract{\unboldmath Given a split simply connected and connected algebraic group scheme $\Gm$ over $\Z$ 
and a split parabolic subgroup scheme $\Pm\subset \Gm$, this paper constructs
semi-orthogonal decompositions of the bounded derived category $\Dd ^b(\rep( \Pm))$ of noetherian representations of $\Pm$ with each semi-orthogonal component being equivalent to the bounded derived category $\Dd ^b(\rep( \Gm))$ of noetherian representations of $\Gm$. 
The semi-orthogonal components of those decompositions are stable under the monoidal action of $\Dd ^b(\rep( \Gm))$ on $\Dd ^b(\rep( \Pm))$. The decompositions depend on an arbitrarily chosen total order on the Weyl group that refines the Bruhat order.
The  semi-orthogonal decompositions are also compatible with the Bruhat order on cosets of the Weyl group of $\Pm$ in the Weyl group of $\Gm$. 
Their construction builds upon the foundational results on $\Bm$-modules from the works of Mathieu, Polo, and van der Kallen, and upon properties of the Steinberg basis of the $ \Tm$-equivariant $\sK$-theory of $ \Gm/\Bm$. As a corollary, we obtain full exceptional collections in the bounded derived category of coherent sheaves on generalized flag schemes $\Gm/\Pm$ over $\mathbb Z$.}

\keywords{$B$-modules, equivariant $K$-theory, Steinberg basis, flag variety, derived category, semi-orthogonal decompositions, exceptional collections}

%%\pacs[JEL Classification]{D8, H51}

%%
\pacs[MSC2020 Classification]{14F08, 14L30, 14M15, 19L47, 20G05, 20G10}

\maketitle

\tableofcontents

\section{\unboldmath Introduction}

%--------------------------------------------------------------
\subsection{\unboldmath The context}
%--------------------------------------------------------------
Let $\Gm$ be a split simply connected and connected algebraic $\Z$-group $\Gm$, with terminology as in \cite[II 1.1, II 1.6]{Jan}, and let $\Bm\subset \Gm$ a split Borel subgroup scheme. 
Associated to it is the flag scheme $\Gm/\Bm$ over $\mathbb Z$. More generally, let $\Pm\subset \Gm$ be a parabolic subgroup scheme containing $\Bm$. Generalized flag varietes for the group scheme $\Gm$ are the quotient schemes
$\Gm/\Pm$.

The goal of this paper is to study $\Dd ^b(\rep (\Bm))$,
the bounded derived category of noetherian representations of $\Bm$,  in terms of the monoidal action of 
$\Dd ^b(\rep (\Gm))$ on $\Dd ^b(\rep (\Bm))$ (and, more generally, to study 
$\Dd ^b(\rep (\Pm))$ in terms of the monoidal action of 
$\Dd ^b(\rep (\Gm))$ on $\Dd ^b(\rep (\Pm))$).
We will use the terminology $\Gm$-linear to indicate $\Dd ^b(\rep (\Gm))$-linearity (Subsection \ref{subsec:G-linear-sod}).
Our main results, which are Theorems \ref{th:semi-orthogonal} and \ref{th:Psemi-orthogonal}, construct $\Gm$-linear semi-orthogonal decompositions
of $\Dd ^b(\rep (\Bm))$ (resp., $\Gm$-linear semi-orthogonal decompositions of $\Dd ^b(\rep (\Pm))$) that can be considered as categorifications of the classical results 
\cite{BGG} and \cite{Demres}. As a corollary to those theorems, we obtain in Theorem \ref{th:coh-semi-orthogonal} (resp., in Theorem \ref{th:Pcoh-semi-orthogonal}) full exceptional collections in the bounded derived category of coherent sheaves on $\Gm/\Bm$ over $\mathbb Z$ (resp., in the bounded derived category of coherent sheaves on $\Gm/\Pm$). 

We believe that the categorical decompositions of $\Dd ^b(\rep (\Bm))$ will have further applications in geometric representation theory (see Section \ref{sec:disc} below). Chronologically, we arrived at Theorems \ref{th:semi-orthogonal} and \ref{th:Psemi-orthogonal} starting off with the question whether full exceptional collections on generalized flag varietes can be obtained using representation theory of a Borel subgroup scheme $\Bm$. We now explain this path in a greater detail.

The study of exceptional collections on generalized flag varieties has a long history; more recently, the paper \cite{KuzPol} set out an approach that influenced many papers on the subject. For a very recent and comprehensive survey of the works that followed {\it loc.cit.}, we refer the reader to \cite{Fon} and the references therein. In this non-technical part of the introduction we emphasise some of the features that make the approach of the present paper different from the previous constructions.

Given a smooth proper Noetherian scheme over a field $\sk$, a full exceptional collection in the bounded derived category of coherent sheaves $\Dd ^b(\Coh(X))$ is, informally speaking, a way to break up $\Dd ^b(\Coh(X))$ into elementary pieces, each equivalent to $\Dd ^b(\vectk)$ which are glued to each other in a non-trivial way. 
Upon decategorification, a full exceptional collection gives rise to a basis of the Grothendieck group ${\sK}_0(X)$.
 If one is interested only in the $\sK$-theoretic information, for many schemes of interest it is easy to produce a basis of ${\sK}_0(X)$: for instance, if a scheme $X$ has a ${\mathbb G}_m$-action with isolated fixed points then, by Bia\l{}ynicki-Birula's theorem, $X$ has a stratification into locally closed subschemes such that the structure sheaves of their closures are a basis of ${\sK}_0(X)$. 
 Flag schemes $\Gm/\Bm$ furnish a classical example of such a stratification: in this case, the closures of algebraic cells of the  Bia\l{}ynicki-Birula's stratification are Schubert schemes via the Bruhat decomposition. 

Given a scheme $X$ as above and assuming it has a natural geometric basis of the group ${\sK}_0(X)$, one can ask whether such a basis can be lifted to a full exceptional collection in $\Dd ^b(\Coh(X))$. 
Speaking less loosely, a full exceptional collection must satisfy two conditions: it should generate the category $\Dd ^b(\Coh(X))$ in a suitable sense and some cohomological vanishing should hold. 
Generation doesn't pose a problem: for schemes as above, \emph{i.e.}\ having an algebraic stratification into locally closed subschemes, which are isomorphic to affine spaces, the direct sum of structure sheaves of their closures is a classical generator of $\Dd ^b(\Coh(X))$. 
But there is no chance that for such a set of generators the cohomological vanishing (known as ``semiorthogonality'' condition) would hold. 
On the other hand, there is more to flag schemes that one would want to take into account: there is a partial order on Schubert schemes (the Bruhat order) and the order on the sought-for full exceptional collection is expected to be compatible with the Bruhat order on Schubert schemes. 
Thus, what one is after is finding a correct ``lift'' of the natural $\sK$-theoretic basis to the derived category. 
In the present paper, we provide such a lift. 
As explained below, there are two major inputs into our approach: one is representation-theoretic and comes from the highest weight category structures on the category of rational representations of the Borel subgroup scheme $\Bm$, and the other one is combinatorial and comes from the  Steinberg basis, \cite{St}, which is a distinguished basis of the ($\mathbb \Tm$-equivariant) $\sK$-group of
the flag scheme $\Gm/\Bm$, where $\Tm$ is a split maximal torus in $\Bm$.

{There is a basic way to relate algebraic representations of $\Bm$ to $\Gm$-equivariant vector bundles on $\Gm/\Bm$ via the ``associated sheaf'' construction.}
Note that the latter construction was the starting point in \cite{KuzPol}.

Now let $\sk$ be a field.
Let $\fb$ (resp., $\bg$) denote the group schemes over $\sk$ obtained  
by base change from $\Bm$ (resp., $\Gm$) along ${\rm Spec}(\sk)\rightarrow {\rm Spec}(\mathbb Z)$.
The category $\rep(\bg)$ of rational modules over the group scheme $\bg$ that are finite-dimensional over $\sk$ has a highest weight category structure, \cite{CPSHW}, \cite {Donkin1}, and it is these  structures that are expected to manifest themselves through conjectural full exceptional collections. 
In this respect, a model example of such semi-orthogonal decompositions/full exceptional collections is given by a theorem of Efimov's \cite{Ef} that solves the problem for the Grassmannians ${\sf Gr}_{k,n}$
 in the best possible way. 
Precisely, one has (cf. also \cite{BLVdB} for a different approach over a field $\sk$):

\begin{theorem}[{\cite[Theorem 1.8]{Ef}}]\label{th:Ef}
Let  ${\sf Gr}_{k,n}$ be the Grassmannian of $k$-dimensional vector subspaces of $n$-dimensional space, defined over $\mathbb Z$. 
There exists a tilting vector bundle on  ${\sf Gr}_{k,n}$ such that its endomorphism algebra has two natural structures of a split quasi--hereditary algebra over $\mathbb Z$.
\end{theorem}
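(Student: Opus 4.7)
The plan is to realise the tilting bundle as a universal Kapranov-type bundle built from Schur functors on the tautological subbundle, and then extract the two quasi-hereditary structures from two natural partial orders on the indexing partitions. Concretely, on $X = \sf Gr_{k,n}$ let $\mathcal S \hookrightarrow \Oo_X^{\oplus n} \twoheadrightarrow \mathcal Q$ be the tautological short exact sequence, and I would set
\[
T \;=\; \bigoplus_{\lambda \,\subseteq\, (n-k)^k} \Sigma^{\lambda}\mathcal S^\vee,
\]
where $\Sigma^{\lambda}$ denotes the universal Schur functor of Akin--Buchsbaum--Weyman, so that $T$ is a locally free $\Oo_X$-module of finite rank defined over $\mathbb Z$.

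The first step is to prove that $T$ is tilting. For higher-$\Ext$ vanishing one would expand $T^\vee \otimes T$ into a direct sum of bundles of the form $\Sigma^{\nu}\mathcal S^\vee \otimes \Sigma^{\nu'}\mathcal Q$ via an integral Cauchy/Littlewood--Richardson decomposition, and then invoke a $\mathbb Z$-form of Bott vanishing on the Grassmannian, which itself follows from Kempf vanishing for $\Gm/\Pm$ over $\mathbb Z$ (the foundational results of Mathieu, Polo and van der Kallen mentioned in the introduction). Classical generation would follow from a Kapranov/Beilinson-type resolution of the diagonal, whose integrality is ensured by the theory of Schur complexes.

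For the two split quasi-hereditary structures on $\mathsf A := \mathrm{End}_{\Oo_X}(T)^{\mathrm{op}}$, the point is that the indexing set of partitions $\lambda \subseteq (n-k)^k$ carries two natural partial orders: the restriction of the dominance order, coming from the highest weight structure on polynomial $\mathrm{GL}_k$-representations, i.e.\ on the integral Schur algebra $\mathsf S(k,d)$ (Donkin, Cline--Parshall--Scott); and the transposed order obtained by $\lambda \mapsto \lambda'$ together with the interchange $k \leftrightarrow n-k$, corresponding to rewriting the summands of $T$, up to twists by powers of $\det \mathcal S^\vee$, in terms of Schur powers of $\mathcal Q$ and using the analogous highest weight structure on the $\mathrm{GL}_{n-k}$ side. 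Each of these partial orders then equips $\mathsf A$ with a split quasi-hereditary structure over $\mathbb Z$.

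The step I expect to be the principal obstacle is the integral control of both the $\Ext$-vanishing and the quasi-hereditary filtration: over a field of characteristic zero each input is classical, but over $\mathbb Z$ the Littlewood--Richardson expansion of tensor products of Schur functors, the existence of Weyl and good filtrations of $T$, and the corresponding block decomposition of $\mathrm{End}(T)$ must all be established integrally. These ultimately reduce to integral Kempf vanishing and to the split quasi-hereditariness of the integral Schur algebra, after which the two partial orders above give the two advertised structures on $\mathsf A$.
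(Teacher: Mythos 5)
Note first that the paper you are reading does not prove this statement at all: it is quoted verbatim from \cite[Theorem 1.8]{Ef}, so your proposal must be measured against Efimov's argument (and the closely related construction of \cite{BLVdB}). Measured that way, the central object of your proof is wrong. The bundle $T=\bigoplus_{\lambda\subseteq (n-k)^k}\Sigma^{\lambda}\mathcal S^{\vee}$ is exactly Kapranov's characteristic-zero tilting bundle, and it is \emph{not} tilting over $\mathbb Z$ for general $k$. The obstruction is precisely the step you defer to the end: $(\Sigma^{\lambda}\mathcal S^{\vee})^{\vee}\otimes\Sigma^{\mu}\mathcal S^{\vee}$ is a ``Weyl functor tensor Schur functor'' bundle, i.e.\ of $\Delta\otimes\nabla$ type for the Levi, and such tensor products do not carry good filtrations integrally; correspondingly their higher cohomology on the Grassmannian fails to vanish in small positive characteristic (already for small cases such as $\mathsf{Gr}_{2,4}$ at $p=2$; this failure of Kapranov's collection is the explicit starting point of \cite{BLVdB} and \cite{Ef}), hence fails over $\mathbb Z$ by the universal coefficient theorem. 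Relatedly, there is no ``$\mathbb Z$-form of Bott vanishing'': only Kempf vanishing for dominant weights survives integrally, while the Bott-type statements for singular or non-dominant weights that your Cauchy expansion of $T^{\vee}\otimes T$ would require genuinely fail in positive characteristic.

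What Efimov actually does is to replace the Schur bundles by summands that are tilting in the highest-weight sense: each carries both a filtration by Weyl-functor bundles ($\Delta$-objects) and one by Schur-functor bundles ($\nabla$-objects), so that the pairwise $\Ext^{>0}$-vanishing reduces to $\Ext^{>0}(\Delta\mbox{-object},\nabla\mbox{-object})=0$, which \emph{does} hold over $\mathbb Z$ by the good-filtration theorems (Donkin, Mathieu, Wang) combined with Kempf vanishing --- the same $\Delta$/$\nabla$ mechanism that drives Theorem \ref{th:indPQ} in the present paper. Your generation argument via the integral Schur complex resolving the diagonal is fine, and your idea that the two quasi-hereditary structures come from two natural orders on the box of partitions (one seen from $\mathcal S$, one from $\mathcal Q$) is in the right spirit: in \cite{Ef} the two structures are governed by a partial order and its opposite, with the two filtrations of the tilting summands realizing the standard and costandard objects of the respective structures. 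But that part only becomes meaningful after the summands of $T$ have been corrected; as written, the proposal's $\Ext$-vanishing step is false and the argument does not go through.
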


The above theorem \ref{th:Ef} and, more generally, results of \cite{Ef} and \cite{BLVdB} rely considerably on representation theory of the full general
linear group $GL_n$ and on the highest weight category structure on $\rep(GL_n)$.
Remarkably, through the works \cite{Math}, \cite{Polo3}, \cite{WvdK} and \cite{WvdKTata}, highest weight category structures have also been known to exist on the category $\rep( \fb)$ for the Borel subgroup $ \fb\subset  \bg$ when $\sk$ is a field. Here $\rep( \fb)$ stands for the category of rational $\fb$-modules that are finite-dimensional over $\sk$.
It seems, however, that the category $\rep( \fb)$ as a highest weight category has not yet received the full attention that it undoubtedly deserves. 
The following quote from Stephen Donkin, \cite[Remark 4.7]{Donsur} has served as a particular impetus for the first named author: ``In fact ${\mathsf k}[{\fb}]$ is a quasi--hereditary coalgebra. 
This follows from van der Kallen's paper \cite{WvdK}. 
This is a deep and sophisticated work which generalizes the above (restriction gives a full embedding of the category of $ \bg$-modules into the category of $\fb$-modules) and whose consequences, to the best of my knowledge, have so far not been investigated or exploited.''

In the present paper, we demonstrate the force of highest weight category structures on the category $\rep(\fb)$  and, in accordance with Donkin's suggestion, show some of its consequences.
Let $\sk$ be a field or $\Z$. 
If $\sk=\Z$ this just means that $\bg=\Gm$, $\fb=\Bm$, $\bp=\Pm$.
Our two main theorems are:

\begin{theorem}[cf.\ Theorem \ref{th:semi-orthogonal}]\label{th:1}
The category $\D=\Dd^b (\rep ({\fb}))$ has a $ \bg$-linear semi-orthogonal decomposition 
\begin{equation}
\D= \langle {\sX} _v\rangle _{v\in W}
\end{equation}
with respect to   a  total order $\prec$ on the Weyl group $W$,  arbitrarily chosen so that it refines the Bruhat order. Each subcategory $\sf X _v$ is equivalent to $\Dd ^b(\rep( \bg))$.
\end{theorem}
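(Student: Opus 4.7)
The plan is to realize each $\sf X_v$ as the essential image of an explicit fully faithful functor
\[
\Phi_v \colon \Dd^b(\rep(\bg)) \hookrightarrow \Dd^b(\rep(\fb)), \qquad V \longmapsto \res(V) \otimes \sk_{\lambda_v},
\]
where $\sk_{\lambda_v}$ is the one-dimensional $\fb$-module attached to the $v$-th Steinberg weight. The distinguished collection $\{\lambda_v\}_{v\in W}$ --- the Steinberg basis of $\sK^0_{\bt}(\bg/\fb)$ --- is the combinatorial input governing the whole construction.

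Full faithfulness of each $\Phi_v$ reduces, via the autoequivalence $(-)\otimes\sk_{\lambda_v}$ of $\Dd^b(\rep(\fb))$, to full faithfulness of $\res \colon \Dd^b(\rep(\bg)) \to \Dd^b(\rep(\fb))$; the latter is equivalent to the identity $\Rind_{\fb}^{\bg} \circ \res \simeq \mathrm{id}$, which follows from Kempf vanishing for the trivial line bundle on $\bg/\fb$ together with the projection formula, and is the content of van der Kallen's theorem singled out in Donkin's quotation in the introduction. For semiorthogonality, adjunction combined with the projection formula yields, for any $V, W \in \rep(\bg)$,
\[
\RHom_{\fb}\bigl(\Phi_v(V), \Phi_u(W)\bigr) \;\simeq\; \RHom_{\bg}\bigl(V,\; W \otimes \RGamma(\bg/\fb, \Ll_{\lambda_u - \lambda_v})\bigr),
\]
so the whole question collapses to the line-bundle cohomology vanishing $\RGamma(\bg/\fb, \Ll_{\lambda_u - \lambda_v}) = 0$ for $u \prec v$. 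The Steinberg weights are designed precisely so that these differences lie in a singular region of the weight lattice: in characteristic zero this is Bott's theorem, and over $\mathbb Z$ one combines Kempf vanishing with Mathieu's Frobenius splittings and the good/excellent filtration machinery of Polo and van der Kallen to extract the vanishing integrally.

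For generation, after decategorification the classes $[\sk_{\lambda_v}]$ form a $\sK^0(\rep(\bg))$-basis of $\sK^0(\rep(\fb))$ that is unitriangular with respect to the Bruhat order, by the defining property of the Steinberg basis. Lifting this to the derived category uses the highest weight category structure on $\rep(\fb)$ of Mathieu--Polo--van der Kallen, whose standard and costandard objects are organized by $\fb$-characters, together with the semiorthogonality already established, to induct on the Bruhat order and conclude that the $\sf X_v$ exhaust $\D$. The $\bg$-linearity of the decomposition is automatic, because every ingredient in the construction --- restriction, tensoring by an $\fb$-character, and pushforward to a point --- is $\bg$-equivariant.

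The main technical obstacle I foresee is the coordination in the semiorthogonality step: one must choose the total refinement $\prec$ of the Bruhat order and the specific incarnation of the Steinberg basis in tandem, so that every required differential weight $\lambda_u - \lambda_v$ falls into the vanishing locus of line-bundle cohomology on $\bg/\fb$ over $\mathbb Z$ simultaneously. Once this combinatorial matching is in place, the three ingredients --- van der Kallen's full faithfulness, Kempf-type cohomology vanishing, and the Steinberg basis property at the level of $\sK$-theory --- fit together cleanly to give the claimed decomposition.
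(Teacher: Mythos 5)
Your construction takes $\sf X_v$ to be the image of $V\mapsto \res(V)\otimes\sk_{e_v}$, so that semiorthogonality reduces to the vanishing $\RGamma(\bg/\fb,\Ll_{e_u-e_v})=0$ for $u\prec v$. This is the step that fails, and it is not a matter of coordinating the total order $\prec$ with the ``incarnation'' of the Steinberg basis: the differences of Steinberg weights do not all land in the singular region, and the collection of line bundles $\{\Ll_{e_v}\}_{v\in W}$ is simply not an exceptional collection in general. The paper points this out explicitly: the approach of \cite{AAGZ}, which takes the Steinberg basis literally, already breaks down for $\bf G_2$. The rank-two computations in Section \ref{sec:rank_2} confirm that the actual exceptional objects are higher-rank equivariant bundles (e.g.\ $\Psi_1^{\omega_i}$ and their images under Demazure functors), not characters.

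What the paper does instead is to replace $\sk_{e_v}$ by the two $\fb$-modules having $\sk_{e_v}$ as head, respectively socle: the standard module $P(-e_v)^*$ and the costandard module $Q(e_v)$ for the antipodal excellent order. The cohomological input is then Theorem \ref{th:indPQ}, namely $\Rind_\fb^\bg(P(-e_v)\otimes Q(e_w))=0$ unless $w\leq v$ and $=\sk$ when $w=v$; this is proved by combining the $\fb$-cohomology vanishing of Corollary \ref{cor:PQnabla} (Mathieu, Polo, van der Kallen) with the triangularity of the transition matrices to the Schubert and opposite-Bruhat-cell bases in $\sK_\bt(\bg/\fb)$ (Theorem \ref{th:triang}), reducing everything to an Euler-characteristic computation. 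The object $X_v$ is then \emph{defined} as the projection of $P(-e_v)^*$ onto the hull of $\{\nabla_\lambda\otimes Q(e_w)\}_{w\succeq v}$, and one must prove separately (Sections \ref{subsec:X_p-Y_p}--\ref{sec:X_p&Y_p-isom}) that this agrees with the dual projection of $Q(e_v)$, which is where the Steinberg weights reconcile the Bruhat order with the antipodal excellent order. Your generation step is closer in spirit to the paper (it categorifies Steinberg's theorem along $<_a$), but as stated it also leans on the semiorthogonality you have not established. So the proposal is not a proof: its central vanishing claim is false, and the repair requires the $P$/$Q$-module machinery rather than a better choice of $\prec$.
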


\begin{remark}
It may happen that a subcategory ${\sf X} _v$  gets replaced with a different subcategory when one changes the choice of the total order $\prec$. This happens for instance in types $\fb_3$, $\Cc_3$, $\mathbf A_4 $, cf.\ Subsection (\ref{subsec:var}).
\end{remark}

As it will become clear, the case of Borel subgroups is the most important. With all the setup developed for proving Theorem \ref{th:1}, we are in a position to prove the parabolic version of it. %Let $\sk$ be $\Z$ or a field.
Let $\bp$ be a parabolic subgroup containing $\fb$. 
Let $W_\bp$ be the parabolic Weyl group corresponding to $\bp$, and let $W^\bp$ be the set of minimal coset representatives of $W/W_\bp$. Let $\prec _{\bp}$ denote the restriction to $W^\bp$ of the chosen total order $\prec$ on $W$ from Theorem \ref{th:1} above. Then:

\begin{theorem}[cf.\ Theorem \ref{th:Psemi-orthogonal}]\label{th:2}
%-----------------------------------------------------------------
The category $\D=\Dd^b (\rep ({\bp}))$ has a $ \bg$-linear semi-orthogonal decomposition 
\begin{equation}
\D= \langle {\hat{\sX}} _v\rangle _{v\in W^\bp}
\end{equation}
with respect to the order $\prec _{\bp}$ on  $W^\bp$. Each subcategory ${\hat{\sX}} _v$ is equivalent to $\Dd ^b(\rep( \bg))$.
\end{theorem}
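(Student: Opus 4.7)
The plan is to deduce Theorem~\ref{th:2} from the Borel case (Theorem~\ref{th:1}) by exploiting the restriction--induction adjunction along $\fb \subset \bp$. Since $\bp/\fb$ is the flag variety of the Levi subgroup $\bl \subset \bp$, one has $H^i(\bp/\fb,\Oo_{\bp/\fb})=0$ for $i>0$; consequently the unit of the adjunction $\res^\bp_\fb \dashv \R\ind^\bp_\fb$ is an isomorphism on $\Dd^b(\rep(\bp))$, and $\res^\bp_\fb$ is fully faithful at the derived level. Transitivity of induction yields $\R\ind^\bg_\fb \simeq \R\ind^\bg_\bp \circ \R\ind^\bp_\fb$, which will allow us to transport the Borel-case equivalences ${\sf X}_v \simeq \Dd^b(\rep(\bg))$ to their parabolic analogues.

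For each $v \in W^\bp$, I would define $\hat{\sf X}_v \subset \Dd^b(\rep(\bp))$ as the $\bg$-linear thick subcategory generated by $\R\ind^\bp_\fb T_v$, where $T_v$ is a generator of the Borel-case piece ${\sf X}_v$. The equivalence $\hat{\sf X}_v \simeq \Dd^b(\rep(\bg))$ then follows by factoring the known equivalence of Theorem~\ref{th:1} as
\begin{equation*}
{\sf X}_v \xrightarrow{\R\ind^\bp_\fb} \hat{\sf X}_v \xrightarrow{\R\ind^\bg_\bp} \Dd^b(\rep(\bg));
\end{equation*}
the composite is an equivalence by hypothesis, and each factor is fully faithful on the relevant source by adjunction, so each factor is itself an equivalence.

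Semiorthogonality along $\prec_\bp$ reduces to the Borel case via adjunction: for $v \succ_\bp v'$ in $W^\bp$, $\hat A \in \hat{\sf X}_v$, and $\hat B = \R\ind^\bp_\fb B$ with $B \in {\sf X}_{v'}$,
\begin{equation*}
\R\Hom_\bp(\hat A,\hat B) \simeq \R\Hom_\fb(\res^\bp_\fb \hat A, B).
\end{equation*}
A spectral sequence coming from a BGG-type filtration of $\Oo_{\bp/\fb}$ (associated to a reduced decomposition of the longest element of $W_\bp$) shows that $\res^\bp_\fb \hat A$ lies in the thick subcategory of $\Dd^b(\rep(\fb))$ generated by ${\sf X}_{vw}$ for $w \in W_\bp$. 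Because $v$ is the minimal-length coset representative of $vW_\bp$ and $v \succ v'$ in the refined total order $\prec$ on $W$, every element $vw$ with $w \in W_\bp$ still satisfies $vw \succ v'$, so Theorem~\ref{th:1} delivers the required vanishing.

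The main obstacle will be the generation step: proving that $\{\hat{\sf X}_v\}_{v \in W^\bp}$ spans all of $\Dd^b(\rep(\bp))$. My strategy is to decompose $\res^\bp_\fb M$ for an arbitrary $M\in\Dd^b(\rep(\bp))$ using the Borel SOD, and then reconstruct $M$ via the unit isomorphism $M \simeq \R\ind^\bp_\fb \res^\bp_\fb M$. One has to verify that, after applying $\R\ind^\bp_\fb$, contributions from pieces ${\sf X}_u$ with $u \notin W^\bp$ either vanish or are absorbed into the contribution from the minimal representative of the coset $uW_\bp$, by Kempf-type vanishing on $\bp/\fb$ for the Levi $\bl$. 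This is the categorical counterpart of the classical $\sK$-theoretic fact that the pushforward $\sK^0_\bt(\bg/\fb) \to \sK^0_\bt(\bg/\bp)$ carries the Steinberg basis indexed by $W$ onto a natural basis indexed by $W^\bp$ --- precisely the combinatorial input that, for the Borel case, underlies Theorem~\ref{th:1}.
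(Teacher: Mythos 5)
Your strategy---deducing the parabolic case from Theorem \ref{th:1} purely through the adjunction $(\res_\fb^\bp,\Rind_\fb^\bp)$---is genuinely different from the paper's, but as written it breaks down at the equivalence step. You assert that the composite ${\sf X}_v\to\Dd^b(\rep(\bg))$ in your factorization ``is an equivalence by hypothesis''. That composite is $\Rind_\fb^\bg$ restricted to ${\sf X}_v$, which is \emph{not} the equivalence furnished by Theorem \ref{th:1}: the latter is $M\mapsto M\otimes X_v$ going the other way, with quasi-inverse the \emph{twisted} induction $\Rind_\fb^\bg((-)\otimes X_v^*)$ of Proposition \ref{prop:sfX_p-admissibility}, not plain induction. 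In fact plain induction annihilates every piece except the trivial one: since $X_{\id}=\sk$, Corollary \ref{cor:Rind_B^G(X_p*otimesX_p)}(2) applied with $v=\id$ gives $\Rind_\fb^\bg(X_w)=\Rind_\fb^\bg(X_{\id}^*\otimes X_w)=0$ for all $w\neq\id$, and then by the tensor identity $\Rind_\fb^\bg$ vanishes on all of ${\sf X}_w$ (concretely, in rank one $\Rind_\fb^\bg(X_s)=\RGamma(\Pp^1,\Oo(-1))=0$). So the composite is the zero functor on all but one piece, and the argument ``the composite is an equivalence and each factor is fully faithful, hence each factor is an equivalence'' collapses.

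The other two steps also rest on unproven assertions that carry essentially all of the content of the parabolic case. For semiorthogonality you need $\res_\fb^\bp\Rind_\fb^\bp(X_v)=\sH_{w_0^\bp}(X_v)$ to lie in the hull of the ${\sf X}_{vw}$, $w\in W_\bp$; for generation you need $\Rind_\fb^\bp(X_u)$, for arbitrary $u\in W$, to vanish or land in the piece indexed by the minimal representative of $uW_\bp$. Neither follows from a generic ``BGG-type filtration of $\Oo_{\bp/\fb}$'' or ``Kempf-type vanishing''; both amount to controlling how induction to $\bp$ interacts with the building blocks $P(-e_v)^*$ and $Q(e_v)$. This is exactly what the paper establishes by hand: $P(-e_v)$ is already a $\bp$-module for $v\in W^\bp$ (Lemma \ref{lem:PisP}), the modules $\hat Q(e_v)=\ind_\fb^\bp Q(e_v)$ satisfy the parabolic vanishing of Theorem \ref{th:PindPQ}, generation is proved directly inside $\rep(\bp)$ (Theorems \ref{th:Pgeneration} and \ref{th:intervalgeneration}), and the compatibility $\res_\fb^\bp\hat X_v=X_v$ that your approach tacitly presupposes is a separate result (Theorem \ref{th:P and B}) proved only after that machinery is in place. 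To salvage your route you would have to replace plain induction by the appropriate twisted inductions and supply proofs of the two interaction statements above, at which point you are essentially redoing the paper's Section \ref{sec:caseP}.
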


Theorems \ref{th:1} and \ref{th:2} have some immediate applications.

\begin{theorem}[Theorem \ref{th:coh-semi-orthogonal}]\label{th:3}
Let $\prec$ be the same total order on $W$ as in Theorem \ref{th:1}, 
and let $\D=\Dd^b (\Coh( \bg/\fb))$. Let $v,w$ denote elements of  $W$. Then there are objects $\tX_v\in \D$ such that 
\begin{enumerate}
\item $\Hom_\D(\tX_v,\tX_v[i])=\begin{cases}\sk &\text{if $i=0$}\\
0&\text{else.}\end{cases}$.

\

\

\item If $w\succ  v$ then $\Hom_\D(\tX_v,\tX_w[i])=0$ for all $i$.\\
\item The triangulated hull of $\{\tX_v\mid v \in W\}$ is $\D$.
\end{enumerate}
In other words, the collection of objects $(\tX_v)_{v \in W}$ is a full exceptional collection in $ \D$.
\end{theorem}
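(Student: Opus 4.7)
The plan is to transport the semiorthogonal decomposition of Theorem \ref{th:semi-orthogonal} over to $\Dd^b(\Coh(\bg/\fb))$ via the associated sheaf functor, reading off the $\tX_v$ as the images of the distinguished generators of each block. Let $F \colon \Dd^b(\rep(\fb)) \to \Dd^b(\Coh(\bg/\fb))$ denote the composition of the classical equivalence $\Dd^b(\rep(\fb)) \simeq \Dd^b(\Coh^{\bg}(\bg/\fb))$ with the functor that forgets the $\bg$-equivariant structure. For each $v \in W$, let $E_v \in \sX_v$ be the object corresponding to the trivial representation $\sk \in \Dd^b(\rep(\bg))$ under the $\bg$-linear equivalence $\sX_v \simeq \Dd^b(\rep(\bg))$, and set $\tX_v := F(E_v)$.

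The key computation is the identification, valid as complexes of $\sk$-vector spaces,
\[
\RHom_{\Dd^b(\Coh(\bg/\fb))}(\tX_v, \tX_w) \;\simeq\; \RGamma\bigl(\bg/\fb,\, F(E_v^\vee \otimes E_w)\bigr) \;\simeq\; \Rind_{\fb}^{\bg}(E_v^\vee \otimes E_w).
\]
To evaluate the right hand side I would exploit the $\bg$-linearity of Theorem \ref{th:semi-orthogonal}: for every $V \in \rep(\bg)$, tensoring the second argument by $\res V$ keeps $\sX_w$ stable, so the semiorthogonality delivered by that theorem yields $\RHom_{\fb}(E_v, E_w \otimes V) = 0$ whenever $w \succ v$. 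Frobenius reciprocity together with the projection formula identifies this group with $\RHom_{\bg}(V^\vee, \Rind_{\fb}^{\bg}(E_v^\vee \otimes E_w))$; letting $V$ range over a set of generators of $\Dd^b(\rep(\bg))$ then forces $\Rind_{\fb}^{\bg}(E_v^\vee \otimes E_w) = 0$, which is part (2). The same Yoneda argument applied at $w = v$, combined with the identity $\RHom_{\fb}(E_v, E_v \otimes V) = \RHom_{\bg}(\sk, V)$ provided by $\sX_v \simeq \Dd^b(\rep(\bg))$, yields $\Rind_{\fb}^{\bg}(E_v^\vee \otimes E_v) = \sk$, proving part (1).

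For generation in part (3), the image $F(E_v \otimes V)$ is, after forgetting equivariance, a finite complex of copies of $\tX_v$, so the triangulated hull of $\{\tX_v\}_{v\in W}$ already contains $F(\sX_v)$ for every $v$. By the generation clause of Theorem \ref{th:semi-orthogonal} this hull contains the essential image of $F$, which in turn includes every line bundle $\Oo_{\bg/\fb}(\lambda) = F(\sk_\lambda)$ attached to a character $\lambda$ of $\fb$; since such line bundles generate $\Dd^b(\Coh(\bg/\fb))$ via the Schubert cell stratification, generation follows.

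The main obstacle, as I see it, is the mismatch between $\fb$-equivariant Ext, which is what Theorem \ref{th:semi-orthogonal} controls directly, and ordinary sheaf Ext on $\bg/\fb$, which is what an exceptional collection demands. The maneuver above --- probing the full complex $\Rind_{\fb}^{\bg}(E_v^\vee \otimes E_w)$ by tensoring with an arbitrary $V \in \rep(\bg)$ rather than only extracting its $\bg$-invariants --- is precisely what closes this gap, and it is available only because the decomposition of Theorem \ref{th:semi-orthogonal} is $\bg$-linear.
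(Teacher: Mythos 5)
Your proposal is correct and follows essentially the same route as the paper: $\tX_v$ is the non-equivariant sheafification of $X_v$, the identification $\RHom_{\bg/\fb}(\tX_v,\tX_w)\simeq{\sf For}\circ\Rind_{\fb}^{\bg}(X_v^{\ast}\otimes X_w)$ is the paper's Proposition \ref{prop:derived_B-cohomology} together with (\ref{eq:cohdescent}), and your Yoneda argument probing with all $V\in\rep(\bg)$ is exactly how the paper deduces Corollary \ref{cor:Rind_B^G(X_p*otimesX_p)} from the $\bg$-linear semiorthogonality. The only cosmetic difference is in part (3), where the paper justifies that line bundles generate $\Dd^b(\Coh(\bg/\fb))$ via ampleness (Corollary \ref{cor:line_bundles-generation}) and uses admissibility of the exceptional hull, rather than your appeal to the Schubert stratification, but this does not change the argument.
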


\

The next Theorem \ref{th:4} is in the same position with respect to the previous Theorem \ref{th:3} as Theorem \ref{th:2} is with respect to  Theorem \ref{th:1}: 

\

\begin{theorem}[Theorem \ref{th:Pcoh-semi-orthogonal}]\label{th:4}
Let $\prec _{\bp}$ be the same total order on $W^{\bp}$ as in Theorem \ref{th:2}, and let $\D=\Dd^b (\Coh( \bg/\bp))$. Let $v,w\in W^\bp$. Then there are objects $\hat \tX_v\in \D$ such that 
\begin{enumerate}
\item $\Hom_\D(\hat\tX_v,\hat\tX_v[i])=\begin{cases}\sk &\text{if $i=0$},\\
0&\text{else.}\end{cases}$\\

\item If $w\succ _{\bp}v$ then $\Hom_\D(\hat\tX_v,\hat\tX_w[i])=0$ for all $i$.\\
\item The triangulated hull of $\{\hat\tX_v\mid v \in W^\bp\}$ is $\D$.
\end{enumerate}
In other words, the collection of objects $(\hat\tX_v)_{v \in W^\bp}$ is a full exceptional collection in $ \D$.
\end{theorem}

In Sections \ref{sec:FEConGmodB} and \ref{sec:caseP} we explain how Theorem \ref{th:3} (resp., Theorem \ref{th:4}) follows from Theorem \ref{th:1} (resp., Theorem \ref{th:2}). Still, our line of thought worked backwards and we conclude this section with a few preliminary remarks explaining our logic. By its very design, the highest weight category structure on an abelian category gives rise to two distinguished collections of objects (standard and costandard objects) that satisfy the $\Ext$-vanishing close to the one that is required for exceptional collections. {At the outset, the examples of full exceptional collections in rank two case (see Section \ref{subsec:outline} below for more on that) indicated that standard modules in $\rep({\fb})$ with respect to one of the highest weight category structures on $\rep({\fb})$ could be relevant to exceptional objects in the coherent category $\Dd ^b(\Coh({\bg}/{\fb}))$. More specifically, those examples suggested that one would start with an appropriate finite collection of {\it standard} modules in $\rep({\fb})$, would convert them into a collection of equivariant vector bundles on $\bg/\fb$, and would try to ensure the cohomological vanishing conditions for exceptional collections in the category $\Dd ^b(\Coh({\bg}/{\fb}))$. The main challenges are in understanding what is the right collection of standard modules in $\rep({\fb})$ to produce the sought-for exceptional collection in the category $\Dd ^b(\Coh({\bg}/{\fb}))$, and in relating cohomology vanishing for $\fb$-modules to sheaf cohomology vanishing on ${\bg}/{\fb}$.
%in the category $\Dd ^b(\rep({\fb}))$ to that in $\Dd ^b(\Coh({\bg}/{\fb}))$. 
It turns out that the Steinberg basis \cite{St} is the foundation for the right choice of a collection of standard modules in $\rep({\fb})$; no less importantly, it becomes clear that one must also use {\it costandard} modules in $\rep({\fb})$ to actually produce exceptional objects $\Dd ^b(\Coh({\bg}/{\fb}))$.  With these two main inputs combined together, the rest is cohomology vanishing theorems for $\fb$-modules that essentially rely on the foundational results from \cite{Math}, \cite{Polo3}, \cite{WvdK}. Eventually, to show all the needed cohomological vanishing, one works in the category $\Dd ^b(\rep({\fb}))$ considered as a module category over the base category $\Dd ^b(\rep({\bg}))$. In that setting, formulations of Theorems \ref{th:1} and \ref{th:2} appear naturally and we come back to the realm of the category $\Dd ^b(\rep({\fb}))$ where we started from.}

\comment{It would therefore be reasonable to start off with one of such collections in $\rep(\fb)$, convert them into a collection of equivariant vector bundles, and try to ensure the cohomological vanishing conditions in the category $\Dd ^b(\Coh({\bg}/{\fb}))$.
This doesn't work on the nose, for instance because full exceptional collections in $ \Dd^b (\rep ({\fb}))$ are infinite and on $\Dd^b (\Coh( \bg/\bp))$
one seeks a finite collection.
But it does work $\bg$-linearly with the input provided by the combinatorics of the Steinberg basis, \cite{St}.}

Coming back to the problem of lifts of $\sf K$-theoretic classes to the derived category level, we have already mentioned the classical works \cite{BGG} and \cite{Demres} that compute the singular cohomology and, respectively, the $\sf K$-theory of flag varieties $\bg/\fb$ in terms of natural operators called nowadays BGG-Demazure operators. These operators, in different guises, will appear throughout the paper: one of the highest weight category structures on $\rep( \fb)$ is defined in terms of those, see Section \ref{sec:rappelB-mod}. 
We now proceed to the second part of the introduction in which we outline the main steps of the argument.

%--------------------------------------------------------------
\subsection{\unboldmath Outline of the proof}\label{subsec:outline}
%--------------------------------------------------------------

Evidence for the principal construction of this paper comes from special full exceptional collections on flag varieties of rank two groups over ${\mathbb Z}[\frac{1}{6}]$, the proofs of which were sketched in \cite{Sam}. We provided the complete details of the rank two case in Section 14 of  %\href{https://doi.org/10.48550/arXiv:2407.13653}{arXiv:2407.13653}.
\href{https://doi.org/10.48550/arXiv.2407.13653}{arXiv:2407.13653v3}.
In a paper in preparation we will explain how the construction in Section 14 of \href{https://doi.org/10.48550/arXiv.2407.13653}{arXiv:2407.13653v3} fits the general construction given below.
For the purposes of introduction, assume for simplicity that $\Pm=\Bm$ and that
we are working over a field $\sk$  whose characteristic  is not too small.
Thus, $\fb$ and $ \bg$ denote the base change of the group schemes $\Bm$ and $\Gm$ along the morphism ${\rm Spec}(\sk)\rightarrow {\rm Spec}(\mathbb Z)$, \cite[Section 2.6]{Donkin2}. 
Given a finite dimensional $\fb$-module $M$, let $\Ll(M)$ denote the associated (locally free) coherent sheaf on $ \bg/ \fb$ as defined in  \cite[I 5.8]{Jan} (see also Subsection \ref{subsec:sheafification_functor}).
Its fibre at the point $\fb/\fb$ is $M$.

The full exceptional collections from Section 14 of  %\href{https://doi.org/10.48550/arXiv:2407.13653v3}{arXiv:2407.13653v3}
\href{https://doi.org/10.48550/arXiv.2407.13653}{arXiv:2407.13653v3}
 are given as certain $\bg$-equivariant locally free sheaves $\tX_v$. They are of the form $\Ll(X_v)$ where $X_v$ is the fibre at the point $\fb/\fb$.
 That the terms of those collections are $\bg$-equivariant vector bundles is in accordance with a general fact on exceptional objects on smooth projective varieties acted upon by a linear algebraic group, \cite[Lemma 2.2]{Polishchuk}. 
 
 More importantly, the collections in question are obtained with the help of Demazure functors. That points  to the fact that the highest weight category structures on $\rep(\fb)$ can be relevant: as was already mentioned, one of those structures is defined using Demazure operators, see Section \ref{subsec:excord}. 

With those rank two examples in hand, it has become apparent that the $\fb$-module theory developed in \cite{WvdK} would be vital in generalizing the approach to arbitrary rank. 
One way to summarize the results of {\it loc.cit.} is saying that, in arbitrary rank, the abelian category $\rep( \fb)$ has two highest weight category structures,
dual to each other in a suitable sense.
Recall that simple modules in $\rep(\fb)$ are parametrized by  the weight lattice 
$X(\bt)$ of a maximal torus $\bt$ in $\fb$. 
Now the costandard objects with respect to one structure, which is defined by the {\it excellent} order on $X(\bt)$, (Definition \ref{def:excord}) are denoted $P(\lambda)$ for $\lambda \in X(\bt)$ and are related to {\it excellent filtrations} on $\fb$-modules, see Subsection \ref{subsec:filtrations}.  
The costandard objects with respect to another highest weight category structure, which is defined by the {\it antipodal excellent} order on  $X(\bt)$, (Definition \ref{def:antipodal}) are denoted $Q(\lambda)$ for $\lambda \in X(\bt)$ and are related to {\it relative 
Schubert filtrations} on $\fb$-modules, see Subsection \ref{subsec:filtrations}. It is these modules that will play a pivotal role in the construction of full exceptional collections on generalized flag schemes. 
We refer the reader to \cite{WvdK} for the highest weight category approach to $\rep( \fb)$; in \cite{WvdKTata} those structures are somewhat hidden in the background. The lectures \cite{WvdKTata} are devoted to the breakthrough paper \cite{MathG} by Mathieu; that breakthrough will be used in the guise of Corollary \ref{cor:PQnabla}.

In the next two paragraphs, we assume again that the group $\bg$ is of rank two.  A detailed inspection of the modules $ X_v$ led to the two key observations that made it possible to see the pattern. The first one is that in 
each type in rank two the $\fb$-modules $X_v$ have relative Schubert filtrations by modules $Q(\lambda)$, while the $\sk$-linear dual 
$\fb$-modules $X_v^*$ have excellent filtrations by modules $P(\lambda)$. 
The second observation is the appearance of 
the Steinberg weights $e_v$ in filtrations  on the modules $X_v$.
For a weight $\lambda\in X(\bt)$ we denote by $\sk_\lambda$ the one dimensional $\fb$-module of weight $\lambda$. By \cite{St}, the Steinberg weights $e_v$ yield a basis $\{[\sk_{e_v}]\}_{v\in W}$ (the Steinberg basis) of the representation ring $R( \fb)=\sK_0(\rep( \fb))$ as a module over the representation ring $R( \bg)=\sK_0(\rep( \bg))$, see Section \ref{subsec:Steinberg}. 

Remarkably, if $v\in W$, then $Q(e_v)$ is on top in the relative Schubert filtration of $X_v$, and
and if $Q(\lambda)$ is another layer of the filtration, then the weight $\lambda$ of the $\fb$-socle of $Q(\lambda)$ is always a Steinberg weight $e_w$ with $w> v$ in the Bruhat order on $W$. 
On the other hand, the  modules $P(-\lambda)^*$
that occur in the other filtration of $X_v$ are of the form $\nabla_\mu\otimes P(-e_w)^*$ where $\nabla_\mu$ is a dual Weyl module for a  dominant weight $\mu\in X(\bt)$ and $w\leq v$ in the Bruhat order. As the corresponding coherent sheaves $\Ll(X_v), v\in W$ on $\bg/\fb$ form a full exceptional collection in rank two, this imposes a semiorthogonality condition on the sheaves $\Ll(P(-e_w)^*), w\in W$ and $\Ll(Q(e_v)), v\in W$ with respect to the Bruhat order. Furthermore, the $\fb$-socle of the module $Q(e_v)$ is ${\sk }_{e_v}$, while the same ${\sk }_{e_v}$ is also the $\fb$-head of the module $P(-e_v)^*$. These facts in rank two will serve as the basis for producing the sought-for semi-orthogonal decompositions for flag varieties in arbitrary rank; it starts with Theorems \ref{th:indSteinberg_vanishing} and \ref{th:indPQ} (the reader is also invited at this point to keep consulting Section \ref{sec:Leitfaden} for an easier navigation between the key statements across the text). It is also at this stage that the interplay among three orders - one on the Weyl group (the Bruhat order) and the other two on $X(\bt)$ (the excellent and antipodal excellent orders) becomes crucial in our considerations.

It should be noted that an earlier paper \cite{AAGZ} unveiled the role played  by the Steinberg basis in the setting of exceptional collections. At the same time, it showed its limitations already for the group ${\bf G}_2$ if the Steinberg basis is taken literally. As our paper explains, the crucial missing ingredient was that the Steinberg weights should have been seen through the lens of the category $\rep(\fb)$ -- that is as weights of socles (resp., heads) of costandard (resp. standard) objects for the highest weight category structure on $\rep(\fb)$ given by the antipodal excellent order. 
The remaining parts of the costandard modules $Q(e_v), v\in W$ (resp., of   the standard modules $P(-e_v)^*, v\in W$) that lie above the socle ${\sk}_{e_v}$ (resp., below the head ${\sk}_{e_v}$)   are responsible for the eventual semiorthogonality properties.

Now, as we have the Steinberg basis $\{[\sk_{e_v}]\}_{v\in W}$, and the 
$\fb$-modules $P(-e_v)^*$ and $Q(e_v)$ with $v\in W$ at our disposal for  $\bg$ of arbitrary rank, let us take a closer look at the semiorthogonality conditions on $\Ll (P(-e_w)^*)$ and $\Ll (Q(e_v))$ and explain how we arrive at those. Precisely, this is about Theorem  \ref{th:indPQ}. 
The preparatory technical work for this theorem is done in Section \ref{sec:triangular} and proceeds as follows. Let $\fb^+$ be the opposite Borel subgroup to $\fb$. The $\bt$-equivariant $\sf K$-theory of a flag variety $\bg/\fb^+$ for a Borel subgroup $\fb^+$
has two natural bases: one consisting of the classes $\{[\Oo _{X_w}],w\in W\}$ of the closures of $\fb^+$-orbits on $\bg/\fb^+$  (the ``Schubert basis''), and another consisting of the classes 
$\{[\Oo_{\fX^w}(-\partial \fX^w)]\}_{w\in W}$ of the $\fb ^{-}$-orbits on $\bg/\fb^+$ for the opposite Borel subgroup $\fb ^{-}$ to $\fb^+$ (the ``opposite Bruhat cell basis''). The characters of the modules $Q(e_v)$ and $P(-e_w)$ can be computed with Demazure operators, and with computer assisted computations in rank three we have found a close relationship in the $ \bt$-equivariant $\sK$-group $\sK_{ \bt}( \bg/ \fb)$ between the classes $[\Ll(P(-e_w))], w\in W$ and the Schubert basis $\{[\Oo _{X_w}],w\in W\}$. Let $R(\bt)$ denote the representation ring $\sK_0(\rep( \bt))$. Then the $R(\bt)$-module ${\sf K}_{\bt}(\bg/\fb)$  is equipped with a natural $R(\bt)$-valued paring, and a result from \cite{GrahamKumar}, which is attributed to Knutson in {\it loc.cit.}, asserts that the Schubert basis and the opposite Bruhat cell bases are orthogonal to each other with respect to that pairing. Using this very important input, in Section \ref{sec:triangular} we prove Theorem \ref{th:triang} which states that the transition matrix between the basis $\{[\Ll(P(-e_w))]\}_{w\in W}$ and the Schubert basis of $\sK_{ \bt}( \bg/ \fb)$ is always invertible and triangular, up to a permutation of rows and columns. Similarly, the transition matrix from the basis $\{[\Ll(Q(e_v))]\}_{v\in W}$ to the opposite Bruhat cell basis is also invertible and triangular, up to a permutation of rows and columns. Finally, much of the cohomology vanishing required by the statement of Theorem \ref{th:indPQ} is assured by the results explained in \cite{WvdKTata}, e.g.\ Theorem \ref{th:highInd}, which is based on \cite{MathG}. These allow to reduce the proof of Theorem \ref{th:indPQ} to  Euler characteristic computations, and this is precisely what Section \ref{sec:triangular} does with the help of all the input explained above in this paragraph. The triangularity results of Theorems \ref{th:indSteinberg_vanishing} and \ref{th:indPQ} turn out to be the key to the desired $\Ext$-vanishing required for exceptional collections. Arguably,  Theorems \ref{th:indSteinberg_vanishing} and \ref{th:indPQ} are the most important cohomological statements of our work.

Section \ref{sec:triangular}, discussed in the previous paragraph, is concerned with
 the structure of the {$R(\bt)$-module ${\sf K}_\bt(\bg/\fb)$}. Similarly, the representation ring $R( \fb)=\sK_0(\rep( \fb))$ is a module over the representation ring $R(\bg)=\sK_0(\rep( \bg))$; in both cases, the $R(\bt)$-{module structure} for the former module (resp., the $R(\bg)$-{module structure} for the latter) are essential for our purposes. More generally, the category $\rep( \fb)$ has a {module structure} over the base abelian category $\rep( \bg)$; that {module structure} is given by the restriction functor $\res_{\fb}^{ \bg}$,  cf. the aforementioned quote from Donkin. Passing to derived categories, the category $\Dd ^b(\rep( \fb))$ receives a {module  structure} over the base category $\Dd ^b(\rep( \bg))$ (since that {module structure} comes from a monoidal action at the level of abelian categories, it doesn't require further compatibilities). In this framework, thanks to Theorems \ref{th:indSteinberg_vanishing} and \ref{th:indPQ}, the basic examples of Section 14 of  %\href{https://doi.org/10.48550/arXiv:2407.13653v3}{arXiv:2407.13653v3}
 \href{https://doi.org/10.48550/arXiv.2407.13653}{arXiv:2407.13653v3}
 can be reformulated as saying that for a group $\bg$ of rank two and a Borel subgroup $\fb\subset \bg$, there are $\bg$-linear semi-orthogonal decompositions of the category $\Dd ^b(\rep( \fb))$  with each semi-orthogonal component being equivalent to $\Dd ^b(\rep( \bg))$. 

Given a base scheme $S$, there are robust notions of $S$-linear triangulated categories and of $S$-linear semi-orthogonal decompositions of a given $S$-linear triangulated category, \cite{Kuzbasech}. The notions of a $\bg$-linear triangulated category and of a $\bg$-linear semi-orthogonal decomposition are in complete parallel and could be put on an equal footing with those from {\it loc.cit.} if we used the language of (quotient) stacks that we deliberately avoid. For that reason, we recall in Sections \ref{subsec:G-linear-triang_cat} and \ref{subsec:G-linear-sod} all the necessary definitions and statements concerning $\bg$-linear triangulated categories in the classical language of equivariant (quasi)-coherent sheaves.

Having set up the necessary framework, we proceed to constructing $ \bg$-linear semi-orthogonal decompositions of $\Dd ^b(\rep( \fb))$ in arbitrary rank. For that we need to choose a total order $\prec$ on the Weyl group $W$ that refines the Bruhat order. For any element $p\in W$ of the Weyl group we will first construct in Subsection \ref{subsec:cut_at_p} a $\bg$-linear semi-orthogonal decomposition of $\Dd ^b(\rep( \fb))$ (called ``cut at $p$'') into two  subcategories 
\begin{equation}\label{eq:p-cutSOD}
\langle\;\hull(\{\nabla_\lambda\otimes Q(e_v)\}_{v\succ p,\lambda\in X(\bt)_+}),\;\hull(\{\nabla_\lambda\otimes P(-e_v)^*\}_{v\preceq p,\lambda\in X(\bt)_+})\;\rangle.
\end{equation}

Here $\hull(-)$ denotes the triangulated envelope of a given set of objects, and $\nabla_\lambda$ are dual Weyl modules. Strikingly, the $\fb$-modules $P(-e_v)^*, v\in W$ and $Q(e_v), v\in W$ will serve for both the semiorthogonality condition in (\ref{eq:p-cutSOD}), which will follow from Theorem \ref{th:indPQ} and Corollary \ref{cor:left orthogonal}, and for the generating property. Precisely, in Theorem \ref{th:derived_generation} 
we prove that the triangulated envelope of the two triangulated hulls in (\ref{eq:p-cutSOD}) is the whole $\Dd ^b(\rep( \fb))$. That proof, which starts in Subsection \ref{subsec:D^brepB-generation}, can be considered as a categorical upgrade of 
\cite[Theorem 2]{Ana} which is essentially the same statement as in Theorem \ref{th:derived_generation}, but at the $\sf K$-theory level. The reader may prefer to consult \cite[Theorem 2 and Section 4]{Ana} as a starting point before proceeding to Section \ref{subsec:D^brepB-generation}. As a byproduct of the results of Section \ref{sec:B-generation}, we obtain an alternative proof and categorification of a theorem of Steinberg \cite{St}, see Theorem \ref{th:Pgeneration}, which is similar to \cite[Theorem 2]{Ana} of Ananyevskiy.

Semi-orthogonal decompositions (\ref{eq:p-cutSOD}) are the core part of our paper as they allow to construct objects of $\Dd ^b(\rep( \fb))$ that will eventually give rise to full exceptional collections on $\bg/\fb$. More precisely, for an element $p\in W$ we use (\ref{eq:p-cutSOD}) to define in Subsection \ref{subsec:X_p-Y_p} two objects $X_p$ and $Y_p$ of $\Dd ^b(\rep( \fb))$. Specifically, we define the object $X_p\in \Dd ^b(\rep( \fb))$ to be the image of $P(-e_p)^*$ under the left adjoint of the inclusion of the triangulated hull of $\{\nabla_\lambda\otimes Q(e_v)\}_{v\succeq  p,\lambda\in X(\bt)_+}$ into $\Dd ^b(\rep( \fb))$. Similarly, we define $Y_p$ to be the image of $Q(e_p)$ under the right adjoint of the inclusion of the triangulated hull of $\{\nabla_\lambda\otimes P(-e_v)^*\}_{v\preceq p,\lambda\in X(\bt)_+}$ 
into $\Dd ^b(\rep( \fb))$. The reason for defining the objects $X_p$ and $Y_p$ in this manner was prompted by the insight coming from the examples in rank two: the exceptional objects $\tX_v$ on $\bg/\fb$ for rank two groups lie inside the intersection  of the hull of $\{\Ll(P(-e_v)^*)\}_{v\preceq p}$ with the hull of $\{\Ll(Q(e_v))\}_{v\succeq p}$. 

The final step of this core part is showing that for any $p\in W$ the objects $X_p$ and $Y_p$ are isomorphic. This is done in Section  \ref{sec:X_p&Y_p-isom}. To this end, we need a refinement of our categorification of Steinberg's theorem, which is Theorem \ref{th:cone generation}. This refinement, implicit in \cite{Ana}, shows that the Steinberg weights serve as a curious meeting ground of the Bruhat order and the antipodal excellent order $<_a$ on the weight lattice $X(\bt)$  (compare also Remark \ref{rem:Steinberg from anti}). At this stage we learn from Proposition \ref{prop:X_p-G-exceptional-coll} that the objects $X_p$'s we have constructed are $\fb$-exceptional, \emph{i.e.} $\RHom _{\Dd^b(\rep(\fb))}(X_p,X_p)=\sk$; that is, they are exceptional in $\Dd ^b(\rep( \fb))$ considered as a $\sf k$-linear category.

But now, thanks to Theorem \ref{th:indPQ} and to the very construction of objects $X_p, p\in W$, there is much more to cohomological properties of the $X_p$'s than being $\fb$-exceptional: these objects turn out to be exceptional in $\Dd ^b(\rep( \fb))$ considered as a $\bg$-linear category, see Theorem \ref{th:X_p-G-exceptional-coll} for the precise statement. Now, by Proposition \ref{prop:G-linear_exceptional}
each object $X_p$, $p\in W$ gives rise to a $ \bg$-linear functor 
$\Phi _p\colon \Dd^b(\rep ( \bg))\rightarrow \Dd^b(\rep(\fb))$. 
Cohomological vanishing statements from Section \ref{sec:b-cohom_vanish} combined with Theorem \ref{th:X_p-G-exceptional-coll} prove that each $ \bg$-linear functor $\Phi _p:\Dd^b(\rep ( \bg))\rightarrow \Dd^b(\rep(\fb))$ is full and faithful. 
Results of Subsections \ref{subsec:G-linear-triang_cat} and \ref{subsec:G-linear-sod} then allow to establish that the collection of full triangulated subcategories ${\rm Im}(\Phi _p)\subset \Dd^b(\rep(\fb))$, $p\in W$, forms a $ \bg$-linear semi-orthogonal decomposition of the 
category $\D= \Dd^b(\rep(\fb))$ with respect to the {chosen} total order $\prec$ on $W$. 
This is the statement of Theorem \ref{th:semi-orthogonal}.

Now the proof of Theorem \ref{th:coh-semi-orthogonal} goes as follows.
We define objects $\tX_p\in \Dd^b(\Coh ( \bg/ \fb))$ as the images of $X_p$, $p\in W$ from Section \ref{sec:X_p&Y_p} under the map $\Dd^b(\rep(\fb))\to \Dd^b(\Coh ( \bg/ \fb))$ induced by the ``associated sheaf'' functor $\Ll:\rep(\fb)\to \Coh ( \bg/ \fb)$.
Theorem \ref{th:semi-orthogonal} implies that the collection of objects $\tX_p, p\in W$ is an exceptional collection in $\Dd^b(\Coh ( \bg/ \fb))$, while the generating property of $\tX_p, p\in W$ is assured by the results of Section \ref{sec:B-generation}. Remark \ref{rem:basechangeG-to-point} draws a parallel of Theorem \ref{th:coh-semi-orthogonal} with a base change type of statement for semi-orthogonal decompositions, \cite{Kuzbasech}.

Section \ref{sec:caseP} treats the case of a parabolic subgroup $\bp\supset \fb$. Its main statements, which are Theorems \ref{th:Psemi-orthogonal} and \ref{th:Pcoh-semi-orthogonal}, follow essentially the same path that has been set out in Theorems \ref{th:semi-orthogonal} and \ref{th:coh-semi-orthogonal}. For the reasons that are explained below in this paragraph, it is natural to expect that the objects ${\hat{\sX}} _v, {v\in W^\bp}$ that give rise to semi-orthogonal decompositions of $\Dd ^b(\rep(\bp))$ as a $\bg$-linear category (resp., the objects $\hat \tX_v\in \Dd ^b(\Coh(\bg/\bp)), v\in W^\bp$ giving full exceptional collections in $\Dd ^b(\Coh(\bg/\bp))$) are contained among the objects ${\sX} _v, {v\in W}$ of Theorem \ref{th:semi-orthogonal} (resp., among the objects $\tX_v, v\in W$ of $\Dd ^b(\Coh(\bg/\fb))$ of Theorem \ref{th:coh-semi-orthogonal}). One has therefore to recognize those objects among ${\sX} _v, {v\in W}$ (resp., among $\tX_v, v\in W$) that are obtained by the restriction functor $\res_{\fb}^{\bp}:\Dd ^b(\rep(\bp))\rightarrow \Dd ^b(\rep(\fb))$ (resp., by the pullback $\pi _{\bp}^{\ast}:\Dd ^b(\Coh(\bg/\bp))\rightarrow \Dd ^b(\Coh(\bg/\fb))$ along the projection $\pi _{\bp}:\bg/ \fb\rightarrow  \bg/ \bp$). The fundamental fact that both functors $\res_{\fb}^{\bp}$ and $\pi _{\bp}^{\ast}$ are $t$-exact and fully faithful on the respective derived categories makes it possible to recognize the sought-for exceptional objects on $\bg/ \bp$ by applying the induction functor $\Rind _{\fb}^{\bp}$ to 
appropriate objects ${\sX} _v, {v\in W}$ (resp., the pushforward $R{\pi _{\bp}}_{\ast}$ to $\tX_v, v\in W$). It turns out the Steinberg weights $e_v, v\in W^\bp$ for a given parabolic $\bp$ behave nicely with respect to the induction functor $\Rind _{\fb}^{\bp}$, suggesting a natural parabolic analogue of the key $\fb$-modules from Section \ref{sec:rappelB-mod}. The cohomological properties of those parabolic analogues are given by Theorem \ref{th:PindPQ}, a parabolic counterpart of Theorem \ref{th:indPQ}. That allows to further apply the arguments of Sections \ref{sec:B-generation} and \ref{sec:X_p&Y_p} in the parabolic case and obtain Theorems \ref{th:Psemi-orthogonal} and \ref{th:Pcoh-semi-orthogonal}.

%--------------------------------------------------------------
\subsection{\unboldmath Discussion}\label{sec:disc}
%--------------------------------------------------------------

The construction of the objects $\tX_p$, $p\in W$ from Theorem \ref{th:coh-semi-orthogonal} (resp., $\hat \tX_v, v\in W^{\bp}$ from Theorem \ref{th:Pcoh-semi-orthogonal}) a priori produces complexes of coherent sheaves on $\bg/\fb$ (resp., on on $\bg/\bp$). At the moment, we don't yet know whether their cohomological amplitude is of zero length, i.e. whether the objects $\tX_p$, $p\in W$ (resp., $\hat \tX_v, v\in W^{\bp}$) are vector bundles, possibly up to a shift in the derived category (cf. also \cite[Conjecture 4.1]{KuzPol}).

Evidence for the fact that the cohomological amplitude of $\tX_p$, $p\in W$ (resp., of $\hat \tX_v, v\in W^{\bp}$) is of zero length is provided by the rank two cases, see Section 14 of  \href{https://doi.org/10.48550/arXiv.2407.13653}{arXiv:2407.13653v3}. In Section \ref{sec:dualFEC_conj}, based on further evidence coming from low rank cases in ranks up to three, we state some conjectural cohomological statements that shall eventually give the description of dual exceptional collections to $\tX_v, v\in W$ in $\Dd ^b(\Coh(\bg/\fb))$. If true, that description must provide further evidence for the cohomological amplitude of the objects $\tX_v, v\in W$ being minimal.

Presumably, Theorems \ref{th:semi-orthogonal} and \ref{th:Psemi-orthogonal} have further extensions to larger settings than the one of 
semisimple algebraic groups of our paper. Much of the representation-theoretic input that we used in proving Theorems \ref{th:semi-orthogonal} and \ref{th:Pcoh-semi-orthogonal} also exists in the quantum setting, \cite{APW}. This suggest natural generalizations
of the said theorems to the quantum case. The representation theory of parahoric subalgebras of affine Kac-Moody Lie algebras studied in \cite{FKM} suggests another extension.

A very interesting question is to understand a relation of our results to baric structures/staggered $t$-structures on the derived categories of equivariant coherent sheaves from \cite{Achar} and \cite{AchTr}. 
Further, full exceptional collections on flag varieties appear through the computations related to the dual Steinberg basis of ${\sf K}_0^{ \bg}( \bg/ \fb)$, \cite{Daw}. 
These, in turn, are related to Lusztig's asymptotic affine Hecke algebra and to Lusztig's canonical basis in the $ \bt$-equivariant $\sf K$-theory of Springer fibres. 
Ultimately, the problem is to relate the objects $X_p$, $p\in W$ of the present paper to Lusztig's canonical basis. 

%--------------------------------------------------------------
\subsection*{Acknowledgements}
%--------------------------------------------------------------
A.S. is grateful to Henning Krause for his unwavering support of this work and for his and Charles Vial's joint invitation to the BIREP group of  Bielefeld University for a long-term visit in 2023-2025. 
The research of A.S. was supported by the Deutsche Forschungsgemeinschaft (SFB-TRR 358/1 2023 - 491392403). He also acknowledges the hospitality of the Mathematical Institute of the University of Utrecht during his visits in May 2024, in March and May 2025, and in May 2026.

W.v.d.K. thanks BIREP for the opportunity to visit Bielefeld in September 2023.
We thank the referee for a very thorough reading and for numerous suggestions for improvement.
%who had numerous suggestions for improvement.

\subsection{\unboldmath Leitfaden}\label{sec:Leitfaden}
{\footnotesize
\vspace{2cm}
\xymatrix @R=2pc @C=-2.5pc{*+[F]\txt{\bf Corollary \ref{cor:PQnabla} \\ 
($\fb$-cohomology vanishing \\ for $P(\lambda)\otimes Q(\mu)\otimes \nabla_\nu$)} \ar[d]  \ar[drr]
 & &  *+[F]\txt{\rm {\bf Theorem \ref{th:triang}} (Triangularity)} \ar[d]   &  \\
 *+[F]\txt{\bf Theorem \ref{th:highInd}\\
($\RR^i\ind_{\fb}^ \bg (P(\lambda)\otimes Q(\mu))=0$ \\ for $i>0$)} \ar[drr]  
& &  *+[F]\txt{{\bf Theorem \ref{th:indSteinberg_vanishing} (\ref{th:indPQ})} \\ 
(Computing $\ind_{\fb}^ \bg (P(-e_v)\otimes Q(e_w))$ \\ 
 for $w\succeq v$)}  \ar[d]  &  \\
&  *+[F]\txt{\bf Theorem \ref{th:generation} \\
(Generation)} \ar[r] \ar@{.>}[ddl]^{Refined \ version \ of \ generation} &  *+[F]\txt{{\bf Section \ref{subsec:cut_at_p}} (``Cut at $p\in W$'')} \ar[dd]  \\
& & \\
 *+[F]\txt{\bf Section \ref{sec:X_p&Y_p-isom}\\ 
(Vanishing of the cone of $X_p\rightarrow Y_p$)
} \ar[ddr]        
&  & *+[F]\txt{\bf Section \ref{subsec:X_p-Y_p} \\
(Construction of $X_p$'s and $Y_p$'s)} \ar[ddl] \ar[ll] &  \\
\\
&   *+[F]\txt{\bf Section \ref{sec:SODofB-modoverG-mod} \\ 
(Constructing $\bf G$-linear semi-orthogonal \\\ decompositions of $\Dd ^b(\rep(\fb)$) \\ with the help of objects $X_v,v\in W$)} \ar[dd]  & \\
\\
&   *+[F]\txt{\bf Sections \ref{sec:FEConGmodB} and \ref{sec:caseP} \\ 
(Full exceptional collections \\ on ${\mathbb G/\mathbb P}$)}  &
 }
}

\newpage

%--------------------------------------------------------------
\section{\unboldmath Generalities}\label{subsec:Notation}
%--------------------------------------------------------------

%--------------------------------------------------------------
\subsection{{Semisimple simply connected algebraic $\Z$-groups}}\label{subsec:reductive}
%--------------------------------------------------------------
We fix a semisimple split simply connected and connected algebraic $\Z$-group $\Gm$, with terminology as in \cite[II 1.1, II 1.6]{Jan}. Its geometric fibres are connected simply connected
semisimple algebraic groups.
We choose a split maximal torus $\Tm$
in $\Gm$, a Borel subgroup scheme $\Bm$ containing $\Tm$ and a parabolic 
 subgroup scheme $\Pm$ containing $\Bm$.
 We have the flag scheme ${\Gm}/{\Bm}$ and the generalized flag scheme ${\Gm}/{  \Pp}$. 
Then ${\Gm/\Pp}\rightarrow {\rm Spec}({\Z})$ is flat and any line bundle $\mathbb L$ on ${\Gm/  \Pp}$ also comes from a line bundle  on ${\mathbb G/\mathbb B}$. 

Given an affine algebraic group $\bh$, flat over a noetherian base ring $\sk$, an $\bh$-module is a $\sk[\bh]$-comodule (a rational representation). 
The abelian category of $\bh$-modules is denoted $\Rep(\bh)$ and $\rep(\bh)$  denotes the full abelian subcategory of representations that are finitely generated over the base ring $\sk$ (cf. \cite[Section 1]{AndChev}).
We will mostly follow the notation and terminology of \cite{Jan}.
Thus $\Hom_\bh(M,N)$ will be the $\sk$-module of homomorphisms between 
$\bh$-modules $M$ and $N$ and $\Ext^i_\bh(M,N)$ is defined similarly.
Further $H^i(\bh,M)$ means $\Ext_\bh^i(\sk,M)$ with $\bh$ acting trivially on $\sk$.
If $\sk'$ is a commutatative $\sk$-algebra, then $\bh_{\sk'}$ denotes the algebraic $\sk'$-group obtained by base change \cite[I 1.10]{Jan} along $\Spec(\sk')\to\Spec(\sk)$.

\begin{lemma}(\cite[I 2.7(4), I 4.2(1), I 4.4]{Jan})\label{lem:dualmodule}
    Let $M$, $N\in \Rep(\bh)$ with $M$
    finitely generated and projective over $\sk$. Then $M^*=\Hom_\sk(M,\sk)$ is an $\bh$-module and 
    $$\Ext^n_\bh(M,N)=H^n(\bh,M^*\otimes N).$$
\end{lemma}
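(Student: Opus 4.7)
My plan is to treat this as a standard derived-functor identity, following the route sketched by the references to Jantzen. The proof has three ingredients: (i) giving $M^{*}$ an $\bh$-module structure, (ii) establishing a natural isomorphism $\Hom_{\bh}(M,N)\cong (M^{*}\otimes_{\sk}N)^{\bh}$, and (iii) identifying the derived functors on both sides.

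\textbf{Step 1: The $\bh$-module structure on $M^{*}$.} Since $M$ is finitely generated projective over $\sk$, the canonical map $M^{*}\otimes_{\sk}M \to \Hom_{\sk}(M,M)$ is an isomorphism, and more generally $M^{*}\otimes_{\sk}N \cong \Hom_{\sk}(M,N)$ naturally in $N$. The comodule structure $M \to \sk[\bh]\otimes_{\sk}M$ dualises, using the antipode of $\sk[\bh]$ and the finite-generated-projective hypothesis, to a comodule structure on $M^{*}$. This is exactly what I 2.7(4) of Jantzen packages.

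\textbf{Step 2: The functorial isomorphism at $n=0$.} With the structure above, the natural isomorphism $\Hom_{\sk}(M,N)\cong M^{*}\otimes_{\sk}N$ is $\bh$-equivariant, and taking $\bh$-invariants on each side gives
\begin{equation*}
\Hom_{\bh}(M,N)\;\cong\;(M^{*}\otimes_{\sk}N)^{\bh}\;=\;H^{0}(\bh,M^{*}\otimes_{\sk}N).
\end{equation*}
This is the content of I 4.4 in Jantzen and is the $n=0$ case of the lemma.

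\textbf{Step 3: Deriving the isomorphism.} I want to derive both sides in $N$ and compare. The functor $M^{*}\otimes_{\sk}-$ on $\Rep(\bh)$ is exact, because $M^{*}$ is again finitely generated projective over $\sk$, hence flat. Moreover it preserves injectives: for any $\bh$-module $L$, tensor-hom adjunction over $\sk$ combined with $\bh$-equivariance gives
\begin{equation*}
\Hom_{\bh}(L,\,M^{*}\otimes_{\sk}I)\;\cong\;\Hom_{\bh}(L\otimes_{\sk}M,\,I),
\end{equation*}
and $-\otimes_{\sk}M$ is exact by flatness of $M$, so if $I$ is injective in $\Rep(\bh)$ then $M^{*}\otimes_{\sk}I$ is as well. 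Thus $\Hom_{\bh}(M,-)$ equals the composition $H^{0}(\bh,-)\circ(M^{*}\otimes_{\sk}-)$ of an exact functor (preserving injectives) followed by a left exact one, so the Grothendieck spectral sequence degenerates and their right derived functors coincide:
\begin{equation*}
\Ext^{n}_{\bh}(M,N)\;=\;R^{n}\Hom_{\bh}(M,-)(N)\;=\;R^{n}H^{0}(\bh,-)(M^{*}\otimes_{\sk}N)\;=\;H^{n}(\bh,M^{*}\otimes_{\sk}N),
\end{equation*}
which is the claim; this is essentially I 4.2(1) of Jantzen.

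The only non-formal point is the preservation of injectives in Step 3, which is where the projectivity (and not merely flatness) of $M$ over $\sk$ is genuinely used, via the adjunction that requires dualising $M^{*}\otimes_{\sk}-$ back to $-\otimes_{\sk}M$. Everything else is bookkeeping with comodule structures.
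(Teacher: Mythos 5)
Your argument is correct and is precisely the chain of facts the paper invokes by citing Jantzen I 2.7(4) (the comodule structure on $M^*$ for $M$ finitely generated projective), I 4.4 (the identification $\Hom_{\bh}(M,N)\cong H^0(\bh,M^*\otimes N)$ and its derived version), and I 4.2(1) (the definition of $\Ext$ via injective resolutions in the second variable); the paper gives no independent proof beyond these citations. Your Step 3, with exactness of $M^*\otimes_\sk-$ and preservation of injectives via the tensor--hom adjunction, is exactly the standard degeneration argument underlying those references, so nothing is missing.
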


\subsection{Universal coefficient Theorem}
\begin{theorem}[Universal coefficient Theorem]\label{th:universal coefficients}
Let $\sk$ be a Dedekind ring and let $\bh$
be an affine algebraic $\sk$-group, flat over $\sk$. Let $\sk'$ be a commutative $\sk$-algebra and $n\geq0$.
\begin{enumerate}
\item There is for any $\bh$-module $M$, flat over $\sk$, an exact sequence
    $$0\to H^n(\bh,M)\otimes_\sk \sk'\to H^n(\bh_{\sk'},M\otimes_\sk\sk')\to \Tor_1^\sk(H^{n+1}(\bh,M),\sk')\to 0.$$
\item %Put $\bg=\Gm_\sk$, $\bp=\Pm_\sk$.
Let  $Y$ be a closed reduced
$\Bm_\sk$-invariant subscheme, flat over $\sk$, of the generalized flag variety $\Gm_\sk/\Pp_\sk$. For any  $\Pp_\sk$-module $N$, finitely generated and flat over $\sk$, we have an exact 
sequence 
$$0\to H^n(Y,\Ll(N))\otimes_\sk \sk'\to H^n(Y_{\sk'},\Ll(N\otimes_\sk\sk'))
        \to \Tor_1^\sk(H^{n+1}(Y,\Ll(N)),\sk')\to 0,$$
where $\Ll(N)$ is the restriction to $Y$ of the vector bundle associated to $N$ 
(Subsection \ref{subsec:sheafification_functor}) and $\Ll(N\otimes_\sk\sk')$ is defined similarly.
 \end{enumerate}
\end{theorem}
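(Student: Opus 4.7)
The plan is to reduce both statements to the classical universal coefficient theorem for a bounded below complex of flat modules over a Dedekind ring. Recall that if $C^\bullet$ is such a complex and $\sk'$ is any commutative $\sk$-algebra, then there is a natural short exact sequence
\begin{equation*}
0\to H^n(C^\bullet)\otimes_\sk \sk' \to H^n(C^\bullet\otimes_\sk \sk')\to \Tor_1^\sk(H^{n+1}(C^\bullet),\sk')\to 0,
\end{equation*}
obtained from a two-term flat resolution of $\sk'$ (the flat dimension of any $\sk$-module is at most one because $\sk$ is Dedekind) combined with the standard long exact sequence of $\Tor$. The task thus reduces, in each of the two settings, to exhibiting a natural complex of flat $\sk$-modules that computes the cohomology in question and whose base change to $\sk'$ computes the cohomology after base change.

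For part (1) the natural candidate is the Hochschild cochain complex $C^\bullet(\bg,M)$ with $C^n(\bg,M)=M\otimes_\sk \sk[\bg]^{\otimes n}$, which computes the rational cohomology $H^\bullet(\bg,M)$. Since $\sk[\bg]$ is flat over $\sk$ by hypothesis and $M$ is flat, each term $C^n(\bg,M)$ is a flat $\sk$-module. The differentials are built from the comultiplication of $\sk[\bg]$ and the coaction on $M$, so base change commutes with the formation of the complex: the identification $C^\bullet(\bg,M)\otimes_\sk\sk' \simeq C^\bullet(\bg_{\sk'},M\otimes_\sk\sk')$ exhibits a complex of flat $\sk$-modules computing $H^\bullet(\bg_{\sk'},M\otimes_\sk\sk')$. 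The first claim now follows from the universal coefficient sequence displayed above.

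For part (2) fix a finite affine open cover $\mathcal U$ of $(\Gm/\Pp)_\sk$ and let $\mathcal U|_X$ be its pullback to $X$. Since $X$ is separated, the \v Cech cohomology $\check H^\bullet(\mathcal U|_X,\Ll(N))$ computes $H^\bullet(X,\Ll(N))$. Because $N$ is finitely generated and flat over the Dedekind ring $\sk$ it is projective, so $\Ll(N)$ is locally free of finite rank on $(\Gm/\Pp)_\sk$ and its restriction to $X$ is locally free of finite rank on $X$. Each term of the \v Cech complex is therefore a finitely generated projective module over the flat $\sk$-algebra of functions on the corresponding affine open of $X$, and in particular is flat over $\sk$. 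Base change along $\sk\to\sk'$ commutes with the associated-sheaf construction and with forming the \v Cech complex, so $\check C^\bullet(\mathcal U|_X,\Ll(N))\otimes_\sk\sk'$ computes $H^\bullet(X_{\sk'},\Ll(N\otimes_\sk\sk'))$. Applying the universal coefficient sequence to this complex of flat $\sk$-modules yields the second claim.

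The main obstacle is the flatness of the \v Cech terms in part (2), which depends on the combined hypotheses that $X$ is flat over $\sk$ (so that the structure sheaf has flat sections on affines) and that $N$ is finitely generated and flat, hence projective, over $\sk$ (so that $\Ll(N)$ is locally free of finite rank). Once this flatness is secured, everything else is a formal application of homological algebra over a Dedekind ring.
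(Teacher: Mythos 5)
Your argument is correct, and it splits naturally into two halves that relate differently to the paper's proof. For part (1) the paper simply cites \cite[Proposition I 4.18]{Jan}, and your Hochschild-complex argument is precisely the standard proof of that proposition: the terms $M\otimes_\sk \sk[\bg]^{\otimes n}$ are flat, the complex commutes with base change, and the universal coefficient sequence for a complex of flat modules over a hereditary ring does the rest. So here you are reproving the cited result rather than taking a new route. For part (2) the paper's primary route is different from yours: it reduces to part (1) by identifying $H^n(X,\Ll(N))$ with algebraic group cohomology via \cite[I 5.8(2)]{Jan}, which exploits the hypothesis that $X$ is $\Bm_\sk$-invariant. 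Your \v Cech argument is the alternative the paper explicitly mentions in one sentence ("One may also prove the second part with \v Cech cohomology") but does not carry out. The trade-off is that your route needs no group action at all --- only that $X$ is flat and separated over $\sk$ and that $\Ll(N)$ is locally free of finite rank (which you correctly derive from $N$ being finitely generated and flat, hence projective, over the Dedekind ring $\sk$) --- whereas the paper's route is shorter given the Jantzen machinery already in play and fits the way these cohomology groups are actually manipulated elsewhere in the paper. Your verification of the key point, flatness of the \v Cech terms (flat module over a flat $\sk$-algebra is flat over $\sk$) and compatibility of the \v Cech complex with base change on affines, is sound.
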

\begin{proof}
        The first part is \cite[Proposition I 4.18]{Jan}.
        For the second part we choose an affine open cover of $\Gm/\Pp$ and use it to compute the cohomology groups as \v Cech cohomology groups (\cite[Theorem 4.5]{Hartshorne}). The  \v Cech complexes are flat over $\sk$ and $\sk'$ respectively, so we 
        may argue in the same manner as in the proof of the first part.
        The second part is tailored to
        our needs. 
        Of course we allow $\Pp=\Bm$.
    \end{proof}

\begin{remark} 
In part 2 of Theorem \ref{th:universal coefficients} the sheaf $\Ll(N)$ is coherent so that
    $H^n(Y,\Ll(N))$ is a finitely generated $\sk$-module by \cite[Thm. III 5.2]{Hartshorne}.
    When applying part 1 of the same theorem we will always have that
    $H^n(\bh,M)$ is a finitely generated $\sk$-module by \cite[Chapter B]{Jan}.
\end{remark}

\subsection{Notation}

Our base ring $\sk$ is usually a field or $\Z$. We will discuss the case $\sk=\Z$ separately, or leave the necessary modifications to the reader.
For instance, when $M$ is a finitely generated free module over $\sk=\Z$, then $\dim_\sk(M)$ means its rank. When we say that 
a module $M$ has a weight $\nu$ with multiplicity one, then we mean that the weight space $M_\nu$ is free of rank one.

In our references the field is often assumed algebraically closed, but this assumption is irrelevant in our context because of
the straightforward behaviour of the various cohomology groups under field extensions of $\sk$.

Let  $\bg$  be  obtained from ${\mathbb G}$ by base change along ${\rm Spec}(\sk)\rightarrow {\rm Spec}(\mathbb Z)$.
When $\sk=\Z$ this just means that $\bg=\Gm$,  $\fb=\mathbb B$, $\bp=\mathbb P$, $\bt=\mathbb T$.

\subsection{{Associated sheaf functor $\Ll$}}\label{subsec:sheafification_functor}

Let $\bf H$ be a flat $\sf k$-group scheme acting freely (from the right) on a flat $\sf k$-scheme $X$ such that $X/\bf H$ is a scheme. 
Associated to each $\bf H$-module $M$ is a sheaf $\Ll (M)=\Ll _{X/\bf H}(M)\in{\rm Sh}(X/\bf H)$ where ${\rm Sh}(X/\bf H)$ is the category of sheaves of $\Oo _{X/\bf H}$-modules on $X/\bf H$, \cite[I, Section 5.8]{Jan}. The functor $\Ll : {\Rep}({\bf H})\rightarrow {\rm Sh}(X/\bf H)$ is exact and lands in the subcategory ${\QCoh}(X/\bf H)$ of quasi--coherent $\Oo _{X/\bf H}$-modules of ${\rm Sh}(X/\bf H)$, \cite[I, Proposition 5.9, (a) and (b)]{Jan}. Let $\Dd _{{\QCoh}(X/{\bf H})}{\rm Sh}(X/\bf H)$ denote the derived category of complexes on ${\rm Sh}(X/\bf H)$ with quasi-coherent cohomology. Since $\Ll$ is exact, denote also $\Ll:\Dd ({\Rep}({\bf H}))\rightarrow \Dd _{{\QCoh}(X/{\bf H})}{\rm Sh}(X/\bf H)$  its derived functor.

Now set ${\bf H}:= \fb, X:= \bg$ (so $X/{\bf H}= \bg/ \fb$) in the definition of the functor $\Ll$. Then $\Ll$ restricts to a functor between the bounded derived categories $\Dd ^b({\rep}({\fb}))\rightarrow \Dd ^b({\sf Coh}(\bg/ \fb))$, \cite[I, Proposition 5.9, (c)]{Jan}. 

\subsection{Weights}

Now let $\sk$ be a field.
So $ \bg$ is a split semisimple simply connected affine algebraic group. 
Let $W = N_{ \bg}({\bt})/ \bt$ be the Weyl group where $\fb\supseteq \bt$ corresponds with $\Bm\supseteq \Tm$. 
Let $X(\bt)$ be the weight lattice and $\Phi$ be the root system.  We choose the system $\Phi ^+$ of positive roots which makes $\fb$ the \emph{negative} Borel and let $\Pi$ be the set of simple roots. We write $\fb^+$ for the Borel group opposite to $\fb$. The roots of $\fb^+$ are positive.
Let $(~,~)$ be a 
symmetric, positive definite $W$-invariant form on ${\mathbb R}\otimes_{\mathbb Z} X(\bt)$ and let $X(\bt)_+=\{\lambda \in X(\bt)
\mid(\lambda,\alpha ^{\vee})\geq 0$ for all $\alpha \in \Pi$\} be
the set of dominant weights, where $\alpha^\vee =\frac{2\alpha}{(\alpha ,\alpha)}$ for $\alpha \in \Phi$. Let $S=\{\;s_\alpha\mid \alpha\in \Pi\;\}$ be the set of simple reflections, with $s_\alpha$ the reflection in the hyperplane perpendicular to $\alpha$. Then $(W,S)$ is a Coxeter system (\cite[II 5.1]{Humphreys}).
The fundamental weights $\omega_\alpha$ satisfy $(\omega_\alpha,\beta^\vee)=\delta_{\alpha,\beta}$ for $\alpha, \beta\in \Pi$.
\\

We want to emphasize once again the fact that our convention (which is that of \cite{Jan}) is that ${\fb}$ corresponds 
to the {\it negative roots} (as opposed to the conventions of \cite{GrahamKumar}, \cite{MathG}, \cite{Polo3}, and \cite{WvdK}). 
For that reason, the translation of various results from {\it loc.cit.} that are used extensively in this paper requires some care.\\ 

Given a $\fb$-module $M$, the sheaf $\Ll(M)$ on $ \bg/ \fb$ has fibre $M$ at the point $ \fb/ \fb$. Given a weight $\lambda \in  X(\bt)$,
denote $\Ll ({\lambda})$ the corresponding line bundle on ${ \bg}/{\fb}$.
Here we still follow  \cite{Jan}, so that $\Ll(\lambda)$ 
 is associated to the one dimensional $\fb$-module $\sk_\lambda$ of weight $\lambda$, not associated to the dual of that representation.

The weight lattice $X(\bt)$ has a natural partial order $\geq_d$, known as the dominance order: 
for $\lambda,\mu \in X(\bt)$ we write $\lambda \geq_d \mu$ if $\lambda - \mu$ is a sum of positive roots, with repetitions allowed.
We denote by $w_0$ the longest element of the Weyl group $W$  and let $\lambda ^{\ast} = -w_0\lambda$ for $\lambda \in X(\bt)_+$ (dual or contragredient, cf. \cite[Section 2.2]{Hummodular}).\\
 
For $\lambda \in X(\bt)$ let 
$\nabla _{\lambda}$ 
be the induced module $\ind_{\fb}^{ \bg}({\sk}_{\lambda})$ \cite[I, Section 3.3]{Jan}. 
It is finite-dimensional and it is non-zero if and only if $\lambda$ is dominant. 
For $\lambda \in X(\bt)_+$ we denote by $\Delta_\lambda$ the Weyl module $(\nabla _{\lambda^{\ast}})^{\ast}$. Then $\nabla _{\lambda}$  has simple socle known as $L(\lambda)$
and $\Delta_\lambda$ has simple head $L(\lambda)$. 
If $\sk=\Z$ then  $\nabla _{\lambda}$, $\Delta_\lambda$ are finitely generated and free over $\sk$, but heads and socles make less sense.

\begin{remark}\label{rem:Steinberg}
The Steinberg weights (see Subsection \ref{subsec:Steinberg}) will be crucial for our main constructions. They generate the lattice generated by the fundamental weights $\omega_\alpha$. {So we need the $\omega_\alpha$ to be weights of $\bt$. That is why we must assume 
    that ${\mathbb G}$ is simply connected.
    But notice that if ${\mathbb P}\subset{\mathbb G}$ is 
    a split parabolic subgroup scheme  of a connected reductive   ${\mathbb G}$,
    then one may also view the generalized flag scheme ${\mathbb G}/{\mathbb P}$ as a homogeneous space for the simply connected cover of $[{\mathbb G},{\mathbb G}]$.}
  \end{remark}

%--------------------------------------------------------------
\section{\unboldmath Rappels: $\fb$-modules}\label{sec:rappelB-mod}
%--------------------------------------------------------------
Let $\sk$ be a field.
%--------------------------------------------------------------
\subsection{\unboldmath Highest weight category structure on $\Rep( \bg)$}\label{subsec:hwc_repG}
%--------------------------------------------------------------

Simple modules in $\Rep( \bg)$ are the $L(\lambda)$ parametrized by the dominant weight $\lambda\in X(\bt)_+$, \cite[II, Section 2]{Jan}.
By \cite{CPSHW} and \cite{Donkin1}, there is a highest weight category structure
in the sense of  \cite{CPSHW} 
on $\Rep( \bg)$ with the weight poset 
$(X(\bt)_+,\leq_d)$: the standard modules in this structure are the Weyl modules $\Delta _{\lambda},\lambda \in X(\bt)_+$ and the costandard modules are the  $\nabla _{\lambda},\lambda \in X(\bt)_+$, also known as dual Weyl modules.

%--------------------------------------------------------------
\subsection{\unboldmath Joseph-Demazure modules}\label{subsec:J-D}
%--------------------------------------------------------------

Let $ \bh\subset  \bg$ be a   closed subgroup of $ \bg$. 
Associated to it are the restriction and induction functors ${\res}_{ \bh}^{ \bg}$ and ${\ind}_{ \bh}^{ \bg}$, \cite[I, Section 3.1]{Jan}. Recall the definition of Joseph's functors ${\sH}_w, w\in W$, following \cite{WvdK}.

\begin{definition}\label{def:Joseph_functor}
Let $M$ be a $\fb$-module and  $w\in W$. 
Consider the Schubert variety $\fX_w=\overline{\fb w\fb}/\fb\subset \bg/ \fb$ associated to $w$. 
Then the functor ${\sH}_w:{\Rep}( \fb)\rightarrow {\Rep}( \fb)$ is given by $M\mapsto {H}^0(\fX_w,\Ll (M))$. 
\end{definition}

Equivalently, by \cite[Proposition 2.2.5]{WvdKTata}, the functor ${\sH}_w$ can be described as follows.
If $\alpha\in \Pi$ with corresponding simple reflection $s=s_\alpha$, 
let $\bp_\alpha$ denote the minimal parabolic generated by $\fb$ and the root subgroup with root $\alpha$.
Then $\sH_s={\res}_{\fb}^{{ \bp}_{\alpha}}{\ind}_{\fb}^{{ \bp}_{\alpha}}$. And if  $s_{1}s_{2}\dots s_{n}$ is   a reduced expression for $w$, then
$${\sH}_w=\sH_{s_1}\circ\dots\circ \sH_{s_n}.$$

The functor ${\sH}_w$ is left exact; let ${\rm R}{\sH}_w$ denote its right derived functor $\Dd ^+({\Rep}( \fb))\rightarrow \Dd ^+({\Rep}( \fb))$. It restricts to a functor between bounded derived categories  $\Dd ^b({\rep}( \fb))\rightarrow \Dd ^b({\rep}( \fb))$ (cf.\ Subsection \ref{subsec:G-linear-triang_cat}).

\begin{definition}\label{def:dualJosephmod}(Dual Joseph modules $P(\lambda)$).
Let $\lambda \in X(\bt)$. Let $\lambda^+$ be the dominant weight in the Weyl group orbit of $\lambda$ and let $w$ be minimal so that $\lambda=w\lambda^+$. The dual Joseph module $P(\lambda)$ is set to be ${\sH}_w(\sk_{\lambda ^+})$.
Its $\fb$-socle is $\sk_\lambda$ \cite[Lemma 2.2.9]{WvdKTata}.
\end{definition}

%--------------------------------------------------------------
\subsection{\unboldmath Relative Schubert modules}\label{subsec:Schubfilt}
%--------------------------------------------------------------

%--------------------------------------------------------------
\begin{definition}[Relative Schubert modules $Q(\lambda)$]\label{def:defineQ}
%--------------------------------------------------------------

Let $\lambda=w\lambda^+$ as above. 
The relative Schubert module $Q(\lambda)$ is set to be the kernel of the 
restriction homomorphism $P(\lambda)\rightarrow {H}^0(\partial \fX_w,\Ll (\lambda ^+))$, where the boundary $\partial \fX_w$ is the union of the $\fX_v$ that are strictly contained in $\fX_w$. This restriction homomorphism is surjective by 
\cite[Theorems 1.4.8 and 2.3.2]{BK}, cf.\ Proposition \ref{prop:Schubert module} below. The $\fb$-socle of $Q(\lambda)$ is also $\sk_\lambda$ \cite[Remark 2.3.5]{WvdKTata}.
\end{definition}

\begin{lemma}\label{lem:Pindependent}
If $\lambda\in X(\bt)_+$ and $w\in W$, 
then $P(w\lambda)={\sH}_{w}(\sk_{\lambda})$.
For instance, $P(w_0\lambda)=\nabla_\omega$.
\end{lemma}

\begin{proof}
See \cite[Lemma 2.3.1]{WvdKTata}. 
\end{proof}

%--------------------------------------------------------------
\subsection{\unboldmath Excellent order}\label{subsec:excord}
%--------------------------------------------------------------

Recall that we have fixed a Weyl group invariant inner product $(~,~)$ on $\mathbb R\otimes _{\mathbb Z}X(\bt)$.

\begin{definition}(\cite{WvdK})\label{def:excord}
Let $\lambda, \mu \in X(\bt)$. Define that $\lambda$ is less than $\mu$ in the excellent order, notation $\lambda \leq_e \mu$, if either $(\lambda, \lambda)<(\mu,\mu)$ or $\lambda = w\nu$, $\mu=z\nu$ for some $\nu \in X(\bt)_+$, $w,z\in W$ with $w\leq z$ (in the Bruhat order on $W$).
\end{definition}

%--------------------------------------------------------------
\subsection{\unboldmath Antipodal excellent order}\label{subsec:antipodexcord}
%--------------------------------------------------------------

\begin{definition}\label{def:antipodal}(\cite{WvdK})
Define that $\lambda \leq_a \mu$ in the antipodal excellent order if $-\lambda \leq_e -\mu$ in the excellent order.
\end{definition}

%--------------------------------------------------------------
\subsection{\unboldmath Two highest weight category structures on $\rep( \fb)$}\label{subsec:hwc_repB}
%--------------------------------------------------------------

\begin{theorem}(\cite[Theorem 1.6, (i)]{WvdK})\label{th:Pexc}
The category  ${\mathrm{rep} }( \fb)$ of finite dimensional rational representations  of $ \fb
$ is a highest weight category with respect to the excellent order.
The $P(\lambda)$ are the costandard modules with socle $\sk_\lambda$
for this order.
\end{theorem}

\begin{remark}
Originally \cite{CPSHW} it was assumed that in a highest weight category there are enough injectives. But $\rep( \fb)$ does not have enough injectives. That is why in 
\cite[Theorem 1.6, (i)]{WvdK} one finds not $\rep( \fb)$ but $\Rep( \fb)$, even though the costandard modules are in $\rep( \fb)$.
Nowadays one also considers $(\rep( \fb),\leq_e)$ a highest weight category, in fact
a lower finite highest weight (lfhw) category in the terminology of \cite[section 3]{Coulembier}. As we will see in Subsection \ref{subsec:truncations}, $D^b(\rep( \fb))$ may be viewed as a full subcategory of $D^b(\Rep( \fb))$.
Similar remarks apply to the next theorem.
\end{remark}

\begin{theorem}(\cite[Theorem 1.6, (ii)]{WvdK})\label{th:Qant}
The category  ${\mathrm{rep} }( \fb)$ of finite dimensional rational representations  of $\fb$ is a highest weight category with respect to the antipodal excellent order.
The $Q(\lambda)$ are the costandard modules with socle $\sk_\lambda$
for this order.
\end{theorem}

%--------------------------------------------------------------
\subsection{\unboldmath Cohomology vanishing for $\fb$-modules}
%-------------------------------------------------------------

\begin{proposition}\label{prop:cohinduced}(\cite[II Proposition 4.13, B.4]{Jan})
Let $\lambda\in X(\bt)_+$. Let $\sk$ be $\Z$ or a field. Then
\begin{equation*}
\Ext ^i_{\bg}(\sk,\nabla_\lambda) = \Ext ^i_{\fb}(\sk,\sk_\lambda)=\left\{
   \begin{array}{l}
    {\sf k}, \quad {\rm for} \quad i=0, \  \lambda=0,\\
    0, \quad {\rm otherwise}. \\
   \end{array}
  \right.
\end{equation*}
\end{proposition}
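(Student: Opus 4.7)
The plan is to establish the two equalities separately, reducing each to a group cohomology computation.

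\textit{First equality.} I would apply the Grothendieck spectral sequence to the factorization of $\fb$-invariants as $\bg$-invariants composed with induction from $\fb$ to $\bg$:
\[
E_2^{p,q} = H^p\bigl(\bg,\, R^q\ind_\fb^{\bg}\sk_\lambda\bigr) \Longrightarrow H^{p+q}(\fb, \sk_\lambda).
\]
Kempf vanishing asserts $R^q\ind_\fb^{\bg}\sk_\lambda = 0$ for $q > 0$ when $\lambda$ is dominant, so the spectral sequence collapses to an edge isomorphism $H^i(\bg, \nabla(\lambda)) \cong H^i(\fb, \sk_\lambda)$. Combining this with the lemma just preceding the proposition (applied to $M = \sk$, which is trivially finitely generated and projective over the base) converts both sides into $\Ext^i$, giving the first equality.

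\textit{Second equality.} It remains to compute $H^i(\fb, \sk_\lambda)$ for $\lambda \in X(\bt)_+$. The case $i = 0$ is immediate: $(\sk_\lambda)^\fb = (\sk_\lambda)^\bt$ is $\sk$ when $\lambda = 0$ and zero otherwise. For $i > 0$ I would invoke the Hochschild--Serre spectral sequence for the split extension $1 \to \bu \to \fb \to \bt \to 1$. Since $\bt$ is diagonalizable, its higher cohomology on rational modules vanishes, and the sequence collapses to the statement that $H^i(\fb, \sk_\lambda)$ is the $-\lambda$-weight space of $H^i(\bu, \sk)$. A root-subgroup filtration of $\bu$ (each factor isomorphic to $\mathbb G_a$ with $\bt$-weight a negative root, given this paper's negative-Borel convention) together with the associated Hochschild--Serre estimates shows that every $\bt$-weight of $H^\bullet(\bu, \sk)$ is a non-negative integer combination of positive roots. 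If $-\lambda = \sum_{\alpha \in \Phi^+} n_\alpha\alpha$ with all $n_\alpha \geq 0$, then
\[
(\lambda,\lambda) \;=\; -\sum_{\alpha \in \Phi^+} n_\alpha(\lambda,\alpha) \;\leq\; 0
\]
by dominance of $\lambda$, forcing $\lambda = 0$; and in the case $\lambda = 0$ the only summand contributing the weight $0$ is the trivial one sitting in degree zero.

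\textit{Main obstacle.} The delicate point is the weight description of $H^\bullet(\bu, \sk)$ in positive characteristic, where $H^\bullet(\mathbb G_a, \sk)$ is much larger than its characteristic-zero counterpart. The positive-root cone estimate nevertheless persists because each $\bt$-equivariant step in the root-subgroup filtration introduces only weights in that cone; this is what the cited references \cite[II 4.13, B.4]{Jan} package. An alternative that bypasses the $\bu$-cohomology computation entirely is to observe that $(\sk, \nabla(\lambda)) = (\Delta(0), \nabla(\lambda))$ is a standard--costandard pair in the highest weight category $\rep(\bg)$, and the defining axioms (\cite{CPSHW}, \cite{Donkin1}) at once give $\Hom = \delta_{\lambda,0}\sk$ together with $\Ext^{>0} = 0$.
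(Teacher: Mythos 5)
The paper offers no proof of this proposition at all: it is stated as a citation to \cite[II 4.13, B.4]{Jan}, and your argument is, in substance, a correct reconstruction of the proof given there. The first equality via the spectral sequence $H^p(\bg,\RR^q\ind_\fb^{\bg}\sk_\lambda)\Rightarrow H^{p+q}(\fb,\sk_\lambda)$ collapsing by Kempf vanishing is exactly right, as is the reduction of the second to the weight $-\lambda$ part of $H^\bullet(\bu,\sk)$ via the Lyndon--Hochschild--Serre sequence for $\bu\trianglelefteq\fb$ with diagonalizable quotient $\bt$. Two points deserve a word of care. First, for $\lambda=0$ you need not just that the weights of $H^\bullet(\bu,\sk)$ lie in the cone $\mathbb N\Phi^+$ but that the weight $0$ does not occur in positive degree; this follows cleanly from the normalized Hochschild complex built on tensor powers of the augmentation ideal $I\subset\sk[\bu]$, whose weights are \emph{nonzero} elements of the pointed cone $\mathbb N\Phi^+$, so that $I^{\otimes n}$ has no weight-$0$ vectors for $n\geq1$ --- your phrasing asserts this but the filtration argument as written only yields containment in the cone. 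Second, the proposition is also invoked over $\sk=\Z$ (the B.4 citation); your argument is for a field, and the integral case follows either from \cite[B.4]{Jan} directly or from the field case together with the Universal Coefficient Theorem \ref{th:universal coefficients}, using that $\nabla(\lambda)$ is $\Z$-free. Your alternative via the standard--costandard $\Ext$-vanishing for the pair $(\Delta(0),\nabla(\lambda))$ is legitimate for the $\bg$-side but is less elementary in this context, since that vanishing is usually itself deduced from the $\fb$-cohomology computation you carried out.
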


\begin{theorem}(\cite[Theorem 2.20(i)]{WvdK}, \cite[Theorem 3.2.6]{WvdKTata})\label{th:Tata Theorem 3.2.6}
Let $\lambda, \mu\in X(\bt)$. Then $H^p( \fb,P(\lambda)\otimes Q(\mu))=0$ for $p>0$.
\end{theorem}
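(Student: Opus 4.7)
The plan is to reduce the statement to the $\Ext$-vanishing between standard and costandard objects in the highest weight structure on $\Rep(\fb)$ defined by the antipodal excellent order $<_a$, via a tensor-Hom adjunction.

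First I would use finite-dimensionality of $P(\lambda)$ and $Q(\mu)$. Both are sections of line bundles on projective Schubert varieties (Definitions \ref{def:dualJosephmod} and \ref{def:defineQ}), so they lie in $\rep(\fb)$. Over a field $\sk$, the functor $P(\lambda)\otimes(-)$ on $\Rep(\fb)$ is then exact with exact two-sided adjoint $P(\lambda)^*\otimes(-)$; hence it preserves injective objects. Starting from an injective resolution $Q(\mu)\hookrightarrow I^\bullet$ in $\Rep(\fb)$, the complex $P(\lambda)\otimes I^\bullet$ is therefore an injective resolution of $P(\lambda)\otimes Q(\mu)$. Applying $\Hom_{\fb}(\sk,-)$ and using the natural isomorphism $\Hom_{\fb}(\sk,P(\lambda)\otimes N)\cong \Hom_{\fb}(P(\lambda)^*,N)$ termwise in $N$, I would conclude the fundamental identification
\begin{equation*}
H^p(\fb,P(\lambda)\otimes Q(\mu))\;\cong\;\Ext^p_{\fb}(P(\lambda)^*,Q(\mu)).
\end{equation*}

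The second step is to invoke the $\fb$-cohomological duality recorded in equation \eqref{eq:P-Q-duality} of Section \ref{subsec:key_orthogonality}: for all $\lambda,\mu\in X(\bt)$ and all $p>0$ one has $\Ext^p_{\fb}(P(\lambda)^*,Q(\mu))=0$. Combining this vanishing with the identification above yields $H^p(\fb,P(\lambda)\otimes Q(\mu))=0$ for $p>0$, as desired. Conceptually this is automatic: by Remark \ref{rem:anti-standard}, in the highest weight structure on $\Rep(\fb)$ given by $<_a$ the module $P(\lambda)^*$ is the standard object with head $\sk_{-\lambda}$ while $Q(\mu)$ is the costandard object with socle $\sk_\mu$, and the vanishing of $\Ext^{>0}$ between standard and costandard is built into the axiomatics of a highest weight category.

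The main potential obstacle is the tensor-Hom adjunction at the derived level. Over a field this is clean, but one should verify carefully that tensoring with the finite-dimensional, $\sk$-flat module $P(\lambda)$ genuinely preserves injectives in $\Rep(\fb)$ rather than only in $\sk\textrm{-}\mathrm{Mod}$; this is handled by the existence of an exact adjoint $P(\lambda)^*\otimes(-)$. Over $\Z$ the same argument applies after ensuring $P(\lambda)$ is $\Z$-free (so that both $P(\lambda)$ and $P(\lambda)^*$ are flat), which is the case for the modules in question. With these flatness checks in place, the proof is essentially two lines: adjunction followed by the highest-weight $\Ext$-vanishing.
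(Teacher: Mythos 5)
The paper offers no proof of this statement: it is recalled as a foundational input from \cite[Theorem 2.20(i)]{WvdK} and \cite[Theorem 3.2.6]{WvdKTata} and is consumed later as a black box (Corollary \ref{cor:PQnabla}, Theorem \ref{th:highInd}). Your first step, the identification $H^p(\fb,P(\lambda)\otimes Q(\mu))\cong\Ext^p_{\fb}(P(\lambda)^*,Q(\mu))$, is correct and is exactly the Lemma recorded in Section \ref{subsec:reductive} (from \cite[I 2.7(4), I 4.2(1), I 4.4]{Jan}); the fuss about preservation of injectives is standard and unobjectionable.

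The gap is in the second step, which carries all the mathematical content. Equation (\ref{eq:P-Q-duality}) is not an independent input you may invoke: it \emph{is} Theorem \ref{th:Tata Theorem 3.2.6} (plus the degree-zero computation) rewritten through the very adjunction you just applied — the paper signals this by adding ``See also Theorem \ref{th:Tata Theorem 3.2.6}'' immediately after displaying it. Invoking it is therefore circular. The fallback to ``the axiomatics of a highest weight category'' has the same defect in a subtler form: that $\Rep(\fb)$ is a highest weight category for $<_a$ with the $Q(\mu)$ costandard and the $P(\lambda)^*$ standard (Remark \ref{rem:anti-standard}) is precisely the main theorem of \cite{WvdK}, and that structure is itself established by way of the filtration and acyclicity results of Polo and Mathieu — the circle of ideas behind Theorems \ref{th:lambda_tensor_Q-excellent} and \ref{th:Joseph_conj} and the Frobenius-splitting techniques of \cite{WvdKTata} — so deducing the $\Ext$-vanishing from the highest weight structure relocates the difficulty rather than resolving it. A genuine proof must engage with that machinery (e.g., acyclicity of $Q(\mu)$ for the Joseph functors ${\sH}_w$ and excellent filtrations on suitable twists of $P(\lambda)\otimes Q(\mu)$), none of which appears in your argument. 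As written, the proposal reformulates the theorem; it does not prove it.
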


This will be subsumed by Corollary  \ref{cor:PQnabla}.

\begin{definition}
A weight $\lambda$ of a $\fb$-module $M$ is called \emph{extremal} if all other weights $\mu$
of $M$ are shorter or equally long:
$(\mu,\mu)\leq(\lambda,\lambda)$.   For instance, if  all other weights of $M$ are in the convex hull of
the $W$-orbit of $\lambda$, then $\lambda$ is extremal.
\end{definition}

\begin{lemma}\label{lem:extrPQ}
Let $\lambda\in X(\bt)$. Then $\lambda$ is the unique extremal weight of $Q(\lambda)$ and it is an extremal weight of $P(\lambda)$.
\end{lemma}
\begin{proof}
See \cite[Remark 2.3.5]{WvdKTata} and  \cite[Proposition 2.2.15]{WvdKTata}.
\end{proof}

%--------------------------------------------------------------
\subsection{\unboldmath Round truncations}\label{subsec:truncations}
%--------------------------------------------------------------

If $R>0$, then $\Rep(\fb)_{\leq R}$ denotes the full subcategory of $\Rep(\fb)$
whose objects are $\fb$-modules all whose weights satisfy $(\lambda,\lambda)\leq R$.
The category $\rep(\fb)_{\leq R}$ is defined similarly.
Both $\Rep(\fb)_{\leq R}$ and $\rep(\fb)_{\leq R}$ have enough injectives \cite[Theorem 3.5, Theorem 3.6]{CPSHW}.

\begin{theorem}Let $R>0$. 
\begin{enumerate}
\item
The natural inclusion  $\rep(\fb)_{\leq R}\to\Rep(\fb)$
induces a full embedding $$D^b(\rep(\fb)_{\leq R})\to D^b(\Rep(\fb))$$
of triangulated categories.
\item
The natural inclusion  $\rep(\fb)\to\Rep(\fb)$
induces a full embedding $$D^b(\rep(\fb))\to D^b(\Rep(\fb))$$
of triangulated categories.
\end{enumerate}
\end{theorem}

\begin{proof}
The first part follows by combining \cite[Thm 3.5, Thm 3.9]{CPSHW} with Theorem
\ref{th:Pexc} or rather its original formulation in \cite{WvdK}.
The second part then follows by viewing $D^b(\rep(\fb))$ as a nested union of the  $D^b(\rep(\fb)_{\leq R})$, which is justified by the first part.
\end{proof}

%--------------------------------------------------------------
\subsection{\unboldmath $\fb$-cohomological duality}\label{subsec:key_orthogonality}
%--------------------------------------------------------------

\begin{theorem}\label{th:key_orthogonality}
Let $\lambda, \mu\in X(\bt)$. 
Then $P(\lambda)$,  $Q(\mu)$ satisfy:
\begin{equation}\label{eq:P-Q-duality}
\Ext ^i_{\fb}(P(\lambda)^*,Q(\mu)) = \left\{
   \begin{array}{l}
    {\sf k}, \quad {\rm for} \quad i=0, \  \lambda=-\mu,\\
    0, \quad {\rm otherwise}. \\
   \end{array}
  \right.
\end{equation}
\end{theorem}
\begin{proof}
For $i>0$ this is Theorem \ref{th:Tata Theorem 3.2.6} with Lemma \ref{lem:dualmodule}.
Let $i=0$ and put $R=(\lambda,\lambda)$.
If $(\mu,\mu)>R$ consider a $\fb$-module map $f:P(\lambda)^*\to Q(\mu)$.
Its image lies in $\rep(\fb)_{\leq R}$, so it does not contain the socle of $Q(\mu)$ and thus the image of $f$ must be zero.
Next suppose $(\mu,\mu)\leq R$. As $\Ext ^0_{\fb}(P(\lambda)^*,Q(\mu))\cong
\Ext ^0_{\fb}(Q(\mu)^*,P(\lambda))$, we now consider a 
 $\fb$-module map $f:Q(\mu)^*\to P(\lambda)$. The only extremal weight of 
 $Q(\mu)^*$ is $-\mu$ by Lemma \ref{lem:extrPQ}, so if $f$ is nonzero, then $f$ must map the weight space
 $(Q(\mu)^*)_{-\mu}$ onto the socle $\sk_\lambda$ of $P(\lambda)$.
 So we must have $\lambda=-\mu$.
 If $\lambda=-\mu$, then we have a $\fb$-module map from $Q(\mu)^*$ onto its head $\sk_\lambda$ and a nonzero $f$ exists. 
Moreover, $f$ is determined by its restriction to the weight space $(Q(\mu)^*)_{-\mu}$, because the difference $f-g$ of two maps $f$, $g$ with the same restriction to this weight space has an image that does not contain the socle of $P(-\mu)$.
\end{proof}

%--------------------------------------------------------------
\subsection{Standard modules}\label{subsec:standard}
%--------------------------------------------------------------

We will be describing our main constructions in terms of the costandard modules of Theorems \ref{th:Pexc} and \ref{th:Qant}. For completeness let us now discuss standard modules.
Let $\leq_{hw}$ denote one of $\leq_e$ and $\leq_a$ and let $\leq_{hw}^*$
denote the other one, so that $$\{\leq_{hw},\leq_{hw}^*\}=\{\leq_e,\leq_a\}$$ as sets. Let $R>0$.
Sending a $\fb$-module $M$ to its dual $M^*$ induces an anti-autoequivalence 
$\tau$ of $\rep(\fb)_{\leq R}$. Comparing Theorem \ref{th:key_orthogonality} with 
\cite[Proposition 3.12]{Ef}, and using Lemma \ref{lem:dualmodule}, one sees that $\tau$ sends a costandard module for 
$\leq_{hw}$ with  socle $\sk_\mu$ to a standard module for $\leq_{hw}^*$ with head $\tau(\sk_\mu)=\sk_{-\mu}$.
For instance, if $\lambda\in X(\bt)$, then $P(-\lambda)^*$ is the standard module
 for $\leq_a$ with head $\sk_\lambda$.
%--------------------------------------------------------------
\subsection{\unboldmath Filtrations on $\fb$-modules}\label{subsec:filtrations}
%--------------------------------------------------------------

We recall here the fundamental statements concerning filtrations on $\fb$-modules. 
As we emphasized above, one has to take some care when citing the literature, 
because our $\fb$ is not $\fb^+$.
We have to convert to our conventions.
That means that dominant often becomes antidominant and vice versa. 

We begin with recalling the definition of good filtration on a $ \bg$-module, \cite{Donkin1}.

\begin{definition}
A rational $ \bg$-module $M$ is said to admit a good filtration provided that there exists a finite or infinite increasing filtration
$$
0=M_0\subsetq M_1\subsetq M_2\subsetq \dots \subsetq M_n\subsetq \dots ; \bigcup _{i>0} M_i = M
$$
such that each $M_i/M_{i-1}=\nabla _{\lambda_i}$ for some $\lambda_i \in X(\bt)_+$.
\end{definition}
\begin{aside}
An unfortunate side effect of this definition is that if $M$ admits a good filtration, its dimension is at most countable, even if $\bg$ acts trivially on $M$. 
This makes results like Theorem \ref{th:G criterion} a bit complicated.
If one wants to keep representation theory clean, it is better to use \cite[Definition 4.1.1]{WvdKTata}, 
which is similar to the next definition.
\end{aside}

\begin{definition}\cite[Definition 2.3.6]{WvdKTata}
A $\fb$-module $M$ is said to have an excellent filtration if and only if there exists a filtration $0\subset F_0\subset F_1\subset \dots $ by $\fb$-modules such that $\cup F_i= M$ and $F_i/F_{i-1}=\oplus P(\lambda _i)$ for some $\lambda _i\in X(\bt)$. Here $\oplus$ stands for 
the direct sum of any number of copies, ranging from zero copies to infinitely many. We call a nonzero $F_i/F_{i-1}$ a layer of the filtration.
\end{definition}

\begin{remark}
Finite dimensional modules possessing excellent filtrations are called {\it strong} modules in \cite{Math}.
\end{remark}

\begin{definition}\cite[Definition 2.3.8]{WvdKTata}
A $\fb$-module $M$ is said to have a relative Schubert filtration if and only if there exists a filtration $0\subset F_0\subset F_1\subset \dots $ by $\fb$-modules such that $\cup F_i= M$ and $F_i/F_{i-1}=\oplus Q(\lambda _i)$ for some $\lambda _i\in X(\bt)$. Here $\oplus$ stands for the direct sum of any number of copies, ranging from zero copies to infinitely many. We call a nonzero $F_i/F_{i-1}$ a layer of the filtration.
\end{definition}

\begin{remark}
Finite dimensional $\fb$-modules possessing a relative Schubert  filtration 
are called {\it weak} modules in \cite{Math}.
\end{remark}

\begin{theorem}\cite[Corollary 5.2.7]{WvdKTata}\label{th:Joseph_conj}
Let $\lambda \in X(\bt)$ and $\mu \in X(\bt)_+$. Then $P(\lambda)\otimes \nabla_\mu$ has an excellent filtration. Note that $\nabla_\mu=P(w_0\mu)$
by Lemma \ref{lem:Pindependent}.
\end{theorem}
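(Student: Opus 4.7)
My plan is to establish the statement via the $\Ext$-criterion for excellent filtration coming from the highest weight category structure of Theorem~\ref{th:Pexc}. Since $P(\lambda)$ is the costandard and, by Remark~\ref{rem:anti-standard}, $Q(\nu)^{\ast}$ is the standard module for the excellent order on $X(\bt)$, a finite-dimensional $\fb$-module $M$ carries an excellent filtration if and only if $\Ext^{i}_{\fb}(Q(\nu)^{\ast},M)=0$ for every $\nu\in X(\bt)$ and every $i\geq 1$. Applying this to $M=P(\lambda)\otimes\nabla_{\mu}$ and using the standard identity $\Ext^{i}_{\fb}(V^{\ast},W)=H^{i}(\fb,V\otimes W)$ for finite-dimensional $V$, the theorem reduces to the cohomological vanishing
\[
H^{i}\bigl(\fb,\,Q(\nu)\otimes P(\lambda)\otimes\nabla_{\mu}\bigr)=0\qquad\text{for all }i>0,\ \nu\in X(\bt).
\]

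To exploit that $\nabla_{\mu}$ carries a $\bg$-action, I would invoke the tensor identity $R^{j}\ind_{\fb}^{\bg}(N\otimes V)\cong R^{j}\ind_{\fb}^{\bg}(N)\otimes V$, valid for any $\fb$-module $N$ and any $\bg$-module $V$, together with the Hochschild--Serre spectral sequence $H^{p}(\bg,R^{q}\ind_{\fb}^{\bg}(-))\Rightarrow H^{p+q}(\fb,-)$. Combining these yields a spectral sequence
\[
E_{2}^{p,q}=H^{p}\Bigl(\bg,\,R^{q}\ind_{\fb}^{\bg}\bigl(Q(\nu)\otimes P(\lambda)\bigr)\otimes\nabla_{\mu}\Bigr)\ \Longrightarrow\ H^{p+q}\bigl(\fb,\,Q(\nu)\otimes P(\lambda)\otimes\nabla_{\mu}\bigr).
\]

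The main obstacle is the essential input needed for $q>0$: namely, the acyclicity $R^{q}\ind_{\fb}^{\bg}\bigl(Q(\nu)\otimes P(\lambda)\bigr)=0$ for all $q>0$. This strengthens Theorem~\ref{th:Tata Theorem 3.2.6} (which only asserts the vanishing of the total cohomology) and is, in my view, the genuinely deep, Mathieu-style input behind the whole highest-weight-category framework for $\Rep(\fb)$. Granting it, the spectral sequence collapses onto the $q=0$ edge, reducing matters to $H^{p}(\bg,\ind_{\fb}^{\bg}(Q(\nu)\otimes P(\lambda))\otimes\nabla_{\mu})=0$ for $p>0$. The $\bg$-module $\ind_{\fb}^{\bg}(Q(\nu)\otimes P(\lambda))$ then admits a good filtration; tensoring with $\nabla_{\mu}$ preserves good filtrations by the Donkin--Mathieu theorem, and good-filtered $\bg$-modules have vanishing higher $\bg$-cohomology by Kempf's theorem applied layer by layer. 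This closes the argument.

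A more conceptual shortcut is worth recording: since $\nabla_{\mu}=P(w_{0}\mu)$, both tensor factors are themselves (single) costandard modules for the excellent order, so Mathieu's theorem that tensor products of excellently-filtered $\fb$-modules remain excellently-filtered would give the result in one line. The spectral-sequence plan above simply makes explicit the cohomological content of that deep result in the concrete form $H^{>0}(\fb,P\otimes Q\otimes\nabla)=0$ that is the one actually used elsewhere in the paper.
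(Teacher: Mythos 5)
Your reduction via the cohomological criterion (\cite[Theorem 3.2.7]{WvdK}, quoted in the paper) is legitimate, but the argument as a whole is circular relative to the logical structure of the results you are allowed to use. The two inputs you ``grant'' --- the acyclicity $\RR^{q}\ind_{\fb}^{\bg}(P(\lambda)\otimes Q(\nu))=0$ for $q>0$ and the good filtration on $\ind_{\fb}^{\bg}(P(\lambda)\otimes Q(\nu))$ --- are exactly Theorem \ref{th:highInd}, which the paper proves \emph{from} Corollary \ref{cor:PQnabla} (via Lemma \ref{lem:cohdescentlemma}), and Corollary \ref{cor:PQnabla} is in turn obtained by combining Theorem \ref{th:Tata Theorem 3.2.6} with the very Theorem \ref{th:Joseph_conj} you are trying to prove. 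Note that Theorem \ref{th:Tata Theorem 3.2.6} ($H^{>0}(\fb,P\otimes Q)=0$) does not imply the acyclicity of the derived induction: vanishing of the abutment of the Hochschild--Serre spectral sequence says nothing about its $E_2$-page. Likewise, ``then admits a good filtration'' does not follow from acyclicity alone; Donkin's criterion would require $H^{1}(\bg,\nabla_\mu\otimes\ind_{\fb}^{\bg}(P\otimes Q))=0$, which by your own spectral sequence is precisely the vanishing you set out to prove. So unless you supply an independent proof of the acyclicity (which is not easier than the theorem itself), the plan does not close.

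The proposed ``one-line shortcut'' is based on a false premise: there is no theorem that tensor products of excellently filtered $\fb$-modules are excellently filtered. For $\bg=\mathrm{SL}_2$ one has $P(-\rho)=\sk_{-\rho}$, and $P(-\rho)\otimes P(-\rho)=\sk_{-2\rho}$ has no excellent filtration (the only one-dimensional modules with excellent filtration are the $\sk_\nu$ with $\nu$ dominant). Mathieu's tensor product theorems require one factor to be a dominant character or a $\bg$-module with good filtration. The paper's actual proof is short and goes the other way around: writing $\lambda=w\lambda^{+}$ with $w$ minimal, the tensor identity gives $P(\lambda)\otimes\nabla_\mu={\sH}_w(\sk_{\lambda^{+}}\otimes\nabla_\mu)$; the module $\sk_{\lambda^{+}}\otimes\nabla_\mu$ has excellent filtration by \cite[\S 5, Corollary 1]{MathG}; and ${\sH}_w$ preserves excellent filtrations by \cite[Lemma 2.11]{WvdK}. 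You should adopt that route (or at least prove the acyclicity of $\RR\ind_{\fb}^{\bg}$ on $P\otimes Q$ from scratch) rather than running the paper's deduction chain backwards.
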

\begin{proof}This relies on the main results of \cite{MathG}.

   For completeness we recall the argument. Let $\lambda^+$ be the dominant weight in the Weyl group orbit of $\lambda$ and let $w$ be minimal so that $\lambda=w\lambda^+$. By repeated application of  the  Tensor Identity \cite[I Proposition 4.8]{Jan}, we have $P(\lambda)\otimes \nabla_\mu={\sH}_w(\sk_{\lambda^+}\otimes \nabla_\mu)$. Now $k_{\lambda^+}\otimes \nabla_\mu$
   has an excellent filtration by \cite[\S 5, Corollary 1]{MathG}.
   The result in the Theorem thus follows from \cite[Lemma 2.11]{WvdK}.
\end{proof}

The next result is a key ingredient in our proofs. 

\begin{corollary}\label{cor:PQnabla}
    Let $\lambda,\mu\in X(\bt)$, $\nu\in X(\bt)_+$. Then $H^p( \fb,P(\lambda)\otimes Q(\mu)\otimes \nabla_\nu)=0$ for $p>0$.
\end{corollary}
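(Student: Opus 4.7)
The plan is to reduce this to Theorem \ref{th:Tata Theorem 3.2.6} by resolving the tensor factor $P(\lambda)\otimes\nabla_\nu$ via an excellent filtration supplied by Theorem \ref{th:Joseph_conj}. Concretely, I would rewrite the module as $\bigl(P(\lambda)\otimes\nabla_\nu\bigr)\otimes Q(\mu)$ and invoke Theorem \ref{th:Joseph_conj} to produce a filtration
\[
0=F_0\subset F_1\subset F_2\subset\cdots\subseteq P(\lambda)\otimes\nabla_\nu,\qquad F_i/F_{i-1}=\bigoplus P(\lambda_{i,j}),
\]
by $\fb$-submodules, whose subquotients are (direct sums of) dual Joseph modules $P(\lambda_{i,j})$. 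Since both $P(\lambda)$ and $\nabla_\nu$ are finite-dimensional over $\sk$, so is $P(\lambda)\otimes\nabla_\nu$, and the filtration above can be taken to be finite with finite-dimensional subquotients, which removes any issue about direct sums.

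Next, tensoring this finite filtration with $Q(\mu)$ (which is $\sk$-flat) yields a finite filtration of $P(\lambda)\otimes\nabla_\nu\otimes Q(\mu)$ whose successive quotients are finite direct sums of modules of the form $P(\lambda_{i,j})\otimes Q(\mu)$. By Theorem \ref{th:Tata Theorem 3.2.6} we have
\[
H^p\bigl(\fb,\,P(\lambda_{i,j})\otimes Q(\mu)\bigr)=0\quad\text{for all }p>0,
\]
and group cohomology commutes with finite direct sums, so the same vanishing holds on each graded piece of our filtration. A straightforward induction on the length of the filtration, using the long exact cohomology sequence attached to each short exact sequence $0\to F_{i-1}\otimes Q(\mu)\to F_i\otimes Q(\mu)\to (F_i/F_{i-1})\otimes Q(\mu)\to 0$, then yields $H^p(\fb,F_i\otimes Q(\mu))=0$ for all $p>0$ and all $i$, and in particular for $F_i=P(\lambda)\otimes\nabla_\nu$.

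I do not anticipate a serious obstacle: the only points that need care are that $\nabla_\nu$ is indeed finite-dimensional (so the filtration is finite and all commutation with direct sums is trivial), and that one correctly cites the two input theorems with the conventions of the paper (\emph{dual} Joseph modules $P(\lambda)$ as the costandard objects for the excellent order, and our $\fb$ being the negative Borel). Both are already handled in Section \ref{subsec:reductive} and Section \ref{subsec:hwc_repB}, so no further translation is needed.
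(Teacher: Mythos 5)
Your proof is correct and is exactly the argument the paper intends: its proof reads ``Combine Theorem \ref{th:Tata Theorem 3.2.6} with Theorem \ref{th:Joseph_conj}'', and your filtration-plus-long-exact-sequence argument is precisely the unpacking of that combination. No gaps; the finiteness remarks you add are accurate and harmless.
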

\begin{proof}
Combine Theorem \ref{th:Tata Theorem 3.2.6}
with Theorem \ref{th:Joseph_conj}.
\end{proof}

\begin{remark}
The vanishing statement of Corollary \ref{cor:PQnabla} exhibits a remarkable interplay among the three highest weight category structures: one on the category ${\mathrm{Rep} }( \bg)$ from Section \ref{subsec:hwc_repG}, 
and the other two on $\rep( \fb)$ from  Section \ref{subsec:hwc_repB}.
Note also that $\sk = P(0)=Q(0)=\nabla(0)$ is costandard in all three highest weight category structures.
\end{remark}

%--------------------------------------------------------------
\subsection{\unboldmath Cohomological criterion}
%-------------------------------------------------------------

\begin{theorem}\cite[Theorem 4, Corollary 7]{Frid}\label{th:G criterion}
Let $M$ be a countably
  generated rational $ \bg$-module satisfying ${H}^1( \bg,\nabla _{\mu}\otimes M)=0$ for all $\mu \in X(\bt)_+$. Then $M$ admits a good  filtration.
\end{theorem}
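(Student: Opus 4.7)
My strategy is to reformulate the hypothesis as a first-$\Ext$ vanishing condition in the highest weight category $\Rep(\bg)$ of Section~\ref{subsec:hwc_repG}, and then to build a good filtration by peeling off one costandard layer from $M$ at a time, from the socle upwards.

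\textbf{Reformulation.} Because $\nabla_\mu$ is finite-dimensional over $\sk$, one has $\nabla_\mu\otimes M\cong \Hom_\sk(\nabla_\mu^*,M)\cong \Hom_\sk(\Delta_{\mu^*},M)$ with $\mu^*=-w_0\mu$, whence $H^i(\bg,\nabla_\mu\otimes M)\cong \Ext^i_\bg(\Delta_{\mu^*},M)$. As $\mu$ ranges over $X(\bt)_+$ so does $\mu^*$, so the hypothesis is exactly
\[
\Ext^1_\bg(\Delta_\nu,M)=0\quad \text{for every } \nu\in X(\bt)_+.
\]
This is the classical Ringel--Cline--Parshall--Scott criterion for the existence of a $\nabla$-filtration.

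\textbf{Extracting a costandard layer.} Rational $\bg$-modules are locally finite, so the $\bg$-socle of $M$ is nonzero semisimple, say $\bigoplus_\lambda L(\lambda)^{n_\lambda}$ with $\lambda\in X(\bt)_+$. I choose such a $\lambda$ that is maximal for the dominance order $\leq_d$ among the weights appearing in the socle (this is immediate in the finite-dimensional case, and reduced to it in the countably generated case by working inside a f.d.~$\bg$-submodule containing a given socle summand). My aim is to lift the socle inclusion $L(\lambda)\hookrightarrow M$ to an embedding $\nabla_\lambda\hookrightarrow M$. Pushing out along $L(\lambda)\hookrightarrow \nabla_\lambda$ yields an extension
\[
0\to M\to E\to \nabla_\lambda/L(\lambda)\to 0
\]
whose class in $\Ext^1_\bg(\nabla_\lambda/L(\lambda),M)$ must be shown to vanish. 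This vanishing is then forced using the hypothesis and the highest weight structure --- concretely, by first bootstrapping $\Ext^1_\bg(\Delta_\mu,M)=0$ to the stronger $\Ext^i_\bg(\Delta_\mu,M)=0$ for every $i\geq 1$ and every dominant $\mu$ (a standard boosting that exploits the existence of tilting modules $T(\mu)$ with both $\Delta$- and $\nabla$-filtrations), and then running a dévissage against a $\Delta$-filtered resolution of $\nabla_\lambda/L(\lambda)$ obtained from such a tilting cover.

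\textbf{Iteration.} Having produced $\nabla_\lambda\hookrightarrow M$, let $M_0\subset M$ be a maximal submodule isomorphic to a direct sum of copies of $\nabla_\lambda$, and set $M'=M/M_0$. Applying $\Hom_\bg(\Delta_\mu,-)$ to $0\to M_0\to M\to M'\to 0$, together with the standard vanishing $\Ext^i_\bg(\Delta_\mu,\nabla_\lambda)=0$ for $i\geq 1$ (which reduces via dévissage in $\Delta_\mu$ to Proposition~\ref{prop:cohinduced}), gives $\Ext^1_\bg(\Delta_\mu,M')=0$ for every dominant $\mu$. Thus $M'$ inherits the hypothesis, and by the maximality of $\lambda$ its socle contains no $L(\mu)$ with $\mu\geq_d\lambda$. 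In the finite-dimensional case the process now terminates by induction on $\dim_\sk M$; in the countably generated case one performs a transfinite recursion along a well-ordering of the (at most countably many) dominant weights of $M$ refining $\leq_d$, obtaining an ascending exhaustive filtration whose successive quotients are direct sums of dual Weyl modules, i.e.\ a good filtration.

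\textbf{Main obstacle.} The pivotal technical step is lifting the socle inclusion $L(\lambda)\hookrightarrow M$ to an embedding of the full costandard $\nabla_\lambda$. The hypothesis $\Ext^1_\bg(\Delta_\mu,M)=0$ does \emph{not} directly imply $\Ext^1_\bg(L(\mu),M)=0$, so a naive dévissage along the composition factors of $\nabla_\lambda/L(\lambda)$ breaks down. The correct argument requires invoking the full highest weight structure of $\Rep(\bg)$ --- either through tilting modules acting as bridges between $\Delta$- and $\nabla$-filtrations, or through Ringel's boosting of $\Ext^1$-vanishing to all higher $\Ext^i$-vanishing. This is the technical heart of Friedlander's theorem and the place where representation theory of the reductive group $\bg$ enters essentially.
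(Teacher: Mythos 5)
This theorem is quoted in the paper from Friedlander (cf.\ also Donkin and \cite[II.4.16]{Jan}); the paper gives no proof, so I am measuring your plan against the standard argument. Your reformulation $H^1(\bg,\nabla_\mu\otimes M)\cong\Ext^1_\bg(\Delta(\mu^*),M)$ is correct, and so is the overall strategy of splicing costandard submodules off the bottom of $M$. The genuine gap is your choice of $\lambda$: you take $\lambda$ \emph{maximal} for $\leq_d$ among the socle constituents of $M$, whereas the argument works precisely when $\lambda$ is taken \emph{minimal}. With $\lambda$ minimal, the obstruction $\Ext^1_\bg(\nabla_\lambda/L(\lambda),M)$ dies by an entirely elementary two-step d\'evissage: every composition factor $L(\mu)$ of $\nabla_\lambda/L(\lambda)$ has $\mu<_d\lambda$; the sequence $0\to\mathrm{rad}\,\Delta(\mu)\to\Delta(\mu)\to L(\mu)\to0$ together with the hypothesis gives a surjection $\Hom_\bg(\mathrm{rad}\,\Delta(\mu),M)\onto\Ext^1_\bg(L(\mu),M)$; and $\Hom_\bg(\mathrm{rad}\,\Delta(\mu),M)=0$ because all composition factors of $\mathrm{rad}\,\Delta(\mu)$ are $L(\nu)$ with $\nu<_d\mu<_d\lambda$, so by minimality none of them occurs in the socle of $M$. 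No tilting modules and no boosting to higher $\Ext$ groups are needed; the "technical heart" you describe dissolves once the extremality is reversed.

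With $\lambda$ maximal the lifting step is not merely harder but can genuinely fail. Whenever $\Ext^1_\bg(\nabla_\lambda/L(\lambda),\nabla_\nu)\neq0$ for some $\nu<_d\lambda$ (the typical situation in positive characteristic), pull back such an extension along $\nabla_\lambda\onto\nabla_\lambda/L(\lambda)$ to get a non-split $0\to\nabla_\nu\to M\to\nabla_\lambda\to0$: this $M$ has a good filtration (hence satisfies the hypothesis), its socle is $L(\nu)\oplus L(\lambda)$ with $\lambda$ maximal, yet $\nabla_\lambda$ does not embed in $M$ — a nonzero composite $\nabla_\lambda\hookrightarrow M\onto\nabla_\lambda$ would be an isomorphism (as $\End(\nabla_\lambda)=\sk$) and would split the sequence, while a zero composite would force $\nabla_\lambda\hookrightarrow\nabla_\nu$, impossible since $\lambda$ is not a weight of $\nabla_\nu$. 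In particular, even granting the boosting $\Ext^{i}_\bg(\Delta(\mu),M)=0$ for all $i\geq1$, this cannot produce the vanishing of $\Ext^1_\bg(\nabla_\lambda/L(\lambda),M)$, since that group is nonzero in the example; a $\Delta$-filtered resolution of $\nabla_\lambda/L(\lambda)$ only computes this $\Ext$ as the cohomology of a complex, with no exactness for free. Your iteration and the countably generated case (Friedlander exhausts $M$ by finite-dimensional submodules; your transfinite recursion needs the same care to see that the resulting filtration is exhaustive) do go through once the minimal choice is made, by induction on dimension.
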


\subsection{\unboldmath Forms of the $P(\lambda)$, $Q(\mu)$ over the integers}
For the results over $\Z$ we will use  $\Z$-forms $P(\lambda)_\Z$, $Q(\mu)_\Z$ of  $P(\lambda)$, $Q(\mu)$ respectively.
They are constructed and studied in \cite[Chapter 7]{WvdKTata}. Both  $P(\lambda)_\Z$ and $Q(\mu)_\Z$ are finitely generated and free over $\Z$. 
They share many properties with their counterparts over fields.
We often drop the $\Z$ from  the notation $P(\lambda)_\Z$, $Q(\mu)_\Z$.

\begin{remark}
{We do not pursue
 the highest weight category structures in the sense of \cite[Section 3]{Ef}
on the exact category of $\Bm$-modules that are finitely generated and free over $\Z$.
 Instead, we will invoke the Universal coefficient Theorem (Theorem \ref{th:universal coefficients}) for cohomology to move back and forth between the case that $\sk$ is a field and the case $\sk=\Z$. We need that tool anyway.}
 \end{remark}
%--------------------------------------------------------------
\section{\unboldmath Rappels: Weyl groups}\label{sec:Weylgroups}
%--------------------------------------------------------------

%--------------------------------------------------------------
\subsection{\unboldmath The Demazure product}\label{subsec:Demazureproduct}
%--------------------------------------------------------------

Let $(W,S)$ be the Coxeter presentation of the Weyl group of  $ \bg$. 
The Demazure product on $W$, also known as the Hecke product, is an associative product on $W$ defined by replacing the relation $s_{\alpha}\cdot s_{\alpha}=e$ for a simple root $\alpha$ with $s_{\alpha}\star s_{\alpha}=s_{\alpha}$. 
Equivalently, for any $w\in W$ and any simple root $\alpha$, define $w\star s_{\alpha}$ to be the longer of $ws_{\alpha}$ or $w$ 
\cite[Section 3]{BM}.
If a sequence $s_{1},\dots ,s_{{t}}$ of simple reflections defines a 
reduced expression, then the Demazure product $s_{1}\star \dots \star s_{{t}}$ 
is equal to the ordinary product $s_{1} \cdots s_{{t}}$.
If $v,w\in W$, then $\overline{\fb v\fb}~\overline{\fb w\fb}=\overline{\fb (v\star w)\fb}$.

\

\begin{lemma}\label{lem:Hwstar}
    If $v,w\in W$, then ${\sH}_v\circ {\sH}_w={\sH}_{v\star w}$.
\end{lemma}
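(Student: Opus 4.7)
The plan is to proceed by induction on $\ell(w)$, using the composition characterization of $\sH_w$ along a reduced expression (Proposition 2.2.5 of \cite{WvdKTata}). The base case $w=e$ is trivial since $\sH_e = \id$ and $v \star e = v$. The nontrivial content consists of one idempotency fact and a case split in the inductive step.

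The first key step is to establish that $\sH_s \circ \sH_s = \sH_s$ for every simple reflection $s$. Using the description $\sH_s = \res_{\fb}^{\bp_s} \circ \ind_{\fb}^{\bp_s}$, it suffices to show that $\ind_{\fb}^{\bp_s} \circ \res_{\fb}^{\bp_s}$ is the identity on $\Rep(\bp_s)$. This follows from the tensor identity \cite[I Proposition 4.8]{Jan}: for a $\bp_s$-module $N$, one has $\ind_{\fb}^{\bp_s}(\res_{\fb}^{\bp_s} N) \cong N \otimes \ind_{\fb}^{\bp_s}(\sk)$, and $\ind_{\fb}^{\bp_s}(\sk) = H^0(\bp_s/\fb, \Oo) = H^0(\mathbb P^1, \Oo) = \sk$. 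This matches combinatorially the defining relation $s \star s = s$.

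For the inductive step, write $w = s w'$ with $s$ simple and $\ell(w') = \ell(w)-1$, so that $\sH_w = \sH_s \circ \sH_{w'}$. I split on the Bruhat comparison of $v$ and $vs$. If $vs > v$, then a reduced expression for $v$ followed by $s$ is reduced, giving $\sH_v \circ \sH_s = \sH_{vs}$; moreover $v \star s = vs$, hence $v \star w = (vs) \star w'$, and induction yields
\[
\sH_v \circ \sH_w = \sH_{vs} \circ \sH_{w'} = \sH_{(vs) \star w'} = \sH_{v \star w}.
\]
If $vs < v$, then $v = (vs)\cdot s$ is a reduced factorization, so $\sH_v = \sH_{vs} \circ \sH_s$, and combining with the idempotency above yields $\sH_v \circ \sH_s = \sH_{vs} \circ \sH_s \circ \sH_s = \sH_{vs} \circ \sH_s = \sH_v$. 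In this case $v \star s = v$, hence $v \star w = v \star w'$, and induction gives
\[
\sH_v \circ \sH_w = \sH_v \circ \sH_s \circ \sH_{w'} = \sH_v \circ \sH_{w'} = \sH_{v \star w'} = \sH_{v \star w}.
\]

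The only substantive step is the verification of $\sH_s \circ \sH_s = \sH_s$; once that is in hand, everything else is formal Coxeter-theoretic bookkeeping. An alternative, more geometric route would be to identify $\sH_v \sH_w M$ with $H^0$ of a Bott--Samelson-type fiber product and push forward along the multiplication map onto $\fX_{v\star w}$, invoking $\overline{\fb v \fb}\cdot \overline{\fb w \fb} = \overline{\fb(v\star w)\fb}$ together with the normality and rationality of Schubert varieties; however, the inductive argument above is shorter and keeps the proof within representation-theoretic tools already established in the paper.
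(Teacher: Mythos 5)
Your proof is correct. The paper itself does not argue the lemma at all: its ``proof'' is a one-line citation to \cite[1.3, Corollary and Theorem 3.1]{CPS} and \cite[Proposition 2.2.5]{WvdKTata}, so you are supplying the standard argument that those references contain. Your two ingredients are exactly the right ones: the idempotency $\sH_s\circ\sH_s=\sH_s$, which you correctly reduce via the tensor identity to $\ind_{\fb}^{\bp_s}(\sk)=H^0(\Pp^1,\Oo)=\sk$, and the Coxeter bookkeeping in the two cases $vs>v$ and $vs<v$, where the case split matches the two defining relations of the $0$-Hecke monoid ($v\star s=vs$ resp.\ $v\star s=v$). Two things you are implicitly importing and could flag: (i) the well-definedness of $\sH_w$ independently of the chosen reduced expression, which is precisely the content of \cite[Proposition 2.2.5]{WvdKTata} that the paper has already invoked when restating the definition, and (ii) the associativity of $\star$, used when you pass from $v\star w$ to $(v\star s)\star w'$; this is automatic from the monoid presentation the paper uses to define $\star$, but it is doing work in your induction. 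Your closing remark about the geometric alternative via $\overline{\fb v\fb}\,\overline{\fb w\fb}=\overline{\fb(v\star w)\fb}$ is also the route suggested by the paper's Definition \ref{def:Joseph_functor}; the inductive argument you chose is the cleaner of the two given what is already established in the text.
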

\begin{proof}
    See \cite[1.3., Corollary and Theorem 3.1]{CPSHW} or \cite[Proposition 2.2.5]{WvdKTata}.
\end{proof}
%--------------------------------------------------------------
%
%--------------------------------------------------------------

%--------------------------------------------------------------
\subsection{\unboldmath Total order \unboldmath $\prec$ on $W$, refining the Bruhat order}\label{subsec:totalorderonW}
%--------------------------------------------------------------

  Choose a total order $\prec$ on $W$  that refines the Bruhat order $<$.
Thus $w\leq v$ implies $w\preceq v$. And $w\succ v$ implies $w\nleq v$.
To get a nice fit with Theorem \ref{th:triang} one may also arrange that $w\preceq v$ implies $ww_0\succeq vw_0$. This is optional.

%--------------------------------------------------------------
\subsection{\unboldmath Combinatorics}\label{subsec:comb}
%--------------------------------------------------------------

 First we recall some facts from \cite{Humphreys} about $W$ and its action on $X(\bt)$. 
\begin{lemma}\label{lew0reverse} Let $v,w\in W$. The following are equivalent
\begin{itemize}
\item $v<w$ \item $v^{-1}<w^{-1}$ \item $w_0w<w_0v$ \item $ww_0<vw_0$.
\end{itemize}
\end{lemma}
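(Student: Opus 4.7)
The plan is to deduce all three equivalences from the standard subword characterization of the Bruhat order on the finite Coxeter group $(W,S)$, recorded in \cite[Section 5.10]{Humphreys}: namely, $v\leq w$ if and only if some (equivalently, every) reduced expression for $w$ admits a reduced expression for $v$ as a subword. Since each of the maps $w\mapsto w^{-1}$, $w\mapsto w_0 w$, and $w\mapsto w w_0$ is a bijection of $W$, passing between the strict inequality $<$ and its non-strict counterpart $\leq$ is automatic, so it suffices to establish each equivalence for $\leq$.

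First I would handle inversion. If $s_{i_1}\cdots s_{i_k}$ is a reduced expression for $w$, then the reversed word $s_{i_k}\cdots s_{i_1}$ is a reduced expression for $w^{-1}$, and reversing a subword realizing $v$ produces a subword realizing $v^{-1}$. Hence $v\leq w$ if and only if $v^{-1}\leq w^{-1}$.

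Next I would treat multiplication by $w_0$ on either side. The key facts are $\ell(w_0 u)=\ell(u w_0)=\ell(w_0)-\ell(u)$ for every $u\in W$, so that left and right multiplication by $w_0$ reverse the length. The order-reversing property $v\leq w \Leftrightarrow w_0 w\leq w_0 v$ is then a direct consequence of the subword criterion combined with this length reversal (cf.\ \cite[Section 5.9]{Humphreys}). The symmetric statement $v\leq w \Leftrightarrow w w_0\leq v w_0$ can be obtained either analogously on the right, or, more economically, by applying the inversion equivalence of the previous paragraph to the left-multiplication equivalence, using $(w_0 w)^{-1}=w^{-1}w_0$.

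Assembling the three equivalences yields the chain displayed in the lemma, and stripping equality (impossible between distinct elements of $W$) gives the strict form. The only bookkeeping required concerns strict versus non-strict inequalities, which is handled by the bijectivity noted above. I do not anticipate a genuine obstacle: the lemma is a compact repackaging of well-known symmetries of the Bruhat order on a finite Coxeter group, included here for ease of reference in the sequel.
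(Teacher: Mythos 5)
Your argument is correct and follows essentially the same route as the paper: both treatments lean on Humphreys for the order-reversing effect of multiplication by $w_0$ and obtain the fourth equivalence from the second and third via the identity $(ww_0)^{-1}=w_0w^{-1}$. The only soft spot is calling $v\leq w\Leftrightarrow w_0w\leq w_0v$ a ``direct consequence'' of the subword criterion and length reversal---one also needs that reduced words extend to reduced words for $w_0$ (cf.\ Lemma~\ref{lem:left fill}), but your citation of Humphreys covers this, exactly as the paper's own citation does.
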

\begin{proof}For the first three, see \cite[page 119]{Humphreys}.
Use that $(ww_0)^{-1}=w_0w^{-1}$, $(vw_0)^{-1}=w_0v^{-1}$.
\end{proof}

\begin{lemma}\label{lem:ws shorter} Let $w\in W$ and let $\alpha $ be a simple root.
Then $\ell(ws_\alpha)>\ell(w)$ if and only if $w(\alpha)>0$.
\end{lemma}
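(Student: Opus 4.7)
The plan is to use the standard combinatorial formula
\[
\ell(w) \;=\; \#\{\beta \in \Phi^+ : w(\beta) < 0\},
\]
which is the characterization of length in terms of ``inversions'' (this is standard and can be cited directly from \cite[\S1.6--1.7]{Humphreys}, parallel to how Lemma \ref{lew0reverse} was handled). Once this is in hand, the result reduces to a short bookkeeping exercise.

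First I would note that the simple reflection $s_\alpha$ permutes $\Phi^+ \setminus \{\alpha\}$ and sends $\alpha$ to $-\alpha$; this is again standard from \cite{Humphreys}. Splitting the set $\{\beta \in \Phi^+ : w(\beta)<0\}$ according to whether $\beta = \alpha$ or $\beta \neq \alpha$ gives
\[
\ell(w) \;=\; \epsilon(w,\alpha) \;+\; \bigl|\{\beta \in \Phi^+\setminus\{\alpha\} : w(\beta)<0\}\bigr|,
\]
where $\epsilon(w,\alpha) = 1$ if $w(\alpha) < 0$ and $0$ otherwise. Applying the same formula to $ws_\alpha$ and using the change of variable $\gamma = s_\alpha(\beta)$, which is a bijection of $\Phi^+\setminus\{\alpha\}$ with itself, yields
\[
\ell(ws_\alpha) \;=\; \epsilon(ws_\alpha,\alpha) \;+\; \bigl|\{\gamma \in \Phi^+\setminus\{\alpha\} : ws_\alpha(\gamma)<0\}\bigr|.
\]
The two cardinalities on the right coincide, since $w(\beta)<0$ if and only if $ws_\alpha(\gamma)<0$ under this bijection.

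Finally, $ws_\alpha(\alpha) = -w(\alpha)$, so $\epsilon(ws_\alpha,\alpha) = 1$ precisely when $w(\alpha)>0$. Subtracting gives $\ell(ws_\alpha) - \ell(w) = \pm 1$ with sign $+1$ exactly when $w(\alpha)>0$, which is the claim.

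I do not anticipate any real obstacle: the only delicate point is invoking the inversion formula for $\ell(w)$ and the stability of $\Phi^+\setminus\{\alpha\}$ under $s_\alpha$. Both are classical and can be cited from \cite{Humphreys} in the same way as Lemma \ref{lew0reverse}. Alternatively, one could cite the lemma verbatim from \cite[\S5.7]{Humphreys}, where it appears as a basic property of the length function.
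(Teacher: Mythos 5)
Your proof is correct: the inversion formula $\ell(w)=\#\{\beta\in\Phi^+ : w(\beta)<0\}$, the fact that $s_\alpha$ permutes $\Phi^+\setminus\{\alpha\}$, and the bookkeeping with $\epsilon(w,\alpha)$ all check out, giving $\ell(ws_\alpha)-\ell(w)=\pm1$ with sign determined by the sign of $w(\alpha)$. The paper simply cites \cite[page 116]{Humphreys} for this lemma, and your argument is precisely the standard proof given there, so there is nothing further to add.
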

\begin{proof}See \cite[page 116]{Humphreys}.
\end{proof}

\begin{lemma}\label{lem:left fill} Let $w\in W$. One can successively multiply $w$ on the right by simple reflections (increasing the length by 1) until this is no longer possible and $w_0$ is obtained.
\end{lemma}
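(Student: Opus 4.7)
The plan is to argue by downward induction on the quantity $\ell(w_0) - \ell(w)$, using Lemma \ref{lem:ws shorter} as the main engine. The base case is $\ell(w) = \ell(w_0)$, where one invokes the standard fact that $w_0$ is the unique element of maximal length in $W$; so $w = w_0$ and there is nothing to do (there is no simple reflection we could multiply by that would increase the length, since the length is already maximal).

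For the inductive step, suppose $w \ne w_0$, so $\ell(w) < \ell(w_0)$. The goal is to produce a simple root $\alpha$ such that $\ell(ws_\alpha) = \ell(w)+1$; by Lemma \ref{lem:ws shorter} it suffices to show there exists a simple root $\alpha$ with $w(\alpha) > 0$. I would prove the contrapositive: if $w(\alpha) < 0$ for every simple root $\alpha$, then $w = w_0$. This follows because every positive root $\beta$ is a non-negative integer combination $\beta = \sum_i c_i \alpha_i$ of simple roots, and then $w(\beta) = \sum_i c_i\, w(\alpha_i)$ is a non-negative integer combination of negative roots, hence a negative root. Thus $w$ would send every positive root to a negative root, forcing $\ell(w) = |\Phi^+| = \ell(w_0)$ and hence $w = w_0$, a contradiction.

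Having obtained a simple $\alpha$ with $w(\alpha)>0$, replace $w$ by $ws_\alpha$. Since $\ell(w_0) - \ell(ws_\alpha) = \ell(w_0) - \ell(w) - 1$, the inductive hypothesis produces a sequence of simple reflections that brings $ws_\alpha$ to $w_0$ by successively increasing the length, and prepending $s_\alpha$ to this sequence gives the required sequence for $w$. The process is forced to terminate at $w_0$ (and not earlier): at any intermediate $w'$ with $\ell(w') < \ell(w_0)$ the same contrapositive argument provides a simple $\alpha$ with $\ell(w's_\alpha) = \ell(w')+1$, so the procedure can only halt when $w' = w_0$.

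The only substantive step is the contrapositive observation, and that is routine once one uses Lemma \ref{lem:ws shorter} together with the fact that positive roots are non-negative integer combinations of simple roots; no further obstacle is anticipated.
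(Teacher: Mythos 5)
Your argument is correct and complete: the key step (if $w(\alpha)<0$ for all simple $\alpha$ then $w$ sends $\Phi^+$ to $\Phi^-$, so $\ell(w)=|\Phi^+|=\ell(w_0)$ and $w=w_0$), combined with Lemma \ref{lem:ws shorter} and downward induction, is exactly the standard argument. The paper itself gives no proof but simply cites \cite[page 16]{Humphreys}, and your write-up is the argument that citation points to, so there is nothing to add.
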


\begin{proof}See \cite[page 16]{Humphreys}. 
\end{proof}

Of course there is a similar Lemma with multiplication on the left.

\begin{lemma}\label{lem:in w0}
Let $w,z\in W$ with $\ell(wz)=\ell(w)+\ell(z)$. Then $w\leq w_0z^{-1}$.
\end{lemma}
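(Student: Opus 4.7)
The plan is to reduce the inequality to the trivial observation $wz\leq w_0$, by first establishing a ``common right factor'' property of the Bruhat order. The auxiliary statement I would prove is: if $u,v,z\in W$ satisfy $\ell(uz)=\ell(u)+\ell(z)$ and $\ell(vz)=\ell(v)+\ell(z)$, then $u\leq v$ in the Bruhat order if and only if $uz\leq vz$.

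To prove the auxiliary lemma I would induct on $\ell(z)$. Writing $z=z's$ reducedly with $s$ a simple reflection, the reducedness conditions $\ell(uz's)=\ell(u)+\ell(z)$ and $\ell(vz's)=\ell(v)+\ell(z)$ force $\ell(uz')=\ell(u)+\ell(z')$ and $\ell(vz')=\ell(v)+\ell(z')$, so the inductive hypothesis reduces the claim to the single-reflection case: if $\ell(us)=\ell(u)+1$ and $\ell(vs)=\ell(v)+1$, then $u\leq v$ iff $us\leq vs$. For this single step I would invoke the subword characterization of the Bruhat order together with Lemma \ref{lem:left fill} to select a reduced expression $vs=s_1\cdots s_m s$ ending in $s$, so that $v=s_1\cdots s_m$ reducedly. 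In the forward direction, appending $s$ to a reduced subword of $v$ realizing $u$ gives a reduced subword of $vs$ realizing $us$. In the reverse direction, a reduced subword of $vs$ realizing $us$ either uses the final letter $s$, in which case dropping it exhibits $u$ as a reduced subword of $v$; or it does not, in which case that same subword lies already in $v$, showing $us\leq v$, and then $u\leq us\leq v$ because $\ell(us)>\ell(u)$.

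With the auxiliary lemma in hand I would apply it with $u=w$ and $v=w_0 z^{-1}$. The hypothesis of the Lemma gives $\ell(uz)=\ell(u)+\ell(z)$. For $v$, the standard identity $\ell(w_0 z^{-1})=\ell(w_0)-\ell(z)$ yields $\ell(vz)=\ell(w_0)=\ell(v)+\ell(z)$, so the second reducedness hypothesis is automatic. Since $uz=wz$ and $vz=w_0$, and $w_0$ is the unique maximum of the Bruhat order, trivially $uz\leq vz$. The auxiliary lemma then delivers the desired inequality $w\leq w_0 z^{-1}$.

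The only step requiring real attention is the single-reflection case of the auxiliary lemma, which is a routine subword manipulation; the rest is bookkeeping with lengths. I expect no substantive obstacle.
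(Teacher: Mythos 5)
Your argument is correct, but it follows a different route from the paper. The paper's proof is a three-line induction on $\ell(w_0)-\ell(wz)$: as long as $wz\neq w_0$ one picks a simple $s$ with $\ell(swz)>\ell(wz)$, observes that then $\ell(sw)=\ell(w)+1$ and $\ell(swz)=\ell(sw)+\ell(z)$, applies the induction hypothesis to get $sw\leq w_0z^{-1}$, and concludes $w\leq sw\leq w_0z^{-1}$ (this is the left-multiplication analogue of Lemma \ref{lem:left fill}). You instead prove the general cancellation statement that, when $\ell(uz)=\ell(u)+\ell(z)$ and $\ell(vz)=\ell(v)+\ell(z)$, one has $u\leq v$ iff $uz\leq vz$, and then specialize to $v=w_0z^{-1}$, where $\ell(w_0z^{-1})=\ell(w_0)-\ell(z)$ makes the second reducedness hypothesis automatic and $uz=wz\leq w_0=vz$ is trivial. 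Your single-reflection step is a standard instance of the lifting/subword property and your length bookkeeping in the induction on $\ell(z)$ is sound (note that in the forward direction you do not actually need Lemma \ref{lem:left fill}: appending $s$ to any reduced word for $v$ already gives a reduced word for $vs$ since $\ell(vs)=\ell(v)+1$). The trade-off is that the paper's argument is shorter and entirely self-contained, while yours isolates a reusable monotonicity property of the Bruhat order under reduced right multiplication, at the cost of a two-level induction.
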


\begin{proof}
If $wz=w_0$, then it is clear. We argue by induction on $\ell(w_0)-\ell(wz)$.
If $\ell(w_0)>\ell(wz)$, then there is a simple reflection $s$ with $\ell(swz)>\ell(wz)$.
We have $\ell(sw)=\ell(w)+1$ and $\ell(swz)=\ell(sw)+\ell(z)$. So by induction
hypothesis $sw\leq w_0z^{-1}$. And thus $w\leq sw\leq w_0z^{-1}$.
\end{proof}

\begin{definition}\label{def:WI}
Let $I$ be a subset of the set $S$ of simple reflections.
The subgroup  of $W$ generated by $I$ is called a parabolic subgroup of $W$ and it
is denoted $W_I$.
\end{definition}

\begin{lemma}
Let $\lambda$ be dominant and let $I$ be the set of simple reflections that fix 
$\lambda$.
Then  the stabilizer of $\lambda$ in $W$ is the parabolic subgroup $W_I$. 
\end{lemma}
\begin{proof}
See \cite[Prop. 1.15]{Humphreys}.
\end{proof}

\begin{lemma}\label{lem:minimal-coset-representative}
Let $I$ be a subset of the set $S$.
\\
One puts $W^I=\{\;w\in W\mid \ell(ws)>\ell(w) \mbox{ for every simple reflection $s\in I$}\; \}$.
Every $w\in W$ may be written uniquely as $vz$, where $z\in W_I$ and $v\in W^I$ is the minimal coset representative of the coset $wW_I$, \emph{i.e.} the unique element of minimal length in $wW_I$. 
The assignment of $v$ to $w$ respects the Bruhat order.
\end{lemma}

\begin{proof}Compare  \cite[1.10, 1.15]{Humphreys}. To prove the final statement,
let $w,w'\in W$ with $w'\leq w$. Write $w=vz$, $w'=v'z'$ with $v,v'\in W^I$ and
$z,z'\in W_I$. Choose a reduced expression $s_1\dots s_n$ for $w$ that is the concatenation of a reduced 
expression for $v$ and a reduced expression for $z$. As $w'\leq w$ we can describe $w'$ by a substring of $s_1\dots s_n$. So $w'=uz''$ with $u\leq v$ and $z''\leq z$. Note that $z''\in W_I$. 
Now $v'$ is the minimal coset representative of $uW_I$, and we get $v'\leq u\leq v$.
\end{proof}
For comparison with \cite[Section 3]{Ana}, note that, if $W_I$ is the stabilizer of a weight $\lambda$ and $v\in W^I$, then $\ell(v)$ is the number of reflecting hyperplanes that separate $v\lambda$ from $\lambda$.
\begin{remark}
The last statement in  Lemma \ref{lem:minimal-coset-representative} is useful to understand the transitivity of the partial order $\leq_e$, which is not immediate from its definition.
\end{remark}

 \begin{lemma}\label{lem:starsmaller}
 Let $v,w\in W$. There are $v'\leq v$, $w'\leq w$ so that $v'w=v\star w=vw'$,
$ \ell(v')+\ell(w)=\ell(v\star w)=\ell(v)+\ell(w')$.
 \end{lemma}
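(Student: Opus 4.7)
The plan is to produce $w'$ (and symmetrically $v'$) as a suitable subword of a reduced expression for $w$ (resp.\ $v$), using the fact that the Demazure product can be computed letter by letter from any reduced expression.

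First I would fix a reduced expression $w = s_1 s_2 \cdots s_k$ and inductively form $v_0 = v$, $v_i = v_{i-1} \star s_i$, so that $v_k = v \star w$ by associativity of the Demazure product (compare Lemma \ref{lem:Hwstar}). By the defining relation of $\star$, each step either has $v_i = v_{i-1} s_i$ with $\ell(v_i) = \ell(v_{i-1}) + 1$, or $v_i = v_{i-1}$. Let $A = \{i_1 < i_2 < \cdots < i_m\}$ be the indices at which the length strictly increases, and set $w' := s_{i_1} s_{i_2} \cdots s_{i_m}$. By construction $v \star w = v\, w'$ and $\ell(v \star w) = \ell(v) + m$. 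Since $w'$ is a subword of the reduced word for $w$, the subword characterization of the Bruhat order (standard, see \cite{Humphreys}) gives $w' \leq w$. Combining $\ell(v w') \leq \ell(v) + \ell(w') \leq \ell(v) + m = \ell(vw')$, both inequalities become equalities: $\ell(w') = m$ (so $s_{i_1}\cdots s_{i_m}$ is itself a reduced expression) and $\ell(v w') = \ell(v) + \ell(w')$.

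For $v'$, I would run the same procedure but from the other side. Taking a reduced expression $v = t_1 t_2 \cdots t_j$, I set $u_0 = w$ and $u_i = t_{j - i + 1} \star u_{i-1}$, so that $u_j = v \star w$. Collecting the letters of $v$ whose corresponding step strictly increases the length, in their original left-to-right order, produces a subword $v'$ of the reduced word for $v$. Then $v' \leq v$ by the subword property, $v'\, w = v \star w$ by construction, and the identical length-counting argument forces $\ell(v' w) = \ell(v') + \ell(w)$.

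The two key ingredients are the iterative characterization of $v \star w$ along a reduced expression (implicit in Section \ref{subsec:Demazureproduct} and Lemma \ref{lem:Hwstar}) and the subword property for the Bruhat order. No deeper combinatorics is needed; the only point requiring care is bookkeeping, to be sure that processing the reduced word of $v$ from the right yields $v'$ on the \emph{left} of $w$ with the correct length-additivity $\ell(v' w) = \ell(v') + \ell(w)$. Alternatively, this symmetric statement could be deduced from the first by applying the first half to $w^{-1} \star v^{-1}$ and using $(v\star w)^{-1} = w^{-1}\star v^{-1}$, but direct iteration from the right seems cleaner.
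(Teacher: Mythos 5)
Your proof is correct. The paper states this lemma without proof (the \qed is attached directly to the statement), and your argument — computing $v\star w$ letter by letter along a reduced word, extracting the subword of letters that actually increase the length, and invoking the subword characterization of the Bruhat order together with the subadditivity $\ell(vw')\leq\ell(v)+\ell(w')$ to force all inequalities into equalities — is precisely the standard argument the authors are omitting; the symmetric case via $(v\star w)^{-1}=w^{-1}\star v^{-1}$ is also a clean way to get $v'$.
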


\begin{proof}
 See \cite[Lemma 1]{He}.
\end{proof}

\begin{lemma}\label{lem:caps}
Let $x$, $y$, $s\in W$ with $s$ simple and $x\leq y\star s$. Choose $v$ minimal so that $v\star s= x\star s$.
Then $v\leq y$.
\end{lemma}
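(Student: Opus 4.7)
My plan is to make $v$ explicit and then reduce the claim to a single application of the Bruhat lifting property. First I would determine what ``$v$ minimal with $v \star s = x \star s$'' means combinatorially. Setting $z := x \star s$, we have $zs < z$, and the equation $w \star s = z$ holds exactly when $w \in \{z, zs\}$, since $\star s$ identifies each pair $\{w, ws\}$ with its longer member. The two candidates for $v$ are therefore $z$ and $zs$, and the minimum in Bruhat (equivalently, length) order is $v = zs = (x \star s)s$. Writing $x \star s = \max(x, xs)$ on the pair $\{x, xs\}$, this simplifies to $v = \min(x, xs)$. Symmetrically, $(y\star s)s = \min(y, ys) \leq y$, so it suffices to prove
\begin{equation*}
\min(x, xs) \;\leq\; (y\star s)s.
\end{equation*}

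Next I would invoke the Bruhat lifting property applied to $x \leq w := y \star s$. By construction $ws < w$, i.e.\ $s$ is a right descent of $w$. The lifting property (e.g.\ Bj\"orner--Brenti, Prop.~2.2.7) then says: if $xs > x$, then $x \leq ws$; if $xs < x$, then $xs \leq ws$. In either case, $\min(x, xs) \leq ws = (y\star s)s \leq y$, giving the desired $v \leq y$. The boundary case $x = y \star s$ needs no separate argument beyond the observation that then $xs = ws = (y \star s)s$, which is $\leq y$ as above.

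The main obstacle I expect is really just bookkeeping: spotting that the identity $(w\star s)s = \min(w, ws)$ recasts both the hypothesis and the conclusion in terms of the ``lower'' element of each Demazure pair, at which point the lifting property is tailor-made to bridge them. No induction on length, exchange-condition computation, or manipulation of reduced words should be required, and the whole argument takes place inside the Coxeter group $(W, S)$, independently of the root-positivity conventions emphasized elsewhere in the paper.
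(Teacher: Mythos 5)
Your proof is correct and is essentially the paper's argument: both hinge on identifying $v=(x\star s)s=\min(x,xs)$ as the minimal representative of the coset $x\langle s\rangle$ and then comparing it with the minimal representative of $y\langle s\rangle$. The paper packages the comparison as monotonicity of the projection onto minimal coset representatives (its Lemma \ref{lem:minimal-coset-representative}), whereas you unpack that monotonicity into a direct application of the lifting property, which is the standard proof of the same fact.
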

\begin{proof}We use Lemma \ref{lem:minimal-coset-representative}.
Note that $v$ is the minimal representative of the coset $x\langle s\rangle$. If $w$ is the 
minimal representative of the coset $y\langle s\rangle$, then $v\leq w\leq y$.
\end{proof}

\begin{lemma}\label{lem:ea}
Let $\lambda$, $\mu$ be weights in the same $W$-orbit. Then $\lambda<_e\mu$ if and only if $\lambda>_a\mu$. 
\end{lemma}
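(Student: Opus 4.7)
The plan is to reduce the statement to the second clause of Definition \ref{def:excord} and then apply the order-reversing involution $w\mapsto ww_0$ from Lemma \ref{lew0reverse}. First I would note that since $\lambda$ and $\mu$ lie in a single $W$-orbit and $(~,~)$ is $W$-invariant, one has $(\lambda,\lambda)=(\mu,\mu)$, so the length clause in Definition \ref{def:excord} cannot witness a strict inequality. Consequently $\lambda<_e\mu$ is equivalent to the existence of $\nu\in X(\bt)_+$ and $w,z\in W$ with $\lambda=w\nu$, $\mu=z\nu$, and $w<z$ in the Bruhat order; the strictness on the Weyl group side is automatic, because $\lambda\neq\mu$ forces $w\neq z$ on any such witness.

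Next I would translate such a witness into a witness of $-\mu<_e -\lambda$. Put $\nu^\ast:=-w_0\nu$, which is dominant (it is the dominant representative of the $W$-orbit of $-\nu$), and compute
\[
-\lambda=w(-\nu)=(ww_0)(-w_0\nu)=(ww_0)\nu^\ast,\qquad -\mu=(zw_0)\nu^\ast.
\]
By Lemma \ref{lew0reverse}, $w<z$ is equivalent to $zw_0<ww_0$, so the triple $(\nu^\ast,\,zw_0,\,ww_0)$ witnesses $-\mu<_e -\lambda$ via the second clause of Definition \ref{def:excord}. By Definition \ref{def:antipodal} this is precisely $\mu<_a\lambda$, i.e.\ $\lambda>_a\mu$. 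The converse implication is obtained by running the same translation in reverse, using that $\nu\mapsto\nu^\ast$ and $w\mapsto ww_0$ are involutions.

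I do not expect a genuine obstacle here: the orbit hypothesis trivialises the length clause of the excellent order, so the statement reduces to the purely combinatorial compatibility between the Bruhat order and right multiplication by $w_0$, which is precisely the content of Lemma \ref{lew0reverse}. The only small points to verify are that $-w_0\nu$ is dominant when $\nu$ is, and that the existential nature of the second clause of Definition \ref{def:excord} is preserved by the substitution above.
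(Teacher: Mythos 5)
Your proof is correct and follows essentially the same route as the paper's: reduce to the second clause of Definition \ref{def:excord} (the length clause being vacuous on a single $W$-orbit), rewrite $-\lambda=(ww_0)(-w_0\nu)$ and $-\mu=(zw_0)(-w_0\nu)$ with $-w_0\nu$ dominant, and invoke Lemma \ref{lew0reverse} to reverse the Bruhat inequality. Your treatment is if anything slightly more explicit than the paper's about why the length clause cannot occur and about strictness.
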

\begin{proof}
If $\lambda\leq_e\mu$, then there is a dominant weight $\omega$ and there are $w, z\in W$ with $w\leq z$, $\lambda=w\omega$, $\mu=z\omega$. Then $ww_0\geq zw_0$ by Lemma \ref{lew0reverse} and $-\lambda=ww_0(-w_0\omega)$, $-\mu=zw_0(-w_0\omega)$, so $-\lambda\geq_e-\mu$, so $\lambda\geq_a\mu$. The converse is proved similarly.
\end{proof}

%--------------------------------------------------------------
\section{\unboldmath \unboldmath$ \bt$-equivariant $\sK$-theory of $ \bg / \fb$}\label{sec:rappelT-equivGmodB}
%--------------------------------------------------------------
In this section we introduce the theorem on triangular transition matrices. Let $\sk$ be a field.
\subsection{\unboldmath Two fixed points}

We will need results from Graham--Kumar \cite{GrahamKumar} on the $ \bt$-equivariant $\sK$-theory $\sK_{\bt}( \bg/\fb)$ of $ \bg/ \fb$.
Their $\fb$ is our $ \fb^+$. As both $\fb$ and $\fb^+$ will be needed, let us consider the $\bg$-variety  
$\mathcal B$ of Borel subgroups.
If $x$ is a  point in $\mathcal B$, then its stabilizer $ \fb(x)$ is a Borel subgroup and one identifies
$\mathcal B$ with $ \bg / \fb(x)$.
Let $x_+$ be the $ \bt$-fixed point the stabilizer of which is $ \fb(x_+)= \fb^+$. Similarly, let $x_-$ be the $ \bt$-fixed point the stabilizer of which is $ \fb(x_-)= \fb$.
Choose a representative $\dot{w_0}$ of $w_0$.
One has $x_+= \dot{w_0}x_-$. We simply write $x_+= {w_0}x_-$.
Let $\phi_0$ be the isomorphism $ \fb^+\to\fb$ sending $b$ to 
$\dot{w_0}b\dot{w_0}^{-1}$. 
If $\F$ is a $ \bg $-equivariant vector bundle on $\mathcal B$, then the fibre $\F_x$ is a $ \fb(x)$-module for $x\in \mathcal B$.
We have $\F_{x_+}\cong \phi_0^*(\F_{x_-})$.
Recall that when $M$ is a finite dimensional $\fb$-module, we denote as in \cite{Jan} by $\Ll(M)$ the $ \bg $-equivariant vector bundle 
$\F$ with $\F_{x_-}=M$. If $M$ is a finite dimensional $ \fb^+$-module, we denote by $\Ll^+(M)$ the $ \bg $-equivariant vector bundle 
$\F$ with $\F_{x_+}=M$. 

If $\F$ is a $ \bt$-equivariant coherent sheaf on $ \bg / \fb$ and $N$ is a finite dimensional $ \bt$-module, then
$$\F\otimes_\sk N$$ is a $ \bt$-equivariant coherent sheaf.

When working with $ \fb^+$ one should define excellent filtrations and relative Schubert filtrations in terms
of $P^+(\lambda):=\phi_0^* (P(w_0\lambda))$ and
$Q^+(\lambda):= \phi_0^* (Q(w_0\lambda))$. They have $ \fb^+$-socles of weight $\lambda$ and are the costandard
modules of \cite{WvdK}.

One has the following counterpart to Theorem \ref{th:key_orthogonality}:
 $$H^i( \fb^+,P^+(\lambda)\otimes_\sk Q^+(\mu))=
\begin{cases}\sk &\text{if $i=0$ and $\lambda+\mu=0$}\\
0&\text{else.}
\end{cases}
$$

%--------------------------------------------------------------
\subsection{\unboldmath The Steinberg basis}\label{subsec:Steinberg}
%--------------------------------------------------------------

For $v\in W$ the Steinberg weight $e_v$ is given by $$e_v = v^{-1} \sum_{\alpha\in\Pi,\;v^{-1}\alpha<0} \omega_\alpha.$$
The Steinberg basis $\{\sk_{e_v}\}_{v\in W}$ consists of the corresponding one dimensional
$\fb$-modules.

More precisely, the classes $[\sk_{e_v}]$ provide by \cite{St} a basis of the representation ring $R( \bt)=\sK_0(\rep( \bt ))$ as a module over the representation ring $R( \bg )=\sK_0(\rep( \bg ))$.
Recall that $R(\bt)$ is  isomorphic to
the group ring over $\Z$ of $X(\bt)$ and that
$R( \bg )$ equals $R( \bt)^W$, the subring of elements invariant under the Weyl group action.
We will find later (Remark \ref{cor:Steinberg_alternative}) that one still gets a basis if one replaces a few $\sk_{e_v}$ by $Q(e_v)$ or $P(-e_v)^*$.
\begin{remark}
    The order $\prec$ gives an order on the Steinberg basis, often in conflict with $<_a$ and $>_e$.
\end{remark}
Generators of $R( \bt)$ are often written $e^\lambda$ instead of $[\sk_\lambda]$.
We will find later (Remark \ref{rem:Steinberg from anti}) that the Steinberg weights can also be described as follows: a weight $\lambda$ is a Steinberg weight if and only if $e^\lambda$ does not lie in the $R(\bg)$-submodule of $R(\bt)$
generated by the $e^\mu$ with $\mu<_a\lambda$.

%--------------------------------------------------------------
\subsection{\unboldmath Schubert varieties and opposite Schubert varieties}
%--------------------------------------------------------------
\label{subsec:alphabet}
Let the Schubert variety $\fX^+_w$ be the closure of $ \fb^+wx_+$ and let the opposite Schubert variety
$\fX^w$ be the closure of 
$ \fb^-wx_+$. Its ``boundary'' $\partial \fX^w$ is the union of the $\fX^v$ that are strictly contained in $\fX^w$.

The $ \bt$-equivariant $\sK$-theory $\sK_{\bt}( \bg/\fb)$ of $ \bg / \fb$ is a module for the representation ring $R( \bt)$.
%Generators of $R( \bt)$ are often written $e^\lambda$ instead of $[\sk_\lambda]$.
If $M$ is a finitely generated $\bt$-module, then its class $[M]$ in $R( \bt)$
is also known as $\Char(M)$, the formal
character of $M$.

If $\F$ is a $ \bt$-equivariant coherent  sheaf and $N$ is a finite dimensional $ \bt$-module, then $\F\otimes_\sk N $
represents $[N]\cdot[\F]$.

If $\sk=\cC$, then we learn from \cite{GrahamKumar} that the $R( \bt)$-module $\sK_{\bt}( \bg/\fb)$ has a Schubert basis $\{[\Oo_{\fX^+_w}]\}_{w\in W}$.
It also has an `opposite Bruhat cell' basis $\{[\Oo_{\fX^w}(-\partial \fX^w)]\}_{w\in W}$.  
They are orthogonal  under the $R( \bt)$-bilinear symmetric
pairing $\langle -, -\rangle$ on $\sK_{\bt}( \bg/\fb)$ given by 
$$\langle [\F],[\G] \rangle=\sum_i(-1)^i[H^i( \bg / \fb,\F\otimes \G )]\in R( \bt).$$
This pairing makes sense over any field $\sk$.

If $\F\otimes\G$ is supported on a $ \bt$-stable closed subscheme $Y$, then 
$$\langle [\F],[\G] \rangle=\sum_i(-1)^i[H^i(Y,\F\otimes\G)]\in R( \bt).$$

We put $$\tP_v=\Ll(P(-e_v))$$
and $$\tQ_v=\Ll(Q(e_v)).$$
They are $ \bg $-equivariant vector bundles, but we also view them as $ \bt$-equivariant vector bundles.

Write $\alpha_{vw}=\langle [\Oo_{\fX^+_w}],[\tQ_v] \rangle$. So
$$[\tQ_v]=\sum_w \alpha_{vw}[\Oo_{\fX^w}(-\partial \fX^w)]$$
when $\sk=\cC$.

Write $\beta_{vw}=\langle [\Oo_{\fX^w}(-\partial \fX^w)],[\tP_v] \rangle$.
So
$$[\tP_v]=\sum_w \beta_{vw}[\Oo_{\fX^+_w}]$$
when $\sk=\cC$.

Our main result concerning these matrices is
that, with a suitable reordering of rows and columns, the matrices 
$(\alpha_{wv})$ and $(\beta_{vw})$ are upper triangular and invertible. 

\begin{theorem}[Triangular transition matrices]\label{th:triang}
\
\begin{enumerate}
    \item $\alpha_{vw}=\langle [\Oo_{\fX^+_w}],[\tQ_v] \rangle$ vanishes unless 
    %$w\preceq v\;w_0$.
    $w\leq v\,w_0$ in the Bruhat order.
    \item $\beta_{vw}=\rule{0pt}{1.1em}\langle [\Oo_{\fX^w}(-\partial \fX^w)],[\tP_v] \rangle$ vanishes unless $v\,w_0  \leq w$
    %\preceq w$ 
    in the Bruhat order.
    \item If $v\,w_0  = w$,  then $\alpha_{vw}=\rule{0pt}{1.1em}[\sk_{ve_v}]$, $\beta_{vw}=[\sk_{-ve_v}]$.
\end{enumerate}
\end{theorem}

\section{\unboldmath Triangularity of transition matrices}\label{sec:triangular}
%--------------------------------------------------------------

In this section
we will prove Theorem \ref{th:triang}. Let $\sk$ be a field in this section.

\subsection{Comparing extremal weights}
A  closed subset  of $\bg/\fb$ is $\fb$-invariant if and only if it is a union of Schubert varieties.
So there are only finitely many $\fb$-invariant  closed subsets of $\bg/\fb$.
\begin{proposition}    
\label{prop:Schubert module}
    Let $S, S'$ be  unions of Schubert varieties  in $ \bg /{\fb}$ and let $\lambda$ be dominant. 
\begin{itemize}\item The extremal weights of $\Gamma(S,\Ll(\lambda))$ are the $w\lambda$ 
with $wx_-\in S$. These weights have multiplicity one and $\Gamma(S,\Ll(\lambda))$ has a relative
Schubert filtration.
In a relative Schubert filtration of $\Gamma(S,\Ll(\lambda))$ the layers are the $Q(\mu)$ with $\mu$ an extremal weight of $\Gamma(S,\Ll(\lambda))$.
\item If $S'\subset S$, then $\Gamma(S,\Ll(\lambda))\to\Gamma(S',\Ll(\lambda))$ is surjective, and its kernel $M$ has a relative Schubert filtration.
The extremal weights of $M$ are the extremal weights of $\Gamma(S,\Ll(\lambda))$ that are not extremal weights of $\Gamma(S',\Ll(\lambda))$.
In a relative Schubert filtration of $M$ the layers are the $Q(\mu)$ with $\mu$ an extremal weight of $M$.
\item $H^i(S,\Ll(\lambda))=0$ for $i>0$.
\item If $M\in \rep(\fb)$ has relative Schubert filtration and $w\in W$, then $M$ is $\sH_w$-acyclic.
\end{itemize}
\end{proposition}
\begin{proof}
    See  \cite[Proposition 2.2.15, Lemma 2.3.10, Proposition 2.3.11, Lemma 2.2.11,  Proposition A.2.6]{WvdKTata}, \cite[Theorem 1.2.8, Chapter 2]{BK}, Lemma \ref{lem:extrPQ},
    \cite[Theorem 1.9.(a)(ii)]{WvdK}.
\end{proof}

\begin{proposition}\label{prop:1dimQ}Let $\mu$ be a weight with  $|(\alpha^\vee ,\mu)|\leq1$ for all simple roots $\alpha$.
Then
 $Q(\mu)=\sk_{\mu}$.
 \end{proposition}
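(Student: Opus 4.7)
The plan. Let $v\in W$ be the element of minimal length with $v\mu^* = -\mu$, where $\mu^* := -w_0\mu$. The hypothesis $(\mu,\alpha_j^\vee)\in\{0,1\}$ identifies the stabilizer $W_\mu$ with the standard parabolic $W_I$ generated by the simple reflections $s_j$ with $(\mu,\alpha_j^\vee) = 0$, whence $v = w_0^I w_0$ with $w_0^I$ the longest element of $W_I$. By the defining short exact sequence
\begin{equation*}
    0\to Q(-\mu)\to P(-\mu) = H^0(\fX_v,\Ll(\mu^*))\xrightarrow{\,\mathrm{res}\,} H^0(\partial\fX_v,\Ll(\mu^*))\to 0,
\end{equation*}
together with Lemma \ref{lem:Schubert module} (which says $\mathrm{res}$ is surjective), the $\bt$-equivariance of $\mathrm{res}$ makes it surjective on every weight space, so the conclusion $Q(-\mu) = \sk_{-\mu}$ is equivalent to the kernel being concentrated in the single weight $-\mu$.

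The weight-$(-\mu)$ piece is immediate: $-\mu$ is the lowest weight of $\nabla(\mu^*)$, hence occurs with multiplicity one in the quotient $P(-\mu)$; moreover, the only $\bt$-fixed point of $\fX_v$ on which $\Ll(\mu^*)$ has fiber of weight $-\mu$ is $vx_-$, and by the minimality of $v$ this point lies in the open Bruhat cell $\fb v\fb/\fb \subset \fX_v\setminus\partial\fX_v$, so $-\mu$ is not a weight of $H^0(\partial\fX_v,\Ll(\mu^*))$. For the remaining weights $\nu\neq -\mu$, I would apply Proposition \ref{prop:multone} to both $\fX_v$ and $\partial\fX_v$: each of $P(-\mu)$ and $H^0(\partial\fX_v,\Ll(\mu^*))$ acquires a relative Schubert filtration whose layers are $Q(w\mu^*)$ (multiplicity one each), indexed respectively by the extremal weights $w\mu^*$ with $wx_-\in\fX_v$ and $wx_-\in\partial\fX_v$. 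The only extremal weight in the first index set missing from the second is $v\mu^* = -\mu$, so these two filtrations agree layer-by-layer for $w\neq v$. Combining this with the cohomology vanishing of Theorem \ref{th:Tata Theorem 3.2.6} and induction on the Bruhat order (base case $\mu=0$, where trivially $Q(0) = \sk_0$) yields the desired isomorphism of weight-$\nu$ spaces for every $\nu\neq -\mu$.

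The main obstacle is the matching of multiplicities on non-extremal weight spaces: $P(-\mu)$ may have weights outside the $W$-orbit of $\mu^*$ with multiplicity strictly greater than one, as exemplified by the zero weight in the adjoint representation $\nabla(\rho)$ of type $A_2$. The hypothesis $(\mu,\alpha^\vee)\leq 1$ enters precisely here: it guarantees that all such multiplicities are realized already on $H^0(\partial\fX_v,\Ll(\mu^*))$, so that only $\sk_{-\mu}$ survives in the kernel $Q(-\mu)$.
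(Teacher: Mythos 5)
There is a genuine gap, and it sits exactly where you flag the ``main obstacle.'' Your filtration comparison via Proposition \ref{prop:multone} is circular: the relative Schubert filtration of $P(-\mu)=H^0(\fX_v,\Ll(\mu^*))$ has the layer $Q(v\mu^*)=Q(-\mu)$ itself among its sections, so subtracting the filtration of $H^0(\partial\fX_v,\Ll(\mu^*))$ only recovers $\Char(Q(-\mu))=\Char(Q(-\mu))$, i.e.\ the definition of $Q(-\mu)$ as the kernel; it says nothing about its size. The closing assertion that the hypothesis $\alpha^\vee(\mu)\le 1$ ``guarantees that all such multiplicities are realized already on $H^0(\partial\fX_v,\Ll(\mu^*))$'' is precisely the proposition restated, and no argument is supplied for it. The appeal to Theorem \ref{th:Tata Theorem 3.2.6} and to ``induction on the Bruhat order'' cannot close this: the other layers $Q(w\mu^*)$, $w<v$, are not instances of the statement being proved (they are $Q$ of arbitrary weights in the orbit, not of antidominant near-minuscule ones), so there is no smaller case of the same proposition to induct on, and it is not explained how the $\fb$-cohomology vanishing would control a weight multiplicity. (Your first step, that $-\mu$ has multiplicity one in $Q(-\mu)$, is fine, modulo the small point that one must rule out \emph{every} $u$ with $u\mu^*=-\mu$ from $\partial\fX_v$, not just $u=v$; this is Proposition \ref{prop:extremalQ}(1).)

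The paper's proof is much more direct and shows where the hypothesis really enters. One reduces to $\sk=\cC$ by base change, and argues on socles: if $Q(-\mu)$ were larger than its socle $\sk_{-\mu}$, a weight vector $f$ of weight $\nu$ lying over the socle of $Q(-\mu)/\sk_{-\mu}$ cannot be fixed by the unipotent radical $\bu$ of $\fb$, and since $\bu(\cC)$ is generated by the simple root groups $x_{-\alpha}(t)$, some $x_{-\alpha}(t)(f)-f$ lands nontrivially in $\sk_{-\mu}$. Hence $-\mu\in\nu+\Z\alpha$ with $\nu$ strictly shorter than the unique extremal weight $-\mu$. But for $\nu=-\mu+k\alpha$ one computes $(\nu,\nu)-(\mu,\mu)=k(\alpha,\alpha)\bigl(k-\alpha^\vee(\mu)\bigr)\ge 0$ for all integers $k$ precisely because $\alpha^\vee(\mu)\le 1$, a contradiction. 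If you want to keep a character-theoretic approach you would have to prove independently that every non-extremal weight of the Demazure module $H^0(\fX_v,\Ll(\mu^*))$ occurs with the same multiplicity in $H^0(\partial\fX_v,\Ll(\mu^*))$, which is not easier than the proposition itself.
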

 \begin{proof}
 
 We may assume $\sk=\cC$, because of the base change properties \cite[Chapter 7]{WvdKTata} of the $Q(\mu)$. (When the characteristic of $\sk$ divides 
 a structure constant, a direct reasoning is more technical.)
 Suppose $Q(\mu)$ is larger than its socle $\sk_{\mu}$. Let $\nu$ be a weight of the socle of $Q(\mu)/\sk_{\mu}$
 and let $f$ be a nonzero weight vector of $Q(\mu)$, of weight $\nu$, 
 mapping to a vector of the socle of $Q(\mu)/\sk_{\mu}$.
 As the socle of $Q(\mu)$ is its weight space $\sk_{\mu}$, the vector  $f$ can not be fixed by the unipotent radical $\bu$
 of $\fb$. But it is well known
 that  $\bu(\cC)$ is generated by the $x_{-\alpha}(t)$ with $\alpha$ simple. So there must be such an $x_{-\alpha}(t)$ with 
 $x_{-\alpha}(t)(f)-f$ a nonzero vector in $\sk_{\mu}$. In particular, $\mu=\nu + n(-\alpha)$ for some integer $n$. But $\nu$
 must be strictly shorter than the unique extremal weight $\mu$ of $Q(\mu)$. However, there is no strictly shorter 
 weight in $\mu +\Z\alpha$, because for $n\in \Z$ one has 
 $$\frac{(\mu+n\alpha,\mu+n\alpha)-(\mu,\mu)}{(\alpha,\alpha)}=n(\alpha^\vee,\mu)+n^2\geq0.$$
  \end{proof}
 
  \begin{lemma}\label{lem:Q acyclic}Let $w\in W$.
If $M$ is a finite dimensional $ \fb^+$-module with a relative Schubert filtration,
then $H^i(\overline{ \fb^+wx_+},\Ll^+(M))=0$ for $i>0$.
\end{lemma}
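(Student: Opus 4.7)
The proof I would carry out proceeds by dévissage followed by a geometric cohomology computation. Choose a finite filtration
\[
0 = F_0 \subset F_1 \subset \cdots \subset F_n = M
\]
by $ \fb^+$-submodules with successive quotients $F_j/F_{j-1}\cong Q^+(\mu_j)$. Exactness of the associated-sheaf functor $\Ll^+$ produces short exact sequences of coherent sheaves on $\fX^+_w = \overline{ \fb^+ w x_+}$, and the corresponding long exact sequences in sheaf cohomology reduce the problem, by induction on $n$, to the single-layer case $M = Q^+(\mu)$ for a single weight $\mu$.

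For $M = Q^+(\mu)$, write $\mu = v\mu^+$ with $\mu^+ \in X( \bt)_+$ and $v$ of minimal length in $vW_{\mu^+}$. The $ \fb^+$-counterpart of Lemma \ref{lem:Schubert module}, applied with $S = \fX^+_v$ and $S' = \partial \fX^+_v$ and twisted by $\Ll^+(\mu^+)$, yields Kempf vanishing for $\Ll^+(\mu^+)$ on both $\fX^+_v$ and $\partial \fX^+_v$, and (via the short exact sequence $0 \to \Oo_{\fX^+_v}(-\partial \fX^+_v) \to \Oo_{\fX^+_v} \to \Oo_{\partial \fX^+_v} \to 0$ tensored with $\Ll^+(\mu^+)$) identifies
\[
Q^+(\mu) \cong H^0\bigl(\fX^+_v,\,\Ll^+(\mu^+)(-\partial \fX^+_v)\bigr),\qquad H^{>0}\bigl(\fX^+_v,\,\Ll^+(\mu^+)(-\partial \fX^+_v)\bigr) = 0.
\]
Geometrically, this realizes $\Ll^+(Q^+(\mu))$ as the pushforward along the convolution map $\sigma\colon \bg \times^{ \fb^+} \fX^+_v \to \bg/ \fb^+$, $[g, y] \mapsto g y$, of the $\bg$-equivariant line bundle on the total space induced by $\Ll^+(\mu^+)(-\partial \fX^+_v)$. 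Restricting along $\fX^+_w \hookrightarrow \bg/ \fb^+$ via flat base change, cohomology of $\Ll^+(Q^+(\mu))$ on $\fX^+_w$ becomes cohomology of this twisted line bundle on the convolution product $\fX^+_w \times^{ \fb^+} \fX^+_v$. Multiplication sends this convolution product to the Schubert variety $\fX^+_{w \star v}$ associated to the Demazure product $w\star v$, and the resulting map is a Bott--Samelson-type resolution. Its Kempf--Ramanathan acyclicity, combined with Kempf vanishing for the relevant twisted line bundle on $\fX^+_{w\star v}$, delivers the desired vanishing of $H^i(\fX^+_w, \Ll^+(Q^+(\mu)))$ for $i > 0$.

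The hardest step will be the geometric bookkeeping in the second paragraph — in particular, tracking how the relative boundary twist $-\partial \fX^+_v$ behaves under the convolution/multiplication diagram, and verifying flat base change together with Kempf--Ramanathan acyclicity for the resolution $\fX^+_w \times^{ \fb^+} \fX^+_v \to \fX^+_{w\star v}$. Once that bookkeeping is organized, the required vanishing follows from repeated application of the Kempf-type statements already encoded in Lemma \ref{lem:Schubert module}. A more elementary route, bypassing the convolution picture, would be an induction on $\ell(w)$ peeling off one simple reflection at a time: for a simple $s$ with $\ell(sw) < \ell(w)$ one has $\fX^+_w \cong \bp^+_s \times^{ \fb^+} \fX^+_{sw}$, and the Leray spectral sequence of the associated $\Pp^1$-fibration, combined with the preservation of relative Schubert filtrations under the relevant $ \fb^+$-cohomology (a $ \fb^+$-version of Proposition \ref{prop:multone}), should close the induction.
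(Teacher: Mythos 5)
Your proposal is correct in outline, but it is doing something quite different from the paper: the paper's entire proof of Lemma \ref{lem:Q acyclic} is the one-line citation of \cite[Theorem 1.9.(a)(ii)]{WvdK}, whereas you are reconstructing the proof of that cited theorem. The d\'evissage to a single layer $Q^+(\mu)$ is unproblematic, and the identification of $Q^+(\mu)$ with $H^0$ of the boundary-twisted line bundle on a Schubert variety, together with the convolution/Demazure-product picture, is indeed the standard route. Two caveats. First, what you call ``geometric bookkeeping'' is in fact the mathematical heart of the cited theorem: the multiplication map $\fX^+_w\times^{\fb^+}\fX^+_v\to\fX^+_{w\star v}$ is in general \emph{not} birational (only when $\ell(w\star v)=\ell(w)+\ell(v)$), and the acyclicity of the boundary-twisted bundle on the convolution product is exactly where the Frobenius-splitting input (Ramanathan, Mehta--Ramanathan; \cite[Ch.~2--3]{BK}) is needed — so this step cannot be dismissed as organizational. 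Second, a conventions slip: in the $\fb^+$ picture the paper insists (see the discussion preceding Proposition \ref{prop:Leray+}) that \emph{antidominant} weights be used, so you should write $\mu=v\mu^-$ with $\mu^-$ the antidominant representative, not $\mu^+\in X(\bt)_+$; as written, $\Ll^+(\mu^+)$ has no sections on $\bg/\fb^+$. Your alternative route — induction on $\ell(w)$ via the $\Pp^1$-fibrations, using that $\sH_s$ preserves relative Schubert filtrations and is exact on them — is actually closer to how the paper itself handles the analogous acyclicity elsewhere (Lemma \ref{lem:inducing Q}, Proposition \ref{prop:Leray}), and can be closed using Lemma \ref{lem:Schubert module} and Proposition \ref{prop:Leray}; but note that the paper's Lemma \ref{lem:inducing Q} itself invokes \cite[Theorem 1.9]{WvdK} for acyclicity, so to avoid circularity you would need to establish the single-reflection acyclicity of $Q(\mu)$ directly from the surjectivity statements in Lemma \ref{lem:Schubert module}, which is doable.
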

\begin{proof}
Compare Proposition \ref{prop:Schubert module}, \cite[I Proposition 5.11]{Jan}.
\end{proof}

Let $v\in W$. Let $I$ consist of the simple reflections that fix the dominant weight $ve_v$. 
 Let $s$ be simple.
 From  the definition of $e_v$ and Lemma \ref{lem:ws shorter} it follows that $\ell(v^{-1}s)>\ell(v^{-1})$ if and only if $s\in I$. 
 In particular, $v^{-1}$ is a minimal coset representative of $v^{-1}W_I$.

 \begin{proposition}\label{prop:extremalQ}
 Let also $u\in W$.
 \begin{enumerate}
 \item $-ve_v$ is an extremal weight of $\Gamma(\bg/{\fb},\Ll(-w_0ve_v))$, but not of 
 $$\Gamma((\partial \overline{{\fb}v\fb})\;\overline{{\fb}v^{-1}w_0x_-},\Ll(-w_0ve_v)).$$
 \item If $-uve_v\neq -ve_v$, then $-uve_v$ is an extremal weight of 
 $$\Gamma((\partial \overline{{\fb}v{\fb}})\;\overline{{\fb}v^{-1}w_0x_-},\Ll(-w_0ve_v)).$$
 \end{enumerate}
 \end{proposition}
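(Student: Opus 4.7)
The plan is to reduce all three parts to combinatorial statements about the Bruhat order via Lemma \ref{lem:Schubert module}, which identifies the extremal weights of $\Gamma(S,\Ll(\lambda))$, for dominant $\lambda$ and $\fb$-invariant closed $S\subseteq\bg/\fb$, with the set $\{w\lambda:wx_-\in S\}$. Here $\lambda=-w_0 ve_v$ is dominant, and $w\lambda=-ww_0 ve_v$ equals $-uve_v$ precisely when $w\in uW_I w_0$, where $I$ is the stabilizer in $W$ of the dominant weight $ve_v$. Part (1)(a) is then immediate, since $\overline{\fb w_0 x_-}=\bg/\fb$ and $w_0\cdot(-w_0 ve_v)=-ve_v$.

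For (1)(b) and (2), the Demazure product identity $\overline{\fb z\fb}\cdot\overline{\fb v^{-1}w_0\fb}/\fb=\overline{\fb(z\star v^{-1}w_0)\fb}/\fb$ gives $S'=\bigcup_{z<v}\fX_{z\star v^{-1}w_0}$, so $wx_-\in S'$ iff $w\leq z\star v^{-1}w_0$ for some $z<v$. Two further inputs will be key. First, the simple reflections partition as $I\sqcup D_L(v)$, where $D_L(v)$ is the left descent set of $v$: using $(ve_v,\alpha_s^\vee)=[v^{-1}\alpha_s<0]$ together with Lemma \ref{lem:ws shorter} one sees that $s\notin I\iff v^{-1}\alpha_s<0\iff sv<v\iff s\in D_L(v)$. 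Second, the Demazure product admits a length-additive decomposition $z\star v^{-1}w_0=z_0\cdot v^{-1}w_0$ for a unique $z_0\leq z$, obtained by deletion from any reduced expression of $z$.

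For (1)(b), suppose $uw_0\leq z\star v^{-1}w_0$ with $u\in W_I$ and $z<v$. Lemma \ref{lew0reverse} reformulates this as $(z\star v^{-1}w_0)w_0\leq u$, and since $u\in W_I$ the parabolic subword property forces $(z\star v^{-1}w_0)w_0\in W_I$. But $(z\star v^{-1}w_0)w_0=z_0 v^{-1}$, so $z_0 v^{-1}\in W_I$ would give $z_0=u'v$ for some $u'\in W_I$, whence $\ell(z_0)=\ell(u')+\ell(v)\geq\ell(v)$, contradicting $\ell(z_0)\leq\ell(z)<\ell(v)$. Hence no $w\in W_I w_0$ satisfies $wx_-\in S'$, and $-ve_v$ is not an extremal weight.

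For (2), given $u\notin W_I$ the support of $u$ cannot lie inside $I$, so one can choose a simple reflection $s_0\leq u$ with $s_0\in D_L(v)$. Taking $z:=s_0 v<v$, the product $z\cdot v^{-1}w_0=s_0 w_0$ is length-additive, so $z\star v^{-1}w_0=s_0 w_0$. Lemma \ref{lew0reverse} then reduces $uw_0\leq s_0 w_0$ to $s_0\leq u$, which holds by choice. Therefore $uw_0 x_-\in\fX_{s_0 w_0}\subset S'$, contributing the extremal weight $uw_0\cdot(-w_0 ve_v)=-uve_v$. The most delicate point I expect is invoking cleanly the length-additive decomposition of the Demazure product; if it is not already available, a brief justification from the standard one-step construction of $\star$ would suffice.
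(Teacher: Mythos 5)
Your proof is correct and follows essentially the same route as the paper's: both reduce via Lemma \ref{lem:Schubert module} to showing $uw_0\leq z\star v^{-1}w_0$ for some $z<v$, both use the length-additive factorization of the Demazure product, the identification of $I$ with the non-descents of $v^{-1}$ coming from the definition of $e_v$, and $w_0$-reversal of the Bruhat order. The only cosmetic differences are that in (2) the paper takes a right descent of the minimal coset representative of $u$ rather than an arbitrary non-$I$ letter of its support, and in (1) it concludes from the minimality of $v^{-1}$ in $v^{-1}W_I$ rather than from your (equivalent) length count.
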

 \begin{proof}Part (1).
 By Proposition \ref{prop:Schubert module} the  extremal weights of $\Gamma(\bg/{\fb},\Ll(-w_0ve_v))$ are the elements in the $W$-orbit of $-w_0ve_v$,
 hence of $-ve_v$.
 Now suppose $-ve_v$ is  an extremal weight of $\Gamma((\partial \overline{{\fb}v{\fb}})\;\overline{{\fb}v^{-1}w_0x_-},\Ll(-w_0ve_v))$.
Then $-ve_v$ can be written as $y(-w_0ve_v)$ with $y\leq z\star v^{-1}w_0$ for some $z<v$.
Replacing $z$ by a lesser element 
if necessary (Lemma \ref{lem:starsmaller}), we may assume $y\leq zv^{-1}w_0$ for some $z<v$.
Put $u=yw_0$. Then $-ve_v=-uve_v$ with $u\geq zv^{-1}$. But $-ve_v=-uve_v$ implies $u\in W_I$, so 
$u\geq zv^{-1}$ implies $zv^{-1}\in W_I$. Thus $z^{-1}\in v^{-1}W_I$.
This contradicts the minimality of $v^{-1}$ in its coset $v^{-1}W_I$.

Part (2). Now consider a weight of the form $-uve_v$ with $-uve_v\neq -ve_v$. We may replace $u$ by its minimal coset representative.
As $\ell(u)\geq 1$ there is a simple reflection $s$ with $s\notin I$ and $\ell(u)= \ell(us)+1$. In particular, $u\geq s$.
Now $v^{-1}s<v^{-1}$ by the construction of $e_v$. Put $z=sv$, $y=uw_0$. Then $z<v$, $-uve_v=y(-w_0ve_v)$, with
$y\leq sw_0=z\star v^{-1}w_0$.
 \end{proof}

 \begin{proposition}\label{prop:vw_0 notless w}
 Let $w\in W$ such that $vw_0\nleq w$.
 Every extremal weight of $$\Gamma(\overline{{\fb}(ww_0\star v^{-1}w_0)x_-},\Ll(-w_0ve_v))$$ is also an 
 extremal weight of 
 $$\Gamma((\partial \overline{{\fb}ww_0{\fb}})\;\overline{{\fb}v^{-1}w_0x_-},\Ll(-w_0ve_v)).$$
 \end{proposition}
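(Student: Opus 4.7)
The plan is to reduce the statement to a direct geometric inclusion of varieties, which will in fact turn out to be an equality. Since $\lambda := -w_0 v e_v$ is dominant, Lemma~\ref{lem:Schubert module} identifies the extremal weights of $\Gamma(S,\Ll(\lambda))$ with the set $\{y\lambda : y\, x_- \in S\}$. It therefore suffices to prove
\[
\overline{\fb(ww_0 \star v^{-1}w_0)x_-}\ \subseteq\ \partial\overline{\fb ww_0\fb}\ \overline{\fb v^{-1}w_0\, x_-}.
\]

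First I convert the hypothesis: by Lemma~\ref{lew0reverse}, the condition $vw_0\nleq w$ is equivalent to $ww_0\nleq v$. I then apply the Demazure-product lemma of Section~\ref{subsec:Demazureproduct} to the pair $(ww_0,v^{-1}w_0)$ to obtain $u_1\leq ww_0$ with $u_1\cdot v^{-1}w_0=ww_0\star v^{-1}w_0$ and $\ell(u_1)+\ell(v^{-1}w_0)=\ell(u_1\cdot v^{-1}w_0)$. The key point is that the hypothesis forces $u_1<ww_0$: otherwise $u_1=ww_0$, so $\ell(ww_0\cdot v^{-1}w_0)=\ell(ww_0)+\ell(v^{-1}w_0)$, and Lemma~\ref{lem:in w0} applied with $w\mapsto ww_0$, $z\mapsto v^{-1}w_0$ yields $ww_0\leq w_0(v^{-1}w_0)^{-1}=v$, contradicting $ww_0\nleq v$.

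With $u_1<ww_0$ in hand, the identity $\overline{\fb a\fb}\,\overline{\fb b\fb}=\overline{\fb(a\star b)\fb}$ from Section~\ref{subsec:Demazureproduct} immediately gives
\[
\overline{\fb ww_0\fb}\,\overline{\fb v^{-1}w_0\,x_-}=\overline{\fb(ww_0\star v^{-1}w_0)x_-}=\overline{\fb(u_1\star v^{-1}w_0)x_-}=\overline{\fb u_1\fb}\,\overline{\fb v^{-1}w_0\,x_-},
\]
and the final expression sits inside $\partial\overline{\fb ww_0\fb}\,\overline{\fb v^{-1}w_0\,x_-}$ because $\overline{\fb u_1\fb}\subseteq\partial\overline{\fb ww_0\fb}$ (as $u_1<ww_0$). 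This establishes the varietal inclusion, and hence the extremal-weight statement.

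No genuine obstacle is anticipated: the argument is entirely combinatorial on the Weyl-group side, resting on the length-reversing role of $w_0$ for the Bruhat order (Lemmas~\ref{lew0reverse} and~\ref{lem:in w0}) together with the behaviour of Schubert varieties under the Demazure product. The one step requiring care is the translation $vw_0\nleq w\ \Longleftrightarrow\ ww_0\nleq v$, since it is precisely this reformulation that excludes the length-additive case of $ww_0\star v^{-1}w_0$ and forces $u_1<ww_0$, thereby collapsing the full Schubert variety on the left onto a piece of the boundary union on the right.
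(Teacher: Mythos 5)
Your proof is correct and follows essentially the same route as the paper: both arguments rule out the length-additive case of $(ww_0)\star(v^{-1}w_0)$ via Lemma \ref{lem:in w0} (using $vw_0\nleq w\Leftrightarrow ww_0\nleq v$) and extract an element $u_1<ww_0$ with $u_1v^{-1}w_0=(ww_0)\star(v^{-1}w_0)$ length-additively. The only difference is cosmetic: you package the conclusion as an inclusion (in fact equality) of the underlying closed subvarieties and read off extremal weights from the first bullet of Lemma \ref{lem:Schubert module}, whereas the paper phrases the same fact via surjectivity of the restriction maps onto $\Gamma(\overline{\fb\, y v^{-1}w_0 x_-},\Ll(-w_0ve_v))$.
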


 \begin{proof}
 Suppose 
$\ell((ww_0)\star (v^{-1}w_0))=\ell(ww_0)+\ell(v^{-1}w_0)$. By Lemma \ref{lem:in w0}
 $(ww_0)\leq v$, so $vw_0\leq w$, contrary to our assumption.
So $\ell((ww_0)\star (v^{-1}w_0))<\ell(ww_0)+\ell(v^{-1}w_0)$ and we have by Lemma \ref{lem:starsmaller} an element 
$y<ww_0$ with
$(ww_0)\star (v^{-1}w_0)=yv^{-1}w_0 $, $\ell(yv^{-1}w_0)=\ell(y)+\ell(v^{-1}w_0)$. Every extremal weight of $\Gamma(\overline{{\fb}(ww_0\star v^{-1}w_0)x_-},\Ll(-w_0ve_v))$
is of the form $z(-w_0ve_v)$ with $z\leq yv^{-1}w_0$.
By Proposition \ref{prop:Schubert module} it is then also an extremal weight of 
$\Gamma((\partial \overline{{\fb}ww_0{\fb}})\;\overline{{\fb}v^{-1}w_0x_-},\Ll(-w_0ve_v))$.
 \end{proof}

\begin{lemma}\label{lem:capw}
Let $S'$, $S''$ be unions of Schubert varieties with inverse images $\tilde S'$, $\tilde S''$ respectively in $\bg$. Let $w\in W$. 
Then $\tilde S'(\overline{{\fb}w{\fb}}/{\fb})\cap \tilde S''(\overline{{\fb}w{\fb}}/{\fb})=(\tilde S'\cap\tilde S'')(\overline{{\fb}w{\fb}}/{\fb})$.
\end{lemma}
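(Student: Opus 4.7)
The inclusion $\supseteq$ is immediate since $\tilde S'\cap\tilde S''\subseteq \tilde S'$ and symmetrically. For the reverse inclusion I plan to induct on $\ell(w)$; the case $w=e$ is trivial since $\overline{\fb e\fb}=\fb$ and all three expressions equal the images in $\bg/\fb$ of the right $\fb$-stable sets $\tilde S'$, $\tilde S''$, $\tilde S'\cap\tilde S''$.

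The key step is the simple-reflection case $w=s$, where $\overline{\fb s\fb}=\bp_s$. Here I would use the canonical $\mathbb{P}^1$-bundle $\pi_s\colon \bg/\fb\to \bg/\bp_s$. Since $\tilde S'\cdot \overline{\fb s\fb}/\fb=\tilde S'\bp_s/\fb=\pi_s^{-1}(\pi_s(S'))$, and similarly for $S''$, the desired identity reduces to
\[
\pi_s(S')\cap \pi_s(S'')=\pi_s(S'\cap S'').
\]
Write $S'$ and $S''$ as disjoint unions of Bruhat cells $\fb v\fb/\fb$ indexed by Bruhat-lower sets $L',L''\subset W$, and put $W^s=\{u\in W:\ell(us)>\ell(u)\}$. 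Since $\pi_s$ collapses each pair of cells $\fb u\fb/\fb$, $\fb us\fb/\fb$ (for $u\in W^s$) onto the single $\bp_s$-cell $\fb u\bp_s/\bp_s$, and lowerness of $L'$ forces $us\in L'\Rightarrow u\in L'$, one finds $\pi_s(S')=\bigcup_{u\in L'\cap W^s}\fb u\bp_s/\bp_s$, and similarly for $S''$ and for $S'\cap S''$. Both sides of the displayed identity are therefore the union of the $\bp_s$-cells indexed by $L'\cap L''\cap W^s$.

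For the inductive step with $\ell(w)\geq 1$, I would write $w=w's$ with $\ell(w')=\ell(w)-1$ and $s$ simple, so that $w=w'\star s$ and $\overline{\fb w\fb}=\overline{\fb w'\fb}\cdot\overline{\fb s\fb}$ by Section \ref{subsec:Demazureproduct}. Set $\tilde T'=\tilde S'\cdot \overline{\fb w'\fb}$ and $\tilde T''=\tilde S''\cdot \overline{\fb w'\fb}$; each is a right $\fb$-stable union of Schubert varieties in $\bg$. The inductive hypothesis applied to $w'$ and the pair $(\tilde S',\tilde S'')$ yields an identity in $\bg/\fb$ which, by right $\fb$-stability of all three subsets involved, lifts to the identity $\tilde T'\cap \tilde T''=(\tilde S'\cap\tilde S'')\cdot \overline{\fb w'\fb}$ in $\bg$. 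Applying the base case to $\tilde T'$ and $\tilde T''$ with the simple reflection $s$ and combining yields the claim for $w$.

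The main obstacle is the simple-reflection base case, which hinges on the interplay between the cell decompositions of $\bg/\fb$ and $\bg/\bp_s$ under $\pi_s$; once that bookkeeping is in place, the inductive passage from $w'$ to $w'\star s$ is formal.
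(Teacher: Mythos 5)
Your proof is correct, and its overall architecture --- reduce to the case of a simple reflection, then iterate along a reduced word using $\overline{{\fb}w{\fb}}=\overline{{\fb}s_1{\fb}}\cdots\overline{{\fb}s_n{\fb}}$ --- is exactly the paper's. The only genuine difference is in how the base case $w=s$ is handled. The paper picks a Schubert variety $\overline{{\fb}x{\fb}}/{\fb}$ contained in the left-hand side and invokes its combinatorial Lemma \ref{lem:caps}: the minimal representative $v$ of the coset $x\langle s\rangle$ satisfies $v\leq y$ whenever $x\leq y\star s$, so $\overline{{\fb}v{\fb}}/{\fb}$ lies in both $S'$ and $S''$, whence $\overline{{\fb}(v\star s){\fb}}/{\fb}\supseteq\overline{{\fb}x{\fb}}/{\fb}$ lies in the right-hand side. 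You instead pass to the $\Pp^1$-bundle $\pi_s\colon \bg/\fb\to\bg/\bp_s$, note that $\tilde S'\bp_s/\fb=\pi_s^{-1}(\pi_s(S'))$, and do the bookkeeping with open cells and Bruhat-lower index sets $L'$, $L''$; your identity $\pi_s(S')=\bigcup_{u\in L'\cap W^s}\fb u\bp_s/\bp_s$ is precisely the content of Lemma \ref{lem:caps} in disguise, since both come down to the minimal coset representative of $x\langle s\rangle$ being dominated by any $y$ with $x\leq y\star s$. Your version is marginally more self-contained (it does not presuppose that lemma) at the cost of spelling out the cell decomposition; the paper's is shorter because the combinatorics was isolated beforehand. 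The inductive step is formal in both treatments, as you say.
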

\begin{proof}Clearly the right hand side is contained in the left hand side.
Both sides are unions of Schubert varieties.
First let $w$ be simple,
say $w=s$. Consider $x\in W$ with $\overline{{\fb}x{\fb}}/{\fb}$ contained in the left hand side. Choose $v$ minimal so that $v\star s= x\star s$.
Use Lemma \ref{lem:caps} to see that $\overline{{\fb}(v\star s){\fb}}/{\fb}$ is contained in the right hand side. But then so is $\overline{{\fb}x{\fb}}/{\fb}$. Next, if $\ell(w)>1$, choose a reduced expression $s_1\cdots s_n$ of $w$ and write $\overline{{\fb}w{\fb}}$ as $\overline{{\fb}s_1{\fb}}\cdots  \overline{{\fb}s_n{\fb}}$. Show that $\tilde S'(\overline{{\fb}w{\fb}})\cap \tilde S''(\overline{{\fb}w{\fb}})=(\tilde S'\cap\tilde S'')(\overline{{\fb}w{\fb}})$.
\end{proof}
 \begin{notation}
Let $S$ be a $\fb$-invariant closed subset  of  $ \bg /{\fb}$ and let $\F$ be a $\fb$-equivariant vector bundle on $ \bg /{\fb}$. Recall that $\sk$ is a field. Put
$$\chi(S,\F)=\langle[\Oo_S],[\F]\rangle=\sum_i(-1)^i[H^i(S,\F)]$$
 in $R(\bt)$.
\end{notation}

  \begin{proposition}\label{prop:Leray}Let $S$ be a $\fb$-invariant closed subset  of  $ \bg /{\fb}$. 
  Let $\lambda$ be a dominant weight and let $w\in W$. Let  $\tilde S$ be the inverse image of $S$ in $ \bg $.
   \begin{enumerate} 
 \item There is an isomorphism $H^0(\overline{{\fb}w{\fb}} S,\Ll(\lambda))\to H^0({\overline{{\fb}w{\fb}}/{\fb}},\Ll(H^0(S,\Ll(\lambda)))$ .
 \label{Leray1}
  \item  $H^i(\rule{0pt}{1.1em}{\overline{{\fb}w{\fb}}/{\fb}},\Ll(H^0(S,\Ll(\lambda)))=0$ for $i>0$.
  \label{Leray2}
 \item 
 
 $
     \chi(\tilde S\rule{0pt}{1.1em}\overline{{\fb}w{\fb}}/\fb ,\Ll(\lambda))= \chi(S,\Ll(H^0({\overline{{\fb}w{\fb}}/{\fb}},\Ll(\lambda))).
 $
 \label{Leray3}
  \end{enumerate}
 \end{proposition}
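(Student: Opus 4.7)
The plan is to prove the proposition via a Bott--Samelson style twisted product. Let $\tilde S$ be as in the statement and form
\[
\overline{Y}:=\overline{{\fb}w{\fb}}\times^{\fb} S,
\]
the quotient of $\overline{{\fb}w{\fb}}\times \tilde S$ by the diagonal $\fb$-action $b\cdot(h,g)=(hb,b^{-1}g)$ (together with the residual right $\fb$-action on $\tilde S$). This carries two natural maps: the first-factor projection $q\colon \overline{Y}\to \overline{{\fb}w{\fb}}/{\fb}$, which is Zariski locally trivial with fibre $S$; and the multiplication map $\psi\colon \overline{Y}\to \bg/\fb$, whose image is $\overline{{\fb}w{\fb}}\cdot S$, a union of Schubert varieties (by Lemma \ref{lem:capw} and the Demazure product formula of Section \ref{subsec:Demazureproduct}). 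Both fibres of $q$ and the image of $\psi$ thus fall within the scope of Lemma \ref{lem:Schubert module}.

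The first step is to compute $Rq_\ast\psi^\ast\Ll(\lambda)$. Because $\Ll(\lambda)$ is $\bg$-equivariant, its restriction to every fibre of $q$ is canonically $\Ll(\lambda)|_S$, and Lemma \ref{lem:Schubert module} gives $H^i(S,\Ll(\lambda))=0$ for $i>0$. Cohomology and base change therefore yields
\[
R^iq_\ast\psi^\ast\Ll(\lambda)=\begin{cases}\Ll\bigl(H^0(S,\Ll(\lambda))\bigr), & i=0,\\ 0, & i>0,\end{cases}
\]
so the Leray spectral sequence degenerates to an isomorphism $H^i(\overline{Y},\psi^\ast\Ll(\lambda))\cong H^i(\overline{{\fb}w{\fb}}/{\fb},\Ll(H^0(S,\Ll(\lambda))))$ for all $i$.

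The main obstacle is the second step: showing that $R\psi_\ast\Oo_{\overline{Y}}=\Oo_{\overline{{\fb}w{\fb}}\cdot S}$. I would reduce this to the case $w=s$ a simple reflection by choosing a reduced expression $s_1\cdots s_\ell$ for $w$ and factoring $\overline{Y}$ as an iterated twisted product of $\Pp^1$-bundles; the Demazure product identity $\overline{\fb v\fb}\cdot \overline{\fb w\fb}=\overline{\fb(v\star w)\fb}$ and Lemma \ref{lem:Hwstar} allow the inductive step. In the simple-reflection case the assertion is a $\Pp^1$-bundle computation over $S$ whose target $\bp_s\cdot S$ is a union of Schubert varieties, hence normal; combined with the classical fact that Schubert varieties (and their unions) admit rational resolutions, this yields $R\psi_\ast\Oo_{\overline{Y}}=\Oo$. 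The requisite inputs are collected in \cite[Appendix A]{WvdKTata} and \cite[Chapter 3]{BK}. By the projection formula this upgrades to
\[
H^i(\overline{Y},\psi^\ast\Ll(\lambda))\cong H^i(\overline{{\fb}w{\fb}}\cdot S,\Ll(\lambda)).
\]

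Combining the two isomorphisms in degree $0$ proves (\ref{Leray1}). For (\ref{Leray2}), Lemma \ref{lem:Schubert module} applied to the union of Schubert varieties $\overline{{\fb}w{\fb}}\cdot S$ gives $H^i(\overline{{\fb}w{\fb}}\cdot S,\Ll(\lambda))=0$ for $i>0$, and the Leray identification then forces $H^i(\overline{{\fb}w{\fb}}/{\fb},\Ll(H^0(S,\Ll(\lambda))))=0$. For (\ref{Leray3}) I would replay the same argument with the flipped twisted product $Y':=\tilde S\times^{\fb}(\overline{{\fb}w{\fb}}/{\fb})$, projecting to $S$ with fibre $\overline{{\fb}w{\fb}}/{\fb}$ and mapping to $\tilde S\,\overline{{\fb}w{\fb}}/{\fb}$; the vanishing of $H^{>0}(\overline{{\fb}w{\fb}}/{\fb},\Ll(\lambda))$ from Lemma \ref{lem:Schubert module} degenerates the corresponding Leray spectral sequence, and the analogous birationality statement $R\psi'_\ast\Oo_{Y'}=\Oo$ (proved by the same reduction) identifies both sides of the desired Euler characteristic identity with $\chi(Y',\psi'^\ast\Ll(\lambda))$.
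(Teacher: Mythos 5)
Your overall strategy---replacing $\overline{{\fb}w{\fb}}\,S$ by the twisted product $\overline{{\fb}w{\fb}}\times^{\fb}S$, computing along the first projection by a base-change/Leray argument, and transferring the answer along the multiplication map $\psi$---is a legitimate alternative to the paper's argument. The paper instead quotes \cite[Proposition 2.24, Theorem 1.9.(a)(ii)]{WvdK} for parts (1) and (2), inducting on $\ell(w)$ so that only the minimal-parabolic case is ever needed, and proves part (3) by a Mayer--Vietoris induction on $S$; your route for part (3) would even give a degreewise identification rather than just an equality of Euler characteristics, which is more than the paper claims (or needs, per the remark following the proposition).

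The gap is in the step $R\psi_*\Oo_{\overline Y}=\Oo_{\overline{{\fb}w{\fb}}S}$ (and its flipped analogue in part (3)) precisely when $S$ is a \emph{union} of Schubert varieties, which is the case the proposition is about. Your reduction to a simple reflection plus ``rational resolutions'' handles a single Schubert variety, where $\bp_s\times^{\fb}\overline{{\fb}v{\fb}}/\fb\to\bp_s\overline{{\fb}v{\fb}}/\fb$ is either a $\Pp^1$-bundle or a birational map onto a normal variety with rational singularities. But for $S=S'\cup S''$ the source is a union of twisted products glued along $(\tilde S'\cap\tilde S'')\times^{\fb}(\cdot)$, the target is reducible (so normality and ``rational resolution'' are not available as stated), and the fibres of $\psi$ need not be connected a priori. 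Any Mayer--Vietoris argument on source and target requires knowing that the scheme-theoretic intersections match, i.e.\ $\tilde S'(\overline{{\fb}w{\fb}}/\fb)\cap\tilde S''(\overline{{\fb}w{\fb}}/\fb)=(\tilde S'\cap\tilde S'')(\overline{{\fb}w{\fb}}/\fb)$, and that $S'\cap S''$ is reduced---exactly the content of Lemma \ref{lem:capw} and of Ramanathan's theorem \cite[Proposition 1.2.1, Chapter 2]{BK} that the paper's proof of part (3) invokes. Your sketch never engages with this, so the key direct-image computation is unproven exactly where the statement has content. A smaller omission: the proposition is also used over $\sk=\Z$ and in characteristic zero, so the characteristic-$p$ inputs must be transported via the Universal Coefficient Theorem \ref{th:universal coefficients}, as the paper does.
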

\begin{proof}
If $\sk$ is a field of finite characteristic,
then parts \ref{Leray1} and \ref{Leray2} follow from \cite[Proposition 2.24, Theorem 1.9.(a)(ii)]{WvdK}  by induction on $\ell(w)$.
Cohomology groups like $H^0(S,\Ll(\lambda))$ are finitely generated over a base ring by \cite[Thm. III 5.2]{Hartshorne}.
The map in part 1 is induced by the multiplication $\overline{{\fb}w{\fb}} \times \tilde S\to
\overline{{\fb}w{\fb}} \tilde S$. 
We may choose $N\geq1$ so that this map is defined over $\Z[1/N]$ and such that $H^1(\overline{{\fb}w{\fb}} S,\Ll(\lambda)) $ and
$\Ll(H^0(S,\Ll(\lambda)))$ are flat over $\Z[1/N]$. (This is called ``spreading out".)
By the Universal coefficient Theorem \ref{th:universal coefficients}
both parts go through for $\sk=\Z[1/N]$ instead of  $\sk$ a field of finite characteristic, and then also when $\sk$ is a field of characteristic zero.

Proof of part \ref{Leray3}. By the previous parts the result holds when $S$ is a Schubert variety. 
There are only finitely many possibilities for $S$, so we may assume the result for any union of Schubert varieties that is
strictly contained in $S$. If $S$ is not a Schubert variety and $S\neq\emptyset$, write $S=S'\cup S''$ where $S'$, $S''$ are strictly smaller unions of Schubert varieties.
Put $X=\overline{{\fb}w{\fb}}/\fb$
and $M=H^0(X,\Ll(\lambda))$.
If $\sk$ is a field of finite characteristic, then $S'\cap S''$ is reduced by Ramanathan \cite[Proposition 1.2.1, Chapter 2]{BK}. By \cite[Corollary 1.6.6]{BK}
it is then also reduced in characteristic zero.
From the Mayer--Vietoris sequence 
$$0\to  H^0(S,\Ll(M))\to H^0(S',\Ll(M))\oplus H^0(S'',\Ll(M))\to H^0(S'\cap S'',\Ll(M))\to\cdots$$
we get
\begin{equation}\label{eq:chiM}
    \chi(S,\Ll(M))=\chi(S',\Ll(M))+\chi(S'',\Ll(M))- \chi(S'\cap S'',\Ll(M)).
\end{equation}
Similarly the Mayer--Vietoris sequence 
$$0\to H^0(\tilde SX ,\Ll(\lambda))\to H^0(\tilde S'X ,\Ll(\lambda))\oplus H^0(\tilde S''X ,\Ll(\lambda))\to H^0(\tilde S'X\cap \tilde S''X ,\Ll(\lambda))\to0,$$ 
gives 
\begin{equation}\label{eq:chilambda}
    \chi(\tilde SX ,\Ll(\lambda))=\chi(\tilde S'X ,\Ll(\lambda)) + \chi (\tilde S''X ,\Ll(\lambda))- \chi(\tilde S'X\cap \tilde S''X ,\Ll(\lambda))
.\end{equation}
By Lemma \ref{lem:capw} we know that $\tilde S'X\cap \tilde S''X =(\tilde S'\cap \tilde S'')X $.
As the right hand sides of equations (\ref{eq:chiM}), (\ref{eq:chilambda})
agree, part \ref{Leray3} follows.

\end{proof}

\begin{remark}
    Along similar lines one may show that
    $H^i(S,\Ll(H^0({\overline{{\fb}w{\fb}}/{\fb}},\Ll(\lambda)))$ vanishes for $i>0$,
    but we will not need that.
\end{remark}

%--------------------------------------------------------------
\subsection{\unboldmath The matrix \unboldmath$(\beta_{vw})$}\label{subsec:beta}
%--------------------------------------------------------------
Recall that $\beta_{vw}=\langle [\Oo_{\fX^w}(-\partial \fX^w)],[\tP_v] \rangle$.
We want to pair $[\Oo_{\fX^w}(-\partial \fX^w)]$
with $[\tP_v]=[\Ll(P(-e_v))]$. 
As $0\to\Oo_{\fX^w}(-\partial \fX^w)\to \Oo_{\fX^w}\to \Oo_{\partial \fX^w} \to0$
is exact, it suffices to compute the difference between $\langle [ \Oo_{\fX^w}],[\Ll(P(-e_v))]\rangle$ and $\langle [ \Oo_{\partial \fX^w}],[\Ll(P(-e_v))]\rangle$.

Note that $\fX^w=\overline{{\fb}ww_0x_-}$.

And note that $P(-e_v)=\Gamma(\overline{{\fb}v^{-1}w_0x_-},\Ll(-w_0ve_v))$.

We want to prove that $\langle[\fX^w(-\partial \fX^w)],[\tP_v]\rangle$ vanishes unless $vw_0\leq w$.
But Proposition \ref{prop:Leray} shows that $\langle [ \Oo_{\fX^w}],[\Ll(P(-e_v))]\rangle$  is just the character of the module
$\Gamma(\overline{{\fb(}ww_0\star v^{-1}w_0)x_-},\Ll(-w_0ve_v))$.
Similarly, Propositions \ref{prop:Leray} and \ref{prop:Schubert module} give that  $\langle [ \Oo_{\partial \fX^w}],[\Ll(P(-e_v))]\rangle$ is just the character of the module
$\Gamma(\partial^{w,v},\Ll(-w_0ve_v))$,
where $\partial^{w,v}$ equals 
$(\partial \overline{{\fb}ww_0{\fb}})\;\overline{{\fb}v^{-1}w_0{\fb}}x_-$. Both modules have a relative Schubert filtration by Proposition \ref{prop:Schubert module}
and we can get a grip on them by inspecting the extremal weights.

There are three cases.

\begin{itemize}
\item $vw_0=w$. So that is about  the kernel of the surjective map from $\Gamma(\overline{{\fb}w_0x_-},\Ll(-w_0ve_v))$ to $\Gamma(\partial^{w,v},\Ll(-w_0ve_v))$.
By Proposition \ref{prop:extremalQ} the kernel is isomorphic to $Q(-ve_v)$ which equals $\sk_{-ve_v}$ by Proposition \ref{prop:1dimQ}.
\item $vw_0\nleq w$. Then one sees with Proposition \ref{prop:vw_0 notless w}
that 
the kernel of the surjective map from $\Gamma(\overline{{\fb}(ww_0\star v^{-1}w_0)x_-},\Ll(-w_0ve_v))$ to $\Gamma(\partial^{w,v},\Ll(-w_0ve_v))$ vanishes.
\item $vw_0< w$. No claims here.
\end{itemize}

We conclude that the matrix 
 $(\beta_{vw})=(\langle  [\Oo_{\fX^w}(-\partial \fX^w)],[\tP_v]\rangle)$ behaves  as claimed in Theorem \ref{th:triang}.

%--------------------------------------------------------------
\subsection{\unboldmath The matrix \unboldmath$(\alpha_{vw})$}\label{subsec:alpha}
%--------------------------------------------------------------

So let us turn to $(\alpha_{vw})=(\langle [\Oo_{\fX_w}],[\tQ_v]\rangle)$.

Now we are dealing with the situation where the Borel subgroup has positive roots, and anti-dominant weights
are to be used as in \cite{WvdKTata}. For instance,  Proposition \ref{prop:Leray} gives
 \begin{proposition}\label{prop:Leray+}
  Let $\lambda$ be an anti-dominant weight. And let $S$ be a ${\fb}^+$ invariant closed subset of $ \bg /{\fb}^+$.
   \begin{enumerate} 
 \item $H^0(\overline{{\fb}^+w{\fb}^+} S,\Ll^+(\lambda))\to H^0({\overline{{\fb}^+w{\fb}^+}/{\fb}^+},\Ll^+(H^0(S,\Ll^+(\lambda))))$ is an isomorphism.
 \label{Leray+1}
  \item  $H^i({\overline{{\fb}^+wx_+}},\Ll^+(H^0(S,\Ll^+(\lambda))))=0$ for $i>0$.
  \label{Leray+3}
 \qed\end{enumerate}
 
 \end{proposition}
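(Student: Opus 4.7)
The plan is to reduce Proposition \ref{prop:Leray+} to Proposition \ref{prop:Leray} by transport of structure along the automorphism $\phi_0 \colon \fb^+ \to \fb$, $b \mapsto \dot w_0 b \dot w_0^{-1}$. This automorphism extends to a $\bg$-equivariant isomorphism of flag varieties $f \colon \bg/\fb^+ \to \bg/\fb$, $g\fb^+ \mapsto g \dot w_0 \fb$, sending $x_+$ to $x_-$. Under $\phi_0^*$ a weight $\mu$ is carried to the weight $w_0\mu$, so anti-dominant weights $\lambda$ for $\fb^+$ correspond to dominant weights $w_0\lambda$ for $\fb$, and $\Ll^+(\lambda)$ corresponds to $\Ll(w_0\lambda)$ under $f$. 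The Schubert variety $\overline{\fb^+ w \fb^+}/\fb^+$ corresponds to $\overline{\fb\, (w_0 w w_0)\,\fb}/\fb$, and any $\fb^+$-invariant closed subset $S$ corresponds to the $\fb$-invariant closed subset $f(S)$.

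With this dictionary in place, the two displayed assertions of Proposition \ref{prop:Leray+} are identified with assertions \eqref{Leray1} and \eqref{Leray2} of Proposition \ref{prop:Leray} applied to $f(S)$, the element $w_0 w w_0 \in W$, and the dominant weight $w_0\lambda$. The result then follows without further work.

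An alternative route is to bypass the translation and repeat the proof of Proposition \ref{prop:Leray} in the $\fb^+$-setting: induct on $\ell(w)$ using the fibration $\bg/\fb^+ \to \bg/\bp_s^+$ for a simple reflection $s$ occurring at the left end of a reduced expression of $w$, and combine this with the vanishing of higher cohomology on $\bp_s^+/\fb^+$ for modules with relative Schubert filtration. The key inputs, namely \cite[Proposition 2.24]{WvdK} and \cite[Theorem 1.9.(a)(ii)]{WvdK}, are formulated natively in the $\fb^+$-framework and apply without modification. The base-change passage between the case of a field of positive characteristic, the case $\sk = \Z$, and the case of characteristic zero is handled by the Universal Coefficient Theorem \ref{th:universal coefficients}, exactly as in the proof of Proposition \ref{prop:Leray}.

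The main subtlety, rather than any real obstacle, is keeping track of conventions. Since the paper already installs the dictionary $\Ll^+ = \phi_0^*\Ll$, $P^+(\lambda) = \phi_0^*(P(w_0\lambda))$, and $Q^+(\lambda) = \phi_0^*(Q(w_0\lambda))$, the required bookkeeping is routine and no new representation-theoretic input is needed beyond what was already assembled in Lemma \ref{lem:Schubert module} and Proposition \ref{prop:Leray}.
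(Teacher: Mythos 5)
Your proposal is correct and matches the paper, which gives no separate argument at all: it introduces the proposition with ``Proposition \ref{prop:Leray} gives'' and a \qed, i.e.\ it invokes exactly the $\fb\leftrightarrow\fb^+$ transport of structure via conjugation by $\dot w_0$ that you spell out (your explicit dictionary, and the fallback of rerunning the $\fb^+$-native proof from \cite[Proposition 2.24, Theorem 1.9.(a)(ii)]{WvdK} plus the Universal Coefficient Theorem, is if anything more detailed than the paper). The only nit is that the intertwining map should be $g\fb^+\mapsto \dot w_0^{-1}g\dot w_0\,\fb$ rather than $g\fb^+\mapsto g\dot w_0\fb$ so that $\fb^+$-invariant subsets go to $\fb$-invariant ones, but this is exactly the convention bookkeeping you already flag as routine.
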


By Lemma \ref{lem:Q acyclic} we have 

$\alpha_{vw}=\langle [\Oo_{\fX_w}],[\tQ_v]\rangle=\Gamma(\overline{{\fb}^+wx_+},\Ll^+(\phi_0^* (Q(e_v))))$. Now 
$\phi_0^* (Q(e_v))$ has $\fb^+$-socle of weight $w_0e_v$ and it is therefore the kernel of the surjection
$$\Gamma(\overline{{\fb}^+w_0v^{-1}w_0x_+},\Ll^+(w_0ve_v))\onto\Gamma(\partial(\overline{{\fb}^+w_0v^{-1}w_0x_+}),\Ll^+(w_0ve_v)).$$
Combining with Proposition \ref{prop:Leray+}
we see that $\alpha_{vw}$ is the character of the kernel of the surjective map from
$\Gamma(\overline{({\fb}^+w{\fb}^+})(\overline{{\fb}^+w_0v^{-1}w_0x_+}),\Ll^+(w_0ve_v))$ to $\Gamma(\overline{({\fb}^+w{\fb}^+})\partial(\overline{{\fb}^+w_0v^{-1}w_0x_+}),\Ll^+(w_0ve_v))$.

Let $v\in W$. Let $I$ again consist of the simple reflections that fix the dominant weight $ve_v$ and let $J=w_0Iw_0$, so that $J$ consists of the simple reflections that fix $w_0ve_v$. 
\begin{lemma}\label{lem:stoboundary}We have
\begin{enumerate}
\item $w_0v^{-1}w_0$  is a minimal coset representative in $W/W_J$.
\item $\partial\rule{0pt}{1.1em}(\overline{{\fb}^+w_0v^{-1}w_0x_+})$ contains the union of the 
$\overline{{\fb}^+w_0v^{-1}w_0sx_+}$ with $s$ simple, $s\notin J$.
\end{enumerate}
\end{lemma}
\begin{proof}Let  $\alpha$ be a simple root.
As $e_v$ is a Steinberg weight, $w_0v^{-1}(\alpha)>0$ if and only if $s_\alpha\notin I$.
So $w_0v^{-1}w_0(-w_0(\alpha))<0$ if and only if $s_{-w_0\alpha}\notin J$.
Thus if $\beta$ is a simple root, then $w_0v^{-1}w_0(\beta)<0$ if and only if $s_{\beta}\notin J$. Now use Lemma \ref{lem:ws shorter}.
\end{proof}
\begin{proposition}\label{prop:extremal2}
 Let also $u\in W$.
 \begin{enumerate}
 \item $ve_v$ is an extremal weight of $$\Gamma(\overline{({\fb}^+vw_0{\fb}^+})(\overline{{\fb}^+w_0v^{-1}w_0x_+}),\Ll^+(w_0ve_v))))$$ but not of 
$$\Gamma((\overline{{\fb}^+vw_0\fb^+})
\partial(\overline{{\fb}^+w_0v^{-1}w_0x_+}),\Ll^+(w_0ve_v)))).$$
 \item If $uve_v\neq ve_v$, then $uve_v$ is an extremal weight of $$\Gamma(\overline{({\fb}^+vw_0\fb^+})
\partial(\overline{{\fb}^+w_0v^{-1}w_0x_+}),\Ll^+(w_0ve_v)))).$$
 \end{enumerate}
 \end{proposition}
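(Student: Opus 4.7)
The argument mirrors Proposition \ref{prop:extremalQ} transferred to the $\fb^+$-setting. The main ingredients are the Demazure-product calculation for products of Schubert varieties in $\bg/\fb^+$, the identification (recalled just before Lemma \ref{lem:s to boundary}) of the stabilizer $W_I$ of $ve_v$ as the parabolic generated by the simple $s$ with $\ell(v^{-1}s)>\ell(v^{-1})$ (so that $v^{-1}$ is minimal in $v^{-1}W_I$), and Lemma \ref{lem:s to boundary} describing the codimension one strata of $\partial(\overline{\fb^+w_0v^{-1}w_0x_+})$.

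For part (1), the length count $\ell(vw_0)+\ell(w_0v^{-1}w_0)=(\ell(w_0)-\ell(v))+\ell(v)=\ell(w_0)$ combined with $(vw_0)(w_0v^{-1}w_0)=w_0$ shows $(vw_0)\star(w_0v^{-1}w_0)=w_0$, so the first module coincides with $\Gamma(\bg/\fb^+,\Ll^+(w_0ve_v))$; its extremal weights form the full $W$-orbit of $ve_v$, which contains $ve_v$. To show $ve_v$ is \emph{not} extremal in the boundary module, I assume the contrary: then $ve_v=y(w_0ve_v)$ with $y\leq(vw_0)\star z$ for some $z<w_0v^{-1}w_0$; after shrinking $z$ we may take $y\leq vw_0\cdot z$ with lengths adding. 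Writing $u:=yw_0$ one gets $u\in W_I$. Multiplying $y\leq vw_0z$ on the right by $w_0$, Lemma \ref{lew0reverse} yields $u\geq v(w_0zw_0)$; conjugation by $w_0$, being an order-preserving automorphism of $(W,\leq)$, turns $z<w_0v^{-1}w_0$ into $z'':=w_0zw_0<v^{-1}$. Since $W_I$ is an order ideal for $\leq$ and $u\in W_I$, one obtains $vz''\in W_I$, hence $z''\in v^{-1}W_I$; minimality of $v^{-1}$ in this coset gives $v^{-1}\leq z''$, contradicting $z''<v^{-1}$.

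For part (2), assume $uve_v\neq ve_v$. Replace $u$ by the minimal representative of $uW_I$; then $u\notin W_I$ and hence there is a simple reflection $t\notin I$ with $ut<u$, so $u\geq t$. Set $s:=w_0tw_0\notin J$ and $z:=w_0v^{-1}w_0\cdot s=w_0v^{-1}tw_0$. Since $v^{-1}t<v^{-1}$ (by the characterization of $I$ recalled above) we have $z<w_0v^{-1}w_0$, so by Lemma \ref{lem:s to boundary}, $\overline{\fb^+zx_+}\subseteq\partial(\overline{\fb^+w_0v^{-1}w_0x_+})$. A direct length count shows $(vw_0)z=tw_0$ with $\ell(vw_0)+\ell(z)=\ell(tw_0)$, so $(vw_0)\star z=tw_0$. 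Taking $y:=uw_0$, Lemma \ref{lew0reverse} converts $u\geq t$ into $y\leq tw_0=(vw_0)\star z$, while $y(w_0ve_v)=uve_v$. The $\fb^+$-analogue of Proposition \ref{prop:Leray}(\ref{Leray1}) then exhibits $uve_v$ as an extremal weight of $\Gamma(\overline{\fb^+(vw_0\star z)x_+},\Ll^+(w_0ve_v))$ and hence of the boundary module.

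The main obstacle is bookkeeping for the order-reversing behaviour of right multiplication by $w_0$ (Lemma \ref{lew0reverse}) alongside the order-preserving behaviour of conjugation by $w_0$: the switch from the left-multiplication-by-boundary setup of Proposition \ref{prop:extremalQ} to the right-multiplication-by-boundary setup of Proposition \ref{prop:extremal2} is precisely what introduces the conjugate parabolic $J=w_0Iw_0$ and the factor $w_0v^{-1}w_0$, and it is Lemma \ref{lem:s to boundary} that isolates the correct codimension one strata on which the extremal weight $uve_v$ appears.
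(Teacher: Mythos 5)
Your proof is correct and follows essentially the same route as the paper's: the length count giving $(vw_0)\star(w_0v^{-1}w_0)=w_0$, the contradiction with minimality of the coset representative for part (1), and the choice of a simple reflection outside the stabilizer together with Lemma \ref{lem:s to boundary} for part (2). The only (cosmetic) difference is that you conjugate everything by $w_0$ so as to argue with $W_I$ and the minimality of $v^{-1}$ in $v^{-1}W_I$, whereas the paper works directly with $W_J=w_0Iw_0$ and the minimality of $w_0v^{-1}w_0$ in $w_0v^{-1}w_0W_J$.
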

\begin{proof}Part (1).
 The extremal weights of 
 $$M:=\Gamma(\overline{({\fb}^+vw_0{\fb}^+})(\overline{{\fb}^+w_0v^{-1}w_0x_+}),\Ll^+(w_0ve_v))))$$ are  elements in the $W$-orbit of $w_0e_v$.
 We have
 $\ell(vw_0)+\ell(w_0v^{-1}w_0)=\ell(w_0)$, so  $(vw_0)\star (w_0v^{-1}w_0)=w_0$, and $w_0w_0ve_v$ is one of the weights of $M$.
 Now suppose $ve_v$ is  an extremal weight of 
$\Gamma((\overline{{\fb}^+vw_0\fb^+})
\partial(\overline{{\fb}^+w_0v^{-1}w_0x_+}),\Ll^+(w_0ve_v))))$.
Then $ve_v$ can be written as $y(w_0ve_v)$ with $y\leq vw_0\star z$ for some $z<w_0v^{-1}w_0$.
Replacing $z$ by a lesser element if necessary (Lemma \ref{lem:starsmaller}), we may assume $y\leq vw_0z$ for some $z<w_0v^{-1}w_0$.
Put $u=w_0y$. Then $ve_v=yw_0ve_v=w_0uw_0ve_v$ with $u\geq w_0vw_0z$. But $uw_0ve_v=w_0ve_v$ implies $u\in W_J$, so 
$u\geq w_0vw_0z$ implies $w_0vw_0z\in W_J$. This contradicts the minimality of $w_0v^{-1}w_0$ in its coset $w_0v^{-1}w_0W_J$ (Lemma \ref{lem:stoboundary}).

Part (2). Now consider a weight of the form $uve_v$ with $uve_v\neq ve_v$. 
Then $w_0uw_0w_0ve_v\neq w_0ve_v$ so $w_0uw_0\notin W_J$. There is a simple $s$, $s\notin J$, with $w_0uw_0\geq s$.
Then $uw_0\leq w_0s=(vw_0)\star (w_0v^{-1}w_0s)$ and $\overline{{\fb}^+w_0v^{-1}w_0sx_+}$ is contained in 
$\partial(\overline{\fb^+w_0v^{-1}w_0x_+})$ by Lemma \ref{lem:stoboundary}. So $uw_0w_0ve_v$ is an extremal weight of
$\Gamma((\overline{{\fb}^+vw_0\fb^+})
\partial(\overline{{\fb}^+w_0v^{-1}w_0x_+}),\Ll^+(w_0ve_v))))$.
 \end{proof}

 \begin{proposition}\label{prop:w notless vw_0}
 Let $w\in W$ such that $w\nleq vw_0$.
 Every extremal weight of $$\Gamma(\overline{({\fb}^+w{\fb}^+})(\overline{{\fb}^+w_0v^{-1}w_0x_+}),\Ll^+(w_0ve_v))))$$  is also an extremal weight of 
$$\Gamma(\overline{({\fb}^+w{\fb}^+})\partial(\overline{{\fb}^+w_0v^{-1}w_0x_+}),\Ll^+(w_0ve_v)))).$$

 \end{proposition}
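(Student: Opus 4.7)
The plan is to adapt the argument of Proposition \ref{prop:vw_0 notless w} almost verbatim, swapping the roles of $\fb$ and $\fb^+$ and of left and right multiplication, and using that conjugation by $w_0$ is a length-preserving automorphism of the Coxeter system. The target pair of spaces has the form $\overline{\fb^+w\fb^+}\cdot Y$ versus $\overline{\fb^+w\fb^+}\cdot \partial Y$, where $Y=\overline{\fb^+w_0v^{-1}w_0\,x_+}$. I will show that the hypothesis $w\nleq vw_0$ forces the second set to already coincide with the first, whence the global sections agree and the extremal-weight statement is trivial.

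First I would analyze the length of the Demazure product $w\star w_0v^{-1}w_0$. If one had $\ell(w\cdot w_0v^{-1}w_0)=\ell(w)+\ell(w_0v^{-1}w_0)$, then Lemma \ref{lem:in w0} with $z=w_0v^{-1}w_0$, using $z^{-1}=w_0vw_0$ and hence $w_0z^{-1}=vw_0$, would give $w\leq vw_0$, contradicting the hypothesis. So strict inequality $\ell(w\star w_0v^{-1}w_0)<\ell(w)+\ell(w_0v^{-1}w_0)$ must hold, and by the factorization lemma of Section \ref{subsec:Demazureproduct} one can pick $u'<w_0v^{-1}w_0$ (strictly) with
$$w\star w_0v^{-1}w_0 = w\cdot u',\qquad \ell(wu')=\ell(w)+\ell(u').$$

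Since $u'<w_0v^{-1}w_0$, the Schubert variety $\overline{\fb^+u'x_+}$ lies in $\partial Y$. Multiplying on the left by $\overline{\fb^+w\fb^+}$ and using that $\overline{\fb^+a\fb^+}\cdot \overline{\fb^+bx_+}\supseteq \overline{\fb^+(a\star b)x_+}$, I obtain
$$\overline{\fb^+w\fb^+}\cdot \partial Y\;\supseteq\;\overline{\fb^+w\fb^+}\cdot \overline{\fb^+u'x_+}\;\supseteq\;\overline{\fb^+(w\star u')x_+}\;=\;\overline{\fb^+(w\star w_0v^{-1}w_0)x_+}\;=\;\overline{\fb^+w\fb^+}\cdot Y,$$
the last equality being the Demazure product identity for Schubert varieties. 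Hence the two reduced closed subsets actually coincide, their spaces of global sections of $\Ll^+(w_0ve_v)$ are identical, and every extremal weight of the first is an extremal weight of the second.

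The only delicate points are the identity $w_0\cdot(w_0v^{-1}w_0)^{-1}=vw_0$ (immediate from $w_0^2=e$) and the length preservation $\ell(w_0v^{-1}w_0)=\ell(v)$ used implicitly when invoking Lemma \ref{lem:in w0}; both follow from the fact that conjugation by $w_0$ permutes the simple roots. I do not anticipate any serious obstacle: this proposition is exactly the $\alpha$-matrix counterpart of Proposition \ref{prop:vw_0 notless w}, and the proof is just the mirror image on the opposite side under the involution $\phi_0$.
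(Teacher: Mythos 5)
Your argument is correct, and its combinatorial core is exactly the one the paper uses: assuming lengths add, Lemma \ref{lem:in w0} applied to $z=w_0v^{-1}w_0$ gives $w\leq w_0z^{-1}=vw_0$, contradicting the hypothesis; hence the Demazure product degenerates and one can write $w\star(w_0v^{-1}w_0)=w\star u'$ with $u'<w_0v^{-1}w_0$. Where you diverge is in the endgame. The paper stops at the level of sections: it parametrizes the extremal weights of the big module as $uw_0ve_v$ with $u\leq w\star(w_0v^{-1}w_0)=w\star u'$ and then uses the surjectivity of restriction maps (Lemma \ref{lem:Schubert module} together with Proposition \ref{prop:Leray+}) to see each such weight survives in the boundary module. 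You instead observe that $\overline{\fb^+w\fb^+}\cdot\overline{\fb^+u'x_+}=\overline{\fb^+(w\star u')x_+}=\overline{\fb^+w\fb^+}\cdot\overline{\fb^+w_0v^{-1}w_0x_+}$, so the two reduced closed subschemes being compared actually coincide and the section modules are literally equal. This is a cleaner and strictly stronger conclusion (equality of the modules, not just of their extremal weights), at no extra cost; the only inputs beyond the shared combinatorics are the Demazure product identity for Schubert varieties and the containment $\overline{\fb^+u'x_+}\subseteq\partial(\overline{\fb^+w_0v^{-1}w_0x_+})$, both of which are available in the paper. Your parenthetical worry about $\ell(w_0v^{-1}w_0)=\ell(v)$ is unnecessary, since Lemma \ref{lem:in w0} needs no such normalization, but this does not affect the proof.
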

 
 \begin{proof}
 The extremal weights of  $\Gamma(\overline{({\fb}^+w{\fb}^+})(\overline{{\fb}^+w_0v^{-1}w_0x_+}),\Ll^+(w_0ve_v))))$ are of the form
 $uw_0ve_v$ with $u\leq w\star (w_0v^{-1}w_0)$. Suppose $\ell(w)+\ell(w_0v^{-1}w_0)=\ell(ww_0v^{-1}w_0)$.
 Then $w\leq w_0w_0vw_0$, contrary to the assumption. So $\ell(w)+\ell(w_0v^{-1}w_0)<\ell(ww_0v^{-1}w_0)$
 and we may by Lemma \ref{lem:starsmaller} chose $z<w_0v^{-1}w_0$ with $w\star (w_0v^{-1}w_0)=w\star z$. As $u\leq w\star z$, we see that $uw_0ve_v$ is an extremal 
 weight of $\Gamma(\overline{({\fb}^+w{\fb}^+})\partial(\overline{{\fb}^+w_0v^{-1}w_0x_+}),\Ll^+(w_0ve_v))))$
 \end{proof}

Notice that $Q(ve_v)=\sk_{ve_v}$ because $ve_v$ is dominant.
One can now deal with the matrix  $(\alpha_{vw})$
in the same manner as for  $(\beta_{vw})$. 

This ends the proof of Theorem \ref{th:triang}.
We next develop its consequences.

%---------------------------------------------------------------------
\section{\unboldmath \unboldmath$\ind$-vanishing for $\fb$-modules}\label{sec:b-cohom_vanish}
%---------------------------------------------------------------------
In this section we draw conclusions about $\fb$-modules by combining  the vanishing in Corollary \ref{cor:PQnabla} with Theorem \ref{th:triang} on triangular transition matrices. Let $\sk$ be a field or $\Z$, until Section \ref{subsec:dualFEC}.
%--------------------------------------------------------------
\subsection{\unboldmath Cohomological descent from \unboldmath$ \bg/ \fb$ to $\fb$}\label{subsec:cohdesc}
%--------------------------------------------------------------

\begin{proposition}\cite[I, Proposition 5.12]{Jan}\label{prop:IndvsCoh}
There is for each $\fb$--module $M$ and each $n\in \mathbb N$ a canonical isomorphism of $\sf k$--modules
$$
{\sf For}({\rm R}^n{\rm Ind}_{\fb}^{\bg}M)\simeq {H}^n({\bg/\fb},\Ll (M)).
$$
 where $\sf For$ is the forgetful functor $\Dd ^b(\rep( \bg))\rightarrow \Dd ^b( {\kmod})$.

\end{proposition}
Let $\Ff=\Ll ({ F})$ be a $ \bg$-equivariant sheaf on $ \bg/ \fb$ given by a finitely generated $\fb$-module $ F$, which is projective over $\sk$. By  \cite[I Proposition 4.10]{Jan} and Proposition \ref{prop:IndvsCoh} there are isomorphisms 
\begin{equation}\label{eq:cohdescent}
{H}^p({\fb},\sk[{ \bg}]\otimes { F})={\sf For}(\RR^p\ind _{\fb}^{ \bg}({ F}))={H}^{p}({ \bg/\fb},\Ff),
\end{equation} 
so it is all about computing sheaf cohomology of a $ \bg$-equivariant coherent sheaf via derived induction. 
 
 \subsubsection*{`Peter-Weyl' filtration of $\sk[ \bg]$}
 In the usual simplified notation, the left regular representation $\rho_\ell$ of $\bg$ on $\sk[\bg]$ is defined by $\rho_\ell(g)(f):x\mapsto f(g^{-1}x)$ for $g\in\bg$, $f\in \sk[\bg]$  (\cite[I 2.7]{Jan}). 
 The right regular representation $\rho_r$ of $\bg$ on $\sk[\bg]$ is similarly defined by $\rho_r(g)(f):x\mapsto f(xg)$ for $g\in\bg$, $f\in \sk[\bg]$.
By \cite[II, Proposition 4.20, B.8]{Jan}, we know that  $\sk[ \bg]$ as a ${ \bg}\times { \bg}$-module via $\rho _l\times \rho _r$ admits a good filtration whose layers are $\nabla _{\lambda}\otimes \nabla _{-w_0\lambda}$ 
with $\lambda \in X(\bt)_+$, each occurring with multiplicity one; that is, $\sk[ \bg]=\varinjlim A_i$ such that $A_i/A_{i-1}=\nabla _{\lambda}\otimes \nabla _{-w_0\lambda}$ for one $i$ per dominant $\lambda$. The action of ${\fb}$ on $\sk[ \bg]$ in equation (\ref{eq:cohdescent})
is by way of $\rho _r$. 

\

Let $\Ff=\Ll ({ F})$ as above.
\begin{lemma}\label{lem:cohdescentlemma}
Assume $H^p(\fb,{ F}\otimes\nabla _{\mu})=0$ for $p>0$ and all $\mu \in X(\bt)_+$. 
\begin{enumerate}\item Then ${H}^{p}({ \bg/\fb},\Ff)=0$ for all $p> 0$ and ${H}^{0}({ \bg/\fb},\Ff)$ has a good filtration with associated graded 
$$\bigoplus _{\lambda \in X(\bt)_+}\nabla _{\lambda}\otimes{H}^0({\fb}, \nabla _{-w_0\lambda}\otimes { F}).$$
\item If moreover $H^0(\fb,{ F}\otimes\nabla _{\mu})=0$ for all $\mu \in X(\bt)_+$, then ${H}^{0}({ \bg/\fb},\Ff)=0$.
\end{enumerate}
\end{lemma}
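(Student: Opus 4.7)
The plan is to transfer cohomology computations on $\bg/\fb$ to the $\fb$-cohomology side by means of the identification (\ref{eq:cohdescent}), and then exploit the Jantzen filtration of $\sk[\bg]$ that is mentioned just above the lemma. Write $\sk[\bg]=\varinjlim A_i$ with $A_i/A_{i-1}\cong \nabla_{\lambda_i}\otimes \nabla_{-w_0\lambda_i}$, where the left tensor factor carries $\rho_l$ and the right one carries $\rho_r$. Because $\sf F$ is projective over $\sk$, tensoring with $\sf F$ preserves exactness, so $\{A_i\otimes \sf F\}$ is an exhaustive filtration of $\sk[\bg]\otimes \sf F$ with subquotients $\nabla_{\lambda_i}\otimes \nabla_{-w_0\lambda_i}\otimes \sf F$.

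Next, I would observe that since the $\fb$-action on $\sk[\bg]$ is via $\rho_r$, the factor $\nabla_{\lambda_i}$ is a trivial $\fb$-module while retaining its $\bg$-module structure under $\rho_l$. Hence, as $(\fb,\bg)$-bimodules,
$$(A_i/A_{i-1})\otimes \sf F \;\cong\; \nabla_{\lambda_i}\otimes_\sk \bigl(\nabla_{-w_0\lambda_i}\otimes \sf F\bigr),$$
with $\fb$ acting only on the second factor. Since $\nabla_{\lambda_i}$ is $\sk$-free, it factors out of $\fb$-cohomology, giving
$$H^p\bigl(\fb,(A_i/A_{i-1})\otimes \sf F\bigr)\;=\;\nabla_{\lambda_i}\otimes_\sk H^p\bigl(\fb,\nabla_{-w_0\lambda_i}\otimes \sf F\bigr).$$
Under the hypothesis of part (1), these vanish for $p>0$. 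Now I would run an induction on $i$, using the long exact sequence attached to $0\to A_{i-1}\otimes \sf F\to A_i\otimes \sf F\to (A_i/A_{i-1})\otimes \sf F\to 0$, to conclude that $H^p(\fb,A_i\otimes \sf F)=0$ for all $p>0$ and all $i$, and that $H^0(\fb,A_i\otimes \sf F)$ is a $\bg$-module (via $\rho_l$) filtered by submodules whose successive quotients are precisely $\nabla_{\lambda_j}\otimes H^0(\fb,\nabla_{-w_0\lambda_j}\otimes \sf F)$ for $j\leq i$.

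Then I would pass to the filtered colimit in $i$: rational $\fb$-cohomology commutes with filtered colimits of $\fb$-modules (as in \cite[I 4.17]{Jan}), so
$$H^p\bigl(\fb,\sk[\bg]\otimes \sf F\bigr)\;=\;\varinjlim_i H^p(\fb,A_i\otimes \sf F).$$
For $p>0$ each term is zero, which combined with (\ref{eq:cohdescent}) yields the vanishing of $H^p(\bg/\fb,\Ff)$. For $p=0$ the colimit inherits the good filtration with the asserted associated graded, proving part (1). For part (2), the further hypothesis $H^0(\fb,\sf F\otimes \nabla_\mu)=0$ kills every layer of this good filtration, so $H^0(\bg/\fb,\Ff)=0$.

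The main obstacle I anticipate is the bookkeeping in identifying the $\bg$-module structure on $H^0(\fb,A_i\otimes \sf F)$: one needs to check that $\rho_l$ descends through the cohomology of $\rho_r$ and commutes with the connecting maps of the long exact sequence, so that the filtration produced on $H^0(\bg/\fb,\Ff)$ really is a good filtration of $\bg$-modules and not merely of $\sk$-modules. This is routine but must be handled carefully, as must the fact that the good filtration of $\sk[\bg]$ is countable but typically infinite, so the statement of a ``good filtration'' for the colimit must be read in the sense allowed by the definition of Section \ref{sec:rappelB-mod}.
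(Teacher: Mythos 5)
Your proposal is correct and follows essentially the same route as the paper's proof: identify $H^p(\bg/\fb,\Ff)$ with $H^p(\fb,\sk[\bg]\otimes \sf F)$ via (\ref{eq:cohdescent}), use the good filtration of $\sk[\bg]$ with layers $\nabla_\lambda\otimes\nabla_{-w_0\lambda}$, factor $\nabla_\lambda$ out of the $\fb$-cohomology since $\fb$ acts through $\rho_r$, and combine the long exact sequences with commutation of cohomology and filtered colimits (\cite[I Lemma 4.17]{Jan}). The only difference is presentational (you run the induction on $i$ before passing to the colimit, whereas the paper interchanges the colimit with cohomology first), which changes nothing.
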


\begin{proof}
\

(1) By (\ref{eq:cohdescent}), ${H}^{p}({ \bg/\fb},\Ff)={H}^p({\fb},\sk[{ \bg}]\otimes { F})$, 
and using the good filtration on $\sk[ \bg]$ we obtain 
${H}^p({\fb},\sk[{ \bg}]\otimes { F})={H}^p({\fb},\varinjlim A_i\otimes { F})={H}^p({\fb},\varinjlim (A_i\otimes { F}))$, 
because direct limits commute with tensoring with finitely generated projective $\sk$-modules. 

By \cite[I. Lemma 4.17]{Jan}, ${H}^p({\fb},\varinjlim (A_i\otimes { F}))=\varinjlim{H}^p({\fb}, A_i\otimes { F})$. 
Now ${H}^p({\fb}, (A_i/A_{i-1})\otimes { F})={H}^p({\fb},\nabla _{\lambda}\otimes \nabla _{-w_0\lambda}\otimes { F})=\nabla _{\lambda}\otimes {H}^p({\fb},\nabla _{-w_0\lambda}\otimes { F})=0$ for $p>0$, as ${\fb}$ acts on $\sk[ \bg]$ 
by way of $\rho _r$, and ${H}^p({\fb},\nabla _{-w_0\lambda}\otimes { F})=0$ for $p>0$ by the assumption; the long cohomology sequence then gives both statements.

(2) This follows from the preceding item.
\end{proof}

\begin{theorem}\label{th:highInd}
Let $\lambda,\mu\in X(\bt)$.  Then
\begin{enumerate}
\item $\RR^i\ind_{\fb}^ \bg (P(\lambda)\otimes Q(\mu))=0$ for $i>0$.
\item $\ind_{\fb}^ \bg \rule{0pt}{1.1em}(P(\lambda)\otimes Q(\mu))$ has a good filtration.
\end{enumerate}
\end{theorem}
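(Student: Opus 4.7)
The plan is to derive both statements as an immediate application of Lemma \ref{lem:cohdescentlemma} to the $\fb$-module $\sf F := P(\lambda)\otimes Q(\mu)$, using Corollary \ref{cor:PQnabla} to verify the hypothesis of that lemma.

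First I would observe that $\sf F = P(\lambda)\otimes Q(\mu)$ is finitely generated and free over $\sk$ (both factors are finitely generated and projective over $\sk$, and over $\Z$ by Section~2.11 we use the integral forms $P(\lambda)_\Z$, $Q(\mu)_\Z$). So the hypotheses on $\sf F$ in Lemma \ref{lem:cohdescentlemma} are satisfied. Next, the hypothesis
\[
H^p(\fb,\,{\sf F}\otimes \nabla_\nu)=0 \quad\text{for all } p>0 \text{ and all } \nu\in X(\bt)
\]
needs to be checked. If $\nu\notin X(\bt)_+$, then $\nabla_\nu=0$ and there is nothing to show. If $\nu\in X(\bt)_+$, this is precisely the content of Corollary \ref{cor:PQnabla}: $H^p(\fb,\,P(\lambda)\otimes Q(\mu)\otimes \nabla_\nu)=0$ for $p>0$.

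Applying Lemma \ref{lem:cohdescentlemma}(1) to $\sf F$ and recalling the identification $\RR^p\ind_{\fb}^{\bg}({\sf F})=H^p(\bg/\fb,\Ll({\sf F}))$ from (\ref{eq:cohdescent}) immediately gives (1), namely $\RR^i\ind_{\fb}^\bg(P(\lambda)\otimes Q(\mu))=0$ for $i>0$. Moreover, the same application of Lemma \ref{lem:cohdescentlemma}(1) gives that $\ind_{\fb}^{\bg}(P(\lambda)\otimes Q(\mu))=H^0(\bg/\fb,\Ll({\sf F}))$ admits a good filtration with associated graded
\[
\bigoplus_{\nu\in X(\bt)_+}\nabla_{\nu}\otimes H^0(\fb,\,\nabla_{-w_0\nu}\otimes P(\lambda)\otimes Q(\mu)),
\]
which is precisely (2).

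Since the proof is just a direct invocation of Corollary \ref{cor:PQnabla} and Lemma \ref{lem:cohdescentlemma}, there is no real obstacle over a field. The only point requiring mild care is the case $\sk=\Z$: one works with the integral forms $P(\lambda)_\Z$, $Q(\mu)_\Z$ and either verifies Corollary \ref{cor:PQnabla} directly over $\Z$ or transfers the field-case conclusion using the Universal Coefficient Theorem \ref{th:universal coefficients} (the higher $\ind$-vanishing over $\Z$ follows because $\sf F$ is $\Z$-free, so base change to any residue field preserves the cohomology and yields flatness of $H^0(\fb,{\sf F}\otimes\nabla_\nu)$ together with vanishing of $H^{\geq 1}$).
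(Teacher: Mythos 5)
Your proposal is correct and is essentially identical to the paper's own proof: Theorem \ref{th:highInd} is proved there by combining Corollary \ref{cor:PQnabla} with the identification (\ref{eq:cohdescent}) and Lemma \ref{lem:cohdescentlemma}, exactly as you do. Your closing remark on $\sk=\Z$ likewise matches the paper's treatment (Theorem \ref{th:highIndZ}), which notes that the argument works directly over $\Z$ or via the Universal Coefficient Theorem.
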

\begin{proof}
Corollary \ref{cor:PQnabla} also holds when $\sk=\Z$, by the Universal coefficient Theorem \ref{th:universal coefficients}. The results then  
 follow from (\ref{eq:cohdescent}) and Lemma \ref{lem:cohdescentlemma} above.
\end{proof}

\begin{corollary}\label{cor:P-Q_pairing}
 $\langle[\Ll(P(\lambda))],[\Ll(Q(\mu))]\rangle=\ind_{\fb}^ \bg (P(\lambda)\otimes Q(\mu))$.\qed
\end{corollary}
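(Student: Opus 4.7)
My plan is to reduce the pairing to a single term via the vanishing provided by Theorem \ref{th:highInd}. First, I would unpack the definition of the pairing: by the formula in Section \ref{subsec:alphabet},
\[
\langle[\Ll(P(\lambda))],[\Ll(Q(\mu))]\rangle=\sum_{i}(-1)^i\Char\bigl(H^i(\bg/\fb,\Ll(P(\lambda))\otimes\Ll(Q(\mu)))\bigr).
\]
The first step is to identify the tensor product of sheaves with the sheaf associated to the tensor product of modules. Since the associated sheaf construction $M\mapsto\Ll(M)=\bg\times^\fb M$ is monoidal with respect to $\otimes_\sk$ at the level of $\fb$-modules, one has
\[
\Ll(P(\lambda))\otimes_{\Oo_{\bg/\fb}}\Ll(Q(\mu))=\Ll(P(\lambda)\otimes_\sk Q(\mu)).
\]

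Next I would invoke the cohomological descent formula (\ref{eq:cohdescent}) from Section \ref{subsec:cohdesc}, which gives
\[
H^i(\bg/\fb,\Ll(P(\lambda)\otimes Q(\mu)))=\RR^i\ind_{\fb}^{\bg}\bigl(P(\lambda)\otimes Q(\mu)\bigr).
\]
At this point the higher terms in the alternating sum are controlled by Theorem \ref{th:highInd}, part (1), which asserts precisely that $\RR^i\ind_{\fb}^{\bg}(P(\lambda)\otimes Q(\mu))=0$ for $i>0$. Hence only the $i=0$ contribution survives, and the pairing collapses to
\[
\langle[\Ll(P(\lambda))],[\Ll(Q(\mu))]\rangle=\Char\bigl(\ind_{\fb}^{\bg}(P(\lambda)\otimes Q(\mu))\bigr).
\]

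Finally, I would identify the formal character of $\ind_{\fb}^{\bg}(P(\lambda)\otimes Q(\mu))$ with its class in $R(\bg)\subset R(\bt)$, under the convention that a $\bg$-module is identified in $R(\bt)$ with its $\bt$-character. The right hand side of the claim is to be read in exactly this sense, so the identity holds on the nose. There is no real obstacle here: the content of the corollary is a direct packaging of Theorem \ref{th:highInd}; the only points requiring (minimal) care are the monoidality of $\Ll(-)$ and the implicit identification of $\ind_{\fb}^{\bg}(-)$ with its character in $R(\bt)$, neither of which presents any difficulty.
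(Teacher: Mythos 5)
Your argument is exactly the one the paper intends (the corollary is stated with an immediate \qed after Theorem \ref{th:highInd}): unwind the pairing, use monoidality of $\Ll(-)$ and the descent isomorphism (\ref{eq:cohdescent}) to identify $H^i(\bg/\fb,-)$ with $\RR^i\ind_{\fb}^{\bg}$, kill the higher terms by Theorem \ref{th:highInd}(1), and read the surviving $i=0$ term as a character in $R(\bt)$. No gaps; your remark about the implicit identification of the induced module with its class in $R(\bt)$ is the right way to read the statement.
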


\begin{theorem}\label{th:indSteinberg_vanishing}
Let $v,w\in W$. If $\sk=\cC$, then
\begin{enumerate}
\item
$\ind_{\fb}^ \bg (P(-e_v)\otimes Q(e_w))$ vanishes unless $w\leq v$.
\item
$\ind_{\fb}^ \bg \rule{0pt}{1.1em}(P(-e_v)\otimes Q(e_v))=\sk$.
\end{enumerate}
\end{theorem}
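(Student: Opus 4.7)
The plan is to combine Corollary~\ref{cor:P-Q_pairing} with the triangularity of the transition matrices in Theorem~\ref{th:triang}, using the duality between the Schubert basis and the opposite Bruhat cell basis of $\sK_{\bt}(\bg/\fb)$ attributed to Knutson.

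First, by Corollary~\ref{cor:P-Q_pairing}, the class of $\ind_{\fb}^{\bg}(P(-e_v)\otimes Q(e_w))$ in $R(\bt)$ is computed by the pairing
\[
\langle [\tP_v],[\tQ_w]\rangle \in R(\bt),
\]
where $\tP_v=\Ll(P(-e_v))$ and $\tQ_w=\Ll(Q(e_w))$. Moreover, by Theorem~\ref{th:highInd}, this induced module has a good filtration, so it is determined up to isomorphism by its formal character as a $\bg$-module. Thus it suffices to compute the pairing above in $R(\bt)$.

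Next, I would expand both classes in the appropriate bases of $\sK_{\bt}(\bg/\fb)$ (here we work over $\sk=\cC$ as in the hypothesis). By definition of $\alpha_{vw}$ and $\beta_{vw}$ in Section~\ref{subsec:alphabet},
\[
[\tP_v]=\sum_{z\in W}\beta_{vz}[\Oo_{\fX^+_z}], \qquad [\tQ_w]=\sum_{y\in W}\alpha_{wy}[\Oo_{\fX^y}(-\partial\fX^y)].
\]
The Knutson orthogonality from \cite{GrahamKumar} gives $\langle [\Oo_{\fX^+_z}],[\Oo_{\fX^y}(-\partial\fX^y)]\rangle=\delta_{zy}$, so bilinearity of the pairing yields
\[
\langle [\tP_v],[\tQ_w]\rangle=\sum_{z\in W}\beta_{vz}\,\alpha_{wz}.
\]

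Now I apply Theorem~\ref{th:triang}: $\beta_{vz}=0$ unless $vw_0\leq z$, and $\alpha_{wz}=0$ unless $z\leq ww_0$. For the sum to be nonzero one therefore needs some $z$ with $vw_0\leq z\leq ww_0$, which forces $vw_0\leq ww_0$, equivalent by Lemma~\ref{lew0reverse} to $w\leq v$. This proves part~(1). For part~(2), take $w=v$: the only admissible index is $z=vw_0$, and Theorem~\ref{th:triang}(3) gives $\beta_{v,vw_0}=[\sk_{-ve_v}]$ and $\alpha_{v,vw_0}=[\sk_{ve_v}]$, so
\[
\langle [\tP_v],[\tQ_v]\rangle=[\sk_{-ve_v}]\cdot[\sk_{ve_v}]=[\sk_0]=1 \text{ in } R(\bt).
\]
Since $\ind_{\fb}^{\bg}(P(-e_v)\otimes Q(e_v))$ has a good filtration and its formal character equals $1$, the only possibility is that it is isomorphic to the trivial $\bg$-module $\sk=\nabla(0)$.

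The only subtle point I foresee is making sure the signs and the identification $\langle [\Oo_{\fX^+_z}],[\Oo_{\fX^y}(-\partial\fX^y)]\rangle=\delta_{zy}$ are imported from \cite{GrahamKumar} with the right conventions; our $\fb$ is the negative Borel, so as the authors warned in Section~\ref{subsec:reductive}, one must convert carefully between their setup and ours. Everything else is a formal manipulation of the two bases, and the cohomological vanishing needed to pass from the Euler characteristic back to a statement about $\ind$ itself is already furnished by Theorem~\ref{th:highInd}.
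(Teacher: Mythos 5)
Your proposal is correct and follows essentially the same route as the paper: the paper's proof likewise expands $[\tP_v]$ and $[\tQ_w]$ in the two Knutson-orthogonal bases, reduces the pairing to $\sum_{y}\alpha_{wy}\beta_{vy}$, and concludes by Theorem \ref{th:triang}, with Theorem \ref{th:highInd} and Corollary \ref{cor:P-Q_pairing} supplying the passage from Euler characteristic to the induced module itself. No gaps.
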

\begin{proof}
\begin{gather*}
    \ind_{\fb}^ \bg (P(-e_v)\otimes Q(e_w))=
    \langle[\tP_v],[\tQ_w]\rangle=\\\sum_{y, z\in W}\alpha_{wz}\beta_{vy}\langle[\Oo_{X^+_y}],[\Oo_{X^z}(-\partial X^z)]\rangle
=\sum_{y\in W}\alpha_{wy}\beta_{vy}.
\end{gather*}
Now apply Theorem \ref{th:triang}.
\end{proof}
We wish to get rid of the restriction $\sk=\cC$.
As Euler characteristics are robust, this provides no difficulty:

The $\ind_{\Bm}^{\Gm} (P(\lambda)_\Z\otimes Q(\mu)_\Z)$ are 
free $\Z$-modules of finite rank, so
by Theorem \ref{th:highInd} and the Universal coefficient Theorem \ref{th:universal coefficients} we get that Theorem \ref{th:indSteinberg_vanishing} and part 1 of Theorem \ref{th:highInd} generalize from $\sk=\cC$
to $\Z$, which in turn implies the case where $\sk$ is a field, again by the Universal coefficient Theorem.

Thus

\begin{theorem}\label{th:indPQ}%Let $\sk$ be a field or $\Z$.
Let $v,w\in W$.
\begin{enumerate}
\item $\RR^i\ind_\fb^\bg (P(-e_v)\otimes Q(e_v))=\begin{cases}\sk &\text{if $i=0$}\\
0&\text{else.}\end{cases}$
\item If $w\not\leq v$ then $\RR^i\ind_\fb^\bg (P(-e_v)\otimes Q(e_w))=0$ for all $i$.
\end{enumerate}
\end{theorem}

\begin{corollary}\label{cor:left orthogonal}
Let $M, N\in \rep( \bg)$ with $M$, $N$ flat over $\sk$.
\begin{enumerate}
\item $\Ext^i_{\fb}(M\otimes P(-e_v)^*,N\otimes Q(e_v))=\Ext^i_ \bg (M,N) $ for all $i$.
\item If $w\succ v$ then $\Ext^i_{\fb}(M\otimes P(-e_v)^*,N\otimes Q(e_w))=0$ for all $i$.
\end{enumerate}
\end{corollary}
\begin{proof}
$(1).$ We have a spectral sequence  \cite[I Proposition 4.5]{Jan}
$$\Ext^i_ \bg (M\otimes N^*,\RR^j\ind_{\fb}^ \bg (P(-e_v)\otimes Q(e_w)))\Rightarrow \Ext^{i+j}_{\fb}(M\otimes N^*,P(-e_v)\otimes Q(e_w)).$$
By Theorem \ref{th:indPQ}, $\RR^j\ind_{\fb}^ \bg (P(-e_v)\otimes Q(e_w))=0$ for $j>0$. Thus, the above spectral sequence degenerates, and with Theorem \ref{th:indPQ}  this gives
$\Ext^{i}_{\fb}(M\otimes N^*,P(-e_v)\otimes Q(e_v))=\Ext^i_ \bg (M\otimes N^*,\RR^0\ind_{\fb}^ \bg (P(-e_v)\otimes Q(e_v)))=
\Ext^i_ \bg (M\otimes N^*,\sk)=\Ext^i_ \bg (M,N)$.

$(2).$ If $w\succ v$ then $w\not\leq v$, and the above spectral sequence still degenerates and $\Ext^i_{\fb}(M\otimes P(-e_v)^*,N\otimes Q(e_w))=0$ for all $i$.
\end{proof}

\begin{remark}\label{rem:fr left orthogonal}
If $\sk=\Z$, then this proof needs the assumption that $M$, $N$ are flat. But see  Corollary \ref{cor:Z left orthogonal} below.
\end{remark}

%--------------------------------------------------------------
\section{\unboldmath Rappels: triangulated categories}\label{sec:rappel_triangulated}
%--------------------------------------------------------------

Recall that $\sk$ is a field or $\Z$. Given a $\sk$-linear triangulated category $\D$, equipped with a shift functor $[1]\colon \D \rightarrow \D$ and two
objects $A,B\in \D$, we let $\Hom ^{\bullet}_{\D}(A,B)$ denote 
the graded $\sk$-module $\bigoplus _{i\in \mathbb Z}{\Hom} _{\D}(A,B[i])$. A full triangulated subcategory $\A \subset \D$ is a full subcategory which is closed under shifts and taking cones.

A strictly full subcategory is a full subcategory that contains with every object also the objects isomorphic to it.

Given a smooth scheme $X$ over $\sf k$, we write $\Dd ^b(X)$ for the bounded derived category of coherent sheaves on $X$.

%--------------------------------------------------------------
\subsection{\unboldmath Triangulated hull }
%--------------------------------------------------------------

\begin{definition}[Triangulated hull]
Given a set $S$ of objects of
a triangulated category $\mathcal D$, its triangulated hull $\hull(S)$ is the smallest strictly full triangulated subcategory that contains $S$.
%The triangulated hull of a {union of subcategories} is defined similarly.
\end{definition}

%--------------------------------------------------------------
\subsection{\unboldmath Thick hull }
%--------------------------------------------------------------
\begin{definition}[Thick] A triangulated subcategory of
a triangulated category is called thick if it is closed under taking direct summands of objects.
\end{definition}

\begin{definition}[Thick hull]
Given a  set $S$ of objects of
a triangulated category $\mathcal D$, its thick hull $\hull(S)_{\oplus}$ is the smallest strictly full triangulated subcategory that contains $S$ and is thick.
\end{definition}

%--------------------------------------------------------------
\subsection{\unboldmath Admissible subcategories}\label{subsec:amdsubcat}
%--------------------------------------------------------------

We follow the exposition of  \cite[Section 2.1]{Ef}.
Let $\N\subset \D$ be a full triangulated subcategory. The right orthogonal to $\N$ is the full subcategory $\N ^{\perp}\subset \D $ consisting of all objects $X$ such that $\Hom _{\D}(Y,X)=0$ for any $Y\in \N$. The left orthogonal $^{\perp}\N$ is defined analogously. The orthogonals are also triangulated subcategories.

\begin{definition}\label{def:admissible}
{A full triangulated subcategory $\A$ of $\D$ is called
{\it right admissible} if the inclusion functor $\A\hookrightarrow \D$ has a right adjoint. Similarly, $\A$ is called {\it left
  admissible} if the inclusion functor has a left adjoint. Finally,
$\A$ is {\it admissible} if it is both right and
left admissible.}
\end{definition}

If the subcategory $\N$ is right (resp., left) admissible, then $\N$ is thick and one can consider the Verdier localization functor with respect to $\N$. 

\begin{proposition}\cite[Proposition 3.2.8]{Kr}\label{prop:thick version}
Let $\N\subset \D$ be a thick subcategory. Then the following are equivalent:

\begin{itemize}

\item[(1)] The inclusion functor $\N\subset \D$ has a right adjoint (\emph{i.e.}, $\N$ is right admissible in $\D$).

\item[(2)] For each object $b\in \D$ there exists an exact triangle $a\rightarrow b\rightarrow c$ with $a\in \N,c\in \N ^{\perp}$.

\item[(3)] The canonical functor $\D \rightarrow \D /\N$ has a right adjoint.

\item[(4)] The composite $\N ^{\perp}\hookrightarrow \D \rightarrow \D/\N$ is a triangulated equivalence.

\end{itemize}

\end{proposition}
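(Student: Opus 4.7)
The plan is to prove the four equivalences by assembling the cycle $(1) \Leftrightarrow (2)$, $(1) \Rightarrow (3) \Rightarrow (4) \Rightarrow (1)$. The equivalence $(1) \Leftrightarrow (2)$ is immediate from Lemma \ref{lem:notorious} specialized to the thick subcategory $\N$, so no new argument is needed for that step.

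For $(1) \Rightarrow (3)$: given a right adjoint $i^!$ to $\N \hookrightarrow \D$, apply the characterization in (2) to produce, functorially in $b$, a distinguished triangle $i^!(b) \to b \to c_b$ with $c_b \in \N^\perp$. The assignment $b \mapsto c_b$ defines a triangulated functor $\D \to \N^\perp$ that inverts every morphism whose cone lies in $\N$ (the cone of $c_b \to c_{b'}$ then lies in both $\N$ and $\N^\perp$, hence is zero). By the universal property of the Verdier quotient this functor factors through $Q$, yielding $\widetilde R \colon \D/\N \to \N^\perp$. Composing with the inclusion gives $R \colon \D/\N \to \D$, and the natural identification $\Hom_\D(b, R(x)) \cong \Hom_\D(c_b, R(x)) \cong \Hom_{\D/\N}(Q(b), x)$ (the first using $\Hom(\N, \N^\perp) = 0$, the second the definition of morphisms in the localization with target in $\N^\perp$) exhibits $R$ as right adjoint to $Q$.

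For $(3) \Rightarrow (4)$ and $(4) \Rightarrow (1)$: if $R$ is right adjoint to $Q$, then for $n \in \N$ and $x \in \D/\N$ one has $\Hom_\D(n, R(x)) \cong \Hom_{\D/\N}(Q(n), x) = 0$, so $R$ corestricts to $R' \colon \D/\N \to \N^\perp$; moreover the counit $QR \to \id$ is an isomorphism, a standard feature of right adjoints to Verdier quotient functors, and hence $R$ (so also $R'$) is fully faithful. This makes $R'$ a quasi-inverse to the composition $\N^\perp \hookrightarrow \D \to \D/\N$, which is (4). Conversely, a quasi-inverse $S$ to that composition furnishes, for each $b \in \D$, an object $SQ(b) \in \N^\perp$ with an isomorphism $Q(b) \cong Q(SQ(b))$; since $SQ(b) \in \N^\perp$, this isomorphism is realized by a genuine morphism $b \to SQ(b)$ in $\D$, and completing to a triangle $a \to b \to SQ(b)$ yields $Q(a) = 0$, so $a \in \N$ by thickness. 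This establishes (2), whence (1).

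The main obstacle is the standard but nontrivial fact, used both in $(1) \Rightarrow (3)$ and $(3) \Rightarrow (4)$, that for $c \in \N^\perp$ the localization map $\Hom_\D(-, c) \to \Hom_{\D/\N}(Q(-), Q(c))$ is a bijection; this follows from the calculus of fractions by observing that any roof $\cdot \xleftarrow{s} \cdot \to c$ with $\cone(s) \in \N$ admits a canonical splitting, since $\Hom(\cone(s), c) = 0$ forces every such $s$ to be inverted by $\Hom(-, c)$. Once this is in place, the other steps amount to bookkeeping with universal properties of adjoints and Verdier quotients.
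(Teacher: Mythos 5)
The paper does not prove this proposition at all; it is quoted verbatim from Krause's book \cite[Proposition 3.2.8]{Kr}, so there is no in-text argument to compare against. Judged on its own, your proof is correct and is the standard argument one finds in the references. The cycle $(1)\Leftrightarrow(2)$, $(1)\Rightarrow(3)\Rightarrow(4)\Rightarrow(1)$ works, and you correctly isolate the one computation everything hinges on, namely that $\Hom_\D(b,c)\to\Hom_{\D/\N}(Qb,Qc)$ is bijective whenever $c\in\N^\perp$, which follows from the calculus of fractions exactly as you say. You also use thickness in precisely the right (and only necessary) place: in $(4)\Rightarrow(1)$, the cone $a$ of $b\to SQ(b)$ is killed by $Q$, and $\ker Q$ equals $\N$ rather than merely its thick closure because $\N$ is assumed thick.

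Two small points deserve slightly more care in a polished writeup. First, in $(1)\Rightarrow(3)$ you assert that $b\mapsto c_b$ is functorial; cones are not functorial in a triangulated category, so you should note that the induced map $c_b\to c_{b'}$ exists and is \emph{unique} because $\Hom(ii^!b,c_{b'})=\Hom(ii^!b[1],c_{b'})=0$ (equivalently, $b\mapsto c_b$ is the composite $j_*j^*$ with $j^*$ the left adjoint of $\N^\perp\hookrightarrow\D$ supplied by item (4) of Lemma \ref{lem:notorious}, so functoriality is automatic). Second, the "standard feature" that the counit $QR\to\id$ is invertible is not independent of your key fact: it is most cleanly deduced from it, since $\Hom_{\D/\N}(Qb,QRx)\cong\Hom_\D(b,Rx)\cong\Hom_{\D/\N}(Qb,x)$ for all $b$, and $Q$ is essentially surjective, so Yoneda gives that $\epsilon_x$ is an isomorphism. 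With those clarifications the argument is complete.
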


We have the following variation

\begin{lemma}\cite[Lemma 3.1]{Bondal}
\label{lem:notorious}
Let $\N\subset \D$ be a full triangulated subcategory, and consider its right orthogonal $\N ^{\perp}\subset \D$. Then the following are equivalent:

\begin{itemize}

\item[(1)]  The category $\D$ is generated by $\N$ and $\N ^{\perp}$, \emph{i.e.}
$\hull(\N ^{\perp}\cup\N)=\D$.

\item[(2)]  For each object $x\in \D$ there exists an exact triangle $b\rightarrow x\rightarrow c$ with $b\in \N,c\in \N ^{\perp}$.

\item[(3)] The embedding functor $i_{\ast}:\N\hookrightarrow \D$ has a right adjoint $i^!:\D\rightarrow \N$.

\item[(4)] $\N$ is thick and the embedding functor $j_{\ast}:\N ^{\perp}\hookrightarrow \D$ has a left adjoint $j^{\ast}:\D\rightarrow \N ^{\perp}$.

\end{itemize}

\end{lemma}
\begin{proof}First assume (2).
Taking for $x$ a summand of an object of $\N$ one gets a triangle $b\to x\to c$ with $\Hom_\D(b,c)=\Hom_\D(x,c)=0$,
hence $\Hom_\D(c,c)=0$, hence $b\cong x$. We conclude that (2) implies that $\N$ is thick.
That (3)$\Rightarrow$(2)$\Leftrightarrow$(1) is shown in
 \cite{Bondal}. So in all four cases $\N$ is thick and we are in the situation of Proposition \ref{prop:thick version}.
\end{proof}

 In \cite[Lemma 3.1]{Bondal} the thickness is missing, but it is necessary because of the following example.

\begin{example}(Henning Krause)
Let $\N$ be {non-thick with thick hull $\D$}. Thus $\N$ is dense \cite[Definition 1.4]{Tho2}, but not thick. Then $\N^\perp=0$ and 
the embedding functor $j_{\ast}:\N ^{\perp}\hookrightarrow \D$ has a left adjoint $j^{\ast}:\D\rightarrow \N ^{\perp}$. But $\hull(\N ^{\perp}\cup\N)\neq \D$.
\end{example}

\begin{lemma}\label{lem:one way}
Let $\Mmm$, $\N$ be full triangulated subcategories of $\D$ with $\Mmm\subset \N ^{\perp}$. If $\hull(\Mmm\cup\N)=\D$,
then $\Mmm= \N ^{\perp}$.
\end{lemma}
\begin{proof}Clearly $\hull(\N ^{\perp}\cup\N)=\D$.
By Lemma \ref{lem:notorious}  we have a left adjoint $j^*$ of the inclusion of $\N^\perp$ into $\D$.
 Then $j^*$ is the identity on $\N^\perp$ and it vanishes on $\N$. 
We thus have that
\begin{equation}\label{eq:image is perp}
j^*(\D)=\N^\perp,
\end{equation}
\begin{equation}
j^*(\N)=0,
\end{equation}
\begin{equation}
j^*(\Mmm)= \Mmm.
\end{equation}
Now consider the full subcategory of $\D$ whose objects are the $X$ with $j^*(X)\in \Mmm$.
It is triangulated and contains both $\N$ and  $\Mmm$. So it contains 
$\hull(\Mmm \cup \N)$ and must be all of $\D$.
In other words, $j^*(\D)= \Mmm$. Taken together with equation (\ref{eq:image is perp}) this shows
\begin{equation}
\N^\perp= \Mmm.
\end{equation}
\end{proof}

\begin{lemma}\label{lem:hullofadmissible}
 If $\N$, $\C$
are admissible subcategories of $\D$ with $\N \subset \C^\perp$, then  $\hull(\N\cup\C)$ is also admissible in $\D$.
\end{lemma}
\begin{proof}
Let us show that the inclusion functor $\hull(\N\cup\C)\subset\D$
has a right adjoint. Consider an object $b\in\D$ as in part (2) of Proposition \ref{prop:thick version}. We have an exact triangle $c\to b\to a$ with 
$c\in \C$, $a\in\C^\perp$.
Next we have an exact triangle 
$n\to a\to m$ with $n\in \N$,
$m\in \N^\perp$. As both $n$ and $a$ are in $\C^\perp$, so is $m$. We see that $m\in\hull(\N\cup\C)^\perp$. 
The composite map $b\to a\to m$ can be completed to an exact triangle
$x\to b\to m$. By the octahedral axiom $x$ fits into an exact triangle $c\to x\to n$. It follows that $x\in \hull(\N\cup\C)$ and $x\to b\to m$ is as in part (2) of Proposition \ref{prop:thick version}.

Dually, the inclusion functor $\hull(\N\cup\C)\subset\D$
has a left adjoint.
\end{proof}
%--------------------------------------------------------------
\subsection{\unboldmath Semi-orthogonal decompositions}\label{subsec:sod}
%--------------------------------------------------------------

If a full triangulated subcategory $\N\subset \D$ is right admissible then by Lemma \ref{lem:notorious}, 
every object $X\in \D$ fits into a  exact triangle
\begin{equation}
 Y\to X\to Z
\end{equation}
with $Y\in \N$ and $Z\in \N^{\perp}$. 
This exact triangle
is unique 
(up to unique isomorphism) by the proof of \cite[Lemma 3.1]{Bondal}.
One then
says that there is a semi-orthogonal decomposition $\langle \N^{\perp}, \ \N\rangle $ of $\D$ into
the subcategories $\N^{\perp}$, $\N$. More generally,
assume given a sequence of full triangulated subcategories $\N _1,\dots,\N_n \subset \D$. 
%Denote $\langle \N _1,\dots,\N _n\rangle:= \hull (\N _1,\dots,\N _n)$, the triangulated %subcategory of $\D$ generated by $\N _1,\dots,\N _n$

\begin{definition}\label{def:semdecomposition-filtration}
{A semi-orthogonal decomposition $\D=\langle \N _1,\dots,\N _n\rangle$ of a triangulated category $\D$ is a sequence of full triangulated subcategories $\N _1,\dots,\N _n$ in $\D$ such that $\N _i\subsetq \N_j^{\perp}$ for $1\leq i < j\leq n$ and $\hull(\N _1,\dots,\N _n)=\D$.
One may show with Lemma \ref{lem:notorious} that
for every object $X\in \D$ there then exists a chain of morphisms in $\D$,
\[
\xymatrix@C=.5em{
0_{\ } \ar@{=}[r] & X_n \ar[rrrr] &&&& X_{n-1} \ar[rrrr] \ar[dll] &&&& X_{n-2}
\ar[rr] \ar[dll] && \ldots \ar[rr] && X_{1}
\ar[rrrr] &&&& X_0 \ar[dll] \ar@{=}[r] &  X_{\ } \\
&&& A_{n} \ar@{->}^{[1]}[ull] &&&& A_{n-1} \ar@{->}^{[1]}[ull] &&&&&&&& A_1\ar@{->}^{[1]}[ull] 
}
\]
such that a cone $A_k$ of the morphism $X_k\rightarrow X_{k-1}$ belongs to $\N _k$ for $k=1,\dots ,n$.}
More generally, if one has a finite totally ordered set $I=\{w_1,\dots,w_n\}$,
one may define a semi-orthogonal decomposition indexed by $I$ by replacing $i$ with $w_i$ in the above.
\end{definition}

%--------------------------------------------------------------
\subsection{\unboldmath Exceptional collections}\label{subsec:FEC}
%--------------------------------------------------------------

Recall that $\sk$ is a field or $\Z$. Let $\kMod$ denote the category of $\sk$-modules, and $\kmod$ the subcategory of finitely generated $\sk$-modules.
Exceptional collections in $\sf k$-linear triangulated categories are a special case of semi-orthogonal decompositions with each component of the decomposition being equivalent to $\Dd ^b({\kmod})$. 

\begin{definition}\label{def:exceptcollection}Let $\D$ be a $\sf k$-linear triangulated category.

An object $E \in \D$ of  $\D$ is said to be exceptional if there is an isomorphism of graded $\sf k$-algebras 
\begin{equation}
\Hom _{\D}^{\bullet}(E,E) = {\sf k}.
\end{equation}
A collection of exceptional objects $(E_0,\dots,E_n)$ in $\D$ is called 
exceptional if for $1 \leq i < j \leq n$ one has
\begin{equation}
\Hom _{\D}^{\bullet}(E_j,E_i) = 0.
\end{equation}
The collection $(E_0,\dots,E_n)$ in $\D$ is said to be {\it full} if 
$\hull(E_0,\dots,E_n)^{\perp} = 0.$
\end{definition}
%--------------------------------------------------------------
\subsection{\unboldmath Admissible subcategories from exceptional collections}
%--------------------------------------------------------------

\begin{lemma}\label{lem:hullofexceptional}
Let $\D=\Dd^b (\bg/\bp)$ and $X\in \D$ be an exceptional object.
Then $\hull(X)$ is admissible in $\D$.
\end{lemma}
\begin{proof}Observe that 
$\hull(X)$ is equivalent to $\Dd^b({\kmod})$ through $F\mapsto \RHom_\D(X,F)$ with inverse $E\mapsto E\otimes_\sk^L X$.
   For $E\in \Dd ^b(\kmod)$ we put $E^*=\RHom_{\Dd^b({\kmod}) }(E,\sk)$.
   We claim that the left adjoint of the inclusion functor $\tau:\hull(X)\subset\D$ is $$\sigma:F\mapsto (\RHom_\D(F,X))^*\otimes_\sk^L X.$$ 
We have
\begin{gather*}
\RHom_{\hull(X)}( (\RHom_\D(F,X))^*\otimes_\sk^L X,E\otimes_\sk^L  X )\cong\\
\RHom_{\Dd^b({\kmod}) }( (\RHom_\D(F,X))^*,E)\cong
E\otimes_\sk^L\RHom_{\D}(F,  X)\cong\\
\RHom_{\D}(F,E\otimes_\sk^L  X),
\end{gather*}
for $E\in \Dd ^b(\kmod)$, $F\in \D$. 

   Similarly, the right adjoint of the inclusion functor $\tau$ is $$F\mapsto \RHom_\D(X,F)\otimes_\sk^L X.$$
\end{proof}
Assume given an exceptional collection $(E_0,\dots,E_n)$ in $\D=\Dd^b (\bg/\bp)$. 
Using Lemmas  \ref{lem:hullofadmissible}, \ref{lem:hullofexceptional} one shows that the subcategory $\hull(E_0,\dots,E_n)\subsetq \D$ is admissible in $\D$, which is well known, but the argument is usually given over fields \cite[Theorem 3.2]{Bondal}. The argument over $\Z$ is basically the same.
If the collection is full, then ${\D}
= \hull(E_0,\dots,E_n)$.

%--------------------------------------------------------------
\section{\unboldmath Generating the categories \unboldmath$\rep({\fb})$ and $\Dd^b (\rep ({\fb}))$}\label{sec:B-generation}
%--------------------------------------------------------------
Recall that $\sk$ is a field or $\Z$.
%--------------------------------------------------------------
\subsection{\unboldmath The categories \unboldmath$\Dd^b (\rep ({\fb}))$ and $\Dd^b (\rep ( \bg))$}%-linear category}
\label{subsec:bounded-triang_cat}
%--------------------------------------------------------------

We  denote by $\Dd(\Rep ({\fb}))$ (resp., $\Dd(\Rep ( \bg))$) the unbounded derived category of $\Rep(\fb)$ (resp., of $\Rep ( \bg)$), 
and by $\Dd^b (\rep ({\fb}))$ (resp., $\Dd^b (\rep ( \bg))$) the bounded derived category of the smaller category $\rep(\fb)$ (resp., of $\rep ( \bg)$).

Let $\rep_\fr( \fb)$ denote the full subcategory of $\rep( \fb)$ consisting of the representations that are free over $\sk$.
Define $\rep_\fr( \bg)$
similarly.

We have the exact bifunctors:
$$-\;\otimes_\sk-:\rep_\fr( \fb)\times \rep_\fr( \fb)\to \rep_\fr( \fb)$$ 
and (the internal $\Hom _\sk$ on $\rep_\fr( \fb)$):
$$\Hom_\sk(-,-):\rep_\fr( \fb)\times \rep_\fr( \fb)\to \rep_\fr( \fb).$$ 
Mapping $M$ to $M^*$, see Lemma \ref{lem:dualmodule}, provides an anti-autoequivalence of $\rep_\fr( \fb)$.

\begin{proposition}[Resolution property]\label{prop:resolution}
    Let $ H$ be a flat affine group scheme over  $\sk$.
    Then for every finitely generated $ H$-module $N$ there is an exact sequence $$0\to L\to M\to N\to0$$ with the
    $ H$-modules $L$, $M$, finitely generated and free over $\sk$. 
\end{proposition}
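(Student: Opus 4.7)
The plan is to construct the desired surjection by embedding $N$ into a ``coinduced'' $H$-module, lifting along a free $\sk$-cover of that coinduction, and then cutting down to a finitely generated $H$-subcomodule.

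First, I would view $N$ as a $\sk[H]$-comodule via its coaction $\Delta_N : N \to N\otimes_\sk\sk[H]$, and endow the target with the $H$-action where $H$ acts trivially on the first factor and by the right regular representation on $\sk[H]$; call this $H$-module $N_{\triv}\otimes\sk[H]$. A one-line Sweedler-coassociativity calculation (equivalently, the adjunction between the forgetful functor and coinduction from the trivial subgroup) shows that $\Delta_N$ is an $H$-equivariant injection $N \hookrightarrow N_{\triv}\otimes\sk[H]$. Since $\sk$ is a PID and $N$ is finitely generated over $\sk$, I would next choose a $\sk$-linear surjection $\pi : F \twoheadrightarrow N$ from a finite-rank free $\sk$-module $F$ and tensor with $\sk[H]$ to obtain an $H$-equivariant surjection
\[
\tilde{\pi} := \pi \otimes \mathrm{id} \colon F \otimes_\sk \sk[H] \twoheadrightarrow N_{\triv}\otimes\sk[H].
\]

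Then, given $\sk$-module generators $n_1,\dots,n_k$ of $N$, I would pick lifts $p_i \in F\otimes\sk[H]$ with $\tilde{\pi}(p_i) = \Delta_N(n_i)$, and let $M \subset F\otimes\sk[H]$ be the $H$-subcomodule generated by $p_1,\dots,p_k$. By the local finiteness of rational modules (the natural extension of \cite[I.2.13]{Jan} over a Noetherian base, valid because $\sk[H]$ is flat over $\sk$), $M$ is finitely generated over $\sk$. Since $F\otimes\sk[H] \cong \sk[H]^{\mathrm{rank}(F)}$ is flat (and hence torsion-free) over the PID $\sk$, its submodule $M$ is torsion-free; being finitely generated and torsion-free over a PID, $M$ is then free of finite rank over $\sk$.

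Finally, the composite $M \hookrightarrow F\otimes\sk[H] \xrightarrow{\tilde\pi} N_{\triv}\otimes\sk[H]$ lands in $\Delta_N(N)\cong N$ and surjects onto $N$, because its image contains each $n_i$ and the $n_i$ generate $N$ over $\sk$. The kernel $L$ of $M \twoheadrightarrow N$ is a submodule of the finitely generated $\sk$-module $M$, hence finitely generated by Noetherianity of $\sk$, and a submodule of the free $\sk$-module $M$, hence free over the PID $\sk$. This yields the desired short exact sequence $0 \to L \to M \to N \to 0$. The one step requiring genuine care is the local finiteness assertion for subcomodules over the PID $\sk$ rather than a field; once this is cited or verified using the flatness of $\sk[H]$, the rest of the argument is essentially formal.
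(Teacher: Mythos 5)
Your argument is correct, but it is worth noting that the paper does not prove this proposition at all: it simply quotes it as a special case of \cite[Proposition 3]{Serre}, and what you have written is essentially a reconstruction of Serre's own proof of that proposition. The chain of steps --- embed $N$ into the cofree comodule $N_{\triv}\otimes\sk[H]$ via the coaction (injective by the counit axiom, equivariant by coassociativity), cover by $F\otimes\sk[H]$ with $F$ free of finite rank, lift generators, and cut down to a finitely generated subcomodule, which is then free over the PID because it is finitely generated and torsion-free --- is exactly the standard resolution-property argument. You correctly isolate the only non-formal ingredient, namely local finiteness of comodules over a non-field base; this is precisely what Serre establishes in \S1 of the cited paper (and it is \cite[I.2.13]{Jan} for flat group schemes over a Noetherian ring), so your appeal to it is legitimate rather than circular. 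One small point you elide: to see that $\tilde\pi(M)\subseteq\Delta_N(N)$ you should observe that $\tilde\pi^{-1}(\Delta_N(N))$ is a subcomodule containing the lifts $p_i$, hence contains $M$; that the preimage of a subcomodule under a comodule map is again a subcomodule uses the flatness of $\sk[H]$ over $\sk$ once more, which is available by hypothesis. With that remark, your proof is complete and self-contained, whereas the paper's is a bare citation; the trade-off is length versus reliance on the literature, and your version has the merit of making visible exactly where flatness and the PID hypothesis enter.
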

\begin{proof}
    This is a special case of \cite[Proposition 3]{Serre}.
\end{proof}

The Proposition implies in standard fashion that every
bounded complex in $\rep( \fb)$ is quasi-isomorphic to a bounded complex with objects in $\rep_\fr( \fb)$.
The above functors can thus be extended as exact bifunctors to the bounded derived category $\Dd ^b(\rep( \fb))$: 
$$
(-)\otimes_\sk (-): \Dd ^b(\rep( \fb))\times \Dd ^{b}(\rep( \fb))\rightarrow \Dd ^b(\rep( \fb))
$$
and 
$$\Hom_\sk(-,-):\Dd ^b(\rep( \fb))\times \Dd ^b(\rep( \fb))\to \Dd ^b(\rep( \fb)).$$ 
In other words, we are using that by the dual of \cite[Theorem 10.22, Remark 10.23]{Buehler},
 the bounded derived category of the abelian category $\rep(\fb)$ is equivalent to the bounded 
derived category of its exact subcategory $\rep_\fr(\fb)$.

We often write $\otimes_\sk $ as $\otimes$. We put $M^*=\Hom_\sk(M,\sk)$ for $M\in \Dd ^b(\rep( \fb))$.
By Lemma \ref{lem:dualmodule}, there is an isomorphism $\Hom _\sk(M, N)=M^{\ast}\otimes N$ for $M,N\in \Dd ^b(\rep( \fb))$. There is $(\Hom _\sk,\otimes _\sk)$-adjunction:
$$
\Hom _\sk (-\otimes _\sk-,-)=\Hom _\sk (-,\Hom _\sk(-,-)).
$$
One gets $$\Hom _\sk (L\otimes_\sk M,-)=\Hom _\sk(L,M^*\otimes_\sk-))$$
for $L$, $M\in \Dd ^b(\rep( \fb))$.

Let $\Dd_{\rep ({\fb})}(\Rep(\fb))$ denote the derived category of complexes of $\Rep(\fb)$ whose cohomology lie in $\rep ({\fb})$.
One defines $\Dd ^b_{\rep ({\fb})}(\Rep(\fb))$ similarly.
\begin{lemma}\label{lem:small_object_in_Rep(B)}
There is an equivalence of triangulated categories $\Dd^b (\rep ({\fb}))=\Dd ^b_{\rep ({\fb})}(\Rep(\fb))$. 
\end{lemma}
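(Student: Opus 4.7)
The plan is to show that the triangulated functor
$$\iota\colon \Dd^b(\rep(\fb)) \longrightarrow \Dd^b_{\rep(\fb)}(\Rep(\fb))$$
induced by the fully faithful exact inclusion $\rep(\fb)\hookrightarrow \Rep(\fb)$ is an equivalence. Two structural properties do all the work: (i) $\rep(\fb)$ is a Serre subcategory of $\Rep(\fb)$ (closed under subobjects, quotients and extensions, since $\sk$ is Noetherian, so a submodule of a finitely generated $\sk$-module is finitely generated); and (ii) every $M\in\Rep(\fb)$ is the filtered union of its subobjects belonging to $\rep(\fb)$. Over a field this is immediate from \cite[I 2.13]{Jan}, and over $\Z$ it follows from the resolution property (Proposition \ref{prop:resolution}) applied to each finitely generated $\sk$-submodule together with the flatness of $\sk[\fb]$.

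\textbf{Essential surjectivity.} Given $M^\bullet\in \Dd^b_{\rep(\fb)}(\Rep(\fb))$, represented by an honest bounded complex concentrated in degrees $[a,b]$, I construct by descending induction a quasi-isomorphic subcomplex $N^\bullet\hookrightarrow M^\bullet$ with all terms in $\rep(\fb)$. Since $H^b(M^\bullet)\in \rep(\fb)$, use (ii) to pick a finitely generated $N^b\subseteq M^b$ whose image in $H^b(M^\bullet)$ is all of $H^b(M^\bullet)$. Let $\widetilde M^{b-1}\subseteq M^{b-1}$ be the preimage of $N^b$; then $\widetilde M^{b-1}/\ker(d^{b-1})$ is a subobject of $N^b$ and hence lies in $\rep(\fb)$, so one may choose a finitely generated $N^{b-1}\subseteq \widetilde M^{b-1}$ that surjects onto $\widetilde M^{b-1}/\ker(d^{b-1})$. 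Iterate: at each step choose a finitely generated subobject which, modulo the kernel of the relevant differential, covers what is needed to match cohomology. In degree $a$ one must additionally enlarge $N^a$ so that its image hits all of $H^a(M^\bullet)$; this is possible by (ii) and keeps $N^a$ in $\rep(\fb)$ by Serre-closure. By construction, $N^\bullet\hookrightarrow M^\bullet$ induces an isomorphism on all cohomology groups.

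\textbf{Full faithfulness.} A morphism $N_1^\bullet\to N_2^\bullet$ in $\Dd^b(\Rep(\fb))$ between complexes in $\rep(\fb)$ is represented by a roof $N_1^\bullet \xleftarrow{q} L^\bullet \to N_2^\bullet$ with $q$ a quasi-isomorphism in $\Dd^b(\Rep(\fb))$. The cohomology of $L^\bullet$ equals that of $N_1^\bullet$, hence lies in $\rep(\fb)$; by the essential surjectivity step just established there is a quasi-isomorphism $L'^\bullet \xrightarrow{\sim} L^\bullet$ with $L'^\bullet\in \Dd^b(\rep(\fb))$. Refining the roof through $L'^\bullet$ exhibits the given morphism as coming from $\Dd^b(\rep(\fb))$. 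The same argument with a cone shows that two morphisms in $\Dd^b(\rep(\fb))$ that agree in $\Dd^b(\Rep(\fb))$ already agree in $\Dd^b(\rep(\fb))$.

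\textbf{Main obstacle.} The technical heart is the descending induction in essential surjectivity: one has to organise the choices so that the chosen finitely generated subobjects assemble into a genuine subcomplex whose inclusion is a quasi-isomorphism, not merely a complex whose terms are individually large enough. The trick is to enlarge $N^{i}$ \emph{after} fixing $N^{i+1}$ so as to cover both the generators needed to represent $H^i(M^\bullet)$ \emph{and} enough of $\ker(d^{i+1})\cap M^i$ to ensure that cycles map to cycles. Once this bookkeeping is done, everything else is formal consequence of (i) and (ii).
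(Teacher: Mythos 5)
Your argument is correct, and it takes the route the paper itself only gestures at: the paper's proof is a bare citation of \cite[Appendix, Lemma A.3]{MR}, and the standard Serre-subcategory/filtered-union subcomplex construction you carry out (made to work because subcomodules of objects of $\rep(\fb)$ stay in $\rep(\fb)$ over a Noetherian base, and every object of $\Rep(\fb)$ is the filtered union of its subobjects in $\rep(\fb)$) is exactly the content behind that reference. The only blemishes are cosmetic: the phrase ``enough of $\ker(d^{i+1})\cap M^{i}$'' should refer to cycles in $\ker(d^{i})$ lying inside the preimage of $N^{i+1}$, and the appeal to the resolution property over $\Z$ is unnecessary, since the filtered-union statement holds for any flat affine group scheme over a Noetherian ring by \cite[I 2.13]{Jan}.
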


\begin{proof}
This is a particular case of \cite[Appendix, Lemma A.3]{MR}. See also \cite[Lemma 1.4]{Tho1}.
\end{proof}

\begin{corollary}\label{cor:Z left orthogonal}
Let $\sk=\Z$.
Let $M, N\in \rep(\Gm)$ and $v,w\in W$.
\begin{enumerate}
\item $\Ext^i_ \Bm (M\otimes P(-e_v)^*,N\otimes Q(e_v))=\Ext^i_ \Gm (M,N) $ for all $i$.
\item If $w\not\leq v$ then $\Ext^i_ \Bm \rule{0pt}{1.1em}(M\otimes P(-e_v)^*,N\otimes Q(e_w))=0$ for all $i$.
\end{enumerate}
\end{corollary}
\begin{proof}We know this already for $M, N\in \rep_\fr(\Gm)$, cf. Remark \ref{rem:fr left orthogonal}.
By Proposition \ref{prop:resolution} the results follow.
\end{proof}
%{
%Let $\sk$ be a field.}

%---------------------------------------------------------------
\subsection{\unboldmath The category \unboldmath$\Dd^b (\rep ({\fb}))$ as a $\Dd^b (\rep ( \bg))$-linear category}\label{subsec:G-linear-triang_cat}
%---------------------------------------------------------------

The restriction functor $\res_{\fb}^{ \bg}: \Dd^+ (\Rep ( \bg))\rightarrow \Dd^+ (\Rep ({\fb}))$ is $t$-exact. 
Its right adjoint is the induction functor $\Rind_{\fb}^{ \bg}$ and the fact that $\res_{\fb}^{ \bg}$ gives a fully faithful embedding is a consequence of the Generalized Tensor Identity \cite[I Proposition 4.8]{Jan} and the Kempf vanishing theorem, \cite[Theorem 1.2]{CPSWvdK}, \cite[B.3]{Jan}: 
it implies that $\Rind_{\fb}^{ \bg}\res_{\fb}^{ \bg}={\sf id}_{\Dd^+ (\Rep ( \bg))}$. 

\begin{proposition}
The functor $\Rind_{\fb}^{ \bg}: \Dd ^+(\Rep( \fb))\rightarrow \Dd ^+(\Rep( \bg))$ restricts to a functor $\Rind_{\fb}^{ \bg}: \Dd ^b(\rep( \fb))\rightarrow \Dd ^b(\rep( \bg))$. 
\end{proposition}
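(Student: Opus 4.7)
The plan is to verify the two defining conditions of $\Dd^b(\rep(\bg))$ for $\Rind_\fb^\bg(M^\bullet)$: (i) boundedness of cohomology and (ii) finite generation over $\sk$ of each cohomology module. A standard reduction via the brutal-truncation (stupid filtration) spectral sequence
$$E_1^{p,q} = \RR^q\ind_\fb^\bg(M^p) \;\Longrightarrow\; \RR^{p+q}\ind_\fb^\bg(M^\bullet)$$
reduces the claim to the case of a single $M \in \rep(\fb)$ placed in degree zero: since $M^\bullet$ is bounded, only finitely many horizontal columns contribute, and a uniform vertical bound together with finite generation on each column propagate to the abutment.

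For a single $M \in \rep(\fb)$, apply Proposition \ref{prop:resolution} to obtain a short exact sequence $0 \to L \to F \to M \to 0$ with $L,F \in \rep_\fr(\fb)$. The long exact sequence of $\RR\ind_\fb^\bg$ then reduces the problem to $M$ finitely generated and free over $\sk$. In that case the identification (\ref{eq:cohdescent}) gives $\RR^q\ind_\fb^\bg(M) \cong H^q(\bg/\fb, \Ll(M))$. The scheme $\bg/\fb$ is smooth and proper over $\sk$ of relative dimension $|\Phi^+|$, and $\Ll(M)$ is a coherent sheaf on it; Grothendieck vanishing forces $H^q = 0$ for $q > |\Phi^+|$, and the finiteness theorem for proper morphisms (applied to the structure morphism $\bg/\fb \to \mathrm{Spec}(\sk)$) ensures finite generation over $\sk$. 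These cohomologies inherit a rational $\bg$-action, placing them in $\rep(\bg)$.

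The only subtlety lies in the case $\sk = \Z$, where a general $M \in \rep(\fb)$ may have torsion and the direct sheaf-cohomology interpretation (\ref{eq:cohdescent}) does not apply to $M$ itself; the resolution property (Proposition \ref{prop:resolution}) is precisely what resolves this difficulty. With that in hand there is no serious obstacle, since properness of $\Gm/\Bm$ over $\Z$ supplies both the boundedness and the finite generation uniformly.
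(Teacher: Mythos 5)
Your proof is correct, but it takes a genuinely different route from the paper's. The paper disposes of the proposition in one line by specializing a general equivariant boundedness-and-coherence statement, namely \cite[Proposition A.12]{MR}, to $X=\bg/\fb$, $Y=\mathrm{pt}$, $H=\bg$. You instead give a self-contained argument assembled from ingredients already available in the paper: the stupid-filtration spectral sequence to reduce to a single module, the resolution property (Proposition \ref{prop:resolution}) to reduce to $M\in\rep_\fr(\fb)$, the identification $\RR^q\ind_\fb^\bg(M)\cong H^q(\bg/\fb,\Ll(M))$ of (\ref{eq:cohdescent}), and properness of $\bg/\fb$ over $\sk$ for the cohomological bound and finite generation. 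The long-exact-sequence step over $\Z$ is sound since $\Z$ is Noetherian, so subquotients of finitely generated modules are finitely generated. What your approach buys is transparency: it makes explicit that the cohomological amplitude of $\Rind_\fb^\bg$ is controlled by $\dim\bg/\fb=|\Phi^+|$, and it isolates exactly where torsion over $\Z$ enters and how Proposition \ref{prop:resolution} absorbs it. What the paper's citation buys is brevity and uniformity, since \cite[Proposition A.12]{MR} is formulated for general equivariant pushforwards and is reused elsewhere in the same appendix-based setup (cf.\ Propositions \ref{prop:Hom-ind_factorisation} and \ref{prop:derived_B-cohomology}). One cosmetic remark: the precise value of the vanishing bound over $\Z$ is immaterial; whether one quotes Grothendieck vanishing on the total space or the fiber-dimension bound for the proper morphism $\Gm/\Bm\to\mathrm{Spec}(\Z)$, a uniform finite bound is all the argument needs.
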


\begin{proof}
See \cite[I Proposition 5.12]{Jan}.
\end{proof}

It follows from the above that $\Dd ^b(\rep ( \bg))$ identifies with a right admissible subcategory of $\Dd^b(\rep ({\fb}))$. 
Both categories are monoidal and $\Dd^b(\rep ({\fb}))$ is invariant under the monoidal action by  $\Dd ^b(\rep ( \bg))$. 
Denote $\Dd ^+_{ \bg}( \bg/ \fb):=\Dd ^+({\QCoh}^{ \bg}( \bg/ \fb))
%=\Dd ^+(\Rep( \fb))
$ and $\Dd ^b_{ \bg}( \bg/ \fb):=\Dd ^b({\sf Coh}^{ \bg}( \bg/ \fb))$,
where ${\QCoh}^{ \bg}( \bg/ \fb)$ (resp. ${\Coh}^{ \bg}( \bg/ \fb)$) denotes the category of $\bg$-equivariant
quasicoherent (resp., $\bg$-equivariant coherent) sheaves on $\bg/ \fb$.
By \cite[Appendix A2]{MR}, ${\QCoh}^{ \bg}( \bg/ \fb)$ has enough injectives.
% =\Dd ^b(\rep( \fb))$.

\begin{proposition}\label{prop:Hom-ind_factorisation}
Let ${ F}\in \Dd ^b(\rep( \fb))$ and let $\Ff \in \Dd ^b_{ \bg}( \bg/ \fb)$ be the associated complex of sheaves on $ \bg/ \fb$. 
Then the functor $\RHom _{{\sf QCoh}(\bg/\fb)}(\Ff,-): \Dd ^+_{ \bg}( \bg/ \fb)\rightarrow \Dd ^+(\kMod)$ factors canonically through a functor $\Dd ^+_{ \bg}( \bg/ \fb)\rightarrow \Dd ^+(\Rep( \bg))$. 
\end{proposition}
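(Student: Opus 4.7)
The plan is to reduce the statement to the induction formalism of Section~\ref{subsec:G-linear-triang_cat} by transporting everything along the equivalence
$$\Ll \colon \Dd^+(\Rep(\fb)) \xrightarrow{\sim} \Dd^+_\bg(\bg/\fb)$$
induced by the associated-sheaf construction. First I would replace ${\sf F}$ by a bounded complex with terms in $\rep_\fr(\fb)$ using Proposition~\ref{prop:resolution}, so that the $\sk$-linear dual ${\sf F}^*$ is a well-defined object of $\Dd^b(\rep(\fb))$ and $\Hom_\sk({\sf F},-) = {\sf F}^* \otimes_\sk (-)$ as exact bifunctors on $\rep_\fr(\fb)$. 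Any $\G \in \Dd^+_\bg(\bg/\fb)$ is then of the form $\Ll({\sf N})$ for an essentially unique ${\sf N} \in \Dd^+(\Rep(\fb))$.

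Next I would observe that the equivariant internal Hom sheaf agrees with the associated sheaf of the internal $\Hom_\sk$ of the underlying $\fb$-modules. Since ${\sf F}$ is represented by a bounded complex of finitely generated free $\sk$-modules, this identification passes to derived functors and yields a natural equivariant isomorphism
$$
\tRHom_{\bg/\fb}(\Ll({\sf F}), \Ll({\sf N})) \;\cong\; \Ll({\sf F}^* \otimes_\sk {\sf N}).
$$
Applying equivariant derived global sections and invoking the isomorphism (\ref{eq:cohdescent}) to identify $\RGamma(\bg/\fb, \Ll(-))$ with $\Rind_\fb^\bg$, one obtains a canonical object
$$
\Rind_\fb^\bg({\sf F}^* \otimes_\sk {\sf N}) \;\in\; \Dd^+(\Rep(\bg)),
$$
which is the proposed factorization. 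Composition with the forgetful functor $\Dd^+(\Rep(\bg)) \to \Dd^+(\kMod)$ then reproduces $\RHom_{\bg/\fb}(\Ff, \G)$ because equivariant $\Gamma$-acyclic resolutions of $\G$ remain $\Gamma$-acyclic after forgetting the $\bg$-equivariance.

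The main obstacle will be the compatibility of the equivariant internal $\tRHom$ with $\Ll$. This becomes tractable as soon as ${\sf F}$ has a bounded representative in $\rep_\fr(\fb)$, since then $\tRHom_{\bg/\fb}(\Ll({\sf F}), -) = \Ll({\sf F}^*) \otimes_{\Oo_{\bg/\fb}} (-)$ is already exact in its second argument, which reduces the derived statement to the underived one. The remaining checks, namely that the forgetful functor from equivariant to non-equivariant quasi-coherent sheaves is compatible with derived tensor and derived pushforward in this setting, follow from the equivalence $\QCoh^\bg(\bg/\fb) \simeq \Rep(\fb)$ together with the fact that $\Ll({\sf F})$ is locally free of finite rank on $\bg/\fb$.
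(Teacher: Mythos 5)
Your argument is correct, and it is worth noting that it is considerably more explicit than what the paper does: the paper's entire proof is the citation ``Put $X=\bg/\fb$, $H=\bg$ in [MR, Appendix, Corollary A.5]'', i.e.\ it outsources the statement to Mautner--Riche. What you have written is essentially a self-contained reconstruction of that corollary in this special case: transport along the equivalence $\QCoh^{\bg}(\bg/\fb)\simeq\Rep(\fb)$, reduce to a bounded representative of ${\sf F}$ in $\rep_\fr(\fb)$ so that $\tRHom_{\bg/\fb}(\Ll({\sf F}),-)=\Ll({\sf F}^*)\otimes(-)$ is exact in the second variable, and then apply the cohomological descent isomorphism (\ref{eq:cohdescent}) to land in $\Dd^+(\Rep(\bg))$ via $\Rind_\fb^\bg({\sf F}^*\otimes-)$. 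This explicit identification of the factorizing functor with $\Rind_\fb^\bg({\sf F}^*\otimes-)$ is precisely what the paper records in the paragraph following the proposition, there attributed to the Mackey imprimitivity theorem of Cline--Parshall--Scott; so your route buys a proof that does not lean on either external reference, at the cost of having to verify the compatibility of $\Ll$ with internal Hom and of equivariant versus non-equivariant sheaf cohomology, both of which you correctly reduce to the locally free case and to (\ref{eq:cohdescent}) respectively. No gaps.
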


\begin{proof}
Put $X= \bg/ \fb, H= \bg$ in \cite[Appendix, Corollary A.5]{MR}.
\end{proof}

\begin{proposition}\label{prop:ForRind}
Let ${F},{G}\in \Dd ^b(\rep( \fb))$ and let $\Ff=\Ll ({F})$, ${\mathcal G}=\Ll ({G})$ be the associated complexes of equivariant coherent sheaves. Then there is a canonical functorial isomorphism
    $\RHom_{\Dd^b(\bg/\fb)}(\Ff,\mathcal G)\cong {\sf For}(\Rind_{\fb}^{ \bg}(F^{\ast}\otimes G))$ where $\sf For$ is the forgetful functor $\Dd ^b(\rep( \bg))\rightarrow \Dd ^b( {\kmod})$.
\end{proposition}

\begin{proof}
Recall from Subsection \ref{subsec:sheafification_functor} that the functor $\Ll:\rep(\fb)\to \Coh(\bg/\fb)$ is exact and monoidal. We first prove an isomorphism of functors
\begin{equation}\label{eq:ForInd=Gamma}
{\sf For}({\rm R}{\rm Ind}_{\fb}^{\bg}(-))\simeq \Rr\Gamma(\bg/\fb,\Ll(-))
\end{equation}
on $\Dd ^b(\rep( \fb))$. Let $M\in\rep_\fr(\fb)$. Setting $n=0$ in Proposition \ref{prop:IndvsCoh}, we obtain an isomorphism ${\sf For}({\rm R}^0{\rm Ind}_{\fb}^{\bg}M)\simeq \Gamma ({\bg/\fb},\Ll (M)),$ the functor ${\sf For}$ being exact. By Lemma \ref{lem:small_object_in_Rep(B)}, we have an equivalence of triangulated categories $\Dd^b (\rep ({\fb}))=\Dd ^b_{\rep ({\fb})}(\Rep(\fb))$, so we can use injective envelopes in $\Dd ^b(\Rep(\fb))$ to compute the derived functor ${\sf For}({\rm R}{\rm Ind}_{\fb}^{\bg}(-))$ on $\Dd ^b(\rep( \fb))$. Since the functor $\Ll$ is exact, to conclude an isomorphism (\ref{eq:ForInd=Gamma}) of derived functors, it is enough by \cite[I, Proposition 4.1, (3)]{Jan} to verify that $\Ll$ maps injective objects in $\Dd ^b(\Rep(\fb))$
to acyclic objects for $\Gamma(\bg/\fb,-)$. By \cite[Proposition 1.8]{CPSa} or \cite[I Proposition 3.9]{Jan}, an injective cogenerator for ${\rm Rep}({\bf B})$ can be taken to be ${\sf k}[{\bf B}]$, the module of regular functions on ${\bf B}$.
(If $\sk=\Z$, work instead with $\Bm$-modules of the form $I\otimes_\Z\Z[\Bm]$, where $I$ is a divisible abelian group, cf.\ \cite[I Proposition 3.9]{Jan}.)
Let $p:{\bf G}\rightarrow \bg/\fb$ denote the projection, a flat affine morphism. Let $\pi:{\bg/\fb}\rightarrow {\sf pt}$ be the projection to the point, so there is an isomorphism of derived functors $\Rr\Gamma(\bg/\fb,-)=\Rr \pi_{\ast}(-)$. By \cite[I, 5.10]{Jan}, there is an isomorphism $\Ll ({\sf k}[{\bf B}])=p_{\ast}\Oo _{\bf G}$. Denoting $g:{\bf G}\rightarrow {\sf pt}$ the projection to the point, we have $g={\pi}\circ p$ and $g_{\ast}={\pi}_{\ast}p_{\ast}$. Now both $p$ and $g$ are affine morphisms, thus $\Rr^ip_{\ast}\Oo _{\bf G}=0$ and $\Rr^ig_{\ast}\Oo _{\bf G}=0$ for $i>0$. By \cite[I, Proposition 4.1, (1)]{Jan}, the spectral sequence for the composition of functors ${\pi}\circ p$ degenerates giving $\Rr^i\pi _{\ast}\Ll ({\sf k}[{\bf B}])=0$ for $i>0$. Thus, $\Ll ({\sf k}[{\bf B}])$ is acyclic for the functor $\pi _{\ast}=\Gamma ({\bg/\fb},-)$, and isomorphism (\ref{eq:ForInd=Gamma}) follows.

Let now $F$, $G\in\rep_\fr(\fb)$. Setting $M:=F^{\ast}\otimes _{\sf k}G$ in (\ref{eq:ForInd=Gamma}), we obtain
\begin{equation}
{\sf For}({\rm R}{\rm Ind}_{\fb}^{\bg}(F^{\ast}\otimes _{\sf k}G))\simeq \Rr\Gamma(\bg/\fb,\Ll(F^{\ast}\otimes _{\sf k}G))\simeq \RHom_{\Dd^b(\bg/\fb)}(\Ll(\Ff),\Ll ({\mathcal G})),
\end{equation}
where the second isomorphism follows from the monoidality of $\Ll$ and from the fact that $\Ll(\Ff)$ is locally free on ${\bg/\fb}$. These isomorphims are bifunctorial in $F$, $G$ and 
we have $\Dd ^b(\rep( \fb))\cong\Dd ^b(\rep_\fr( \fb))$ by Subsection \ref{subsec:bounded-triang_cat}. The proposition follows.
\end{proof}

Let ${\sf Inv}^{ \bg}$ denote the derived functor of invariants $\Dd ^+(\Rep( \bg))\rightarrow \Dd ^+(\kMod)$.

\begin{proposition}\label{prop:derived_B-cohomology}
Let ${F}\in \Dd ^b(\rep( \fb)), G\in \Dd ^+(\rep( \fb))$ and let $\Ff=\Ll ({F})$, ${\mathcal G}=\Ll ({G})\in \Dd ^+_{ \bg}( \bg/ \fb)$ be the associated equivariant complexes of coherent sheaves. Then there is a canonical functorial isomorphism
$$
{\sf Inv}^{ \bg}\circ \RHom _{{\sf QCoh}(\bg/\fb)}(\Ff,\mathcal G)\xrightarrow{\sim} \RHom _{\Dd ^+(\Rep( \fb))}({F},{G}).
$$
\end{proposition}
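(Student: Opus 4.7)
The strategy is to identify both sides with the derived functor of $\fb$-invariants applied to ${\sf F}^*\otimes{\sf G}$, thereby reducing the assertion to the derived form of Frobenius reciprocity between $\res_\fb^\bg$ and $\Rind_\fb^\bg$. One should work with complexes in $\rep_\fr(\fb)$ so that the dual ${\sf F}^*=\Hom_\sk({\sf F},\sk)$ and the tensor ${\sf F}^*\otimes{\sf G}$ are unambiguously defined objects of $\Dd^+(\Rep(\fb))$; this is permissible by the resolution property (Proposition \ref{prop:resolution}) together with the equivalence of $\Dd^b(\rep(\fb))$ with the bounded derived category of the exact subcategory $\rep_\fr(\fb)$ that was invoked in Section \ref{subsec:bounded-triang_cat}.

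First, the right-hand side is rewritten via the internal hom--tensor adjunction: since ${\sf F}^*\otimes{\sf G}=\Hom_\sk({\sf F},{\sf G})$ in $\Dd^+(\Rep(\fb))$, we obtain
$$
\RHom_{\Dd^+(\Rep(\fb))}({\sf F},{\sf G})\;=\;\RHom_\fb(\sk,{\sf F}^*\otimes{\sf G})\;=\;{\sf Inv}^\fb({\sf F}^*\otimes{\sf G}).
$$
Second, for the left-hand side, we invoke the identification recalled just after Proposition \ref{prop:Hom-ind_factorisation}: as an object of $\Dd^+(\Rep(\bg))$ one has $\RHom_{\bg/\fb}(\Ff,\mathcal{G})=\Rind_\fb^\bg({\sf F}^*\otimes{\sf G})$, using $(\Ff)^\vee=\Ll({\sf F}^*)$ and the projection formula together with \cite[Corollary 2.9]{CPSMackey}. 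Applying ${\sf Inv}^\bg=\RHom_\bg(\sk,-)$ then gives
$$
{\sf Inv}^\bg\circ\RHom_{\bg/\fb}(\Ff,\mathcal{G})\;=\;\RHom_\bg\bigl(\sk,\,\Rind_\fb^\bg({\sf F}^*\otimes{\sf G})\bigr).
$$

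The remaining task is therefore to produce a canonical isomorphism
$$
\RHom_\bg\bigl(\sk,\,\Rind_\fb^\bg(M)\bigr)\;\xrightarrow{\sim}\;\RHom_\fb(\sk,M)
$$
for $M={\sf F}^*\otimes{\sf G}\in\Dd^+(\Rep(\fb))$. This is the derived form of Frobenius reciprocity: at the abelian level $\Hom_\bg(\sk,\ind_\fb^\bg(-))=\Hom_\fb(\sk,-)$ by \cite[I, Proposition 4.4]{Jan}, and one promotes this to derived functors via the Grothendieck composition. Concretely, $\ind_\fb^\bg$ admits a right adjoint only after derivation, but it sends injective $\fb$-modules to $\bg$-modules which are acyclic for ${\sf Inv}^\bg$ (in fact injective, by \cite[I, Section 4.1]{Jan}), so the Grothendieck spectral sequence for the composition ${\sf Inv}^\bg\circ\ind_\fb^\bg$ degenerates and $\RR({\sf Inv}^\bg\circ\ind_\fb^\bg)={\sf Inv}^\bg\circ\Rind_\fb^\bg$; the abelian identity then yields the equality with $\RR\,{\sf Inv}^\fb={\sf Inv}^\fb$.

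The main obstacle is the verification of this last composition-of-derived-functors step in the setting of unbounded or bounded-below complexes over a possibly non-field base ring $\sk$; once the acyclicity of injective $\fb$-modules under $\ind_\fb^\bg\circ\,\ldots\,\circ{\sf Inv}^\bg$ is in place, the two chains of identifications above stitch together into the canonical isomorphism of the proposition. Naturality in both ${\sf F}$ and ${\sf G}$ is automatic since every step is a composition of canonical adjunction isomorphisms.
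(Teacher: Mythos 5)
Your proof is correct in substance, but it takes a genuinely different route from the paper: the paper's entire proof is the citation ``put $X=\bg/\fb$ and $H=\bg$ in \cite[Appendix, Proposition A.6]{MR}'', i.e.\ it outsources the statement to Mautner--Riche's general comparison of equivariant $\RHom$ with invariants of non-equivariant $\RHom$. You instead unwind the statement through the representation-theoretic adjunctions: you identify the left-hand side with ${\sf Inv}^\bg\circ\Rind_\fb^\bg({\sf F}^*\otimes{\sf G})$ using the identification $\RHom_{\bg/\fb}(\Ff,-)=\Rind_\fb^\bg({\sf F}^*\otimes(-))$ that the paper establishes just before the Proposition, and then collapse ${\sf Inv}^\bg\circ\Rind_\fb^\bg$ to the derived $\fb$-invariants by derived Frobenius reciprocity (equivalently, the derived $(\res_\fb^\bg,\Rind_\fb^\bg)$-adjunction applied to $N=\sk$, which the paper also records in Section \ref{subsec:G-linear-triang_cat}); the right-hand side is the same object by tensor--hom adjunction. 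The Grothendieck spectral sequence step is sound since $\ind_\fb^\bg$, being right adjoint to the exact restriction, preserves injectives. What your route buys is a self-contained argument using only ingredients already present in the paper; what the citation buys is uniformity (the Mautner--Riche statement handles general $H$-schemes, not just $\bg/\fb$, and does not pass through the dualizability of ${\sf F}$).

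Two small caveats. First, your reduction to $\RHom_\fb(\sk,{\sf F}^*\otimes{\sf G})$ requires ${\sf F}$ to be represented by a \emph{bounded} complex of modules finitely generated and projective over $\sk$; the paper's preparatory identification $\RHom_{\bg/\fb}(\Ff,-)=\Rind_\fb^\bg({\sf F}^*\otimes(-))$ is likewise only set up for ${\sf F}\in\Dd^b(\rep(\fb))$, whereas the Proposition is stated for $\Dd^+$. This boundedness mismatch is inherited from the paper's own preliminaries rather than introduced by you, but if you want your argument to cover all of $\Dd^+$ you should either restrict to $\Dd^b$ or avoid dualizing ${\sf F}$ (which is what the cited \cite{MR} argument does). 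Second, the parenthetical ``$\ind_\fb^\bg$ admits a right adjoint only after derivation'' is a slip --- $\ind_\fb^\bg$ is itself the right adjoint (of $\res_\fb^\bg$), which is exactly why it preserves injectives; no right adjoint of $\ind$ is needed anywhere.
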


\begin{proof}
Put $X= \bg/ \fb$ and $ H= \bg$ in \cite[Appendix, Proposition A.6]{MR}
\end{proof}

Let $G\in \Dd ^b(\rep( \fb))$ be a bounded complex. Then the above isomorphism restricts to an isomorphism of functors ${\sf Inv}^{ \bg}\circ \RHom _{\Dd ^b(\bg/\fb)}(\Ff,\mathcal G)\xrightarrow{\sim} \RHom _{\Dd ^b(\rep( \fb))}({F},{G}).$

%------------------------------------------------------
\subsection{\unboldmath \unboldmath$ \bg$-linear semi-orthogonal decompositions}\label{subsec:G-linear-sod}
%------------------------------------------------------

\begin{definition}\label{def:G-linear-cat}
A triangulated category $\D$ is called $ \bg$-linear if $\D$ is equipped with a monoidal action of $\Dd ^b(\rep( \bg))$, \emph{i.e.}\ there is a bifunctor $\Dd ^b(\rep( \bg))\times \D\rightarrow \D$. Compare \cite[Section 2.3]{KuzHPD}.
\end{definition}

Our main concern is the category $\Dd ^b(\rep( \fb))$ which becomes a $ \bg$-linear triangulated category under the restriction functor $\res_{\fb}^{ \bg}:\Dd ^b(\rep( \bg))\rightarrow \Dd ^b(\rep( \fb))$.
Later we will also need $\bp$, so let us use $\bp$ instead of $\fb$, having in mind $\bp=\fb$ as an important case.

\begin{definition}\label{def:G-linear-SOD}
A $ \bg$-linear semi-orthogonal decomposition of $\Dd^b(\rep ( \bp))$ is a semi-orthogonal decomposition $ \langle{\sf A}_1,\dots, {\sf A}_k\rangle$ of $\Dd^b(\rep ( \bp))$ as in Definition \ref{def:semdecomposition-filtration}, in which
the ${\sf A}_i$ are full triangulated $ \bg$-linear subcategories. 
\end{definition}

The following proposition is just a variation of \cite[Lemma 2.7]{Kuzbasech}:

\begin{proposition}\label{prop:G-linear_sod}
A pair of $\bg$-linear subcategories ${\sf A},{\sf B}\subsetq \Dd ^{b}(\rep( \bp))$ is semi-orthogonal (\emph{i.e.}\  ${\sf A}\subsetq{\sf B}^\perp$) if and only if 
the equality $\Rind_{ \bp}^{ \bg}(N^{\ast}\otimes _k M)=0$ holds for all $M\in {\sf A}$, $N\in {\sf B}$.   
\end{proposition}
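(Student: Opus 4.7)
The plan is to reduce the equivalence to Frobenius reciprocity combined with the tensor identity. For any $N, M \in \Dd^b(\rep(\bp))$, the lemma recalled at the beginning of Section \ref{subsec:reductive} identifies $\RHom_{\bp}(N,M)$ with $\RGamma(\bp, N^{\ast} \otimes M)$, which by Frobenius reciprocity equals $\RGamma(\bg, \Rind_{\bp}^{\bg}(N^{\ast} \otimes M))$. One direction of the proposition is then immediate: if the induction vanishes, so does $\RHom_{\bp}(N,M)$, and in particular $\Hom(N, M[i]) = 0$ for every $i$, which is exactly $\sf A \subseteq \sf B^{\perp}$.

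For the converse I would exploit the $\bg$-linearity of $\sf A$ (or, symmetrically, of $\sf B$): for any $V \in \Dd^b(\rep(\bg))$, the object $V \otimes M$ again lies in $\sf A$, so semiorthogonality yields $\RHom_{\bp}(N, V \otimes M) = 0$. Rewriting this via the identification above and pulling $V$ through the induction by the tensor identity produces
$$\RHom_{\bp}(N, V \otimes M) \;=\; \RGamma\bigl(\bg,\, V \otimes \Rind_{\bp}^{\bg}(N^{\ast} \otimes M)\bigr).$$
Setting $C = \Rind_{\bp}^{\bg}(N^{\ast} \otimes M)$, which lies in $\Dd^b(\rep(\bg))$ since $\bg/\bp$ has finite cohomological dimension and $N^{\ast} \otimes M \in \Dd^b(\rep(\bp))$, the vanishing for every $V$ reads $\RHom_{\bg}(V^{\ast}, C) = 0$ for every $V \in \Dd^b(\rep(\bg))$.

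To finish I would specialize $V = C^{\ast}$, which is legitimate because $\Dd^b(\rep(\bg))$ admits $\sk$-linear duals by the resolution Proposition \ref{prop:resolution}; the identity $\RHom_{\bg}(C, C) = 0$ then forces $\mathrm{id}_C = 0$ and hence $C = 0$, as required. The only genuine technical concerns are the bookkeeping of the tensor identity in the derived setting and, when $\sk = \Z$, the need to represent objects by bounded complexes of $\sk$-free modules in order to make duals well-behaved. Both are formal consequences of the framework already assembled in Sections \ref{subsec:bounded-triang_cat} and \ref{subsec:G-linear-triang_cat}, so I do not anticipate any essentially new ingredient beyond what is in place.
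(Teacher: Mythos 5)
Your proposal is correct and follows essentially the same route as the paper: both directions rest on the adjunction between $\res_{\bp}^{\bg}$ and $\Rind_{\bp}^{\bg}$ together with the tensor identity, and the converse uses the $\bg$-linearity of one of the two subcategories to absorb an arbitrary test object from $\Dd^b(\rep(\bg))$. The only cosmetic differences are that you invoke the linearity of ${\sf A}$ where the paper uses that of ${\sf B}$, and you conclude $C=0$ by testing against the single object $C^{\ast}$ rather than against all $L$ via Yoneda; both are equivalent.
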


\begin{proof}

($\Leftarrow$). Assume given $M\in {\sf A}$, $N\in {\sf B}$ and assume $\Rind_{ \bp}^{ \bg}(N^{\ast}\otimes _k M)=0$. 
Then 
\begin{eqnarray*}&\RHom _{\Dd ^{b}(\rep( \bp))}(N,M)=\RHom _{\Dd ^{b}(\rep( \bp))}(\sk,N^*\otimes_\sk M)=\\
&\RHom _{\Dd ^{b}(\rep(\bg))}(\sk,\Rind_{ \bp}^{ \bg}(N^{\ast}\otimes _k M))=0.
\end{eqnarray*}

($\Rightarrow$). Let $M\in {\sf A}$, $N\in {\sf B}$ and 
$\RHom _{\Dd ^{b}(\rep( \bp))}(N,M)=0$ for all such $M$, $N$.
Let be $L$ an arbitrary object of $\Dd ^{b}(\rep( \bg))$. Then 
\begin{eqnarray*}&
\RHom _{\Dd ^{b}(\rep( \bg))}(L,\Rind_{ \bp}^{ \bg}(N^{\ast}\otimes _{\sk}M))=\RHom _{\Dd ^{b}(\rep( \bp))}(\res _{ \bp}^{ \bg}L,N^{\ast}\otimes _{\sk}M)=\\&
\RHom _{\Dd ^{b}(\rep( \bp))}(N\otimes _{\sk}\res _{ \bp}^{ \bg}L,M)=0,
\end{eqnarray*}
where the last equality holds since the subcategory $\sf B$ is $ \bg$-linear and hence stable under tensoring with objects of $\Dd ^{b}(\rep( \bg))$. 
It follows that $\RHom _{\Dd ^{b}(\rep( \bg))}(L,\Rind_{ \bp}^{ \bg}(N^{\ast}\otimes _{\sk}M))=0$ for an arbitrary object $L\in \Dd ^{b}(\rep( \bg))$; hence, $\Rind_{ \bp}^{ \bg}(N^{\ast}\otimes _{\sk}M)=0$.
\end{proof}
Let us simplify notation and write $\RHom _{ \Dd ^b(\rep( \bp))}$ as $\RHom _{ \bp}$,  and $\RHom _{ \Dd ^b(\rep( \bg))}$ as $\RHom _{ \bg}$.
\begin{definition}
A functor $\Phi: \Dd^b (\rep( \bg))\rightarrow \Dd^b (\rep( \bp))$ is called $ \bg$-linear if for all $M \in \Dd^b (\rep( \bp))$, $N \in \Dd ^b(\rep( \bg))$ there are given bifunctorial isomorphisms 
$$\Phi (\res_{ \bp}^{ \bg}(N)\otimes M)=\res_{ \bp}^{ \bg}(N)\otimes \Phi (M).$$
\end{definition}

\begin{proposition}\label{prop:G-linear_exceptional}
\
\begin{enumerate}
\item Assume given a fully faithful $ \bg$-linear functor $\Phi: \Dd ^b(\rep( \bg))\rightarrow \Dd ^b(\rep( \bp))$. Then $\Phi$ is isomorphic to the functor $\Phi _E(-)=(-)\otimes E$ where $E\in \Dd ^b(\rep( \bp))$ is such that $\RHom _{ \bp}(E,E)=\sk$.

\item Each object $E\in \Dd ^b(\rep( \bp))$ such that $\Rind_{ \bp}^{ \bg}(E^{\ast}\otimes E)=\sk$ gives a fully faithful $ \bg$-linear functor $\Phi: \Dd ^b(\rep( \bg))\rightarrow \Dd ^b(\rep( \bp))$. 

\item Under the assumptions of (1), the object $E$ satisfies $\Rind_{ \bp}^{ \bg}(E^{\ast}\otimes E)=\sk$.
\end{enumerate}

\end{proposition}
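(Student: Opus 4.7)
The three parts interlock, with the tensor identity (Generalized Tensor Identity of \cite[I, Proposition 4.8]{Jan}, in its derived form) and Frobenius reciprocity as the main technical engine. I would organize the argument as (2), then (1), then (3), each building on the previous.

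For part (2), I would define $\Phi_E(N) := \res_\bp^\bg(N)\otimes E$ and check $\bg$-linearity directly: since $\res_\bp^\bg$ is monoidal,
\[
\Phi_E(N\otimes M) = \res_\bp^\bg(N\otimes M)\otimes E = \res_\bp^\bg(N)\otimes(\res_\bp^\bg(M)\otimes E)=\res_\bp^\bg(N)\otimes \Phi_E(M).
\]
For fully faithfulness, given $M,N\in\Dd^b(\rep(\bg))$, I would chain adjunctions and the projection formula:
\[
\RHom_\bp(\Phi_E(M),\Phi_E(N)) = \RHom_\bp\bigl(\sk,\; \res_\bp^\bg(M^*\otimes N)\otimes E^*\otimes E\bigr),
\]
then apply the $(\res_\bp^\bg,\Rind_\bp^\bg)$-adjunction together with the tensor identity $\Rind_\bp^\bg(\res_\bp^\bg(L)\otimes F)=L\otimes \Rind_\bp^\bg(F)$ to get
\[
\RHom_\bg\bigl(\sk,\;M^*\otimes N\otimes \Rind_\bp^\bg(E^*\otimes E)\bigr) = \RHom_\bg(\sk,M^*\otimes N) = \RHom_\bg(M,N),
\]
using the hypothesis $\Rind_\bp^\bg(E^*\otimes E)=\sk$ at the crucial step.

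For part (1), I would set $E:=\Phi(\sk)$. The $\bg$-linearity identity applied to $M = M\otimes \sk$ yields
\[
\Phi(M)=\Phi(M\otimes\sk) \;\cong\; \res_\bp^\bg(M)\otimes \Phi(\sk) = \res_\bp^\bg(M)\otimes E,
\]
so $\Phi\cong \Phi_E$. The identity $\RHom_\bp(E,E)=\RHom_\bp(\Phi(\sk),\Phi(\sk))=\RHom_\bg(\sk,\sk)=\sk$ is then immediate from full faithfulness of $\Phi$.

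For part (3), rather than only recovering $\RHom_\bg(\sk,\Rind_\bp^\bg(E^*\otimes E))=\sk$, I would test against every $M\in\Dd^b(\rep(\bg))$ and invoke Yoneda. Using part (1) to rewrite $\Phi(M)=\res_\bp^\bg(M)\otimes E$, together with adjunction and the projection formula,
\[
\RHom_\bg\bigl(M,\Rind_\bp^\bg(E^*\otimes E)\bigr)=\RHom_\bp(\res_\bp^\bg(M)\otimes E,E)=\RHom_\bp(\Phi(M),\Phi(\sk))=\RHom_\bg(M,\sk),
\]
the last equality by full faithfulness. These isomorphisms are functorial in $M$, so Yoneda in $\Dd^b(\rep(\bg))$ gives $\Rind_\bp^\bg(E^*\otimes E)\cong\sk$. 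The one point requiring care is that $\Rind_\bp^\bg(E^*\otimes E)$ a priori lives in $\Dd^+(\Rep(\bg))$; I would note that $E^*\otimes E\in\Dd^b(\rep(\bp))$, so by the proposition cited in Section \ref{subsec:G-linear-triang_cat} the induced complex belongs to $\Dd^b(\rep(\bg))$, justifying Yoneda.

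The main obstacle is essentially bookkeeping with the tensor identity and adjunction in derived form; once those are in hand the argument is formal, and the triangularity and $\ind$-vanishing results of Sections \ref{sec:triangular}--\ref{sec:b-cohom_vanish} are not needed here (they will be used elsewhere to \emph{verify} the hypothesis $\Rind_\bp^\bg(E^*\otimes E)=\sk$ for the concrete objects $E=X_p$).
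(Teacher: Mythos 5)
Your proposal is correct and follows essentially the same route as the paper: part (1) is the paper's "take $E=\Phi(\sk)$" with the $\bg$-linearity identity spelled out, part (2) is the same adjunction/tensor-identity computation reducing to $\Rind_\bp^\bg(E^*\otimes E)=\sk$, and part (3) is the paper's Yoneda argument verbatim. The extra care you take (checking $\bg$-linearity of $\Phi_E$ and noting that $\Rind_\bp^\bg(E^*\otimes E)$ lands in $\Dd^b(\rep(\bg))$ so Yoneda applies) only makes explicit what the paper leaves implicit.
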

\begin{proof}
(1) Take $E=\Phi(\sk)$.

(2) For $M,N\in \Dd ^b(\rep( \bg))$ we have
\begin{eqnarray*}& \RHom _{ \bp}(M\otimes E,N\otimes E)=\RHom_\bp(M\otimes N^\ast,E^\ast\otimes E)=\\
&\RHom_\bg(M\otimes N^\ast,\Rind_\bp^\bg( E^\ast\otimes E))=\RHom_\bg(M\otimes N^\ast,\sk)=
\RHom_\bg(M,N).
\end{eqnarray*}

(3) Under the assumptions of (1)
\begin{eqnarray*}&\RHom _{ \bg}( M, \sk)=\RHom_\bp(M\otimes E, E)=\\&\RHom_\bp(M, E^\ast\otimes E)=\RHom_\bg(M,\Rind_\bp^\bg( E^\ast\otimes E))
\end{eqnarray*}
for $M\in \Dd^b(\rep(\bg))$. By the Yoneda Lemma it follows that $\Rind_{ \bp}^{ \bg}(E^{\ast}\otimes E)=\sk$.
\end{proof}

\begin{remark}\label{rem:ind-to-B-exceptional}
If $\Rind_{ \bp}^{ \bg}(E^{\ast}\otimes E)=\sk$, then $E$ is exceptional in 
$\Dd ^b(\rep( \bp))$,
because $\Ext_\bp^i(E,E)=H^i(\bp,E^{\ast}\otimes E)=H^i(\bg,\Rind_{ \bp}^{ \bg}(E^{\ast}\otimes E))$ by \cite[I Proposition 4.5]{Jan}. Now use the Proposition \ref{prop:cohinduced}.
\end{remark}

%--------------------------------------------------------------
\subsection{\unboldmath Generating \unboldmath$\Dd ({\QCoh}(\bg/\bp))$}\label{subsec:generatingG/P}
%--------------------------------------------------------------

\begin{definition}
Let $\D$ be a compactly generated triangulated category. A set $S$ 
of compact objects of $\D$ is called a generating set if $\Hom _{\D}(S,X)=0$ implies $X=0$ and $S$ is closed under the shift functor, \emph{i.e.} $S =S[1]$.
\end{definition}

\begin{proposition}\label{prop:ample_line_bundle_gen_set}
Let $X$ be a quasi--compact, separated scheme, and $\Ll$ be an ample line bundle on $X$. Then the set $\langle \Ll ^{\otimes m}[n]\rangle, m,n\in \mathbb Z$ is a generating set for $\Dd ({\QCoh}(X))$.
\end{proposition}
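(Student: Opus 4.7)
The plan is to verify the two conditions defining a generating set: that each $\Ll^{\otimes m}[n]$ is compact in $\Dd(\QCoh(X))$, and that the right orthogonal of the set in $\Dd(\QCoh(X))$ vanishes. Compactness is immediate from the Bondal--Van den Bergh/Neeman theorem identifying, for $X$ quasi-compact and separated, the compact objects of $\Dd(\QCoh(X))$ with the perfect complexes. Each $\Ll^{\otimes m}$ is a locally free sheaf of rank one, in particular perfect, and the set is closed under shifts by construction.

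The substantive step is orthogonality. Suppose $F\in\Dd(\QCoh(X))$ satisfies $\Hom(\Ll^{\otimes m}[n],F)=0$ for all $m,n\in\mathbb Z$. Using that $\Ll^{\otimes m}$ is invertible, this rewrites as
\[
\mathrm{R}\Gamma(X,\Ll^{\otimes -m}\otimes F)=0\quad\text{in } \Dd(\mathrm{Ab}),\ \text{ for every } m\in\mathbb Z.
\]
The task is to upgrade this hypercohomology vanishing to $F\cong 0$, that is, to the vanishing of every cohomology sheaf $\mathcal H^i(F)$.

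The driving input is a consequence of ampleness: for any nonzero quasi-coherent sheaf $\G$ on $X$, the twist $\Ll^{\otimes -m}\otimes \G$ is globally generated and nonzero for $-m$ sufficiently large, hence admits nonzero global sections; together with Serre vanishing $H^p(X,\Ll^{\otimes -m}\otimes \G)=0$ for $p>0$ and $-m\gg 0$. I would first treat the case when $F$ is bounded. Let $i_0$ be the largest integer with $\mathcal H^{i_0}(F)\neq 0$. The hypercohomology spectral sequence
\[
E_2^{p,q}=H^p(X,\Ll^{\otimes -m}\otimes \mathcal H^q(F))\Longrightarrow H^{p+q}(X,\Ll^{\otimes -m}\otimes F)
\]
has no incoming differentials at $E_r^{0,i_0}$ (negative-degree cohomology vanishes) and, by Serre vanishing, no outgoing differentials for $-m\gg 0$; this isolates $H^{i_0}(X,\Ll^{\otimes -m}\otimes F)=H^0(X,\Ll^{\otimes -m}\otimes \mathcal H^{i_0}(F))$, which is nonzero by ampleness if $\mathcal H^{i_0}(F)\neq 0$, contradicting the hypothesis. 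So $F=0$ in the bounded case.

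The unbounded case is handled by a standard reduction: the full subcategory of $\Dd(\QCoh(X))$ annihilated by $\mathrm{R}\Gamma(X,\Ll^{\otimes -m}\otimes -)$ for every $m$ is triangulated and closed under the relevant truncations, so descending induction on the topmost nonvanishing cohomology sheaf (combined with the bounded case just established) forces any object in it to be zero. The main obstacle is precisely this passage from vanishing of total hypercohomology to vanishing of individual cohomology sheaves; it is delicate in the unbounded setting and for genuinely quasi-coherent (non-coherent) sheaves on a possibly non-noetherian qcqs scheme. Both issues are handled by Serre vanishing for twists by powers of the ample line bundle, supplemented by writing quasi-coherent sheaves as filtered colimits of coherent subsheaves and using that on a qcqs scheme sheaf cohomology commutes with such colimits.
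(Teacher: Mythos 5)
The compactness half of your argument is fine, but the orthogonality half rests on Serre vanishing, and that is exactly what is unavailable in the stated generality. The proposition only assumes $X$ quasi-compact and separated with $\Ll$ ample; it does not assume $X$ proper or noetherian. For $X=\mathbb A^2\setminus\{0\}$ the structure sheaf is ample (the scheme is quasi-affine) and $H^1(X,\Oo_X)\neq 0$, unchanged by any twist, so the assertion ``$H^p(X,\Ll^{\otimes N}\otimes\G)=0$ for $p>0$ and $N\gg0$'' is simply false here. Even for $X=\bg/\bp$, where Serre vanishing does hold for \emph{coherent} sheaves, the cohomology sheaves $\mathcal H^q(F)$ of an arbitrary object of $\Dd(\QCoh(X))$ are only quasi-coherent, and Serre vanishing fails for those: $\G=\bigoplus_{k\geq0}\Oo(-k)$ on $\mathbb P^1$ has $H^1(\mathbb P^1,\G(N))\neq0$ for every $N$. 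Your closing remark about filtered colimits does not repair this, because the threshold $N_0(\G_\alpha)$ for each coherent piece is unbounded in $\alpha$, so no single twist kills the colimit. The reduction of the unbounded case by ``descending induction on the topmost nonvanishing cohomology sheaf'' also has no starting point when $F$ is unbounded above.

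The argument the paper points to (Neeman, Examples 1.10--1.11) avoids cohomology vanishing altogether and is the one you should use. Choose finitely many $f_i\in\Gamma(X,\Ll^{\otimes n_i})$ with the $X_{f_i}$ affine and covering $X$ (this is what ampleness gives). For $F$ with $\RHom(\Ll^{\otimes m}[n],F)=0$ for all $m,n$, i.e.\ $\RGamma(X,\Ll^{\otimes -m}\otimes F)=0$ for all $m$, one has
\[
\RGamma(X_{f_i},F)\;=\;\mathrm{colim}\Bigl(\RGamma(X,F)\xrightarrow{\,f_i\,}\RGamma(X,F\otimes\Ll^{\otimes n_i})\xrightarrow{\,f_i\,}\cdots\Bigr)=0,
\]
where the identification uses that $\RGamma$ on a quasi-compact separated scheme has finite cohomological dimension on quasi-coherent sheaves and commutes with these filtered colimits (Čech with respect to the finite affine cover). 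Since $X_{f_i}$ is affine, $\RGamma(X_{f_i},-)$ is conservative on $\Dd(\QCoh(X_{f_i}))$, so $F|_{X_{f_i}}=0$ for every $i$ and hence $F=0$. This is both more elementary and valid in the generality actually claimed.
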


\begin{proof}
See \cite[Example 1.10]{Neem}.
\end{proof}

We now choose an ample line bundle $ \Ll$ on $ \bg/ \bp$.
Such a line bundle exists by \cite[II 8.5]{Jan}.

\begin{corollary}\label{cor:line_bundles-generation}
The set of line bundles $\langle \Ll^{\otimes m} [n]\rangle, m,n\in \mathbb Z$ is a generating set for $\Dd({\QCoh}( \bg/ \bp))$.
\end{corollary}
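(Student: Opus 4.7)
The plan is to reduce the corollary to Proposition \ref{prop:ample_line_bundle_gen_set} by exhibiting an ample line bundle on $ \bg/ \bp$ of the form $\Ll_{\lambda_0}$ with $\lambda_0 \in X(\bt)_+$, whose non-negative tensor powers already sit inside the family in the statement. Concretely, if $I \subseteq \Pi$ is the set of simple roots of the Levi subgroup of $ \bp$, I would take $\lambda_0 = \sum_{\alpha_i \notin I} \omega_i$. This weight is dominant and orthogonal to the coroots of the Levi, so $\Ll_{\lambda_0}$ descends to a (very) ample line bundle on the projective variety $ \bg/ \bp$.

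Applying Proposition \ref{prop:ample_line_bundle_gen_set} to this $\Ll_{\lambda_0}$ yields that $\{\Ll_{\lambda_0}^{\otimes m}[n] : m, n \in \mathbb{Z}\} = \{\Ll_{m\lambda_0}[n] : m, n \in \mathbb{Z}\}$ is a generating set for $\Dd(\QCoh( \bg/ \bp))$. For $m \geq 0$ the weight $m\lambda_0$ is dominant and $\Ll_{m\lambda_0}$ belongs to the claimed family, but for $m < 0$ the line bundle $\Ll_{m\lambda_0}$ is anti-dominant and does not. The remaining task is therefore to check that the negative tensor powers are superfluous, i.e.\ that already the smaller set $\{\Ll_{m\lambda_0}[n] : m \geq 0,\, n \in \mathbb{Z}\}$ is a generating set.

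I would verify this directly from the definition. Suppose $X \in \Dd(\QCoh( \bg/ \bp))$ satisfies $\Hom(\Ll_{m\lambda_0}[n], X) = 0$ for every $m \geq 0$ and $n \in \mathbb{Z}$; this amounts to the vanishing $H^n( \bg/ \bp, \Ll_{\lambda_0}^{-m} \otimes X) = 0$ for all such $m$ and $n$. Since $ \bg/ \bp$ is smooth and projective and $\Ll_{\lambda_0}$ is ample, Serre duality together with a standard spectral-sequence and truncation argument forces every cohomology sheaf of $X$ to vanish, whence $X = 0$. The main obstacle is extending the Serre-duality step from coherent sheaves to arbitrary unbounded complexes, which I would handle by working with the cohomology sheaves $\mathcal{H}^j(X)$ individually and invoking left-completeness of $\Dd(\QCoh( \bg/ \bp))$ on the noetherian scheme $ \bg/ \bp$. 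Once the smaller subset is shown to be generating, the larger family of the corollary follows a fortiori.
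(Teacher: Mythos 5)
You have correctly put your finger on the real issue here, which the paper leaves implicit: Proposition \ref{prop:ample_line_bundle_gen_set} applied to the ample bundle $\Ll_{\lambda_0}$, $\lambda_0=\sum_{\alpha_i\notin I}\omega_i$, produces the generating set $\{\Ll_{m\lambda_0}[n]\}_{m\in\mathbb Z}$, and the twists with $m<0$ are anti-dominant, so the Corollary is not a formal consequence of the Proposition. Your choice of $\lambda_0$ and the reduction to showing that $\{\Ll_{m\lambda_0}[n]\}_{m\geq 0}$ already generates are both correct and are exactly the right way to set the problem up.

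However, the way you propose to close the gap does not work as stated. First, for an arbitrary object $X$ of $\Dd(\QCoh(\bg/\bp))$ the cohomology sheaves are merely quasi-coherent, and for quasi-coherent sheaves Serre duality, Serre vanishing and the global-generation argument all break down (cohomology commutes with filtered colimits, but a filtered colimit of nonzero groups along non-injective maps can vanish); left-completeness addresses unboundedness, not the coherence issue. Second, even if you retreat to perfect complexes, the Serre-duality computation only shows that the right orthogonal of $\{\Ll_{m\lambda_0}\}_{m\geq0}$ meets $\Perf(\bg/\bp)$ trivially, and for a set of compact objects this is strictly weaker than being a generating set (the skyscraper sheaves on $\mathbb A^1$ have zero right orthogonal in $\Dd^b(\Coh)$ but generate only the torsion complexes). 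The clean fix is categorical rather than cohomological: show that each negative power $\Ll_{-m\lambda_0}$ lies in the \emph{thick hull} of the non-negative powers, after which $\{\Ll_{m\lambda_0}\}_{m\geq0}^{\perp}=\{\Ll_{m\lambda_0}\}_{m\in\mathbb Z}^{\perp}=0$ by the Proposition. This follows from the Koszul complex of the nowhere-vanishing Euler section on $\Pp(\nabla_{\lambda_0}^{\ast})$: its restriction to $\bg/\bp\hookrightarrow \Pp(\nabla_{\lambda_0}^{\ast})$ is an exact, locally split complex whose terms are $\Lambda^{j}\nabla_{\lambda_0}\otimes\Ll_{-j\lambda_0}$, and suitable twists of it express every $\Ll_{-m\lambda_0}$ through non-negative powers by induction on $m$. (Alternatively, for the paper's only application of this Corollary, in the proof of Theorem \ref{th:coh-semi-orthogonal}(3), the triangulated hull in question contains $\Ll_\mu$ for \emph{all} $\mu$, so Proposition \ref{prop:ample_line_bundle_gen_set} applies directly and the restriction to dominant weights is not needed.)
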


\begin{corollary}\label{cor:generatingGmodB}
The smallest thick full triangulated subcategory of $\Dd^b( \bg/ \bp)$ containing $\langle \Ll^{\otimes m} [n]\rangle, m,n\in \mathbb Z$ is $\Dd^b( \bg/ \bp)$.
\end{corollary}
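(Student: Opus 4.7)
The plan is to upgrade the generation statement of Corollary \ref{cor:line_bundles-generation} from the unbounded derived category of quasi-coherent sheaves to the bounded derived category of coherent sheaves, via Neeman's theorem on compact generation.

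First I would observe that each line bundle $\Ll_\lambda$ is a perfect complex on $\bg/\bp$ (it is locally free of finite rank), and hence a compact object in $\Dd(\QCoh(\bg/\bp))$. Since $\bg/\bp$ is smooth and proper, the category of perfect complexes coincides with $\Dd^b(\Coh(\bg/\bp))$, and by general results (due to Bondal--Van den Bergh and Neeman) the subcategory of compact objects of $\Dd(\QCoh(\bg/\bp))$ is exactly $\Perf(\bg/\bp)=\Dd^b(\bg/\bp)$.

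Next I would invoke Neeman's theorem: in a compactly generated triangulated category $\mathcal T$, any set $S$ of compact generators has the property that its thick closure inside $\mathcal T$ coincides with the subcategory $\mathcal T^c$ of all compact objects. Corollary \ref{cor:line_bundles-generation} asserts that the set $S=\{\Ll_\lambda[n] \mid \lambda\in X(\bt)_+,\ n\in\Z\}$ is a generating set for $\Dd(\QCoh(\bg/\bp))$ in the sense defined above, and the elements of $S$ are compact by the previous step. Applying Neeman's theorem, the thick closure of $S$ in $\Dd(\QCoh(\bg/\bp))$ equals the full subcategory of compact objects, which as noted is $\Dd^b(\bg/\bp)$.

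Finally, since every object in this thick closure is already contained in $\Dd^b(\bg/\bp)$, the thick closure of $S$ computed inside the fully faithful subcategory $\Dd^b(\bg/\bp)\hookrightarrow \Dd(\QCoh(\bg/\bp))$ agrees with the thick closure computed in $\Dd(\QCoh(\bg/\bp))$. This gives the desired conclusion. The only genuinely delicate point is the identification of compact objects with $\Dd^b(\bg/\bp)$ for the smooth proper scheme $\bg/\bp$, but this is standard and already implicit in the framework of Section \ref{subsec:generatingG/P}; the rest is a direct application of Neeman's compact generation machinery.
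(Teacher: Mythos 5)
Your proposal is correct and follows essentially the same route as the paper, whose proof is the one-line citation of Neeman's result that the thick closure of a set of compact generators equals the subcategory of compact objects. You have simply spelled out the standard intermediate identifications (line bundles are compact; compacts in $\Dd(\QCoh)$ of the smooth proper scheme $\bg/\bp$ are exactly $\Perf(\bg/\bp)=\Dd^b(\bg/\bp)$), which is exactly what the paper leaves implicit.
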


\begin{proof}Any object of $\Dd^b( \bg/ \bp)$ is a perfect complex and therefore compact in $\Dd(\QCoh( \bg/ \bp))$ by  \cite[Example 1.13]{Neem}. The result thus
follows from \cite[part 2.1.3 of Theorem 2.1]{Neem}.
\end{proof}

The goal of the rest of this section -- and the two sections after it -- is to construct a collection of objects $X_p\in \Dd ^b(\rep( \fb))$, $p\in W$, each satisfying the condition in (2) of Proposition \ref{prop:G-linear_exceptional}. By Remark \ref{rem:ind-to-B-exceptional}
each of those objects will also be exceptional. Furthermore, the collection of objects $X_p$ will produce a $ \bg$-linear semi-orthogonal decomposition of $\Dd^b(\rep ({\fb}))$; the ultimate statement is Theorem \ref{th:semi-orthogonal}.

%--------------------------------------------------------------
\subsection{\unboldmath Generating \unboldmath$\Dd^b (\rep ( \bg))$}\label{subsec:generatingrepG}
%--------------------------------------------------------------

\begin{proposition}\label{prop:stronggeneratingrepG}
The smallest strictly full triangulated subcategory of $\Dd^b (\rep ( \bg))$ that contains the set of modules $\nabla _{\lambda}$,
$\lambda \in X(\bt)_+$, is the whole $\Dd^b (\rep ( \bg))$.
\end{proposition}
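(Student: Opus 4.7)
The plan is built around the highest weight category structure on $\rep(\bg)$ recalled in Section \ref{subsec:hwc_repG}, in which the $\nabla_\lambda$ are the costandard objects. First, the truncation triangles $\tau_{\leq n-1}M^\bullet \to \tau_{\leq n}M^\bullet \to H^n(M^\bullet)[-n] \to \tau_{\leq n-1}M^\bullet[1]$ realise any bounded complex $M^\bullet \in \Dd^b(\rep(\bg))$ as an iterated cone of its shifted cohomology objects, so it suffices to show that every $M \in \rep(\bg)$, viewed in degree zero, lies in the triangulated hull $\D := \hull(\{\nabla_\lambda : \lambda \in X(\bt)_+\})$.

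For $\sk$ a field, $M$ has finite composition length in $\rep(\bg)$; induction on length via the triangle attached to a short exact sequence $0 \to N \to M \to L(\lambda) \to 0$ (with simple quotient $L(\lambda)$) reduces the problem to showing that every simple $L(\lambda)$ lies in $\D$. Since $\nabla_\lambda$ has simple socle $L(\lambda)$, the triangle associated to the exact sequence $0 \to L(\lambda) \to \nabla_\lambda \to \nabla_\lambda/L(\lambda) \to 0$ expresses $L(\lambda)$ in terms of $\nabla_\lambda$ and $\nabla_\lambda/L(\lambda)$, whose composition factors are all $L(\mu)$ with $\mu <_d \lambda$. A downward induction on the dominance order restricted to the finitely many weights of $\nabla_\lambda$, with base case $L(0) = \nabla_0 = \sk$, then places every simple in $\D$ and completes the field case.

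For $\sk = \Z$, the short exact sequence $0 \to M_{\mathrm{tors}} \to M \to M/M_{\mathrm{tors}} \to 0$ splits the problem into torsion and torsion-free pieces. The torsion part $M_{\mathrm{tors}}$ is a finite abelian $\Gm$-module; a filtration by $p$-primary subquotients followed by iterates of multiplication by $p$ reduces it to $\mathbb{F}_p$-vector-space layers, which lie in $\hull(\{(\nabla_\lambda)_\Z \otimes_\Z \mathbb{F}_p\})$ by the field case over $\mathbb{F}_p$; since each $(\nabla_\lambda)_\Z \otimes_\Z \mathbb{F}_p$ is the cone of multiplication by $p$ on $(\nabla_\lambda)_\Z$, this layer lies in $\D$. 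For the torsion-free quotient $F := M/M_{\mathrm{tors}}$, I would use the semisimplicity of $\rep(\Gm_\Q)$ to write $F \otimes \Q \cong \bigoplus_\lambda (\nabla_\lambda \otimes \Q)^{m_\lambda}$, then clear denominators in the corresponding $\Gm_\Q$-equivariant embeddings $\nabla_\lambda \otimes \Q \hookrightarrow F \otimes \Q$ (using the identification $\Hom_\Gm((\nabla_\lambda)_\Z, F) \otimes \Q = \Hom_{\Gm_\Q}(\nabla_\lambda \otimes \Q, F \otimes \Q)$) to obtain an injective $\Gm$-equivariant map $\bigoplus_\lambda (\nabla_\lambda)_\Z^{m_\lambda} \hookrightarrow F$ with finite torsion cokernel, placing $F$ in $\D$ via the torsion case.

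The main obstacle is the $\sk = \Z$ case: reconciling the characteristic-zero decomposition of $F \otimes \Q$ with the characteristic-$p$ structure of the torsion contributions, all while preserving $\Gm$-equivariance integrally, requires careful bookkeeping of both $p$-primary filtrations and the ``denominator-clearing'' step for rational $\Gm$-morphisms.
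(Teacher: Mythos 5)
Your argument is correct, but it takes a genuinely different route from the paper's. The paper runs a single induction on the highest weight of $M$ with respect to an injective additive height function: the canonical map $\Delta_\mu\to\nabla_\mu$ and the natural map $\Delta_\mu\otimes_\sk M_{\mu,\mathrm{triv}}\to M$ of \cite[Proposition 21]{FvdK} both have kernel and cokernel supported on strictly lower weights, which lets one strip off the top weight of $M$ one step at a time. This works verbatim for $\sk$ a field and for $\sk=\Z$, so the paper never has to separate the two cases. You instead use composition series and the highest weight category structure on $\rep(\bg)$ over a field, and over $\Z$ a torsion/torsion-free d\'evissage with base change to $\mathbb F_p$ and to $\Q$. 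What your route buys is that the field case is the completely standard highest-weight-category argument; what it costs is exactly the bookkeeping you flag: the $\Z$ case needs the torsion submodule to be a $\Gm$-submodule, the universal coefficient theorem (Theorem \ref{th:universal coefficients}) to identify $\Hom_\Gm(\nabla_\lambda,F)\otimes\Q$ with $\Hom_{\Gm_\Q}(\nabla_\lambda\otimes\Q,F\otimes\Q)$ and $(\nabla_\lambda)_\Z\otimes\mathbb F_p$ with $\nabla_\lambda^{\mathbb F_p}$, and the observation that the exact restriction-of-scalars functor $\rep(\Gm_{\mathbb F_p})\to\rep(\Gm)$ carries the triangulated hull of the $\nabla_\lambda^{\mathbb F_p}$ into the hull of the $(\nabla_\lambda)_\Z\otimes\mathbb F_p=\cone\bigl((\nabla_\lambda)_\Z\stackrel{p}{\to}(\nabla_\lambda)_\Z\bigr)$. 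All of these points do go through (in particular the kernel of your map $\bigoplus(\nabla_\lambda)_\Z^{m_\lambda}\to F$ vanishes because it is torsion-free and rationally zero, and the cokernel is finite), so there is no gap; but if you want a proof that treats $\sk$ a field and $\sk=\Z$ on the same footing, the Weyl-module truncation argument of the paper is the shorter path.
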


\begin{proof}First we show that $\rep(\bg)$ lies in the subcategory.
We choose a real valued additive injective height function $\hgt$ on the weight lattice which is positive on positive roots.
Consider a nonzero $M\in\rep( \bg)$. Say $\mu$ is its highest weight with respect to $\hgt$. 
Thus $\hgt(\mu)\geq0$. Assume that all representations with a smaller highest weight are in the subcategory. We have a map $\Delta_\mu\to \nabla_\mu$ whose kernel and cokernel have
lower weights, so  $\Delta_\mu$ lies in the subcategory.
Let $M_{\mu,\mathrm {triv}}$ denote the weight space $M_{\mu}$ provided with a trivial $ \bg$-action.
By \cite[Proposition 21]{FvdK} there is a natural map 
$\Delta_\mu\otimes_\sk M_{\mu,\mathrm {triv}}\to M$ whose kernel and cokernel have lower weights.  
Now notice that $\Delta_\mu\otimes_\sk M_{\mu,\mathrm {triv}}$ lies in the subcategory. So $M$ does too.
So $\rep(\bg)$ lies in the subcategory. 
Now use that every object of $\Dd^b(\rep(\bg))$  is quasi-isomorphic to a bounded complex in $\rep( \bg)$ and that bounded complexes are repeated cones of objects of minmimal cohomological amplitude. (cf.\ ``Stupid  truncations'' \cite[2.5]{Kuzbasech}.) 
\end{proof}

\begin{proposition}\label{prop:G linear hull}
Let $S$ be a set of objects of $\Dd^b(\rep(\fb))$. 

Then $\hull(\{\nabla_\nu\otimes M\mid \nu\in X_+,M\in S\})$ is $\bg$-linear.
\end{proposition}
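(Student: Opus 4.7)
The plan is to reduce the statement, via two triangulated-hull arguments, to the tensor product theorem for good filtrations (Theorem \ref{th:good tensor}).

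Write $\T:=\hull(\{\nabla_\nu\otimes M\mid \nu\in X(\bt)_+,\;M\in S\})$. To say that $\T$ is $\bg$-linear means (Definition \ref{def:G-linear-cat}) that it is stable under the monoidal action of $\Dd^b(\rep(\bg))$ on $\Dd^b(\rep(\fb))$ coming from $\res_\fb^\bg$ and the diagonal tensor product. Concretely, I must show that for every $N\in\Dd^b(\rep(\bg))$ and every $X\in\T$ we have $(\res_\fb^\bg N)\otimes X\in\T$.

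First I would fix $X\in\T$ and consider the full subcategory
$$\mathcal{C}_X:=\{N\in\Dd^b(\rep(\bg))\mid (\res_\fb^\bg N)\otimes X\in\T\}.$$
Since $-\otimes X$ is a triangulated functor (tensor product over $\sk$ is exact in both variables on $\Dd^b(\rep_\fr(\fb))$, and this extends to the bounded derived categories as recalled in Section \ref{subsec:bounded-triang_cat}), $\mathcal{C}_X$ is a strictly full triangulated subcategory of $\Dd^b(\rep(\bg))$. By Proposition \ref{prop:stronggeneratingrepG} it therefore suffices to show that $\nabla_\lambda\in\mathcal{C}_X$ for every $\lambda\in X(\bt)_+$ and every $X\in\T$. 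Dually, for fixed $\lambda\in X(\bt)_+$, the collection
$$\mathcal{D}_\lambda:=\{X\in\Dd^b(\rep(\fb))\mid \nabla_\lambda\otimes X\in\T\}$$
is a strictly full triangulated subcategory of $\Dd^b(\rep(\fb))$, so it suffices to prove $\nabla_\nu\otimes M\in \mathcal{D}_\lambda$ for all $\nu\in X(\bt)_+$ and $M\in S$, i.e., that $\nabla_\lambda\otimes\nabla_\nu\otimes M\in\T$.

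This last step is where Theorem \ref{th:good tensor} does the work. The $\bg$-module $\nabla_\lambda\otimes\nabla_\nu$ is finitely generated (and free over $\sk$, or finite dimensional if $\sk$ is a field) and admits a good filtration; being finitely generated, the filtration is finite, with successive quotients of the form $\nabla_{\mu_1},\dots,\nabla_{\mu_r}$ for dominant weights $\mu_i$. Restricting this filtration to $\fb$ and tensoring with $M$ (an exact operation) presents $\nabla_\lambda\otimes\nabla_\nu\otimes M$ as an iterated extension in $\Dd^b(\rep(\fb))$ of the objects $\nabla_{\mu_i}\otimes M$, each of which lies in $\T$ by the very definition of the generating set. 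Taking iterated cones in the triangulated subcategory $\T$ yields $\nabla_\lambda\otimes\nabla_\nu\otimes M\in\T$, completing the reduction.

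I do not expect any serious obstacle: the two reduction steps are standard generator arguments for triangulated subcategories, and the only nontrivial input, Theorem \ref{th:good tensor}, has already been recorded. The one point requiring minor care is the base-ring issue when $\sk=\Z$: one must invoke the $\Z$-form of the tensor-product theorem for good filtrations and the fact (Proposition \ref{prop:resolution} and its consequences in Section \ref{subsec:bounded-triang_cat}) that bounded complexes in $\rep(\fb)$ are represented by bounded complexes in $\rep_\fr(\fb)$, so that $\otimes_\sk$ is well-defined and exact at the derived level.
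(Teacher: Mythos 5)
Your proof is correct, and its skeleton (reduce to the generators $\nabla_\lambda$ of $\Dd^b(\rep(\bg))$ via Proposition \ref{prop:stronggeneratingrepG}, then handle $\nabla_\lambda\otimes\nabla_\nu\otimes M$) is the same as the paper's, which simply observes that $\hull(\{\nabla_\nu\})=\Dd^b(\rep(\bg))$ and hence $\hull(\{\nabla_\nu\otimes M\})=\hull(\{\Dd^b(\rep(\bg))\otimes M\})$, the latter being manifestly stable under tensoring with $\Dd^b(\rep(\bg))$. The one genuine difference is your final step: you invoke Theorem \ref{th:good tensor} (the Donkin--Mathieu good-filtration theorem for $\nabla_\lambda\otimes\nabla_\nu$) to write $\nabla_\lambda\otimes\nabla_\nu\otimes M$ as an iterated extension of objects $\nabla_{\mu_i}\otimes M$. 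This works, but it is a substantially stronger input than needed: since $\nabla_\lambda\otimes\nabla_\nu$ is just some object of $\Dd^b(\rep(\bg))=\hull(\{\nabla_\mu\}_{\mu\in X(\bt)_+})$, and $-\otimes M$ is triangulated, membership of $\nabla_\lambda\otimes\nabla_\nu\otimes M$ in $\hull(\{\nabla_\mu\otimes M\}_\mu)\subseteq\T$ already follows from Proposition \ref{prop:stronggeneratingrepG} alone, with no filtration theory. What your route buys is a concrete finite filtration of $\nabla_\lambda\otimes\nabla_\nu\otimes M$ rather than bare membership in a triangulated hull, at the cost of importing a deep theorem where an elementary generation argument suffices; your remarks on the case $\sk=\Z$ (flat representatives via Proposition \ref{prop:resolution}, the $\Z$-form of the tensor-product theorem) are appropriate but likewise become unnecessary once the last step is done the elementary way.
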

\begin{proof}By Subsection \ref{subsec:bounded-triang_cat} we may assume that the members of $S$ are represented by complexes that are flat over $\sk$.
By Proposition \ref{prop:stronggeneratingrepG} we have $\hull(\{\nabla_\nu\mid \nu\in X(\bt)_+\})= \Dd^b (\rep ( \bg))$. 
So $\hull(\{\nabla_\nu\otimes M\mid \nu\in X(\bt)_+,M\in S\})$ equals $\hull(\{\Dd^b (\rep ( \bg))\otimes M\mid M\in S\})$.
\end{proof}
%--------------------------------------------------------------
\subsection{\unboldmath Generating \unboldmath$\Dd^b (\rep ({\fb}))$}\label{subsec:D^brepB-generation}
%--------------------------------------------------------------

\begin{theorem}\label{th:derived_generation}
Given a $p\in W$, the triangulated hull in $\Dd(\Rep(\fb))$ of the two categories
\begin{eqnarray*}
&\hull(\{\nabla_\lambda\otimes Q(e_v)\}_{v\succ p,\lambda\in X(\bt)_+}),\\
&\hull(\{\nabla_\lambda\otimes P(-e_v)^*\}_{v\preceq p,\lambda\in X(\bt)_+}) 
\end{eqnarray*}
is $\Dd^b (\rep ({\fb}))$.
\end{theorem}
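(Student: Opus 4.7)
The plan is to categorify Ananyevskiy's K-theoretic argument \cite[Theorem 2, Section 4]{Ana}. Write $\sf T$ for the triangulated hull appearing in the statement. First I reduce to showing that each one-dimensional weight module $\sk_\lambda$, for $\lambda\in X(\bt)$, lies in $\sf T$: by stupid truncation every object of $\Dd^b(\rep(\fb))$ is a repeated cone of its cohomology modules, and for $\sk$ a field each finitely generated $\fb$-module is finite-dimensional and hence a finite iterated extension of weight modules $\sk_\mu$; the case $\sk=\Z$ is reduced to the field case using Proposition \ref{prop:resolution} and the Universal coefficient Theorem \ref{th:universal coefficients}. The base case $\lambda=0$ is immediate, since $\sk=\sk_0=Q(e_e)=P(-e_e)^{*}$ (using $e_e=0$) lies in the $\nu=0$ summand of either generating family.

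I then proceed by strong induction on the length $(\lambda,\lambda)$. For a Steinberg weight $\lambda=e_v$ with $v\succ p$, the short exact sequence $0\to \sk_{e_v}\to Q(e_v)\to Q(e_v)/\sk_{e_v}\to 0$ becomes a distinguished triangle in which $Q(e_v)\in\sf T$ by construction. Since $Q(e_v)$ is costandard for $<_a$ with socle $\sk_{e_v}$, the composition factors of $Q(e_v)/\sk_{e_v}$ are $\sk_\mu$ with $\mu<_a e_v$; by the definition of $<_a$ each such $\mu$ is either strictly shorter than $e_v$ (handled by the main induction) or lies in the same $W$-orbit with a strict Bruhat relation (handled by a secondary induction on the finite set $W\cdot e_v$). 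This shows $Q(e_v)/\sk_{e_v}\in\sf T$, and hence $\sk_{e_v}\in\sf T$. The case $v\preceq p$ is treated symmetrically using the $\fb$-head surjection $P(-e_v)^{*}\twoheadrightarrow\sk_{e_v}$, whose kernel has the analogous weight property.

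The main obstacle is the case of a general non-Steinberg weight $\lambda$. Here one categorifies the Steinberg-basis expansion $[\sk_\lambda]=\sum_{v\in W}c_v(\lambda)\,[\sk_{e_v}]$ with $c_v(\lambda)\in R(\bg)$ by realizing it via iterated cones: for a suitable choice of dominant $\mu\in X(\bt)_+$ and $v\in W$, the tensor product $\nabla_\mu\otimes Q(e_v)$ (if $v\succ p$) or $\nabla_\mu\otimes P(-e_v)^{*}$ (if $v\preceq p$) contains $\sk_\lambda$ as a distinguished graded constituent, while all remaining composition factors are either $\sk_{\mu'}$ with $(\mu',\mu')<(\lambda,\lambda)$ or $\sk_{\mu'}$ of the same length as $\lambda$ but earlier in a secondary well-ordering. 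Iterating this construction and invoking the two-level induction isolates $\sk_\lambda$ as an object of $\sf T$. The delicate bookkeeping required to verify that each correction term strictly decreases the inductive parameter, so that the double induction actually terminates and every remnant is genuinely caught by $\sf T$, is the main technical difficulty; this mirrors at the derived level the K-theoretic computations carried out in \cite[Section 4]{Ana}.
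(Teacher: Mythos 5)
Your overall architecture matches the paper's: reduce to generating $\rep(\fb)$ as an abelian category by one--dimensional weight modules (via truncations, and via Proposition \ref{prop:resolution} plus Theorem \ref{th:universal coefficients} over $\Z$), then induct along the antipodal excellent order $<_a$, treating Steinberg weights via the socle of $Q(e_v)$ (for $v\succ p$) and the head of $P(-e_v)^*$ (for $v\preceq p$). That part is sound and is exactly how the paper proceeds through Theorem \ref{th:generation}.

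The gap is in the non-Steinberg case, which is the actual content of the theorem and which you defer as ``delicate bookkeeping.'' You assert that for every non-Steinberg $\lambda$ one can choose a \emph{generator} $\nabla_\mu\otimes Q(e_v)$ or $\nabla_\mu\otimes P(-e_v)^*$ containing $\sk_\lambda$ with multiplicity one and all other constituents $<_a\lambda$. No argument is given for the existence of such a choice, and it is a genuinely stronger statement than what is needed or what the paper proves: since all the $Q(e_v)$, $P(-e_v)^*$ have weights confined to a fixed bounded set while $\nabla_\mu$ contains the full $W$-orbit of $\mu$, controlling \emph{all} weights of $\nabla_\mu\otimes Q(e_v)$ by $\leq_a\lambda$ requires delicate alignment that is far from automatic (e.g.\ for $\lambda=-3\omega_2$ in type ${\bf A}_2$ most natural candidates such as $\nabla_{3\omega_1}$ or $\nabla_\rho\otimes Q(-\rho)$ fail because they contain weights strictly longer than $\lambda$). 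The correct move, which your proposal misses, is that the hull is $\bg$-linear (Proposition \ref{prop:G linear hull}), so one may tensor a fundamental representation $\nabla_{\omega_j}$ with an \emph{arbitrary} one-dimensional module $\sk_{w\tau}$, $\tau=\varpi-\omega_j$, that is already in the hull by the induction hypothesis --- this module is not a generator. The whole technical burden then sits in showing that the weights of $\sk_{w\tau}\otimes\nabla_{\omega_j}$ all precede $\lambda=w\varpi$ in $<_a$ with $\lambda$ of multiplicity one; this is Propositions \ref{prop:touch} and \ref{prop:fartouch} (resting on Lemma \ref{lem:aed} and the minimal-coset-representative combinatorics), and nothing in your proposal substitutes for them. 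As written, your proof establishes the reduction and the Steinberg base case but not the inductive step for general weights.
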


The proof of this theorem will take the rest of the section. We will categorify the theorem of Steinberg that says that
the $[\sk_{e_v}]$ generate $R(\bt)$ as an $R(\bg)$-module and we will apply the same reasoning with a few $\sk_{e_v}$'s replaced with $Q(e_v)$ or $P(-e_v)^*$. Our arguments are similar to the proof of \cite[Theorem 2]{Ana}.

%--------------------------------------------------------------
\subsection{\unboldmath Generating \unboldmath$\rep({\fb})$}\label{subsec:repB-generation}
%--------------------------------------------------------------

  We say that a full subcategory of an abelian category has the 2 out of 3 property if,
  whenever $0\to N_1\to N_2 \to N_3 \to 0$ is exact and two of the $N_i$ are in the subcategory, then so is the third.
  
  %--------------------------------------------------------------
  \subsection{\unboldmath The set \unboldmath$\{M_v\}_{v\in W}$}\label{subsec:Mv's}
  %--------------------------------------------------------------
  
   Let us be given a set $\{M_v\}_{v\in W}$ of objects of $\rep_\fr({\fb})$  with the following properties.
  The multiplicity of the weight $e_v$ in $M_v$ is one. Every weight $\lambda$ of $M_v$ satisfies $(\lambda,\lambda)\leq (e_v,e_v)$.
  If $\lambda $ is a weight of $M_v$ with $(\lambda,\lambda)= (e_v,e_v)$, then  $\lambda$ is a weight of $P(-e_v)^*$. So all weights $\lambda$
  of $M_v$ precede $e_v$ in the antipodal excellent order, notation $\lambda\leq_ae_v$.
Examples of possible choices of $M_v$ are $\sk_{e_v}$, $Q(e_v)$, $P(-e_v)^*$.
  %We use $e^\lambda$ as another notation for $\sk_\lambda$. 
  
  \begin{theorem}[Generation]\label{th:generation}
  The smallest strictly full additive subcategory  that 
  \begin{itemize}
  \item contains the $M_v$,  
  \item has the
  2 out of 3 property and 
  \item contains with every $\sk_\lambda$  also
   $\sk_\lambda\otimes\nabla(\omega_\alpha)$ for every fundamental representation $\nabla(\omega_\alpha)$,
  \end{itemize}
   is the category $\rep({\fb})$ of finitely generated ${\fb}$-modules.
  \end{theorem}
  
We first prove a lemma and two propositions.
   \begin{lemma}\label{lem:aed}
    Let $\lambda\in X(\bt)$, $\alpha\in \Pi$.
    Then $s_\alpha\lambda\leq_e\lambda$ if and only if $s_\alpha\lambda\geq_d\lambda$.    
\end{lemma}

\begin{proof}
    Write $\lambda=w\nu$ with $\nu$
    dominant and $w$ minimal. 
    Note that $w$ is unique by Lemma \ref{lem:minimal-coset-representative}.
    Suppose $s_\alpha\lambda<_e\lambda$.  
    Then $s_\alpha w$ is the minimal element $z$ of $W$ with $z\nu= s_\alpha\lambda$
    and we have $s_\alpha w<w$, so $w^{-1}s_\alpha<w^{-1}$. 
    By Lemma \ref{lem:ws shorter} we have $w^{-1}\alpha<0$, so $(w^{-1}\alpha,\nu)\leq0$, hence  $(\alpha,\lambda)=(\alpha,w\nu)\leq0$.
    As
   $s_\alpha\lambda\neq\lambda$, it follows that  $s_\alpha\lambda>_d\lambda$.
 
  Next suppose $s_\alpha\lambda>_e\lambda$. Then 
 $s_\alpha s_\alpha\lambda>_ds_\alpha\lambda$ by the above, so that 
 $s_\alpha\lambda<_d\lambda$.
\end{proof}

   \begin{proposition}\label{prop:touch}
  Let $\lambda$ be a weight in the $W$-orbit of the dominant weight $\varpi$. 
  %Assume $(\beta^\vee,\varpi)\leq1$ for all
 % simple roots $\beta$. 
  Take $w$ minimal so that $\lambda=w\varpi$. Let there be a simple root $\alpha$  such that $(\alpha^\vee,\varpi)=1$ and 
  $w\leq w s_{\alpha}$. Let $\mu\in w(\varpi-\omega_\alpha)+W\omega_\alpha$. Then $(\mu,\mu)\leq (\varpi,\varpi)$ and if $(\mu,\mu)=(\varpi,\varpi)$,
  then there is $v\geq w$ with $\mu=v\varpi$.
  So $\mu\leq_a\lambda$.
  \end{proposition}
  
  \begin{proof}
  Put $\tau=\varpi-\omega_\alpha$. Note that $\tau$ is dominant. Let $W_I$, $W_J$, $W_K$ be the stabilizers in $W$ of $\tau$, $\omega_\alpha$, $\varpi$ respectively, with notations as in definition \ref{def:WI}. So $K$ is the intersection of the subsets
  $I$ and $J$ of $\Pi$.
  Choose $z$ minimal so that $w^{-1}\mu=\tau+z\omega_\alpha$. Assume $(\mu,\mu)\geq(\varpi,\varpi)$.
 % The longest weights of $\nabla(\tau)\otimes\nabla(\omega_j)$ are all in $W\varpi$, so $\mu\in W\varpi$ or $(\mu,\mu)<(\varpi,\varpi)$.
%  Say $\mu\in W\varpi$.
%  As $\tau+z\omega_j$ has the same length as $\tau+\omega_j$, we must have $(\tau,\omega_j-z\omega_j)=0$.
  Choose a reduced expression $s_ks_{k-1}\cdots s_1$ for $z$ and put $z_0=\id$, $z_i=s_iz_{i-1}$. 
  As $z_{i+1}\omega_\alpha>_ez_i\omega_\alpha$, we have by Lemma \ref{lem:aed} that
the path $z_0\omega_\alpha,\cdots, z_k\omega_\alpha$ from $\omega_\alpha$ to $z\omega_\alpha$ is strictly descending for $\leq_d$.
  Along the path $(\tau,z_i\omega_\alpha)$ 
  can only go down, but $(\tau,u\omega_\alpha)<(\tau,v\omega_\alpha)$ implies 
  $(\tau+u\omega_\alpha,\tau+u\omega_\alpha)<(\tau+v\omega_\alpha,\tau+v\omega_\alpha)$.
  So the path must consist of steps in directions perpendicular
  to $\tau$. So $z\in W_I$.
  Now $w$ is minimal in its coset $wW_K$, and moreover we have $w\leq w s_{\alpha_\alpha}$. That makes it minimal in $wW_I$ also.
  This shows $wz\geq w$. Now note that $\mu=w(\tau+z\omega_\alpha)=w(z\tau+z\omega_\alpha)=wz\varpi$.
  \end{proof}
  
  \begin{proposition}\label{prop:fartouch}
  Let $\lambda$ be a weight in the $W$-orbit of the dominant weight $\varpi$. 
  Assume $(\alpha^\vee,\varpi)>1$ for some
  simple root $\alpha$. Take $w$ minimal so that $\lambda=w\varpi$.  Let $\mu\in w(\varpi-\omega_\alpha)+W\omega_\alpha$. 
  Then $(\mu,\mu)\leq (\varpi,\varpi)$ and if $(\mu,\mu)=(\varpi,\varpi)$,
  then there is $v\geq w$ with $\mu=v\varpi$.  So $\mu\leq_a\lambda$.
  \end{proposition}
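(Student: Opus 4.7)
The plan is to closely mirror the argument of Proposition \ref{prop:touch}, exploiting a structural simplification that occurs when $\alpha_j^\vee(\varpi) > 1$. First I will set $\tau = \varpi - \omega_j$. Because $\alpha_i^\vee(\tau) = \alpha_i^\vee(\varpi) - \delta_{ij}$, the hypothesis forces $\tau$ to be dominant, and moreover $\alpha_j^\vee(\tau) > 0$. Writing $\mu = w\tau + wz\omega_j$ with $z$ chosen minimal in its coset $zW_J$ (where $W_J$ is the stabilizer of $\omega_j$), the key observation will be that the stabilizer $W_I$ of $\tau$ coincides with the stabilizer $W_K$ of $\varpi$: indeed, a simple reflection $s_i$ fixes $\tau$ iff $\alpha_i^\vee(\tau) = 0$, which fails for $i = j$ by strict positivity and, for $i \neq j$, is equivalent to $s_i$ fixing $\varpi$. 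Note in addition that $W_K \subset W_J$, since $s_j \notin W_K$.

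The next step will be the computation $(\mu, \mu) - (\varpi, \varpi) = 2(\tau, z\omega_j - \omega_j)$, which reduces everything to controlling this inner product. I will use the same path argument as in Proposition \ref{prop:touch}: fix a reduced expression $z = s_{j_k}\cdots s_{j_1}$, set $z_m = s_{j_m}z_{m-1}$, and observe that the minimality of $z$ in $zW_J$ prevents $z_m\omega_j = z_{m-1}\omega_j$ (otherwise $zW_J$ would admit a representative shorter than $\ell(z)$). Lemma \ref{lem:aed} then makes the path $z_m\omega_j$ strictly descending in the dominance order, with each step $z_m\omega_j = z_{m-1}\omega_j - c_m\alpha_{j_m}$ for some $c_m > 0$. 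Summing yields $\omega_j - z\omega_j = \sum_m c_m\alpha_{j_m}$; since $\tau$ is dominant, each $(\tau, \alpha_{j_m}) \geq 0$, and hence $(\mu,\mu) \leq (\varpi,\varpi)$.

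For the equality case, every term $c_m(\tau,\alpha_{j_m})$ must vanish, forcing $(\tau,\alpha_{j_m}) = 0$ and thus $s_{j_m} \in W_I$ for every $m$. Hence $z \in W_I = W_K \subset W_J$, and the minimality of $z$ in $zW_J$ then forces $z = \id$, so $\mu = w\varpi = \lambda$; taking $v = w$ discharges the ``$v \geq w$'' part of the statement. The final assertion $\mu \leq_a \lambda$ is then immediate: strict length inequality gives $\mu <_a \lambda$ from Definitions \ref{def:excord} and \ref{def:antipodal}, while the equality case has $\mu = \lambda$ outright. I do not expect any real obstacle: the argument is essentially parallel to that of Proposition \ref{prop:touch}, and the far-case hypothesis actually makes the equality analysis cleaner, since the inclusion $W_I = W_K \subset W_J$ pins down $z = \id$ immediately rather than leaving an $s_j$-factor to be absorbed as in the close case.
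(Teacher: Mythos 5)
Your proof is correct and follows essentially the same route as the paper's: the same $\tau=\varpi-\omega_j$, the same observation that the stabilizer $W_I$ of $\tau$ coincides with the stabilizer $W_K$ of $\varpi$, and the same descending-path argument along a reduced expression for the minimal $z$ with $w^{-1}\mu=\tau+z\omega_j$, showing $(\tau,z\omega_j)\leq(\tau,\omega_j)$ with equality only when every step is perpendicular to $\tau$, i.e.\ $z\in W_I$. The only (harmless) difference is the endgame: the paper takes $v=wz$ and uses minimality of $w$ in $wW_K$ to get $wz\geq w$, whereas you additionally invoke $W_K\subset W_J$ to force $z=\id$, hence $\mu=\lambda$ and $v=w$ — both conclusions are valid, yours being marginally sharper.
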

  \begin{proof}
  Put $\tau=\varpi-\omega_\alpha$. Let $W_I$, $W_J$  be the stabilizers in $W$ of $\tau$, $\omega_\alpha$ respectively.
  Observe that $W_I$ is also the stabilizer $W_K$ of $\varpi$.
  Choose $z$ minimal so that $w^{-1}\mu=\tau+z\omega_\alpha$.
  %The longest weights of $\nabla(\tau)\otimes\nabla(\omega_j)$ are all in $W\varpi$, so $\mu\in W\varpi$ or $(\mu,\mu)<(\varpi,\varpi)$.
 % Say $\mu\in W\varpi$.
Assume $(\mu,\mu)\geq(\varpi,\varpi)$. 
%As $\tau+z\omega_j$ has the same length as $\tau+\omega_j$, we must have $(\tau,\omega_j-z\omega_j)=0$.
  Along the path $z_0\omega_\alpha,\cdots, z_k\omega_\alpha$ from $\omega_\alpha$ to $z\omega_\alpha$, given by an irreducible expression for $z$, the inner product $(\tau,z_i\omega_\alpha)$ 
  can only go down, so the path must consist of steps in directions perpendicular
  to $\tau$. So $z\in W_I=W_K$.
  Now $w$ is minimal in its coset $wW_K=wW_I$.  This shows $wz\geq w$. Now note that $\mu=w(\tau+z\omega_\alpha)=w(z\tau+z\omega_\alpha)=wz\varpi$.
  \end{proof}

  \begin{proof}[\rm \bf  Proof of Theorem \ref{th:generation}]
  Let $\varpi$ be dominant and let $\lambda$ be a weight in its $W$-orbit. Choose $w$ minimal so that 
  $\lambda=w\varpi$.
  Assume that for all $v\geq w$ with $v\varpi\neq \lambda$ the representation $\sk_{v\varpi}$ 
   is in the subcategory.
Assume also that for all weights $\mu$ with $(\mu,\mu)<(\lambda,\lambda)$ 
  the representation $\sk_\mu$ of weight $\mu$ is in the subcategory. 
  In other words, assume that $\sk_\mu$  is in the subcategory for all $\mu$ with $\mu<_a\lambda$.
  
  We claim that then    $\sk_\lambda$ is in the subcategory.
  The theorem easily follows from the claim by induction along $<_a$.
  
  So let us prove the claim.
  There are several cases.
  If $\lambda$ is a Steinberg weight $e_v$, then one uses the given properties of $M_v$.
  So we may assume it is not a Steinberg weight.
   
  There are two cases.
  The first case is that $(\beta^\vee,\varpi)\leq 1$ for all $\beta\in \Pi$.
  As $\lambda$ is not a Steinberg weight there must be an $\alpha\in\Pi$ with $(\alpha^\vee,\varpi)=1$ but $w(\alpha)$ positive.
 Then $\ell(ws_\alpha)>\ell(w)$ and we are in the situation of Proposition \ref{prop:touch}. 
 Let $\tau=\varpi-\omega_\alpha$. Then $\tau$ is strictly shorter than $\lambda$, so $\sk_{w\tau}$ is in our subcategory, and therefore
  $N=\sk_{w\tau}\otimes \nabla(\omega_\alpha)$ is in the subcategory. The weights $\mu$ of $N$  lie in the convex hull of $w\tau+W\omega_\alpha$ and are either shorter than $\lambda$
  or they are of the form $v\varpi$ with $v\geq w$. That is $\mu\leq_a\lambda$.
  So $\sk_\mu$ is in the subcategory for all weights of $N$ different from $\lambda$ and $\lambda$ has multiplicity one in $N$. It follows that $\sk_\lambda$ is in the subcategory.
  
  The second case is that some $(\alpha^\vee,\varpi)> 1$. Put $\tau=\varpi-\omega_\alpha$. Again $\tau$ is strictly shorter than $\varpi$.
  By Proposition \ref{prop:fartouch}
  the weights  $\mu$ of $N=\sk_{w\tau}\otimes \nabla(\omega_\alpha)$ are either shorter than $\lambda$
  or they are of the form $v\varpi$ with $v\geq w$.
 So $\sk_\mu$ is in the subcategory for all weights of $N$ different from $\lambda$ and $\lambda$ has multiplicity one in $N$. It follows that $\sk_\lambda$ is in the subcategory.
  \end{proof}

  \begin{remark}\label{cor:if not Steinberg then tensor}
  The above proof  shows the following.
If $\sigma$ is not a Steinberg weight, then  
 there is a fundamental weight $\omega_\alpha$ and a weight $\mu$ with $(\mu,\mu)<(\sigma,\sigma)$,
 so that the $\fb$-module $L=\nabla_{\omega_\alpha}\otimes \sk_\mu$  has
$\sigma$ as a weight  of  multiplicity one, and so that all weights  $\nu$ of $L$ satisfy 
$\nu\leq_a\sigma$.
\end{remark}

\begin{remark}\label{cor:Steinberg_alternative}Let $\sk$ be an algebraically closed field. Let $W$ be partitioned
arbitrarily into three subsets $W_1$, $W_2$, $W_3$. One gets a basis of $R({\bt})$ over $R( \bg )$ by taking
   $[\sk_{e_v}]$ for ${v\in W_1}$, $[Q(e_v)]$ for ${v\in W_2}$,  $[P(-e_v)^*]$ for ${v\in W_3}$. Indeed $\sk_{e_v}$, $Q(e_v)$,  $P(-e_v)^*$ are permissible choices for $M_v$.
 \end{remark}

 \begin{remark}
{
The results of this section give an alternative proof of Steinberg's theorem \cite{St} that the $[\sk_{e_w}]$ generate
$R({\bt})$ over $R( \bg )$. See also Lemma \ref{cor:ordered_generate} or \cite[Theorem 2]{Ana}. Our proof  assumes only that $\bg$ is split and $\sk$ is a field or $\Z$, while
Steinberg takes the field algebraically closed. But by \cite[\S 3]{Serre} this makes no difference.
 }
 \end{remark}
    
\begin{proof}[\rm \bf  Proof of Theorem \ref{th:derived_generation}]
By Theorem \ref{th:generation}, for any $p\in W$ the set
 $\{\nabla_\lambda\otimes Q(e_v)\}_{v\succ p,\lambda\in X(\bt)_+}\cup\{\nabla_\lambda\otimes P(-e_v)^*\}_{v\preceq p,\lambda\in X(\bt)_+}$ generates $\rep({\fb})$ as an abelian category in a specific manner, corresponding in the derived
 category with taking cones, shifts, or using the $\bg$-linear structure of Proposition \ref{prop:G linear hull}. Therefore the hull contains all objects of minmimal cohomological amplitude.
 Considering the canonical truncation of an object of $\Dd^b (\rep ({\fb}))$ and resolving each cohomology via shifts of objects of minimal cohomological amplitude,
 %the above generators of $\rep({\fb})$, 
 we get the statement. One could also use stupid truncations, cf.\ \cite[2.5]{Kuzbasech}.
\end{proof}

%--------------------------------------------------------------
\section{\unboldmath Construction of the objects \unboldmath$X_p$ and $Y_p$}\label{sec:X_p&Y_p}
%--------------------------------------------------------------
Recall that $\sk$ is a field or $\Z$.
By Theorem \ref{th:derived_generation}, we have
\begin{equation}\label{eq:trianghullP-Q}
\hull( \{\nabla_\lambda\otimes Q(e_v)\}_{v\succ  p,\lambda\in X(\bt)_+}\cup\{\nabla_\lambda\otimes P(-e_v)^*\}_{v\preceq  p,\lambda\in X(\bt)_+})%_{\oplus} 
=\Dd ^b(\rep( \fb))
\end{equation}
for all $p\in W$.

%--------------------------------------------------------------
\subsection{\unboldmath Cut at \unboldmath$p\in W$}\label{subsec:cut_at_p}
%--------------------------------------------------------------

Introduce the following notation: 
\begin{itemize}
\item ${\sf Q}_{\succeq p}:=\hull(\{\nabla_\lambda\otimes Q(e_v)\}_{v\succeq  p,\lambda\in X(\bt)_+})$

\item ${\sf Q}_{\succ p}:=\hull(\{\nabla_\lambda\otimes Q(e_v)\}_{v\succ  p,\lambda\in X(\bt)_+})$

\item ${\sf P}_{\preceq p}:=\hull(\{\nabla_\lambda\otimes P(-e_v)^*\}_{v\preceq  p,\lambda\in X(\bt)_+})$

\item ${\sf P}_{\prec p}:=\hull(\{\nabla_\lambda\otimes P(-e_v)^*\}_{v\prec  p,\lambda\in X(\bt)_+})$

\end{itemize}
We will show eventually (Lemma \ref{lem:samehull}, Lemma \ref{lem:hullofadmissible}, Proposition \ref{prop:sfX_p-admissibility}) that these are admissible subcategories of $\Dd ^b(\rep( \fb))$ .
\begin{remark}
    Do not confuse ${\sf Q}_{\succ p}$
    with $\hull(\{\nabla_\lambda\otimes Q(e_v)\}_{e_v>_a  e_p,\lambda\in X(\bt)_+})$.
    The latter may seem more natural, as the $Q(\mu)$ are costandard modules for $>_a$ (Theorem \ref{th:Qant}). 
    The Steinberg weights
    show two faces, and this is essential. On the one hand they are indexed by the Weyl group and are thus
    ordered by the Bruhat order, on the other hand they can be characterised  in terms of $>_a$, see Remark \ref{rem:Steinberg from anti}.
\end{remark}

We put $\D=\Dd ^b(\rep( \fb))$. Then (\ref{eq:trianghullP-Q}) gives that \begin{equation}\label{eq:Q>pPleqp-generation}
\D=\hull({\sf Q}_{\succ p} \cup {\sf P}_{\preceq p}).
\end{equation}
By Corollary \ref{cor:left orthogonal} and Corollary \ref{cor:Z left orthogonal}, for all $p\in W$:
\begin{equation}
{\sf Q}_{\succ p}\subsetq{\sf P}_{\preceq p}^{\perp}.
\end{equation}
Note that we are taking the right orthogonal in $\D$.

By Lemma \ref{lem:one way}  we have 
\begin{equation}\label{eq:P perp is Q}
{\sf P}_{\preceq p}^\perp= {\sf Q}_{\succ p}.
\end{equation}
Thus, by Definition \ref{def:semdecomposition-filtration}, $\D$ has a semi-orthogonal decomposition:
\begin{equation}\label{eq:p-q-SOD-1}
\D=\langle \;{\sf Q}_{\succ p},{\sf P}_{\preceq p}\; \rangle .
\end{equation}
Then the inclusion ${\sf Q}_{\succ p}\hookrightarrow \D$ has a left adjoint and the inclusion ${\sf P}_{\preceq p}\hookrightarrow \D$ has a right adjoint. So, ${\sf Q}_{\succ p}$ is left admissible in $\D$ and ${\sf P}_{\preceq p}$ is right admissible in $\D$. 
Dually to (\ref{eq:P perp is Q}) we also have 
\begin{equation}
{\sf P}_{\preceq p}={}^\perp {\sf Q}_{\succ p}.
\end{equation}
Moreover: 
\begin{proposition}\label{prop:p-q-SOD-2}
\begin{equation}\label{eq:p-q-SOD-2}
\D=\langle\; {\sf Q}_{\succeq p},{\sf P}_{\prec p}\;\rangle 
\end{equation}
\end{proposition}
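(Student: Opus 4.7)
My plan is to mirror the derivation of the previously established decomposition $\D = \langle {\sf Q}_{\succ p}, {\sf P}_{\preceq p}\rangle$ from (\ref{eq:p-q-SOD-1}): I will verify semiorthogonality ${\sf Q}_{\succeq p} \subset {\sf P}_{\prec p}^{\perp}$ and generation $\hull({\sf Q}_{\succeq p} \cup {\sf P}_{\prec p}) = \D$. Once both ingredients are in hand, the semiorthogonal decomposition in the statement follows from the same application of Lemma \ref{lem:notorious} and the chain of arguments spanning (\ref{eq:image is perp})--(\ref{eq:P perp is Q}) that produced (\ref{eq:p-q-SOD-1}) from (\ref{eq:Q>pPleqp-generation}).

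Semiorthogonality is immediate from Corollary \ref{cor:left orthogonal}(2), together with its integral counterpart Corollary \ref{cor:Z left orthogonal}. Indeed, for $v \prec p$ and $w \succeq p$ the total order gives $w \succ v$, hence $\Ext^i_\fb(\nabla_\mu \otimes P(-e_v)^{\ast}, \nabla_\lambda \otimes Q(e_w)) = 0$ for all $i \in \Z$ and all dominant weights $\mu, \lambda$. The vanishing then propagates to triangulated hulls on both sides by the standard fact that $(-)^{\perp}$ and ${}^{\perp}(-)$ define strictly full triangulated subcategories.

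For generation I would apply Theorem \ref{th:generation} with the mixed choice $M_v = Q(e_v)$ for $v \succeq p$ and $M_v = P(-e_v)^{\ast}$ for $v \prec p$; the theorem explicitly permits such a mixture. The generation of $\rep(\fb)$ that it produces upgrades to generation of $\Dd^b(\rep(\fb))$ along the lines of the proof of Theorem \ref{th:derived_generation}: the two-out-of-three closure translates into closure under cones, and the insertion of fundamental representations is absorbed once one tensors with all $\nabla_\nu$ via Proposition \ref{prop:G linear hull}. An equivalent, even more direct route (valid whenever $p$ is not the minimum of $(W, \prec)$) is to apply Theorem \ref{th:derived_generation} verbatim to the immediate predecessor $p^{-}$ of $p$ in $\prec$, since then ${\sf Q}_{\succ p^{-}} = {\sf Q}_{\succeq p}$ and ${\sf P}_{\preceq p^{-}} = {\sf P}_{\prec p}$, reducing the generation step entirely to a prior theorem.

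I do not anticipate any genuine obstacle: both ingredients are immediate specializations of already-proved results, and the passage from semiorthogonality plus generation to the SOD follows the Bondal-type argument used for (\ref{eq:p-q-SOD-1}). The only minor care needed is ensuring that semiorthogonality on generators extends to their triangulated hulls, but this is built into the definitions of $(-)^{\perp}$ and ${}^{\perp}(-)$ as strictly full triangulated subcategories, so no additional work is required.
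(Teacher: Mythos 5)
Your proposal is correct, and your ``more direct route'' is in fact exactly the paper's proof: the paper observes that for $p\neq\id$ the claimed decomposition is literally the already-established decomposition (\ref{eq:p-q-SOD-1}) applied to the predecessor $p'$ of $p$, since ${\sf Q}_{\succeq p}={\sf Q}_{\succ p'}$ and ${\sf P}_{\prec p}={\sf P}_{\preceq p'}$, and it handles the remaining case $p=\id$ by noting $Q(e_{\id})=P(-e_{\id})^*=\sk$, which forces ${\sf P}_{\preceq \id}\subseteq{\sf Q}_{\succeq \id}$ so that ${\sf Q}_{\succeq\id}=\D$ and the decomposition is trivial. Your primary route---rerunning the semiorthogonality argument via Corollary \ref{cor:left orthogonal}(2) and the generation argument via Theorem \ref{th:generation} with the mixed choice $M_v=Q(e_v)$ for $v\succeq p$, $M_v=P(-e_v)^*$ for $v\prec p$, then repeating the Bondal-type argument---is also valid and has the mild advantage of covering the minimal element uniformly, but it redoes work that the predecessor observation makes unnecessary. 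Both ingredients you cite do hold as stated, so there is no gap; the only difference is economy.
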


\begin{proof}
    If $p\neq\id$, then $p$ has a predecessor $p'$ for the total order $\prec$ on $W$ and ${\sf Q}_{\succeq p}={\sf Q}_{\succ p'}$,
    ${\sf P}_{\prec p}={\sf P}_{\preceq p'}$.
    And if $p=\id\in W$, then actually $Q(e_\id)=P(-e_\id)^*=\sk$, so that ${\sf Q}_{\succeq \id}=\hull({\sf Q}_{\succ \id}\cup {\sf P}_{\preceq \id})=\D$.
\end{proof}
%--------------------------------------------------------------
\subsection{\unboldmath Defining \unboldmath$X_p$ and $Y_p$}\label{subsec:X_p-Y_p}
%--------------------------------------------------------------

Let $X_p$ be the image of $P(-e_p)^*$ under the left adjoint of the inclusion of ${\sf Q}_{\succeq p}$ into $\D$.
More precisely, denote $i^{\sf Q}_{\succeq p}:{\sf Q}_{\succeq p}\hookrightarrow \D$ the embedding functor and let ${i^{\sf Q{\ \ast}}_{\succeq p}}$ be its left adjoint. Then
\begin{equation}\label{eq:def_of_X_p}
X_p:={i^{\sf Q{\ \ast}}_{\succeq p}}(P(-e_p)^*).
\end{equation}
By definition, $X_p\in {\sf Q}_{\succeq p}$. 
We have an exact triangle 
\begin{equation}\label{eq:def_triang_X_p}
P(-e_p)^*\rightarrow X_p\rightarrow {\sf cone}(P(-e_p)^*\rightarrow X_p)
\end{equation}
Note that ${\sf cone}(P(-e_p)^*\rightarrow X_p)\in{\sf P}_{\prec p}=\hull(\{\nabla_\lambda\otimes P(-e_v)^*\}_{v\prec  p,\lambda\in X(\bt)_+})$. Therefore, 
$$X_p\in\hull(  P(-e_p)^*\cup {\sf P}_{\prec p})\subsetq\hull(\{\nabla_\lambda\otimes P(-e_v)^*\}_{v\preceq  p,\lambda\in X(\bt)_+})={\sf P}_{\preceq p}.$$
So, 
\begin{equation}\label{eq:X in Q and P}
X_p\in {\sf Q}_{\succeq p}\cap {\sf P}_{\preceq p}.    
\end{equation}
Now let $Y_p$ be the image of $Q(e_p)$ under the 
right adjoint of the inclusion of
${\sf P}_{\preceq p}=\{\nabla_\lambda\otimes P(-e_v)^*\}_{v\preceq p,\lambda\in X(\bt)_+}$ into $\D$. More precisely, denote $i^{\sf P}_{\preceq  p}:{\sf P}_{\preceq p}\hookrightarrow \D$ the embedding functor and let $i^{\sf P{\ !}}_{\preceq p}$ be its right adjoint. Then
\begin{equation}\label{eq:def_of_Y_p}
Y_p:=i^{\sf P{\ !}}_{\preceq p}(Q(e_p)).
\end{equation}
By definition, $Y_p\in {\sf P}_{\preceq p}$. We have an exact triangle 
\begin{equation}\label{eq:def_triang_Y_p}
 Y_p\rightarrow Q(e_p)\rightarrow {\sf cone}(Y_p\rightarrow Q(e_p)) 
\end{equation}
Note that ${\sf cone}(Y_p\rightarrow Q(e_p))\in {\sf P}_{\preceq p}^{\perp}=
{\sf Q}_{\succ p}$. Therefore, 
%:=\langle\{\nabla_\lambda\otimes Q(e_v)\}_{v\succ  p,\lambda\in X_+}\rangle$. 
$$Y_p\in \hull( Q(e_p)\cup {\sf Q}_{\succ p})\subsetq\hull(\{\nabla_\lambda\otimes Q(e_v)\}_{v\succeq  p,\lambda\in X(\bt)_+})={\sf Q}_{\succeq p}.$$
So, \begin{equation}\label{eq:Y in Q and P}Y_p\in {\sf Q}_{\succeq p}\cap {\sf P}_{\preceq p} .
\end{equation}
\begin{lemma}\label{lem:samehull}Let $p\in W$.
\begin{itemize}
    \item $\hull(\{\nabla_\lambda\otimes X_v\}_{v\preceq p,\lambda\in X(\bt)_+})=\hull(\{\nabla_\lambda\otimes P(-e_v)^*\}_{v\preceq  p,\lambda\in X(\bt)_+}).$

    \item $\hull(\{\nabla_\lambda\otimes Y_v\}_{v\succeq  p,\lambda\in X(\bt)_+})=\rule{0pt}{1.1em}
    \hull(\{\nabla_\lambda\otimes Q(e_v)\}_{v\succeq  p,\lambda\in X(\bt)_+}).$
\end{itemize}
In particular, $\hull(\{\nabla_\lambda\otimes X_v\}_{v\in W}) =\D$.
\end{lemma}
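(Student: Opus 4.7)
I would prove the two displayed equalities by induction on $v$ in the total order $\prec$, and then deduce the final assertion by specializing to $p=w_0$.

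For the first equality, the inclusion $\subseteq$ is essentially formal: by (\ref{eq:X in Q and P}) each $X_v$ lies in ${\sf P}_{\preceq v}\subseteq{\sf P}_{\preceq p}$ whenever $v\preceq p$, and ${\sf P}_{\preceq p}$ is $\bg$-linear by Proposition \ref{prop:G linear hull}, so every $\nabla_\lambda\otimes X_v$ is in the right-hand hull. For the reverse inclusion I would induct on $v\preceq p$ in the order $\prec$. The base case is $v=\id$: since $e_\id=0$ and ${\sf P}_{\prec\id}=0$, the defining triangle (\ref{eq:def_triang_X_p}) collapses to give $X_\id=P(-e_\id)^*=\sk$. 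For the inductive step I would apply Proposition \ref{prop:p-q-SOD-2} to $P(-e_v)^*$, producing a distinguished triangle
\[
b_v\longrightarrow P(-e_v)^*\longrightarrow X_v\longrightarrow b_v[1]
\]
with $b_v\in{\sf P}_{\prec v}$. Tensoring with $\nabla_\lambda$ reduces the problem to placing $\nabla_\lambda\otimes b_v$ inside the inductive hull $\hull(\{\nabla_\mu\otimes X_u\}_{u\prec v,\,\mu\in X(\bt)_+})$.

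The crux of this inductive step, and what I expect to be the main technical obstacle, is the following propagation device. Since $b_v$ is assembled by iterated cones and shifts from generators $\nabla_\mu\otimes P(-e_w)^*$ with $w\prec v$, the object $\nabla_\lambda\otimes b_v$ is assembled analogously from $\nabla_\lambda\otimes\nabla_\mu\otimes P(-e_w)^*$. By Theorem \ref{th:good tensor} the tensor product $\nabla_\lambda\otimes\nabla_\mu$ admits a good filtration, hence lies in the extension closure of the $\nabla_\nu$ with $\nu\in X(\bt)_+$, so $\nabla_\lambda\otimes\nabla_\mu\otimes P(-e_w)^*$ is an iterated extension of terms $\nabla_\nu\otimes P(-e_w)^*$, each of which belongs to the inductive hull by the induction hypothesis applied to $w\prec v$. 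Without Theorem \ref{th:good tensor} this step would be out of reach; essentially everything else is bookkeeping.

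The second equality is proved by the dual argument, using the triangle $Y_v\to Q(e_v)\to c_v$ from (\ref{eq:def_triang_Y_p}) (with $c_v\in{\sf Q}_{\succ v}$) and downward induction on $v$ in $\prec$, the base case being $v=w_0$ where $c_{w_0}=0$ and $Y_{w_0}=Q(e_{w_0})$. The same good-filtration manipulation propagates the hull statement. Finally, for the ``in particular'' claim, I would take $p=w_0$; then $\{v:v\preceq w_0\}=W$ and ${\sf Q}_{\succ w_0}=0$, so (\ref{eq:Q>pPleqp-generation}) degenerates to $\D={\sf P}_{\preceq w_0}$, and the first equality then yields $\hull(\{\nabla_\lambda\otimes X_v\}_{v\in W})=\D$.
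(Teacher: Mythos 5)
Your proof is correct and follows essentially the same inductive strategy as the paper: the cones in the defining triangles (\ref{eq:def_triang_X_p}) and (\ref{eq:def_triang_Y_p}) lie in ${\sf P}_{\prec v}$ resp.\ ${\sf Q}_{\succ v}$, and one inducts along $\prec$ (upward for the $X_v$, downward for the $Y_v$), with the final claim obtained at $p=w_0$ via (\ref{eq:Q>pPleqp-generation}). The one inaccuracy is your claim that Theorem \ref{th:good tensor} is indispensable for placing $\nabla_\lambda\otimes b_v$ in the inductive hull: Proposition \ref{prop:G linear hull} already shows that the hulls ${\sf P}_{\prec v}$ are $\bg$-linear, hence closed under $\nabla_\lambda\otimes(-)$, using only that the $\nabla_\nu$ generate $\Dd^b(\rep(\bg))$ (Proposition \ref{prop:stronggeneratingrepG}), so your good-filtration detour is valid but redundant.
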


\begin{proof}
As $\cone(P(-e_p)^*\to X_p)$ lies in $\hull(\{\nabla_\lambda\otimes P(-e_v)^*\}_{v\prec  p,\lambda\in X(\bt)_+})$, we may argue by induction on the size of $\{\;v\in W\mid v\preceq p\;\}$. 

Similarly,
$\cone(Y_v\to Q(e_v))$ lies in $\hull(\{\nabla_\lambda\otimes Q(e_v)\}_{v\succ  p,\lambda\in X(\bt)_+})$ and we may use induction on the size of 
$\{\;v\in W\mid v\succeq p\;\}$.
\end{proof}

\begin{remark}\label{rem:ends}
    If $p=\id$, then $P(-e_p)^*=\sk$ and ${\sf P}_{\preceq p}=\D$, so $X_p=\sk$.

    Let $\rho=\sum_{\alpha\in \pi}\omega_\alpha$. If $p=w_0$, then $Q(e_p)=\sk_{-\rho}$ by Proposition \ref{prop:1dimQ} and 
    ${\sf Q}_{\succeq p}=\D$, so $Y_p=\sk_{-\rho}$.
\end{remark}

%--------------------------------------------------------------
\subsection{\unboldmath Computing morphisms between \unboldmath$X_p$ and $Y_p$}\label{subsec:X_p-Y_p-morphisms}
%--------------------------------------------------------------

The objects $X_v,Y_v$ are the key ingredient for constructing $\bg$-linear semi-orthogonal decompositions of $\Dd^b (\rep(\fb))$. Combined with the results of Section \ref{sec:X_p&Y_p-isom} below, the objects $X_v$'s will give the sought-for $\bg$-linear subcategories of $\D=\Dd^b (\rep(\fb))$. The goal of this section is to compute $\Hom_\D^{\bullet}(X_v,Y_v)$ and also variations on $\Hom_\D^{\bullet}(X_v,Y_v)$ that  take $\bg$-linear structure into account. This computation is achieved by the results of Section \ref{sec:b-cohom_vanish}.
We may assume by Subsection \ref{subsec:bounded-triang_cat} that the $X_v$, $Y_v$ are represented by complexes in $\rep_\fr(\fb)$.
 
\begin{proposition}\label{prop:X_p-G-exceptional-coll}
Let $\D=\Dd^b (\rep ({\fb}))$.
Let $M, N\in\rep(\bg)$ . Let $v,w\in W$.
\begin{enumerate}
\item $\Hom_\D(M\otimes X_v,N\otimes Y_v[i])=\Ext^i_ \bg (M,N) $ for all $i$.
\item If $w\succ  v$ then $\Hom_\D(M\otimes X_v,N\otimes Y_w[i])=0$ for all $i$.
\end{enumerate}
\end{proposition}
\begin{proof}
$(1).$ 
Tensoring the triangles (\ref{eq:def_triang_X_p}) and (\ref{eq:def_triang_Y_p}) with modules $M$ and $N$, respectively, we get
exact triangles
\begin{equation}\label{eq:otimesM1}
 M\otimes P(-e_p)^*\rightarrow M\otimes X_p\rightarrow M\otimes {\sf cone}(P(-e_p)^*\rightarrow X_p)
\end{equation}
and 
\begin{equation}\label{eq:otimesN1}
N\otimes Y_p\rightarrow N\otimes Q(e_p)\rightarrow N\otimes {\sf cone}(Y_p\rightarrow Q(e_p)).
\end{equation}
We have that ${\sf cone}(P(-e_p)^*\rightarrow X_p)\in {\sf P}_{\prec p}$ and ${\sf cone}(Y_p\rightarrow Q(e_p))\in {\sf Q}_{\succ p}$. Both subcategories ${\sf P}_{\prec p}$ and ${\sf Q}_{\succ p}$ are $ \bg$-linear by Proposition \ref{prop:G linear hull}, so we also have
$M\otimes {\sf cone}(P(-e_p)^*\rightarrow X_p)\in {\sf P}_{\prec p}$ and $N\otimes {\sf cone}(Y_p\rightarrow Q(e_p))\in {\sf Q}_{\succ p}$. 

We have $\Hom _{\D}(M\otimes {\sf cone}(P(-e_p)^*\rightarrow X_p),N\otimes Y_p[i]))=0$ for all $i$ since $Y_p\in {\sf Q}_{\succeq p}\cap {\sf P}_{\preceq p}\subset {\sf Q}_{\succeq p}$ and ${\sf Q}_{\succeq p}={\sf P}_{\prec p}^{\perp}$ by 
 (\ref{eq:p-q-SOD-2}). Applying $\Hom _{\D}(-,N\otimes Y_p)$ to the triangle (\ref{eq:otimesM1}), we then obtain $\Hom _{\D}(M\otimes X_p,N\otimes Y_p[i])=\Hom _{\D}(M\otimes P(-e_p)^*,N\otimes Y_p[i])$ for all $i$.
 
Next, we have $\Hom _{\D}(M\otimes P(-e_p)^*,N\otimes {\sf cone}(Y_p\rightarrow Q(e_p)[i]))=0$ for all $i$ since $P(-e_p)^*\in  {\sf P}_{\preceq p}$ and ${\sf Q}_{\succ p}={\sf P}_{\preceq p}^{\perp}$ by 
 (\ref{eq:p-q-SOD-1}). Applying $\Hom _{\D}(M\otimes P(-e_p)^*,-)$ to the triangle (\ref{eq:otimesN1}), we then obtain $\Hom _{\D}(M\otimes P(-e_p)^*,N\otimes Y_p[i])=\Hom _{\D}(M\otimes P(-e_p)^*,N\otimes Q(e_p)[i])$ for all $i$. The latter group is isomorphic to $\Ext ^i_{\fb}(M\otimes P(-e_p)^*,N\otimes Q(e_p))$. Now the statement in (1) follows by Corollary \ref{cor:left orthogonal}, (1).

$(2).$ The second part follows similarly using Corollary \ref{cor:left orthogonal}, (2).
\end{proof}

As is clear from Proposition \ref{prop:X_p-G-exceptional-coll}, it is desirable to know  that $X_v$ is isomorphic to $Y_v$ for all $v\in W$.

%--------------------------------------------------------------
\section{\unboldmath Isomorphism of \unboldmath$X_p$ with $Y_p$}\label{sec:X_p&Y_p-isom}
%--------------------------------------------------------------
In this section we show that $X_p$
is isomorphic with $Y_p$, using a refinement of Theorem \ref{th:generation}. This is where the Steinberg weights tie everything together.
Recall that $\sk$ is a field or $\Z$.

\

Let $\lambda \in X(\bt)$. We denote by $\lambda ^+$ the dominant weight in the $W$-orbit of $\lambda$ and we let $w\in W$ be the minimal element of $W$
such that $\lambda ^+=w\lambda$. 

Then put 
$$\pre_\lambda=\{v\in W\mid e_v<_a\lambda\},$$
where $<_a$ is the antipodal excellent order \ref{def:antipodal}.

We work again with the $M_v$ of Subsection \ref{subsec:Mv's}.
We have the following technical variation on Theorem \ref{th:generation}.

\begin{theorem}%[Generation of triangulated hull]
\label{th:cone generation}
  Let $\lambda,\mu\in X(\bt)$, $\mu\neq\lambda$.
  Assume that $\mu$
  is a weight of $P(-\lambda)^*$ or
  that $(\mu,\mu)<(\lambda,\lambda)$.
  
  Then
  $\sk_\mu$ is an object of the triangulated hull of
$\{\nabla_\nu\otimes M_v\mid \nu\in X(\bt)_+,~v\in\pre_\lambda\}.$
  \end{theorem}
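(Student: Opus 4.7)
The plan is to mimic the induction used in the proof of Theorem \ref{th:generation}, while tracking that only the $M_v$ with $v\in\pre_\lambda$ are employed. Both hypotheses on $\mu$ imply $\mu<_a\lambda$, so it is enough to prove that $e^{\mu'}\in\mathcal{H}$ for every $\mu'<_a\lambda$, where $\mathcal{H}=\hull(\{\nabla_\nu\otimes M_v\mid \nu\in X(\bt)_+,~v\in\pre_\lambda\})$. The set $\mathcal{S}=\{\mu'\in X(\bt):\mu'<_a\lambda\}$ is finite (lattice points in the compact ball $(\mu',\mu')\leq(\lambda,\lambda)$), so strong induction on $<_a$ over $\mathcal{S}$ is well-founded.

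For the inductive step at $\mu\in\mathcal{S}$, assuming $e^{\mu'}\in\mathcal{H}$ for all $\mu'<_a\mu$ in $\mathcal{S}$, I split into two cases exactly as in the proof of Theorem \ref{th:generation}. If $\mu=e_v$ is a Steinberg weight, then $v\in\pre_\lambda$ by definition of $\pre_\lambda$, so $M_v\in\mathcal{H}$ (taking $\nu=0$); every other weight $\tau$ of $M_v$ satisfies $\tau<_a e_v=\mu<_a\lambda$, hence $e^\tau\in\mathcal{H}$ by the inductive hypothesis. If $\mu$ is not a Steinberg weight, Corollary \ref{cor:if not Steinberg then tensor} supplies a fundamental weight $\omega_j$ and a strictly shorter weight $\mu'$ such that $L=\nabla_{\omega_j}\otimes e^{\mu'}$ has $\mu$ as a multiplicity-one weight with all other weights $<_a\mu$. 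By induction $e^{\mu'}\in\mathcal{H}$, and by $\bg$-linearity of $\mathcal{H}$ (Proposition \ref{prop:G linear hull}) $L\in\mathcal{H}$; the remaining $e^\tau$ are in $\mathcal{H}$ again by induction.

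In both cases I would extract $e^\mu$ from the ambient module (either $M_v$ or $L$) by a weight-filtration argument. The key point is that in $\rep(\fb)$ the unipotent radical of $\fb$ only further lowers weights in the dominance order, so the $\leq_d$-minimal weight space of any finite-dimensional $\fb$-module is an $\fb$-submodule; peeling off such submodules inductively yields a filtration $0=F_0\subset F_1\subset\cdots\subset F_N$ with one-dimensional graded pieces $e^{\tau_i}$. In the derived category this becomes an iterated cone decomposition of the ambient module; with $k$ the unique index where $\tau_k=\mu$, applying the 2-of-3 property bottom-up through $F_0,\ldots,F_{k-1}$ (using that each $e^{\tau_i}\in\mathcal{H}$ for $i<k$) and top-down from $F_N$ down to $F_k$ places both $F_{k-1}$ and $F_k$ in $\mathcal{H}$, and the final triangle $F_{k-1}\to F_k\to e^\mu$ forces $e^\mu\in\mathcal{H}$.

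The main obstacle is the passage from the abelian statement of Theorem \ref{th:generation} to the triangulated setting of the hull $\mathcal{H}$: one must carry out the simultaneous top-down and bottom-up propagation of hull-membership through the filtration, and verify that the base case of the induction (the only $<_a$-minimal weight in $X(\bt)$ is $\mu=0=e_{\id}$, with $M_{\id}=\sk_0$ automatically one-dimensional) is subsumed by Case A, so that no separate minimal-element argument is required.
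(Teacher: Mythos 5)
Your proposal is correct and follows essentially the same route as the paper's proof: both reduce the two hypotheses to $\mu<_a\lambda$, induct along $<_a$, split into the Steinberg and non-Steinberg cases using the defining properties of the $M_v$ and Corollary \ref{cor:if not Steinberg then tensor}, and invoke the $\bg$-linearity of the hull from Proposition \ref{prop:G linear hull}. The only difference is that you spell out the extraction of $e^\mu$ via a weight filtration and the two-out-of-three property, which the paper leaves implicit by referring back to the proof of Theorem \ref{th:generation}.
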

  
\begin{proof}
We adapt the proof of Theorem \ref{th:generation}. First let us rephrase what we need to show. We need to show that if $\mu<_a\lambda$, then $\sk_\mu$ is in the triangulated hull of 
$$\{\nabla_\nu\otimes M_v\mid \nu\in X(\bt)_+,~v\in W, ~e_v<_a\lambda\}.$$
By  Proposition \ref{prop:G linear hull} this hull is $\bg$-linear.
We want to show by induction along $<_a$ that $\sk_\mu$ is in the  hull.
 So assume $\sk_\sigma$ is in the hull for $\sigma<_a\mu$.

There are two cases: $\mu$ is a Steinberg weight or it is not. If it is a Steinberg weight, then it is an $e_v$ with $e_v<_a\lambda$. And all weights $\sigma$ of $M_v$ that are distinct from $\mu=e_v$ satisfy $\sigma<_a\mu$.
If $\mu$ is not a Steinberg weight, then by Remark \ref{cor:if not Steinberg then tensor} there is a module $N$  in  $\{\nabla_\nu\otimes \sk_\tau\mid \nu\in X(\bt)_+,~(\tau,\tau)<(\mu,\mu)\}$ 
with $\mu$ a weight  of $N$ of multiplicity one, and all weights  $\sigma$ of $N$ satisfying 
$\sigma\leq_a\mu$. But $(\tau,\tau)<(\mu,\mu)$ implies $\tau<_a\mu$.
\end{proof}
For completeness we mention
\begin{lemma}\label{cor:ordered_generate}
    Let $\lambda\in X(\bt)$. Then $e^\lambda$ lies in the $R(\bg)$-submodule of $R(\bt)$
generated by the $[M_v]$ with $e_v\leq_a\lambda$.
\end{lemma}
\begin{proof}
    `Same proof' by induction along $\leq_a$.
\end{proof}
\begin{remark}\label{rem:Steinberg from anti}
    Thus $\lambda$ is a Steinberg weight if and only if $e^\lambda$ does not lie in the $R(\bg)$-submodule of $R(\bt)$
generated by the $e^\mu$ with $\mu<_a\lambda$.
 If $\lambda$ is a Steinberg weight, then $\lambda=e_w$ where $w\in W$ is the element of minimal length making $w\lambda$ dominant.
\end{remark}

\

Let $f_p$ be the natural map from $P(-e_p)^*$ to $Q(e_p)$ (Theorem \ref{th:key_orthogonality}). Recall from the proof of Theorem \ref{th:key_orthogonality} that $f_p$ factors as a surjection $P(-e_p)^*\to\sk_{e_p}$, followed by an injection $\sk_{e_p}\to Q(e_p)$.
We have an exact triangle 
$$\kernel(f_p)[1]\to \cone(f_p)\to \coker(f_p).$$ 
Take $M_v=Q(e_v)$ for $v\succ p$ and $M_v=P(-e_v)^*$ for $v\prec p$. We do not need to specify $M_p$, because $p\notin \pre_{e_p}$.

\begin{corollary}\label{cor:cone-f_p}
Let $p\in W$. Then $\cone(f_p)$ belongs to the hull of the union of the
following three sets:
\begin{eqnarray*}
\{\;\nabla_\lambda\otimes Q(e_v)&\mid& \lambda \mbox{ is dominant,  } v\succ  p\mbox{ and }(e_v,e_v)<(e_p,e_p)\;\}\cup\\
\{\;\nabla_\lambda\otimes Q(e_v)&\mid& \lambda \mbox{ is dominant, }  v\succ p\mbox{ and }v^{-1} pe_p=e_v\neq e_p\;\}\cup\\
\{\;\nabla_\lambda\otimes P(-e_v)^*&\mid& \lambda \mbox{ is dominant, } v\prec p \mbox{ and }(e_v,e_v)<(e_p,e_p)\;\}.
\end{eqnarray*}
In particular, $\cone(f_p)$ belongs to the hull of $ {\sf P}_{\prec p}\cup {\sf Q}_{\succ p}$.
\end{corollary}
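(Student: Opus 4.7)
The plan is to reduce to Theorem \ref{th:cone generation} via the octahedron axiom. First I will factor $f_p$ through its image $\sk_{e_p}$: write $f_p = \iota\circ \pi$, where $\pi\colon P(-e_p)^*\twoheadrightarrow \sk_{e_p}$ is the quotient onto the head and $\iota\colon \sk_{e_p}\hookrightarrow Q(e_p)$ is the inclusion of the socle. The octahedron axiom applied to this factorisation produces a distinguished triangle
$$\ker(f_p)[1] \longrightarrow \cone(f_p) \longrightarrow \coker(f_p) \longrightarrow \ker(f_p)[2],$$
so it suffices to prove that $\ker(f_p) = \mathrm{rad}(P(-e_p)^*)$ and $\coker(f_p) = Q(e_p)/\sk_{e_p}$ both lie in the triangulated hull $\mathcal H$ displayed in the corollary.

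The key observation is that every weight $\nu$ of either $\ker(f_p)$ or $\coker(f_p)$ satisfies $\nu <_a e_p$. For $\coker(f_p)$ this is immediate because $Q(e_p)$ is costandard for $<_a$ with socle $\sk_{e_p}$. For $\ker(f_p)$ I will dualise: $P(-e_p)$ is costandard for $<_e$ with socle $\sk_{-e_p}$, so all weights of its radical are $\mu <_e -e_p$, and by the definition of $<_a$ the dualised weights $\nu = -\mu$ satisfy $\nu <_a e_p$. Since $e_p$ appears with multiplicity one in both $P(-e_p)^*$ (the head) and $Q(e_p)$ (the socle), it is absent from the radical and the cokernel.

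I will then apply Theorem \ref{th:cone generation} with $\lambda = e_p$, so that $\lambda^+ = p e_p$, $w = p$, and $\pre_{e_p} = \{\,v \in W : e_v <_a e_p\,\}$; I take $M_v = Q(e_v)$ for $v\succ p$ and $M_v = P(-e_v)^*$ for $v\prec p$. The theorem yields $\sk_\nu \in \mathcal H$ for every $\nu <_a e_p$, and by filtering $\ker(f_p)$ and $\coker(f_p)$ by one-dimensional $\fb$-submodules and iterating cones, both modules land in $\mathcal H$. The bookkeeping check is that the three indexing sets of the corollary between them exhaust $\pre_{e_p}$: those $v\in\pre_{e_p}$ with $(e_v,e_v)<(e_p,e_p)$ split according to whether $v\succ p$ or $v\prec p$, yielding sets one and three; the remaining elements of $\pre_{e_p}$ arise from Steinberg weights in $W(p e_p)\setminus\{e_p\}$, for which the associated minimal $v$ satisfies $v > p$ in the Bruhat order and hence $v\succ p$ since $\prec$ refines Bruhat, yielding set two. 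The ``in particular'' assertion follows immediately because each of the three sets is contained in ${\sf Q}_{\succ p}\cup{\sf P}_{\prec p}$. When $\sk = \Z$ the same argument goes through using the $\Z$-forms $P(-e_p)^*_\Z$ and $Q(e_p)_\Z$, which are free of finite rank with weight spaces that are direct summands.

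The main obstacle is the weight analysis for $\mathrm{rad}(P(-e_p)^*)$ via the duality between $<_e$ and $<_a$; once this is pinned down carefully, the remainder of the argument is a direct application of Theorem \ref{th:cone generation} together with elementary bookkeeping to identify the indexing sets.
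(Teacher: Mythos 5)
Your proposal is correct and follows essentially the same route as the paper: factor $f_p$ through $\sk_{e_p}$, use the triangle relating $\cone(f_p)$ to $\kernel(f_p)[1]$ and $\coker(f_p)$, and apply Theorem \ref{th:cone generation} with $\lambda=e_p$ and the choices $M_v=Q(e_v)$ for $v\succ p$, $M_v=P(-e_v)^*$ for $v\prec p$. The only (harmless) extra work is your duality argument for the weights of $\kernel(f_p)$, which is not needed since the hypothesis of Theorem \ref{th:cone generation} already covers weights of $P(-\lambda)^*$ directly.
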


\begin{proof}
Apply Theorem \ref{th:cone generation} with $\lambda=e_p$ to 
$\kernel(f_p)$ and $\coker(f_p)$.
\end{proof}
\begin{remark}
    Observe that the proof involves the partial order $<_a$, but the conclusion, that $\cone(f_p)$ belongs to the hull of $ {\sf P}_{\prec p}\cup {\sf Q}_{\succ p}$, refers to $\prec$. This conclusion just
    uses that only $M_v$ with $v\neq p$ are used. It is at the Steinberg weights that the Bruhat order meets the antipodal excellent order. Both orders are important. Compare Proposition \ref{prop:p-q-SOD-2} and Lemma \ref{cor:ordered_generate}.
\end{remark}
We are now in a position to prove the isomorphism $X_p=Y_p$. 
Note that $i^{\sf P{\ !}}_{\preceq p}Q(e_p)=Y_p$ and $i^{\sf P{\ !}}_{\preceq p}(X_p)=X_p$ as $i^{\sf P{\ !}}_{\preceq p}={\sf id}_{{\sf P}_{\preceq p}}$ on ${\sf P}_{\preceq p}$. 
Moreover, the functor $i^{\sf P{\ !}}_{\preceq p}$ annihilates ${\sf Q}_{\succ p}$
as ${\sf Q}_{\succ p}={\sf P}_{\preceq p}^{\perp}$. That is:
\begin{eqnarray*}
i^{\sf P{\ !}}_{\preceq p}({\sf Q}_{\succ p})=0,
\end{eqnarray*}
and 
\begin{eqnarray*}
i^{\sf P{\ !}}_{\preceq p}({\sf P}_{\prec p})\subset {\sf P}_{\prec p}.
\end{eqnarray*}
As $\cone(f_p)$ belongs to the hull of $ {\sf P}_{\prec p}\cup {\sf Q}_{\succ p}$, we conclude that \begin{eqnarray}i^{\sf P{\ !}}_{\preceq p}(\cone(f_p))\in {\sf P}_{\prec p}.\end{eqnarray}
Consider the exact triangle 
\begin{equation}
P(-e_v)^*\rightarrow Q(e_v)\rightarrow \cone(f_p)
\end{equation}
Applying to the triangle the functor $i^{\sf P{\ !}}_{\preceq p}$ and remembering 
that $i^{\sf P{\ !}}_{\preceq p}(Q(e_v))=Y_p$, we obtain 
\begin{equation}
P(-e_v)^*\rightarrow Y_p\rightarrow i^{\sf P{\ !}}_{\preceq p}(\cone(f_p)) 
\end{equation}
with $i^{\sf P{\ !}}_{\preceq p}(\cone(f_p))\in {\sf P}_{\prec p}$.
Thus
\begin{equation}\label{eq:cone in P} \cone(P(-e_v)^*\rightarrow Y_p)\in  {\sf P}_{\prec p}.
\end{equation} 
Now apply to the triangle 
\begin{equation}
P(-e_v)^*\rightarrow Y_p\rightarrow \cone(P(-e_v)^*\rightarrow Y_p) 
\end{equation}
the projection functor ${i^{\sf Q{\ \ast}}_{\succeq p}}$ onto ${\sf Q}_{\succeq p}$.
We have ${i^{\sf Q{\ \ast}}_{\succeq p}}(P(-e_v)^*)=X_p$ and ${i^{\sf Q{\ \ast}}_{\succeq p}}(Y_p)=Y_p$, as $Y_p\in{\sf Q}_{\succeq p}\cap {\sf P}_{\preceq p}\subset {\sf Q}_{\succeq p}$ by (\ref{eq:Y in Q and P}). Further ${i^{\sf Q{\ \ast}}_{\succeq p}}(\cone(P(-e_v)^*\rightarrow Y_p))=0$ because of (\ref{eq:cone in P}), as ${i^{\sf Q{\ \ast}}_{\succeq p}}$ annihilates ${\sf P}_{\prec p}$. Thus, $X_p=Y_p$.

%--------------------------------------------------------------
\section{\unboldmath Semi-orthogonal decomposition of \unboldmath$\Dd ^b(\rep(\fb))$ as a 
$\bg$-linear category}\label{sec:SODofB-modoverG-mod}
%--------------------------------------------------------------

 Recall that $\sk$ is a field or $\Z$.
\begin{theorem}\label{th:X_p-G-exceptional-coll}
Let $\D=\Dd^b (\rep ({\fb}))$.
Let $M$, $N$ be finitely generated $ \bg $-modules. Let $v,w\in W$.
\begin{enumerate}
\item $\Hom_\D(M\otimes X_v,N\otimes X_v[i])=\Ext^i_ \bg (M,N) $ for all $i$.
\item If $w\succ  v$ then $\Hom_\D(M\otimes X_v,N\otimes X_w[i])=0$ for all $i$.
\end{enumerate}
\end{theorem}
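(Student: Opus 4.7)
The plan is to reduce the computation of $\Hom_\D(M\otimes X_v,N\otimes X_w[i])$ to that of $\Hom_\D(M\otimes P(-e_v)^*,N\otimes Q(e_w)[i])$, which was already handled in Corollary \ref{cor:left orthogonal}. The key ingredients are the two defining triangles
\[
P(-e_v)^*\to X_v\to D_v,\qquad X_w\cong Y_w\to Q(e_w)\to C_w,
\]
with $D_v\in {\sf P}_{\prec v}$ from \eqref{eq:def_triang_X_p} and $C_w\in {\sf Q}_{\succ w}$ from \eqref{eq:def_triang_Y_p}, combined with the $\bg$-linearity of the subcategories ${\sf P}_{\prec v}$, ${\sf P}_{\preceq v}$, ${\sf Q}_{\succ w}$, ${\sf Q}_{\succeq w}$ from Proposition \ref{prop:G linear hull}, the semiorthogonality relations ${\sf Q}_{\succ p}={\sf P}_{\preceq p}^\perp$ and ${\sf Q}_{\succeq p}={\sf P}_{\prec p}^\perp$ established in Section \ref{subsec:cut_at_p}, and the identification $X_v=Y_v$ from Section \ref{sec:X_p&Y_p-isom}. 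In both parts of the theorem we have $v\preceq w$, with equality in part (1).

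First I would tensor the second triangle with $N$ and apply $\Hom_\D(M\otimes X_v,-)$. Because $M\otimes X_v\in {\sf P}_{\preceq v}$ while $N\otimes C_w\in {\sf Q}_{\succ w}$, and because $w\succeq v$ implies ${\sf Q}_{\succ w}\subseteq {\sf Q}_{\succ v}={\sf P}_{\preceq v}^\perp$, the object $M\otimes X_v$ is left-orthogonal to $N\otimes C_w[i]$ for all $i$. Hence
\[
\Hom_\D(M\otimes X_v,N\otimes X_w[i])\;\cong\;\Hom_\D(M\otimes X_v,N\otimes Q(e_w)[i]).
\]
Next I would tensor the first triangle with $M$ and apply $\Hom_\D(-,N\otimes Q(e_w)[i])$. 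Here $M\otimes D_v\in {\sf P}_{\prec v}\subseteq {\sf P}_{\prec w}={}^\perp{\sf Q}_{\succeq w}$ (using $v\preceq w$) and $N\otimes Q(e_w)\in {\sf Q}_{\succeq w}$, so the $D_v$-term contributes nothing, yielding
\[
\Hom_\D(M\otimes X_v,N\otimes Q(e_w)[i])\;\cong\;\Ext^i_\fb(M\otimes P(-e_v)^*,N\otimes Q(e_w)).
\]
Combining the two isomorphisms and invoking Corollary \ref{cor:left orthogonal} (or Corollary \ref{cor:Z left orthogonal} when $\sk=\Z$) identifies this common group with $\Ext^i_\bg(M,N)$ when $v=w$, proving (1), and with zero when $w\succ v$, proving (2).

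The argument is essentially a formal manipulation once all the preparatory results are in place: the real work is entirely absorbed into the admissibility/orthogonality package of Section \ref{subsec:cut_at_p}, the equality $X_v=Y_v$, and the triangularity-based vanishing of Corollary \ref{cor:left orthogonal}, whose proof ultimately rests on Theorem \ref{th:triang}. The only point that needs care --- and thus the closest thing to an obstacle --- is the simultaneous use of the ordering $\prec$ on $W$ in two distinct roles: it controls the chain of inclusions ${\sf P}_{\prec v}\subseteq {\sf P}_{\prec w}$ and ${\sf Q}_{\succ w}\subseteq {\sf Q}_{\succ v}$ needed for the two reductions, while also supplying the hypothesis $w\succ v$ in Corollary \ref{cor:left orthogonal}(2) that produces the final vanishing.
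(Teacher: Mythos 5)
Your argument is correct and follows essentially the same route as the paper: both reduce $\Hom_\D(M\otimes X_v,N\otimes X_w[i])$ to $\Ext^i_\fb(M\otimes P(-e_v)^*,N\otimes Q(e_w))$ by killing the cones of the two defining triangles for $X_v$ and $Y_w=X_w$ against the semiorthogonality relations ${\sf Q}_{\succ p}={\sf P}_{\preceq p}^\perp$ and ${\sf Q}_{\succeq p}={\sf P}_{\prec p}^\perp$, then invoke Corollary \ref{cor:left orthogonal}. The only cosmetic differences are that you perform the two reductions in the opposite order and handle parts (1) and (2) uniformly via $v\preceq w$, where the paper spells out (1) and declares (2) similar.
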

\begin{proof}
We may replace $Y_p$ with $X_p$ in Proposition \ref{prop:X_p-G-exceptional-coll}.
\end{proof}

In fact we have
\begin{theorem}
\label{th:Z-X_p-G-exceptional-coll}
Let $\D=\Dd^b (\rep ({\fb}))$.
Let $M, N\in \Dd^b(\rep(\bg))$ and $v,w\in W$.
\begin{enumerate}
\item $\RHom_\D(M\otimes X_v,N\otimes X_v)=\RHom_ {\Dd^b(\rep(\bg))} (M,N) $,
\item If $w\succ  v$ then $\RHom_\D\rule{0pt}{1.1em}(M\otimes X_v,N\otimes X_w)=0$.
\end{enumerate}
\end{theorem}
\begin{proof}

Part (1).   It suffices to treat the case $M=\sk$
because if $L\in \Dd^b(\rep(\bg))$, and $L^*$ is defined as in Subsection \ref{subsec:bounded-triang_cat}, then
$\RHom_\D(L\otimes X_v,N\otimes X_v)=\RHom_\D( X_v,L^*\otimes N\otimes X_v)$ and
$\RHom_  {\Dd^b(\rep(\bg))}  (L,N) =\RHom_  {\Dd^b(\rep(\bg))} (\sk,L^*\otimes N)$.  By Theorem \ref{th:X_p-G-exceptional-coll} 
we  know the result when $N\in \rep(\bg)$ and $\RHom_ \bg (\sk,N) $ is concentrated in at most one degree. 
For instance, this is the case when $N=\nabla_\nu$ for some dominant $\nu$.
So it holds for  $N$ in the hull $\Dd^b(\rep(\bg))$ of the $\nabla_\nu,\nu\in X(\bt)_+$. 

Part (2) is easier.
\end{proof}

\begin{corollary}
\label{cor:Rind_B^G(X_p*otimesX_p)}

\

\begin{enumerate}
\item $\Rind_{\fb}^{ \bg}(X_v^{\ast}\otimes X_v)=\sk$.
\item If $w\succ  v$ then $\Rind _{\fb}^{ \bg}(X_v^{\ast}\otimes X_w)=0$.
\end{enumerate}
\end{corollary}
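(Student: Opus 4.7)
The plan is to deduce both parts from Theorem \ref{th:Z-X_p-G-exceptional-coll} via the $(\res_{\fb}^{\bg}, \Rind_{\fb}^{\bg})$-adjunction combined with the Generalized Tensor Identity and Yoneda's lemma.

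For part (1), the cleanest route is simply to apply Proposition \ref{prop:G-linear_exceptional}(3). Theorem \ref{th:Z-X_p-G-exceptional-coll}(1), read with $M, N$ ranging over $\Dd^b(\rep(\bg))$, can be interpreted as the statement that the (manifestly $\bg$-linear) functor $\Phi_v := (-) \otimes X_v \colon \Dd^b(\rep(\bg)) \to \Dd^b(\rep(\fb))$ is fully faithful. Since $\Phi_v(\sk) = X_v$, Proposition \ref{prop:G-linear_exceptional}(3) then immediately yields $\Rind_{\fb}^{\bg}(X_v^{\ast} \otimes X_v) = \sk$, which is part (1).

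For part (2) I would argue directly. For any $M, N \in \Dd^b(\rep(\bg))$, the tensor-hom adjunction in $\Dd^b(\rep(\fb))$ together with the $(\res_{\fb}^{\bg}, \Rind_{\fb}^{\bg})$-adjunction and the Tensor Identity \cite[I Proposition 4.8]{Jan} give
$$\RHom_{\fb}(M \otimes X_v, N \otimes X_w) = \RHom_{\fb}(M, N \otimes X_v^{\ast} \otimes X_w) = \RHom_{\bg}\bigl(M, N \otimes \Rind_{\fb}^{\bg}(X_v^{\ast} \otimes X_w)\bigr).$$
When $w \succ v$, Theorem \ref{th:Z-X_p-G-exceptional-coll}(2) forces the left-hand side to vanish for all $M, N$. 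Specializing $N = \sk$ and letting $M$ range over $\Dd^b(\rep(\bg))$ yields $\RHom_{\bg}(M, \Rind_{\fb}^{\bg}(X_v^{\ast} \otimes X_w)) = 0$ for every $M$, and the Yoneda Lemma then produces $\Rind_{\fb}^{\bg}(X_v^{\ast} \otimes X_w) = 0$. The same formalism of course reproves (1): with $w = v$, Theorem \ref{th:Z-X_p-G-exceptional-coll}(1) identifies the left-hand side with $\RHom_{\bg}(M, N)$, and specialization followed by Yoneda returns $\Rind_{\fb}^{\bg}(X_v^{\ast} \otimes X_v) \cong \sk$.

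I do not foresee a substantial obstacle: the whole argument is a bookkeeping translation of the cohomological content of Theorem \ref{th:Z-X_p-G-exceptional-coll} via adjunction. The only point that requires a little care is that $X_v^{\ast}$ and the Tensor Identity should behave well in the derived category, in particular when $\sk = \Z$; this is arranged by representing $X_v$ by a bounded complex of $\sk$-flat $\fb$-modules, as in Remark \ref{rem:flat representative} (which rests on the resolution property, Proposition \ref{prop:resolution}), exactly the same device already exploited in proving Theorem \ref{th:Z-X_p-G-exceptional-coll}.
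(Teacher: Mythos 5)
Your argument is correct and is essentially the paper's own proof: the paper establishes part (1) by exactly the chain $\RHom_{\bg}(M,\sk)=\RHom_{\D}(M\otimes X_v,X_v)=\RHom_{\D}(M,X_v^{\ast}\otimes X_v)=\RHom_{\bg}(M,\Rind_{\fb}^{\bg}(X_v^{\ast}\otimes X_v))$ followed by Yoneda, and dismisses part (2) as ``easier'' via the same formalism. Your alternative route through Proposition \ref{prop:G-linear_exceptional}(3) is also fine, since its proof is the identical computation.
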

\begin{proof}Part (1)
\begin{eqnarray*}&\RHom _{ \Dd^b(\rep(\bg))}( M, \sk)=\RHom_\D(M\otimes X_v, X_v)=\\&\RHom_\D(M, X_v^\ast\otimes X_v)=
\RHom_{\Dd^b(\rep(\bg))}(M,\Rind_\fb^\bg( X_v^\ast\otimes X_v)),
\end{eqnarray*}
for $M\in \Dd^b(\rep(\bg))$. By the Yoneda Lemma it follows that $\Rind_{\fb}^{ \bg}(X_v^{\ast}\otimes X_v)=\sk$.

Part (2) is easier.
\end{proof}

\begin{proposition}\label{prop:sfX_p-admissibility}
    Let $p\in W$.
    The 
strictly full subcategory $\tS$ of
$\D=\Dd ^b(\rep(\fb))$ generated by $\{M\otimes X_p \mid M\in \Dd ^b(\rep( \bg))\}$
is an admissible subcategory.
\end{proposition}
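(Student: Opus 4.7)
The plan is to show ${\sf X}_p$ is admissible by exhibiting both a right and a left adjoint to the fully faithful $\bg$-linear functor
$\Phi_p\colon \Dd^b(\rep(\bg))\to \D$, $M\mapsto \res_\fb^\bg(M)\otimes X_p$,
whose essential image is precisely ${\sf X}_p$. The functor $\Phi_p$ and its fully faithfulness are supplied by Corollary \ref{cor:Rind_B^G(X_p*otimesX_p)}(1) combined with Proposition \ref{prop:G-linear_exceptional}(2). Once both adjoints to $\Phi_p$ are in hand, post-composition with $\Phi_p$ yields the corresponding adjoints to the inclusion ${\sf X}_p\hookrightarrow \D$, which is the required admissibility.

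First I would construct the right adjoint. After replacing $X_p$ by a bounded complex of $\fb$-modules finitely generated and free over $\sk$, via the resolution property Proposition \ref{prop:resolution}, tensoring with $X_p$ is $\otimes_\sk$-dualizable. The tensor--hom adjunction combined with the $(\res_\fb^\bg,\Rind_\fb^\bg)$-adjunction recorded in Section \ref{subsec:G-linear-triang_cat} then gives
\[
\RHom_\D(\Phi_p(M),F)=\RHom_\fb(\res_\fb^\bg(M),X_p^\ast\otimes F)=\RHom_\bg(M,\Rind_\fb^\bg(X_p^\ast\otimes F)),
\]
identifying $\Phi_p^R(F):=\Rind_\fb^\bg(X_p^\ast\otimes F)$. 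That $\Phi_p^R$ preserves $\Dd^b(\rep(\bg))$ is a consequence of the finite cohomological dimension of $\Rind_\fb^\bg$, which in turn follows from $\bg/\fb$ being projective of finite dimension.

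For the left adjoint I would use, in addition, that $\sk$-linear duality $(-)^\ast=\Hom_\sk(-,\sk)$ restricts to a contravariant auto-equivalence of both $\Dd^b(\rep(\fb))$ and $\Dd^b(\rep(\bg))$ once one chooses representatives in $\rep_\fr$. Unwinding gives
\begin{align*}
\RHom_\D(F,\Phi_p(M))&=\RHom_\fb(F\otimes X_p^\ast,\res_\fb^\bg(M))\\
&=\RHom_\fb(\res_\fb^\bg(M^\ast),X_p\otimes F^\ast)\\
&=\RHom_\bg(M^\ast,\Rind_\fb^\bg(X_p\otimes F^\ast))\\
&=\RHom_\bg((\Rind_\fb^\bg(X_p\otimes F^\ast))^\ast,M),
\end{align*}
so that $\Phi_p^L(F):=(\Rind_\fb^\bg(X_p\otimes F^\ast))^\ast$ is a left adjoint of $\Phi_p$, completing the argument.

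The main obstacle is a bookkeeping one: throughout both constructions every intermediate object must remain inside the bounded derived category of finitely generated modules. This reduces to two points which are all available in our setting --- that $\Rind_\fb^\bg$ has finite cohomological dimension, so it preserves boundedness, and that $(-)^\ast$ is well defined on bounded complexes of $\sk$-free modules, where the resolution property Proposition \ref{prop:resolution} is what lets one choose such representatives. Extra care is required for $\sk=\Z$, where one must systematically work with flat integral forms as in Remark \ref{rem:flat representative}.
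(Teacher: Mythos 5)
Your proposal is correct and follows essentially the same route as the paper: the paper's proof likewise exhibits the right adjoint of the inclusion as $\Rind_\fb^\bg(-\otimes_\sk X_p^*)\otimes_\sk X_p$ and the left adjoint as $(\Rind_\fb^\bg((-)^*\otimes_\sk X_p))^*\otimes_\sk X_p$, using the resolution property to make $\otimes_\sk$ and $\Hom_\sk(-,\sk)$ available on $\Dd^b(\rep(\fb))$. Your extra bookkeeping (identifying the subcategory with the essential image of the fully faithful $\Phi_p$ and checking boundedness) only makes explicit what the paper leaves implicit.
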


\begin{proof}
Recall that by Proposition \ref{prop:resolution} we may extend the exact bifunctors
 $$-\;\otimes_\sk-:\rep_\fr( \fb)\times \rep_\fr( \fb)\to \rep_\fr( \fb)$$ and $$\Hom_\sk(-,-):\rep_\fr( \fb)\times \rep_\fr( \fb)\to \rep_\fr( \fb)$$ 
 to $\D$. Also recall that $M^*$ means $\Hom_\sk(M,\sk)$ for
 $M\in\D$.
 %Let $\tS$ denote the subcategory.
Then the right adjoint of the inclusion of $\tS$ into $\D$ is $$\Rind_ \fb^ \bg(-\otimes_\sk X_p^*)\otimes_\sk X_p$$ 
because 
\begin{gather*}
\Hom_\tS( E\otimes_\sk X_p,\Rind_ \fb^ \bg(F\otimes_\sk X_p^*)\otimes_\sk X_p )=\\
\Hom_\D( E\otimes_\sk X_p, \Rind_ \fb^ \bg(F\otimes_\sk X_p^*)\otimes_\sk 
X_p)=^{Thm\ \ref{th:Z-X_p-G-exceptional-coll}}\\
\Hom_{\Dd^b(\rep(\bg))}(E, \Rind_ \fb^ \bg(F\otimes_\sk X_p^*))=
\Hom_{\D}(E , F\otimes_\sk X_p^*)=\\
\Hom_\D(E\otimes_\sk X_p , F) 
\end{gather*}
for $E\in \Dd ^b(\rep( \bg))$, $F\in \D$.

The left adjoint is $$(\Rind_ \fb^ \bg((-)^*\otimes_\sk X_p))^*\otimes_\sk X_p$$
because 
\begin{gather*}
\Hom_\tS( (\Rind_ \fb^ \bg(F^*\otimes_\sk X_p))^*\otimes_\sk X_p,E\otimes_\sk X_p)=\\
\Hom_\D( E^*\otimes_\sk X_p,\Rind_ \fb^ \bg(F^*\otimes_\sk X_p)
\otimes_\sk X_p)=^{Thm\ \ref{th:Z-X_p-G-exceptional-coll}}\\
\Hom_{\Dd^b(\rep(\bg))}(E^*, \Rind_ \fb^ \bg(F^*\otimes_\sk X_p))=
\Hom_{\D}(E^* , F^*\otimes_\sk X_p)=\\
\Hom_\D(F,E\otimes_\sk X_p )
\end{gather*}
for $E\in \Dd ^b(\rep( \bg))$, $F\in \D$.
\end{proof}

%-----------------------------------------------------------------
\begin{theorem}\label{th:semi-orthogonal}
%-----------------------------------------------------------------
For $v\in W$, let ${\sX }_v$ denote the 
triangulated hull in $\D=\Dd ^b(\rep(\fb))$ of $\{M\otimes X_v \mid M\in \Dd ^b(\rep( \bg))\}$.
Then the category $\D$ has a $ \bg$-linear semi-orthogonal decomposition 
\begin{equation}
\D= \langle \sX _v\rangle _{v\in W}
\end{equation}
with respect to the order $\prec$ on the Weyl group $W$. Each subcategory $\sX _v$ is equivalent to $\Dd ^b(\rep( \bg))$.
\end{theorem}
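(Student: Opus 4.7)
My plan is to verify the three defining properties of a $\bg$-linear semiorthogonal decomposition in turn: each $\sf X_v$ is a $\bg$-linear admissible subcategory equivalent to $\Dd^b(\rep(\bg))$; the family $\{\sf X_v\}_{v\in W}$ is semiorthogonal with respect to $\prec$; and the $\sf X_v$ jointly generate $\D$.

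For the equivalence I would consider the $\bg$-linear functor
\begin{equation*}
\Phi_v\colon \Dd^b(\rep(\bg))\longrightarrow \D,\qquad M\longmapsto M\otimes X_v
\end{equation*}
(choosing a $\Z$-flat representative of $X_v$ as in Remark~\ref{rem:flat representative} when $\sk=\Z$). Corollary~\ref{cor:Rind_B^G(X_p*otimesX_p)}(1) gives $\Rind_\fb^\bg(X_v^\ast\otimes X_v)=\sk$, so Proposition~\ref{prop:G-linear_exceptional}(2) makes $\Phi_v$ fully faithful. Its essential image is a strictly full, $\bg$-linear triangulated subcategory of $\D$ containing every object of the form $M\otimes X_v$, so it coincides with $\sf X_v$. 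Admissibility of $\sf X_v$ in $\D$ is then precisely Proposition~\ref{prop:sfX_p-admissibility}.

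Semiorthogonality drops out directly from Theorem~\ref{th:Z-X_p-G-exceptional-coll}(2): whenever $w\succ v$ and $M,N\in \Dd^b(\rep(\bg))$, $\RHom_\D(M\otimes X_v,N\otimes X_w)=0$. Because the equivalence in the previous paragraph identifies every object of $\sf X_v$ with some $M\otimes X_v$ (and similarly for $\sf X_w$), this gives $\Hom_\D^\bullet(\sf X_v,\sf X_w)=0$ for $v\prec w$, which is the semiorthogonality demanded by Definition~\ref{def:semdecomposition-filtration}, with the components listed in the order opposite to $\prec$. Joint generation comes from Lemma~\ref{lem:samehull}, which together with Theorem~\ref{th:derived_generation} yields $\hull(\{\nabla_\lambda\otimes X_v\}_{v\in W,\lambda\in X(\bt)_+})=\D$; each generator lies in some $\sf X_v$, so $\hull(\bigcup_v\sf X_v)=\D$. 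Applying Lemma~\ref{lem:notorious} iteratively---at each step splitting off the $\prec$-largest remaining component using that the cumulative hull is admissible---produces the filtration from Definition~\ref{def:semdecomposition-filtration} and completes the argument.

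The main obstacle is really just bookkeeping. Every nontrivial input (fully faithfulness of $\Phi_v$, the semiorthogonality vanishing, the joint generation, and the admissibility of each $\sf X_v$) has already been packaged in an earlier result in exactly the form needed; what remains is to assemble them and to be careful that the direction of the vanishing $\Hom_\D(X_v,X_w[i])=0$ for $w\succ v$ matches the convention of Definition~\ref{def:semdecomposition-filtration}.
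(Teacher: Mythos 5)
Your proposal is correct and follows essentially the same route as the paper's proof: full faithfulness of $M\mapsto M\otimes X_v$ via Corollary \ref{cor:Rind_B^G(X_p*otimesX_p)} and Proposition \ref{prop:G-linear_exceptional}, admissibility via Proposition \ref{prop:sfX_p-admissibility}, semiorthogonality from the vanishing of $\Rind_\fb^\bg(X_v^*\otimes X_w)$ (the paper routes this through Proposition \ref{prop:G-linear_sod} rather than quoting Theorem \ref{th:Z-X_p-G-exceptional-coll} directly, but the content is identical), and generation from Lemma \ref{lem:samehull}. Your closing remark about the orientation of the enumeration relative to $\prec$ and the explicit assembly via Lemma \ref{lem:notorious} only make explicit what the paper leaves implicit.
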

\begin{proof}
Denote $\Phi _v:\Dd^b (\rep( \bg ))\rightarrow \D$ the $ \bg$-linear functor $M\rightarrow M\otimes _{\sk} X_v$. 
Now Corollary \ref {cor:Rind_B^G(X_p*otimesX_p)}
 and Proposition 
\ref{prop:G-linear_exceptional} give that $\Phi _p$ is fully faithful.

Let now $w\succ v$. 
By  Corollary \ref {cor:Rind_B^G(X_p*otimesX_p)} and \cite[I Proposition 3.6]{Jan} we have
 $\Rind _{\fb}^{ \bg}(X_v^{\ast}\otimes X_w\otimes M)=\Rind _{\fb}^{ \bg}(X_v^{\ast}\otimes X_w)\otimes M=0$ for $M\in \Dd ^b(\rep( \bg))$.
By Proposition \ref{prop:sfX_p-admissibility}, each subcategory ${\sX}_v, v\in W$ is admissible in $\Dd ^b(\rep( \fb))$. 
Proposition \ref{prop:G-linear_sod} then gives that the sequence $( \sX _v) _{v\in W}$ of admissible subcategories of $\D$ is $ \bg$-linear semi-orthogonal with respect to the order $\prec$ on $W$. 
Lemma \ref{lem:samehull} states that the triangulated hull of $( \sX _v )_{v\in W}$ coincides with $\D$. 
Thus, $\langle \sX _v\rangle _{v\in W}$ is a $ \bg$-linear semi-orthogonal decomposition of $\D$.
\end{proof}

%--------------------------------------------------------------
\section{\unboldmath Full exceptional collections in \unboldmath$\Dd ^b( \bg/ \fb)$}\label{sec:FEConGmodB}
%--------------------------------------------------------------
Recall that $\sk$ is a field or $\Z$. Recall the functor  $\Ll:
  \Dd ^b({\rep}({\fb}))\rightarrow \Dd ^b( \bg/\fb)$ (Subsection \ref{subsec:sheafification_functor}).
We put $\tX_v:=\Ll(X_v)$ for $v\in W$. 

\

Theorem \ref{th:semi-orthogonal} implies the following:

\begin{theorem}\label{th:coh-semi-orthogonal}Let $\D=\Dd^b (\bg/\fb)$.
Let $v,w\in W$.
\begin{enumerate}
\item $\Hom_\D(\tX_v,\tX_v[i])=\begin{cases}\sk &\text{if $i=0$},\\
0&\text{else.}\end{cases}$\\

\

\item If $w\succ  v$ then $\Hom_\D(\tX_v,\tX_w[i])=0$ for all $i$.\\
\item The triangulated hull of $\{\tX_v\mid v \in W\}$ is $\D$.
\end{enumerate}
In other words, the collection of objects $(\tX_v)_{v \in W}$ is a full exceptional collection in $ \D$.
\end{theorem}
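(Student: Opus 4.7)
The plan is to deduce Theorem \ref{th:coh-semi-orthogonal} from Theorem \ref{th:semi-orthogonal} by pushing the $\bg$-linear semiorthogonal decomposition of $\Dd^b(\rep(\fb))$ down to $\Dd^b(\bg/\fb)$ via the composition ${\sf For} \circ \Ll$ of the sheafification functor $\Ll$ with the forgetful functor from $\bg$-equivariant to non-equivariant coherent sheaves. Two basic identifications make this transfer work. First, since $\Ll$ is exact and commutes with tensor products over $\sk$, for any $F, G \in \Dd^b(\rep(\fb))$ (taking $\sk$-free representatives via Proposition \ref{prop:resolution}) one has
$$\RHom_{\Dd^b(\bg/\fb)}(\Ll(F), \Ll(G)) \;\cong\; \RGamma(\bg/\fb, \Ll(F^{\ast} \otimes G)) \;\cong\; \RR\ind_{\fb}^{\bg}(F^{\ast} \otimes G)$$
as complexes of $\sk$-modules, the last isomorphism being equation (\ref{eq:cohdescent}). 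Second, for $M \in \rep(\bg)$ the sheaf $\Ll(M)$ is non-equivariantly trivial, $\Ll(M) \cong M \otimes_{\sk} \Oo_{\bg/\fb}$, so ${\sf For}(\Ll(M \otimes X_v)) \cong M \otimes_{\sk} \tX_v$, which lies in the triangulated hull of $\tX_v$ whenever $M$ is a bounded complex of finitely generated $\sk$-free $\bg$-modules.

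Parts (1) and (2) then follow immediately by substituting $F = X_v$, $G = X_w$ in the first identification and invoking Corollary \ref{cor:Rind_B^G(X_p*otimesX_p)}: one reads off $\Hom_{\D}(\tX_v, \tX_w[i]) = \RR^i\ind_{\fb}^{\bg}(X_v^{\ast} \otimes X_w)$, which equals $\sk$ in degree zero when $v = w$ and vanishes in all degrees when $w \succ v$. For part (3), Theorem \ref{th:semi-orthogonal} together with Lemma \ref{lem:samehull} gives $\hull(\{\nabla_\lambda \otimes X_v\}_{v \in W,\, \lambda \in X(\bt)_+}) = \Dd^b(\rep(\fb))$. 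Applying the exact functor ${\sf For} \circ \Ll$ and using the second identification, we see that the triangulated hull $\hull(\tX_v : v \in W) \subseteq \Dd^b(\bg/\fb)$ contains ${\sf For} \circ \Ll(\Dd^b(\rep(\fb)))$, and in particular every line bundle $\Ll_{\lambda}$ with $\lambda \in X(\bt)_+$. By Corollary \ref{cor:generatingGmodB} these line bundles generate $\Dd^b(\bg/\fb)$ as a thick triangulated subcategory; since the triangulated hull of a finite exceptional collection is automatically admissible, hence thick, we conclude that $\hull(\tX_v : v \in W) = \Dd^b(\bg/\fb)$.

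The main obstacle I anticipate is the careful bookkeeping between the equivariant and non-equivariant worlds: one must justify that the $\bg$-module $\RR\ind_{\fb}^{\bg}(X_v^{\ast} \otimes X_w)$ indeed computes the non-equivariant $\Hom$ in $\Dd^b(\bg/\fb)$ once its $\bg$-action is forgotten, and when $\sk = \Z$ one must choose $\Z$-free representatives of the $X_v$ via Proposition \ref{prop:resolution} and potentially invoke the Universal coefficient Theorem \ref{th:universal coefficients} to transfer vanishing statements from the case of fields. Beyond these technicalities, the argument is essentially formal and parallels a base-change type statement for semiorthogonal decompositions.
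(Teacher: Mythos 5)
Your proposal is correct and follows essentially the same route as the paper: parts (1) and (2) via the identification $\RHom_{\bg/\fb}(\Ll(F),\Ll(G))={\sf For}\circ\Rind_{\fb}^{\bg}(F^{\ast}\otimes G)$ (the paper's Proposition \ref{prop:derived_B-cohomology}) combined with Corollary \ref{cor:Rind_B^G(X_p*otimesX_p)}, and part (3) by observing that $\Ll(\nabla_\lambda)$ is non-equivariantly trivial so the hull of the $\tX_v$ contains the image of $\Dd^b(\rep(\fb))$, hence all dominant line bundles, which generate by Corollary \ref{cor:line_bundles-generation}/\ref{cor:generatingGmodB} together with admissibility of the hull. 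The technical caveats you flag (forgetting the $\bg$-action, $\Z$-free representatives, universal coefficients) are exactly the ones the paper handles.
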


\begin{proof}
$(1)$. By Proposition \ref{prop:ForRind} and  Corollary \ref{cor:Rind_B^G(X_p*otimesX_p)} (1) we have 
$$\RHom_\D(\tX_v,\tX_v)=\RHom _{\D}(\Ll (X_v),\Ll (X_v))={\sf For}( \Rind_{\fb}^{ \bg}(X_v^{\ast}\otimes X_v))=\sk.$$

$(2)$. This is the same argument as in (1), using Corollary \ref{cor:Rind_B^G(X_p*otimesX_p)} (2).

$(3).$  
 Observe that by the previous items (1) and (2), the collection of $\tX_v, v\in W$ is exceptional. 
Thus, $\hull( \tX_v, v\in W)\subsetq \D$  is an admissible (hence thick) subcategory of $\D$, see Section \ref{sec:rappel_triangulated}. 

Recall that the functor $\Ll$ is monoidal; thus, $\Ll (\nabla_{\lambda}\otimes X_v)=\Ll (\nabla_{\lambda})\otimes\tX_v$ for $v\in W$, $\lambda\in X(\bt)_+$. 
Now $\Ll (\nabla_{\lambda})$ is a ($ \bg$-equivariant) trivial vector bundle
$\nabla_{\lambda}\otimes_\sk \Oo_{\bg/ \fb}$, so the image of 
$\hull(\{\nabla_{\lambda}\otimes X_v\}_{v\in W,\lambda\in X(\bt)_+})$ under $\Ll$ coincides with $\hull( \tX_v, v\in W)$. 

By Lemma \ref{lem:samehull},  $\hull(\{\nabla_\lambda\otimes X_v\}_{v\in W,\lambda\in X(\bt)_+})$ 
equals $\Dd^b (\rep( \fb))$, so $\hull( \tX_v, v\in W)$ contains the image of 
$\rep( \fb)$.  This image contains an ample line bundle together with its powers. Thus we get 
$\hull( \tX_v, v\in W)=\D=\Dd^b (\bg/\fb)$ by Corollary \ref{cor:line_bundles-generation}.
\end{proof}

\begin{remark}\label{rem:basechangeG-to-point}
{\rm Theorem \ref{th:coh-semi-orthogonal} would be a simple instance of base change for semi-orthogonal decompositions \cite[Theorem 5.6]{Kuzbasech}, if we had the base change theorem at our disposal in the extended setting of quotient stacks. Specifically, is is about the following particular case of base change that in the classical setting of schemes can be stated in elementary terms.

Consider a flat morphism $\pi\colon X\rightarrow S$ between smooth projective varieties, a closed point $s\in S$ and the base change diagram along the embedding $i_s\colon s\hookrightarrow S$:

\begin{figure}[H]
$$\xymatrix @C5pc @R4pc {
X_s\ar[r]^i\ar@<-0.1ex>[d]_{\pi _s} & X \ar@<-0.4ex>[d]^{\pi} \\
       s\ar[r]^{i_s} & S}
$$
%\caption{}	
\end{figure}

Assume given an $S$-linear semi-orthogonal decomposition $\langle \D _1,\dots ,\D _n\rangle$ of $\Dd ^b(X)$, such that each admissible subcategory $\D _i$ is equivalent to $\Dd ^b(S)$. Then each embedding functor $\Phi _k:\D _k\rightarrow \Dd ^b(X), k=1,\dots,n$ is given by $\Phi _k(-)=(-)\otimes E_k$ where $E_k,k=1,\dots n$ is a collection of objects of $\Dd ^b(X)$ with the following two properties: 1) $R\pi _{\ast}R{\mathcal Hom}(E_i,E_j)=0$ for $i>j$ and 2) $R\pi _{\ast}{\mathcal Hom}(E_k,E_k)=\Oo _S$ for all $k$ (cf. Propositions \ref{prop:G-linear_sod} and \ref{prop:G-linear_exceptional}). Restricting the objects $E _k\in \Dd ^b(X)$ to the fibre $X_s$, one obtains a collection of objects $\Ee _k:=i^{\ast}E_k, k=1,\dots n$. Now the claim is that the collection $\Ee _k, k=1,\dots n$ is exceptional in $\Dd ^b(X_s)$: for instance, to see that $\RHom _{X_s}(\Ee _i,\Ee _j)=0$ for $i>j$, by \cite[Lemma 2.32]{KuzHPD} base change holds for the above Cartesian square, thus \
\begin{eqnarray}
&\RHom _{X_s}(\Ee _i,\Ee _j)=R{\pi _s}_{\ast}(\Ee _i^{\ast}\otimes ^{ L}\Ee _j)=R{\pi _s}_{\ast}i^{\ast}(E_i^{\ast}\otimes ^{ L}E _j)= \\
& i^{\ast}_sR\pi _{\ast}(E_i^{\ast}\otimes ^{L}E _j)=i^{\ast}_sR\pi _{\ast}R{\mathcal Hom}(E_i,E_j)=0
\end{eqnarray}
by 1) above. Similarly for $\RHom _{X_s}(\Ee _i,\Ee _i)={\sf k}$ for $i=1,\dots n$. Finally, the objects $\Ee _i,i=1,\dots n$ generate $\Dd ^b(X_s)$: for if there was a non-trivial object $\Ff \in \langle \Ee _1,\dots ,\Ee _n\rangle  ^{\perp}$, its pushforward ${i_s}_{\ast}\Ff \in \Dd ^b(X)$ would be a non-trivial object in the right orthogonal to the semi-orthogonal decomposition $\langle \D _1,\dots ,\D _n\rangle$ of $\Dd ^b(X)$, a contradiction.

Provided that the base change theorem holds in the setting of quotient stacks, Theorem \ref{th:coh-semi-orthogonal} would then follow by setting in the above diagram $X=[{\sf pt}/\fb$] (the classifying stack of $\fb$), $S=[{\sf pt}/\bg]$ (the classifying stack of $\bg$), $s={\sf pt}$ with $X_s=\bg/\fb$.
}
\end{remark}

%--------------------------------------------------------------
\subsection{\unboldmath Variations} \label{subsec:var}
%--------------------------------------------------------------

The $\tX_p$ may depend on the choice of the total order $\prec$
on $W$. Therefore it is not clear that $\tX_p$ is perpendicular to
$\tX_q$ when $\ell(p)=\ell(q)$.
In fact computer assisted computations in $\sK_ {\bt}( \bg /{\fb})$ indicate 
that this fails already for type ${\bf B}_3$ with $\ell(p)=\ell(q)=3$ and for types ${\Cc}_3$ and $\mathbf A_4$ with $\ell(p)=\ell(q)=2$. That is, it fails for at least one choice of $\prec$.
It follows that in these types some $\tX_p$ really depend on the choice of the total order $\prec$.
%--------------------------------------------------------------
\subsection{\unboldmath {Baby case: rank one}}
%--------------------------------------------------------------
In rank one we are dealing with $\bg=\mathrm{SL}_2$, $W=\{s,\id\}$.
Let $\rho$ be the fundamental weight. We now use Remark \ref{rem:ends}.
One has $e_s=-\rho$, $e_\id=0$, $P(0)=\sk$, $Q(-\rho)=\sk_{-\rho}$, ${\sf Q}_{\succeq \id}={\sf P}_{\preceq s}=
\Dd ^b(\rep(\fb))$, $X_s=Y_s=Q(-\rho)=\sk_{-\rho}$, $X_\id=P(0)^*=\sk$, $\tX_s=\Ll({-\rho})=\Oo(-1)$, $\tX_\id=\Oo$,
where $\Oo$ is the structure sheaf of the projective line
$\bg/\fb$.
One gets the familiar full
exceptional collection $(\Oo(-1),\; \Oo)$ on the projective line.

%----------------------------------------------------------------------

\section{\unboldmath Generalised flag varieties}\label{sec:caseP}
Recall that $\sk$ is a field or $\Z$.
Let $\bp$ be a parabolic subgroup containing $\fb$. 
We seek a full exceptional collection on $\bg/\bp$.
Let $W_\bp$ be the Weyl group of $\bp$,
generated inside $W$ by the $s_\alpha$ with $\alpha$ simple and
$\bp_\alpha\subset \bp$.
Let $W^\bp$ be the set of minimal coset representatives of $W/W_\bp$, cf.\  Lemma \ref{lem:minimal-coset-representative}. 
If $\sk=\mathbb C$, then according to Steinberg \cite{St} we may use as generators of  the $R(\bg)$-module $R(\bp)=\sK_0(\rep( \bp))$,  
the classes of the
irreducible $\bp$-modules with highest weight $e_v$ where $v$
runs over $W^\bp$, and $e_v$ is still defined as in \ref{subsec:Steinberg}. Inspired by that,
we restrict our total order $\prec$ from $W$ to $W^\bp$.
Note our convention that the notation $e_v$ keeps the meaning it had when $\bp=\fb$. But  the $e_v$ with $v\in W^\bp$ will be more relevant than the other $e_v$.
\begin{remark}The map $W\to W^\bp$ which sends $w\in W$ to the minimal representative of the coset $wW_\bp$ is a poset map, by Lemma \ref{lem:minimal-coset-representative}.
If one has chosen a total order on $W^\bp$ refining the (restriction to $W^\bp$ of) the Bruhat order, then this
chosen order can be extended to
a total order on $W$ that refines the Bruhat order on $W$. We already had to choose $\prec$ on $W$ in section \ref{subsec:totalorderonW}, so it makes sense to keep that order $\prec$ and restrict it to $W^\bp$.
\end{remark}

For $v\in W^\bp$ we will find ${\hat X}_v\in \Dd^b (\rep ( \bp ))$  such that
 
\begin{theorem}
\label{th:Z-X^P_p-G-exceptional-coll}
Let $\D=\Dd^b (\rep ( \bp ))$.
Let $M, N\in \Dd^b(\rep(\bg))$ and $v,w\in W^\bp$.
\begin{enumerate}
\item $\RHom_\D(M\otimes {\hat X}_v,N\otimes {\hat X}_v)=\RHom_ {\Dd^b(\rep(\bg))} (M,N) $,
\item If $w\succ  v$ then $\RHom_\D(M\otimes {\hat X}_v,N\otimes {\hat X}_w)=0$.
\end{enumerate}
\end{theorem}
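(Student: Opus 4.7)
The plan is to transfer the full argument that established Theorem \ref{th:Z-X_p-G-exceptional-coll} in the Borel case to the parabolic setting, using as input the parabolic analogues of the key cohomological statements. First I would set up the parabolic counterparts of the modules from Section \ref{sec:rappelB-mod}: namely $\bp$-modules $P^\bp(-e_v)^*$ and $Q^\bp(e_v)$ for $v\in W^\bp$, whose existence and good behaviour with respect to $\Rind_\fb^\bp$ is what makes the Steinberg weights $e_v$, $v\in W^\bp$, natural generators of $R(\bp)$ over $R(\bg)$. Combined with the parabolic versions of Theorem \ref{th:highInd} and Theorem \ref{th:indPQ}, this yields the parabolic analogue of Corollary \ref{cor:left orthogonal}, namely
\begin{equation*}
\Ext^i_\bp(M\otimes P^\bp(-e_v)^*,\,N\otimes Q^\bp(e_v))=\Ext^i_\bg(M,N),\qquad
\Ext^i_\bp(M\otimes P^\bp(-e_v)^*,\,N\otimes Q^\bp(e_w))=0\ \text{for}\ w\succ v,
\end{equation*}
for $M,N\in\rep(\bg)$ and $v,w\in W^\bp$. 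This is the substantive cohomological content; everything downstream is formal.

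Next I would mirror the construction of Section \ref{sec:X_p&Y_p} inside $\Dd^b(\rep(\bp))$. Introduce the $\bg$-linear triangulated subcategories
$\hat{\sf P}_{\preceq v}=\hull(\{\nabla_\lambda\otimes P^\bp(-e_u)^*\}_{u\preceq v,\lambda\in X(\bt)_+})$ and
$\hat{\sf Q}_{\succeq v}=\hull(\{\nabla_\lambda\otimes Q^\bp(e_u)\}_{u\succeq v,\lambda\in X(\bt)_+})$ of $\Dd^b(\rep(\bp))$, with $u,v\in W^\bp$. The parabolic version of the generation Theorem \ref{th:derived_generation} (obtained by the same inductive argument on the antipodal excellent order, restricted to $\bp$-dominant layers) shows that these pieces cut $\Dd^b(\rep(\bp))$ into a semiorthogonal decomposition of the form (\ref{eq:p-q-SOD-1})--(\ref{eq:p-q-SOD-2}). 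Define $\hat X_v$ as the image of $P^\bp(-e_v)^*$ under the left adjoint of $\hat{\sf Q}_{\succeq v}\hookrightarrow\Dd^b(\rep(\bp))$, and $\hat Y_v$ as the image of $Q^\bp(e_v)$ under the right adjoint of $\hat{\sf P}_{\preceq v}\hookrightarrow\Dd^b(\rep(\bp))$. The parabolic analogue of Corollary \ref{cor:cone-f_p} (which is again obtained by running the proof of Theorem \ref{th:cone generation} on the $\bp$-side, using the same Steinberg-weight combinatorics restricted to $W^\bp$) then gives $\hat X_v\simeq \hat Y_v$ exactly as in Section \ref{sec:X_p&Y_p-isom}.

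With the objects $\hat X_v$ in hand, the proof of Theorem \ref{th:Z-X^P_p-G-exceptional-coll} is a word-for-word repetition of Theorem \ref{th:X_p-G-exceptional-coll} and its refinement Theorem \ref{th:Z-X_p-G-exceptional-coll}. By $\Hom$-$\otimes$ adjunction one reduces to $M=\sk$; then tensoring the defining triangles of $\hat X_v$ and $\hat X_v=\hat Y_v$ with $N$ and applying $\Hom_\D(-,N\otimes\hat X_w)$ and $\Hom_\D(\hat X_v,-)$ respectively, the correction terms lie in $\hat{\sf P}_{\prec v}$ and $\hat{\sf Q}_{\succ v}$, hence are killed by the semiorthogonality, leaving
\begin{equation*}
\RHom_\D(\hat X_v, N\otimes \hat X_w)=\RHom_\bp(P^\bp(-e_v)^*,\,N\otimes Q^\bp(e_w)).
\end{equation*}
The parabolic analogue of Corollary \ref{cor:left orthogonal} above then yields the desired values: $\RHom_\bg(\sk,N)$ when $w=v$ and $0$ when $w\succ v$. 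To pass from $N\in\rep(\bg)$ to arbitrary $N\in\Dd^b(\rep(\bg))$ (and likewise for $M$), I would use Proposition \ref{prop:stronggeneratingrepG} and stupid truncations, exactly as in the proof of Theorem \ref{th:Z-X_p-G-exceptional-coll}, reducing to the case $N=\nabla_\nu$ where the relevant $\Ext^*_\bg(\sk,\nabla_\nu)$ is concentrated in a single degree by Proposition \ref{prop:cohinduced}.

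The main obstacle is the establishment of the parabolic cohomological vanishing Theorem \ref{th:PindPQ}, i.e.\ the computation of $\Rind_\bp^\bg(P^\bp(-e_v)\otimes Q^\bp(e_w))$ for $v,w\in W^\bp$. This requires both a parabolic variant of the triangular transition matrix result of Theorem \ref{th:triang} on $\sK_\bt(\bg/\bp)$ and a careful analysis of how the Steinberg weights $e_v$, $v\in W^\bp$, interact with $\ind_\fb^\bp$; the outline of the paper indicates that this is precisely where induction from $\fb$ to $\bp$ must be checked to preserve the key triangularity. Once that ingredient is in place, the remainder of the argument is purely formal and mirrors the Borel case with $\bp$ in place of $\fb$ and $W^\bp$ in place of $W$, and transfers without change from $\sk$ a field to $\sk=\Z$ via the Universal coefficient Theorem \ref{th:universal coefficients} and Proposition \ref{prop:resolution}.
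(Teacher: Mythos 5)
Your overall strategy — transfer the Borel-case machinery wholesale, with $W^\bp$ in place of $W$ and parabolic analogues of $P(-e_v)^*$ and $Q(e_v)$ — is exactly what the paper does, and the formal downstream part of your argument (cut-at-$p$ decompositions, definition of $\hat X_v$ and $\hat Y_v$ as adjoint images, reduction to $\RHom_\bp(P^\bp(-e_v)^*,N\otimes Q^\bp(e_w))$, passage to $\Dd^b(\rep(\bg))$ via $\nabla_\nu$'s and stupid truncations, and the Universal Coefficient Theorem for $\sk=\Z$) is correct and matches Sections \ref{sec:caseP} of the paper.

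However, you leave the decisive cohomological input — the parabolic analogue of Theorem \ref{th:indPQ}, i.e.\ Theorem \ref{th:PindPQ} — as an acknowledged ``main obstacle,'' and the route you sketch for it (a parabolic variant of the triangular transition matrix Theorem \ref{th:triang} on $\sK_\bt(\bg/\bp)$) is both unproven in your write-up and unnecessary. The paper instead deduces Theorem \ref{th:PindPQ} from the already-established Borel case by a short reduction: for $v\in W^\bp$ the module $P(-e_v)$ is \emph{already} a $\bp$-module (Lemma \ref{lem:PisP}, proved via $\sH_s P(-e_v)=P(-e_v)$ for simple $s\in W_\bp$, using minimality of $v$ in $vW_\bp$), and one sets $\hat Q(e_w)=\ind_\fb^\bp Q(e_w)$; since $Q(e_w)$ is $\ind_\fb^\bp$-acyclic, the Generalized Tensor Identity gives
\begin{equation*}
\RR^j\ind_\fb^\bp\bigl(P(-e_v)\otimes Q(e_w)\bigr)=
\begin{cases} P(-e_v)\otimes \hat Q(e_w) & j=0,\\ 0 & j>0,\end{cases}
\end{equation*}
so transitivity of induction yields $\RR^i\ind_\bp^\bg(P(-e_v)\otimes\hat Q(e_w))=\RR^i\ind_\fb^\bg(P(-e_v)\otimes Q(e_w))$, which is known from Theorem \ref{th:indPQ}. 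Without this (or an actual proof of your proposed $\sK_\bt(\bg/\bp)$ triangularity), your argument is incomplete at its load-bearing step. A second, smaller soft spot: the parabolic generation statement is not quite ``the same inductive argument restricted to $\bp$-dominant layers''; the paper's Theorem \ref{th:intervalgeneration} needs genuine extra work (filtering by the pairings with $\omega_\alpha$ for $\alpha\in\Pi^{\notin\bp}$, using that the unipotent radical acts trivially on the relevant subquotients, and Lemma \ref{lem:indpreserve} to control weights under $\ind_\fb^\bp$), so you should not treat it as automatic.
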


\begin{theorem}\label{th:Psemi-orthogonal}
%-----------------------------------------------------------------
For $v\in W^\bp$, let ${\hat{\sX}} _v$ denote the the 
triangulated hull in 
$\D=\Dd ^b(\rep(\bp))$ of $\{M\otimes {\hat X}_v \mid M\in \Dd ^b(\rep( \bg))\}$.
Then the category $\D$ has a $ \bg$-linear semi-orthogonal decomposition 
\begin{equation}
\D= \langle {\hat{\sX}} _v\rangle _{v\in W^\bp}
\end{equation}
with respect to the order $\prec$ on  $W^\bp$. Each subcategory ${\hat{\sX}} _v$ is equivalent to $\Dd ^b(\rep( \bg))$.
\end{theorem}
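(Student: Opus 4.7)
The plan is to mirror the strategy used to prove Theorem \ref{th:semi-orthogonal}, transporting the construction from $\fb$ to $\bp$ via the adjoint pair $(\res_{\fb}^{\bp},\Rind_{\fb}^{\bp})$. First, I would introduce parabolic analogs of the key $\fb$-modules. For $v\in W^\bp$, set
$$
\hat P(-e_v)^* := \Rind_{\fb}^{\bp}(P(-e_v)^*), \qquad \hat Q(e_v) := \Rind_{\fb}^{\bp}(Q(e_v)),
$$
whose good behavior is controlled by Theorem \ref{th:PindPQ} (the parabolic analog of Theorem \ref{th:indPQ}, available from the paper's setup): one has $\RHom_{\bp}(\hat P(-e_v)^{*},\hat Q(e_v))=\sk$ and $\RHom_{\bp}(\hat P(-e_v)^{*},\hat Q(e_w))=0$ whenever $w\succ v$ in $W^\bp$. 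By the adjunction and tensor identity these extend to $\Ext$-vanishing after tensoring with $M,N\in\Dd^b(\rep(\bg))$, exactly as in Corollary \ref{cor:left orthogonal}.

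Next, I would establish a parabolic cut-at-$p$ decomposition. For each $p\in W^\bp$, consider
$$
\hat{\sf Q}_{\succeq p} := \hull(\{\nabla_\lambda\otimes \hat Q(e_v)\}_{v\succeq p,\,\lambda\in X(\bt)_+}),
\qquad
\hat{\sf P}_{\preceq p} := \hull(\{\nabla_\lambda\otimes \hat P(-e_v)^{*}\}_{v\preceq p,\,\lambda\in X(\bt)_+}),
$$
and the analogous pair indexed by $\succ p$, $\prec p$. The semiorthogonality $\hat{\sf Q}_{\succ p}\subseteq \hat{\sf P}_{\preceq p}^{\perp}$ is immediate from the parabolic $\Ext$-vanishing. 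For generation one needs the parabolic counterpart of Theorem \ref{th:derived_generation}, which reduces to a parabolic version of Theorem \ref{th:generation}; this is a categorification of Steinberg's theorem that $\{[\sk_{e_v}]\}_{v\in W^\bp}$ (or the characters of the irreducible $\bp$-modules of highest weight $e_v$) generates $R(\bp)$ over $R(\bg)$, proved by the same inductive argument along the antipodal excellent order as in Section \ref{subsec:repB-generation}, using that $\{e_v\mid v\in W^\bp\}$ still picks up the maximal weight of each $W_\bp$-orbit needed in the Steinberg construction.

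Then, mirroring Section \ref{sec:X_p&Y_p}, I would define $\hat X_p$ as the projection of $\hat P(-e_p)^{*}$ into $\hat{\sf Q}_{\succeq p}$ under the left adjoint of the inclusion, and $\hat Y_p$ as the projection of $\hat Q(e_p)$ into $\hat{\sf P}_{\preceq p}$ under the right adjoint; their common membership in $\hat{\sf Q}_{\succeq p}\cap \hat{\sf P}_{\preceq p}$ is immediate from the construction. The isomorphism $\hat X_p\cong \hat Y_p$ then follows from the parabolic analog of Theorem \ref{th:cone generation}: the cone of the natural map $\hat P(-e_p)^{*}\to\hat Q(e_p)$ belongs to $\hull(\hat{\sf P}_{\prec p}\cup \hat{\sf Q}_{\succ p})$, since it is obtained by applying $\Rind_{\fb}^{\bp}$ to the corresponding cone for $\fb$ (Corollary \ref{cor:cone-f_p}) and $\Rind_{\fb}^{\bp}$ is $\bg$-linear. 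With $\hat X_p$ in hand, the parabolic version of Theorem \ref{th:Z-X^P_p-G-exceptional-coll} follows by repeating the proof of Theorem \ref{th:X_p-G-exceptional-coll} verbatim inside $\Dd^b(\rep(\bp))$, giving in particular $\Rind_{\bp}^{\bg}(\hat X_v^{*}\otimes\hat X_v)=\sk$ and $\Rind_{\bp}^{\bg}(\hat X_v^{*}\otimes\hat X_w)=0$ for $w\succ v$.

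Finally, Proposition \ref{prop:G-linear_exceptional} produces fully faithful $\bg$-linear functors $\Phi_v\colon\Dd^b(\rep(\bg))\to\Dd^b(\rep(\bp))$, $M\mapsto M\otimes\hat X_v$, whose images are the subcategories $\hat{\sf X}_v$. The $\bg$-linear semiorthogonality of the collection $\{\hat{\sf X}_v\}_{v\in W^\bp}$ with respect to $\prec$ follows from Proposition \ref{prop:G-linear_sod} combined with the vanishing of $\Rind_{\bp}^{\bg}(\hat X_v^{*}\otimes\hat X_w)$; admissibility is verified as in Proposition \ref{prop:sfX_p-admissibility}; and the generation $\hull(\hat{\sf X}_v\mid v\in W^\bp)=\Dd^b(\rep(\bp))$ follows from the parabolic analog of Lemma \ref{lem:samehull} together with the parabolic categorification of Steinberg's theorem mentioned above. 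The main obstacle is verifying the parabolic generation statement and the behavior of the induction $\Rind_{\fb}^{\bp}$ on the modules $P(-e_v)$, $Q(e_v)$ for $v\in W^\bp$; once Theorem \ref{th:PindPQ} and the parabolic form of Theorem \ref{th:generation} are in place, the remaining argument is essentially formal and runs in parallel with Sections \ref{sec:B-generation}--\ref{sec:SODofB-modoverG-mod}.
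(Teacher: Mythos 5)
Your plan follows the paper's strategy almost step for step: parabolic cut-at-$p$ decompositions, definition of $\hat X_p$ and $\hat Y_p$ as projections, their identification, and the formal descent via Propositions \ref{prop:G-linear_exceptional}, \ref{prop:G-linear_sod} and \ref{prop:sfX_p-admissibility}. Two points, however, need attention. The minor one: your definition $\hat P(-e_v)^*:=\Rind_{\fb}^{\bp}(P(-e_v)^*)$ only matches the object used in Theorem \ref{th:PindPQ} because, for $v\in W^\bp$, the module $P(-e_v)$ is \emph{already} a $\bp$-module (this is Lemma \ref{lem:PisP}, proved via the Demazure product and the minimality of $v^{-1}$ in $v^{-1}W_\bp$); without that input the induction could have higher derived terms and the cited $\Ext$-computations would not apply.

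The substantive gap is in your justification of the parabolic analogue of Theorem \ref{th:cone generation}. You propose to obtain the containment $\cone(\hat f_p)\in\hull(\hat{\sf P}_{\prec p}\cup\hat{\sf Q}_{\succ p})$ by applying $\Rind_{\fb}^{\bp}$ to Corollary \ref{cor:cone-f_p}. But the hull in Corollary \ref{cor:cone-f_p} is generated by objects $\nabla_\lambda\otimes Q(e_v)$ and $\nabla_\lambda\otimes P(-e_v)^*$ with $v$ ranging over \emph{all} of $W$, not just $W^\bp$. For the $Q$-generators this is harmless ($\Rind_{\fb}^{\bp}Q(e_v)=0$ when $e_v$ is not $\bp$-dominant, by Lemma \ref{lem:inducing Q} and acyclicity), but for $v\prec p$ with $v\notin W^\bp$ the complex $\Rind_{\fb}^{\bp}(P(-e_v)^*)$ is in general a nonzero object that is \emph{not} among the generators of $\hat{\sf P}_{\prec p}$ (which are indexed by $W^\bp$ only), and you give no argument placing it there; already for $\bg=\mathrm{SL}_3$ and $\bp$ minimal one computes $\Rind_{\fb}^{\bp}(P(-e_{s_1})^*)=\sk_{e_{s_1s_2}}=P(-e_{s_1s_2})^*$ with $s_1s_2\succ s_1$, so the index can jump \emph{upward} in the Bruhat order under induction. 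This is precisely why the paper does not push Corollary \ref{cor:cone-f_p} forward: it instead proves a generation-of-initial-interval statement directly in $\rep(\bp)$ (Theorem \ref{th:intervalgeneration}, whose proof is not a verbatim repeat of the Borel case --- it needs Lemma \ref{lem:indpreserve} and a filtration of $\bp$-modules by weight conditions relative to $\Pi^{\notin\bp}$) and applies it to $\kernel(\hat f_p)$ and $\coker(\hat f_p)$, whose weights are $<_a e_p$ by construction of $\hat f_p$. You should either supply this intrinsic parabolic generation theorem or prove that $\Rind_{\fb}^{\bp}(P(-e_v)^*)$ for the relevant $v\notin W^\bp$ lands in $\hat{\sf P}_{\prec p}$; the latter does not appear to follow from anything you have assembled.
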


\begin{definition}
 We put $\hat\tX_v:=\Ll_{\bg/\bp}({\hat X}_v)$ for $v\in W^\bp$, in the notation of Subsection \ref{subsec:sheafification_functor}. 
\end{definition}

\begin{theorem}\label{th:Pcoh-semi-orthogonal}Let $\D=\Dd^b (\bg / \bp)$.
Let $v,w\in W^\bp$.
\begin{enumerate}
\item $\Hom_\D(\hat\tX_v,\hat\tX_v[i])=\begin{cases}\sk &\text{if $i=0$},\\
0&\text{else.}\end{cases}$\\

\item If $w\succ  v$ then $\Hom_\D(\hat\tX_v,\hat\tX_w[i])=0$ for all $i$.\\
\item The triangulated hull of $\{\hat\tX_v\mid v \in W^\bp\}$ is $\D$.
\end{enumerate}
In other words, the collection of objects $(\hat\tX_v)_{v \in W^\bp}$ is a full exceptional collection in $ \D$.
\end{theorem}

To prove Theorems \ref{th:Psemi-orthogonal} and \ref{th:Pcoh-semi-orthogonal} we will need to find replacements, often with hatted notation -- of the key ingredients used in 
the case $\bp=\fb$.
This will take the rest of the section.

Recall that by Subsection \ref{subsec:G-linear-triang_cat}  we may view $\rep(\bg)$ as a subcategory of $\rep(\fb)$
and $\Dd^b(\rep(\bg))$ as a subcategory of
$\Dd^b(\rep(\fb))$, so that we may suppress 
$\res^\bg_\fb$ in the notation. For similar reasons we may 
view $\rep(\bp)$ as a subcategory of $\rep(\fb)$ and $\Dd^b(\rep(\bp))$ as a subcategory of
$\Dd^b(\rep(\fb))$ and we may suppress $\res^\bp_\fb$.
Then $\Dd^b(\rep(\bg))$ becomes a subcategory of $\Dd^b(\rep(\bp))$. And when we say that a certain $\fb$-module $M$ is a $\bp$-module, this will mean that $M=\res^\bp_\fb\ind_\fb^\bp(M)$.

\begin{remark}
The strategy for constructing $\hat\tX_v$ is the same as for $\tX_v$, but
the construction does not immediately imply the precise relation between $\tX_v$, $\hat\tX_v$ and $\bg/\fb\to \bg/\bp$.
More specifically, one expects that $\tX_v$ is always
the pull back of $\hat\tX_v$ for $v\in W^\bp$. This is indeed the case:
\end{remark}

\begin{theorem}\label{th:P and B}
Let $v\in  W^\bp$ and let $\pi:\bg/\fb\to \bg/\bp$ be the natural map.
Then ${\hat{X}} _v$ equals ${{X}} _v$ in $\Dd^b(\rep(\fb))$ and $\tX_v=\pi^*\hat\tX_v$.
\end{theorem}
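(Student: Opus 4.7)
The plan is to exploit the fact that $\res^\bp_\fb\colon \Dd^b(\rep(\bp))\to \Dd^b(\rep(\fb))$ is $t$-exact and fully faithful, so that we may regard $\Dd^b(\rep(\bp))$ as a strictly full triangulated subcategory of $\Dd^b(\rep(\fb))$. Via this embedding, I would prove the stronger statement that $\hat X_v$ and $X_v$ are isomorphic objects in $\Dd^b(\rep(\fb))$ for every $v\in W^\bp$; the equality of the $\bg$-linear subcategories ${\hat{\sf X}}_v={\sf X}_v$ then follows, since both are the $\bg$-linear subcategories generated by that common object. The geometric statement $\tX_v=\pi^*\hat\tX_v$ will then be a consequence of the compatibility of the sheafification functor with pullback along $\pi\colon \bg/\fb\to \bg/\bp$.

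First, I would identify the building blocks. For $v\in W^\bp$, the weight $ve_v$ is $\bp$-dominant (since by construction the simple reflections fixing $ve_v$ are exactly those not contributing to $e_v$, and for $v\in W^\bp$ these contain the simple reflections generating $W_\bp$). Consequently the parabolic costandard modules $\hat P(-e_v)$, $\hat Q(e_v)$ used in Section~\ref{sec:caseP}—whose construction mirrors Definitions~\ref{def:dualJosephmod} and~\ref{def:defineQ} using the parabolic Joseph functors on Schubert varieties in $\bg/\bp$—have restrictions to $\fb$ equal to $P(-e_v)$, $Q(e_v)$. This uses the factorization ${\sH}_v={\sH}_{v^{-1}}^{\bp}\circ\ind_\fb^\bp$ together with Proposition~\ref{prop:Leray} type base changes that show that taking sections over Schubert varieties in $\bg/\fb$ versus $\bg/\bp$ differs only by the extra factor $\bp/\fb$, whose sections supply the parabolic extension. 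Second, I would verify that the admissible subcategories ${\sf P}_{\preceq v}^\bp$, ${\sf Q}_{\succeq v}^\bp\subset \Dd^b(\rep(\bp))$ used to define $\hat X_v$ map under $\res^\bp_\fb$ into their Borel counterparts ${\sf P}_{\preceq v}$, ${\sf Q}_{\succeq v}$. Since $\hat X_v$ is characterised by the triangle $\hat P(-e_v)^*\to \hat X_v\to C\to$ with $C\in {\sf P}_{\prec v}^\bp$ and $\hat X_v\in {\sf Q}_{\succeq v}^\bp$, restriction yields the defining triangle of $X_v$ in $\Dd^b(\rep(\fb))$, forcing $\res^\bp_\fb\hat X_v\cong X_v$ by the uniqueness of the semiorthogonal decomposition of $P(-e_v)^*$.

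For the geometric half, one invokes the standard identity $\Ll_{\bg/\fb}\circ \res^\bp_\fb=\pi^*\circ\Ll_{\bg/\bp}$, which holds because the associated-sheaf construction $\Ll$ is obtained by descent along the torsor $\bg\to \bg/\fb$ (respectively $\bg\to\bg/\bp$), and $\pi$ is the quotient by the free action of $\bp/\fb$ on $\bg/\fb$. Applying this to $\hat X_v$ gives $\tX_v=\Ll_{\bg/\fb}(X_v)=\Ll_{\bg/\fb}(\res^\bp_\fb\hat X_v)=\pi^*\Ll_{\bg/\bp}(\hat X_v)=\pi^*\hat\tX_v$. The main obstacle I anticipate is the first step: pinning down precisely which $\bp$-modules play the role of $\hat P(-e_v)$ and $\hat Q(e_v)$ in the construction of $\hat X_v$, and verifying the restriction formulas at the level of the highest weight category structures on $\Rep(\bp)$ (parabolic analogues of Theorems~\ref{th:Pexc} and its antipodal counterpart). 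A related delicate point is that the Borel hull ${\sf P}_{\preceq v}$ (respectively ${\sf Q}_{\succeq v}$) is generated using all $u\preceq v$ in $W$, whereas the parabolic hull uses only $u\in W^\bp$ with $u\preceq v$; the inclusion of categories goes in the right direction, but some care is needed to ensure the defining triangles of $X_v$ and $\hat X_v$ really land in the same semiorthogonal complement once restricted, which ultimately reduces to Theorem~\ref{th:PindPQ}.
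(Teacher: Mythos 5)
Your overall architecture matches the paper's: both reduce the comparison of $\hat X_v$ with $X_v$ to a comparison of the parabolic generators with the Borel ones, and then invoke the uniqueness of the decomposition triangle for the semiorthogonal decomposition $\langle {\sf Q}_{\succeq v},{\sf P}_{\prec v}\rangle$ (the paper phrases this dually, computing $Y_v={i^p}_{\preceq v}^{\ !}(Q(e_v))$ and showing it equals $\hat Y_v$, but since $X_v=Y_v$ and $\hat X_v=\hat Y_v$ the two routes are interchangeable). The geometric half, via $\Ll_{\bg/\fb}\circ\res^\bp_\fb=\pi^*\circ\Ll_{\bg/\bp}$, is also fine.

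However, your first step contains a genuine error: it is \emph{not} true that $\res^\bp_\fb\hat Q(e_v)=Q(e_v)$ for $v\in W^\bp$. The paper defines $\hat Q(e_v):=\ind_\fb^\bp Q(e_v)$ (not via parabolic Joseph functors), and by Lemma \ref{lem:ind of Q(e_w)} one has, for a simple $s\in W_\bp$ with $se_w<_d e_w$, a nonsplit extension $0\to Q(e_{ws})\to \sH_s(Q(e_w))\to Q(e_w)\to 0$; iterating over a reduced word for $w_0^\bp$ shows that $\hat Q(e_v)$ is in general strictly larger than $Q(e_v)$ (already for $\bg=\mathrm{SL}_3$ and $\bp$ a minimal parabolic). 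The asymmetry with $P(-e_v)$ — which genuinely \emph{is} a $\bp$-module by Lemma \ref{lem:PisP}, so that no hat is needed on the ${\sf P}$-side — is the whole point of the parabolic section. The correct statement, and the one your argument actually needs, is Lemma \ref{lem:indP of Q(e_w)}: the natural map $\hat Q(e_v)\onto Q(e_v)$ is surjective with kernel filtered by $Q(e_{vu})$ with $vu\succ v$, hence lying in ${\sf Q}_{\succ v}$. From this one gets $\hat{\sf Q}_{\succeq v}\subseteq{\sf Q}_{\succeq v}$ by descending induction along $\succ$ (Lemma \ref{lem:inclusion of hat}), and with that inclusion in hand your uniqueness-of-triangle argument for $\hat X_v\cong X_v$ goes through verbatim. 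So the proof is repairable, but as written the identification of the restricted parabolic costandard objects is false, and it is precisely at this point that the nontrivial content of the theorem sits.
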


Recall that 
$\Pi$ is the set of simple roots and 
$(\omega_\alpha,\beta^\vee)=\delta_{\alpha,\beta}$.

\begin{notation}
Let $\Pi^\bp$ be the set of simple roots $\alpha$ for which $\bp_\alpha\subsetq \bp$.
Let $\Pi^{\notin \bp}$ be the set of simple roots outside $\Pi^\bp$.
We say that $\lambda$ is $\bp$-dominant if $(\alpha^\vee,\lambda)\geq0$ for all $\alpha\in \Pi^\bp$.
The set of $\bp$-dominant weights is denoted $X^\bp_+$.
Every $W_\bp$-orbit of weights intersects $X^\bp_+$ in a unique element.
Elements of $W^\bp$ are $\bp$-dominant by Lemma \ref{lem:ws shorter}.
Let $\bg_\bp$ be the semisimple subgroup of $\bp$ with $\Pi^\bp$ as simple roots. (So $\bt\bg_\bp$ is a Levi subgroup $\bl_\bp$ of $\bp$ and $\bg_\bp$ is the derived subgroup of the Levi subgroup.)
Note that $\bg_\bp$ is simply connected.
\end{notation}

The definition of relative Schubert filtrations extends in an obvious way to the case $\sk=\Z$,
as does the definition of $\sH_w$ for $w\in W$, cf.\ \cite[II Chapter 13]{Jan}.

\begin{lemma}\label{lem:relativeHsacyclic}
Let $w\in W$.
    If $M\in\rep(\fb)$ has a relative Schubert filtration then it is $\sH_w$-acyclic.
\end{lemma}
\begin{proof}
Use Proposition \ref{prop:Schubert module} and the Universal coefficient Theorem \ref{th:universal coefficients}.
\end{proof}

\begin{lemma}\label{lem:HwQk}
    Let $\lambda\in X(\bt)$ and $w\in W$.
    Then $\sH_w(Q(\lambda))=\sH_w(Q(\lambda)_\Z)\otimes_\Z\sk$.
\end{lemma}
\begin{proof}By Lemma \ref{lem:relativeHsacyclic} this follows from the Universal coefficient Theorem \ref{th:universal coefficients}.
\end{proof}

\begin{lemma}\label{lem:extQQ}
    Let $\mu, \nu\in X(\bt)$ with $\nu\not<_a\mu$.
    Then \begin{equation}\label{eq:ExtQQ}
\Ext^i(Q(\mu),Q(\nu)) = \left\{
   \begin{array}{l}
    {\sf k}, \quad {\rm for} \quad i=0,\ \mu=\nu, \\
    0, \quad {\rm otherwise}. \\
   \end{array}
  \right.
\end{equation}
\end{lemma}
\begin{proof}
    First let $\sk$ be a field. If $i>0$ or $\mu\neq \nu$, then the vanishing  follows  from  \cite[Lemma 3.8(b)]{CPSHW}.
    As in the proof of Theorem \ref{th:key_orthogonality}, a $\fb$-module map $Q(\mu)\to Q(\mu)$ is determined by its restriction to the weight space of weight $\mu$.
    So (\ref{eq:ExtQQ}) holds when $\sk$ is a field.
    By the Universal coefficient Theorem and Lemma \ref{lem:dualmodule}, it also holds when $\sk=\Z$.
\end{proof}

Consider $M\in \rep_\fr(\fb)$ with a relative Schubert filtration.  Let $\mu$ be minimal for $<_a$ amongst the $\nu$ for which (a nonzero direct sum of copies of)
$Q(\nu)$ occurs in the filtration.

\begin{lemma}\label{lem:getQ}
The natural map $\Hom_\fb(Q(\mu),M)\otimes_\sk Q(\mu)\to M$ is injective and its
cokernel has a relative Schubert filtration.
\end{lemma}
\begin{proof}
This follows from Lemma \ref{lem:extQQ} by induction on the length of the relative Schubert filtration of $M$.
\end{proof}

\begin{lemma}\label{lem:Qexercise}
Let $M\in\rep_\fr(\Bm)$ so that for every field $\sk'$ the $\Gm_{\sk'}$-module 
$M\otimes_\Z\sk'$ has a relative Schubert filtration. Then $M$ has a relative Schubert filtration.
\end{lemma}
\begin{proof}
This is \cite[Exercise 7.2.8 (ii)]{WvdKTata}.
We will argue by induction on the rank of the $\Z$-module $M$.
The multiplicity $\text{mult}(\nu)$ with which $Q(\nu)\otimes_\Z\sk'$ occurs in a relative Schubert
filtration of $M\otimes_\Z\sk'$ depends only on the character $[M]\in R(\bt)$, not on the field. Let us take $\mu$ as above and consider the natural map $f:\Hom_\Bm(Q(\mu)_\Z,M)\otimes_\sk Q(\mu)_\Z\to M$. If $\sk'$ is a field then by Lemma \ref{lem:getQ} we may identify
$f\otimes_\Z\sk'$ with a natural map as in Lemma \ref{lem:getQ}, so $f\otimes_\Z\sk'$ is injective and has a cokernel with a relative Schubert filtration. As this holds for all fields $\sk'$ the cokernel of $f$ must lie in $\rep_\fr(\Bm)$, by the elementary divisor theorem. As the cokernel has lower rank than $M$, it has a relative Schubert filtration by the induction hypothesis.
We claim that the image of $f$ also has a relative Schubert filtration. As $f$ is injective, we only need to inspect the source of $f$.
By another application of the Universal coefficient Theorem \ref{th:universal coefficients} with Lemma \ref{lem:dualmodule} the $\Z$-module
 $\Hom_\Bm(Q(\mu)_\Z,M)$ is  free  with a rank $\text{mult}(\mu)$.
 The Lemma follows.
\end{proof}

\begin{lemma}\label{lem:HwPk}
    Let $\lambda\in X(\bt)$ and $w\in W$. Then
 $\sH_w(P(\lambda))=\sH_w(P(\lambda)_\Z)\otimes_\Z\sk$.
\end{lemma}
\begin{proof}
By Lemma \ref{lem:Qexercise} $P(\lambda)_\Z$ has a relative Schubert filtration and we may use Lemmas \ref{lem:relativeHsacyclic},
\ref{lem:HwQk}.
\end{proof}
\begin{lemma}\label{lem:PisP}Let $v\in W^\bp$, $w\in W$.
\begin{itemize}\item
Then $P(-e_v)$ is a $\bp$-module and therefore $P(-e_v)^*$ is a $\bp$-module.
\item If $e_w$ is  $\bp$-dominant, then $w\in W^\bp$.
\end{itemize}
\end{lemma}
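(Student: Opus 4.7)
The plan is to prove the first assertion by realising $P(-e_v)$ as the global sections of a line bundle on a Schubert variety chosen to be $\bp$-invariant, and to prove the converse by a direct computation of $\alpha^\vee(e_w)$.

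First I would observe that $ve_v=\sum_{i\in I_v}\omega_i$, where $I_v=\{i : v^{-1}\alpha_i<0\}$, is dominant, so the dominant member of $W\cdot(-e_v)$ is $(-e_v)^+=-w_0(ve_v)$ and one has the identity $-e_v=v^{-1}w_0\cdot(-e_v)^+$. The element $v^{-1}w_0$ is generally not minimal with this property, but by Lemma~\ref{lem:Hwstar} combined with the fact that $\sH_s\sk_{\lambda^+}=\sk_{\lambda^+}$ for any simple reflection $s$ fixing $\lambda^+$, the functor $\sH_w(\lambda^+)$ is invariant under right multiplication of $w$ by such stabilising reflections. Iterating gives the identification $P(-e_v)=\sH_{v^{-1}w_0}((-e_v)^+)=H^0(\fX_{v^{-1}w_0},\Ll((-e_v)^+))$.

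Next I would verify that $\fX_{v^{-1}w_0}$ is $\bp$-invariant. A Schubert variety $\fX_w$ is $\bp$-invariant precisely when $s_\alpha w\leq w$ in the Bruhat order for every $\alpha\in\Pi^\bp$, which by the left analogue of Lemma~\ref{lem:ws shorter} is equivalent to $w^{-1}(\alpha)<0$ for every $\alpha\in\Pi^\bp$. For $w=v^{-1}w_0$ this reads $w_0v(\alpha)<0$, and this is precisely what $v\in W^\bp$ yields, since then $v(\alpha)>0$ while $w_0$ reverses positivity. Consequently the $\bg$-action on $H^0(\bg/\fb,\Ll((-e_v)^+))$ restricts to a $\bp$-action on $P(-e_v)$ extending the $\fb$-action, and dually $P(-e_v)^*$ is a $\bp$-module. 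The argument is geometric and works verbatim over $\Z$.

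For the converse I would argue the contrapositive. If $w\notin W^\bp$, pick $\alpha\in\Pi^\bp$ with $w(\alpha)<0$ and set $\beta=-w(\alpha)>0$. Using $W$-invariance of the pairing,
\[\alpha^\vee(e_w)=(w\alpha)^\vee\Bigl(\sum_{i\in I_w}\omega_i\Bigr)=-\sum_{i\in I_w}\beta^\vee(\omega_i),\]
and each $\beta^\vee(\omega_i)$ is a non-negative integer, positive precisely when $\alpha_i$ appears in the simple-root expansion of $\beta$. The crucial point is that some such $i$ must lie in $I_w$: otherwise $\beta\in\mathrm{span}_{\geq 0}\{\alpha_j : j\notin I_w\}$, and since $w^{-1}\alpha_j>0$ for $j\notin I_w$ we would obtain $w^{-1}\beta\geq 0$, contradicting $w^{-1}\beta=-\alpha<0$. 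Hence $\alpha^\vee(e_w)<0$ and $e_w$ fails to be $\bp$-dominant. The only real subtlety in the whole argument is the identification $P(-e_v)=H^0(\fX_{v^{-1}w_0},\Ll((-e_v)^+))$ despite $v^{-1}w_0$ not being the minimal representative in Definition~\ref{def:dualJosephmod}; beyond this, everything reduces to the defining inequality $v(\alpha)>0$ for $v\in W^\bp$ and $\alpha\in\Pi^\bp$.
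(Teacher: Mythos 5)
Your proof is correct. For the first assertion your route is essentially the paper's in geometric dress: the paper reduces to showing $\sH_s(P(-e_v))=P(-e_v)$ for simple $s\in W_\bp$ via $s\star v^{-1}w_0=v^{-1}w_0$, while you show the Schubert variety $\fX_{v^{-1}w_0}$ is $\bp$-invariant, which is the same computation ($sv^{-1}w_0<v^{-1}w_0$, coming from $v(\alpha)>0$ for $\alpha\in\Pi^\bp$); the identification $P(-e_v)=\sH_{v^{-1}w_0}(-w_0ve_v)$, which you rightly flag and justify by absorbing the stabilizer of the dominant weight, is used in the same implicit way in the paper (e.g.\ in Section \ref{subsec:beta}). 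For the converse, however, your argument is genuinely different and more elementary: you compute $\alpha^\vee(e_w)=-\sum_{i\in I_w}\beta^\vee(\omega_i)$ with $\beta=-w(\alpha)>0$ and show by a support argument that this is strictly negative whenever $w(\alpha)<0$, so non-$\bp$-dominance follows by pure root combinatorics. The paper instead argues through the module $P(e_w)$: it splits into the cases $s_\alpha e_w=e_w$ (handled by minimality of $w^{-1}$ in its coset) and $s_\alpha e_w\neq e_w$ (handled by observing that $P(e_w)$ cannot be a $\bp_\alpha$-module because of its $\fb$-socle), in both cases deducing $ws_\alpha>w$. Your version avoids representation theory entirely and in fact proves the sharper pointwise statement that $\alpha^\vee(e_w)\geq 0$ forces $w(\alpha)>0$ for each individual $\alpha\in\Pi^\bp$; the paper's version stays closer to the Demazure-functor formalism it needs elsewhere. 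Both are valid.
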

\begin{proof}
We want to show that $P(-e_v)=\res^\bp_\fb\ind_\fb^\bp(P(-e_v))$, \emph{i.e.} that the natural map $\res^\bp_\fb\ind_\fb^\bp(P(-e_v))\to P(-e_v)$ is an isomorphism. It suffices to do this when $\sk$ is a field, because of
Lemma \ref{lem:HwPk}.
Let $\bp=\bu_{\bp}\bl_{\bp}$ be a Levi decomposition of $\bp$ where $\bu_{\bp}$ is the unipotent radical of $\bp$ and $\bl_{\bp}$ is the Levi component. Let $w_0^{\bp}\in W_\bp$ be the longest element and consider a reduced decomposition $w_0^{\bp}=s_{1}\cdots s_{l}$ of $w_0^\bp$. The  functors ${\sf H}_{w_0^{\bp}}$ and $\res^\bp_\fb\ind_\fb^\bp$  are identical, because 
    $\overline{\fb w_0^{\bp}\fb}/\fb=\bp/\fb$. So we now want to prove that ${\sf H}_{w_0^{\bp}}(P(-e_v))=P(-e_v)$. 
By Lemma \ref{lem:Hwstar}
we have ${\sf H}_{w_0^{\bp}}=\sH_{s_{1}}\circ\cdots\circ\sH_{s_{l}}$.
So it suffices to show that $\sH_s(P(-e_v))=P(-e_v)$ for $s\in W_\bp$ simple.
 Now $v$ is a minimal coset representative in $W/W_\bp$, so $vs>v$ for $s\in W_\bp$ simple.
Then $s v^{-1}>v^{-1}$, so $s v^{-1}w_0<v^{-1}w_0$ by Lemma \ref{lew0reverse}, and $s\star v^{-1}w_0=v^{-1}w_0$.
Thus $
{\sH}_{s}{\sH}_{v^{-1}w_0}(\sk_{-w_0ve_v})={\sH}_{v^{-1}w_0}(\sk_{-w_0ve_v})$ by Lemma \ref{lem:Hwstar}. And then 
$\sH_s(P(-e_v))={\sH}_{s}{\sH}_{v^{-1}w_0}(\sk_{-w_0ve_v})={\sH}_{v^{-1}w_0}(\sk_{-w_0ve_v})=P(-e_v)$.
 
Let  $e_w$ be $\bp$-dominant and  $\alpha\in \Pi^\bp$. Then $s_\alpha e_w\leq_d e_w$, so $s_\alpha e_w\geq_e e_w$ by Lemma \ref{lem:aed}. 
 First consider the case that $s_\alpha e_w= e_w$. 
 Now $we_w$ is dominant and $w^{-1}$ is minimal amongst the $z$ with $zwe_w=e_w$, so 
 then $s_\alpha w^{-1}>w^{-1}$.
Next consider the case that $s_\alpha e_w\neq e_w$. Again
we must have $s_\alpha w^{-1}>w^{-1}$,
as otherwise $s_\alpha e_w=s_\alpha w^{-1} we_w\leq_e w^{-1} we_w=e_w$.
 In either case we have $w<ws_\alpha $.

\end{proof}

\begin{lemma}\label{lem:indpreserve}Let $\lambda\in X^\bp_+$.
Let $N\in\rep_\fr(\fb)$ be such that all weights $\mu$ of $N$ satisfy $\mu\leq_a\lambda$.
Then all weights $\mu$ of $\ind_\fb^\bp(N)$ satisfy $\mu\leq_a\lambda$ and the natural map $\ind_\fb^\bp(N)\to N$ 
induces an isomorphism $\ind_\fb^\bp(N)_\lambda\to N_\lambda$.
\end{lemma}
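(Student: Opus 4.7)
The plan is to factor $\ind_\fb^\bp$ through the Joseph functors $\sH_s$ for simple reflections $s\in W_\bp$, and to analyse each step on the Schubert curve $\bp_\alpha/\fb\cong\mathbb{P}^1$. By Lemma~\ref{lem:Hwstar}, a reduced decomposition $w_0^\bp=s_{i_1}\cdots s_{i_l}$ gives $\ind_\fb^\bp=\sH_{w_0^\bp}=\sH_{s_{i_1}}\circ\cdots\circ\sH_{s_{i_l}}$, and the restriction-to-basepoint map factors compatibly as the composition of the natural maps $\sH_s(M)\to M$. Since isomorphisms on the $\lambda$-weight space compose, it suffices to prove both assertions of the lemma for a single $\sH_{s_\alpha}$ with $\alpha\in\Pi^\bp$, and I shall do so.

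For the weight-inclusion part I choose a composition series of $N$ by $\fb$-submodules with one-dimensional subquotients $\sk_{\mu_i}$. Left exactness of $\sH_{s_\alpha}$ implies that every weight of $\sH_{s_\alpha}(N)$ appears in some $\sH_{s_\alpha}(\sk_{\mu_i})$, reducing me to the case $N=\sk_\mu$. In that case $\sH_{s_\alpha}(\sk_\mu)$ has weights $\mu,\mu-\alpha,\ldots,s_\alpha\mu$ when $(\alpha^\vee,\mu)=n\ge 0$, and vanishes otherwise. The interior weights $\mu-k\alpha$ with $0<k<n$ are strictly shorter than $\mu$, hence than $\lambda$, so $<_a\lambda$ by the length clause of Definition~\ref{def:antipodal}; the only genuine issue is the endpoint $s_\alpha\mu$ in the case $(\mu,\mu)=(\lambda,\lambda)$.

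Writing $\lambda=z_0\lambda^+$ and $\mu=w\lambda^+$ with $z_0,w$ the minimal coset representatives in $W^I$ (so that the hypothesis $\mu\le_a\lambda$ becomes $w\ge z_0$ in Bruhat order), the key observation will be that $\bp$-dominance of $\lambda$ forces $s_\alpha z_0>z_0$. Indeed $(\alpha^\vee,\lambda)\ge 0$ gives $s_\alpha\lambda\le_d\lambda$, hence $s_\alpha\lambda\ge_e\lambda$ by Lemma~\ref{lem:aed}; a short coset case analysis (either $s_\alpha z_0\in z_0W_I$, giving $s_\alpha z_0=z_0s_\beta>z_0$ by minimality of $z_0$, or $s_\alpha z_0\in W^I\setminus\{z_0\}$, in which case the excellent-order inequality translates directly into $s_\alpha z_0>z_0$) delivers the claim. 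Applying the standard Bruhat lifting property to $z_0\le w$: if $s_\alpha w>w$ then trivially $s_\alpha w>w\ge z_0$, and if $s_\alpha w<w$ then the combination $s_\alpha z_0>z_0$ and $s_\alpha w<w$ forces $z_0\le s_\alpha w$. Passing to the minimal coset representative of $s_\alpha w$ preserves the inequality, so $s_\alpha\mu\le_a\lambda$.

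For the isomorphism on the $\lambda$-weight space, Euler characteristics on $\mathbb{P}^1$ give $\Char(\sH_{s_\alpha}N)-\Char(\RR^1\sH_{s_\alpha}N)=D_\alpha(\Char(N))$ with $D_\alpha$ the Demazure operator. The coefficient of $e^\lambda$ in $D_\alpha(e^\mu)$ is nonzero only for $\mu=\lambda+k\alpha$ with $k\ge 0$ (coefficient $+1$) or for $\mu=\lambda-k\alpha$ with $k\ge(\alpha^\vee,\lambda)+1$ (coefficient $-1$); a direct calculation using $(\alpha^\vee,\lambda)\ge 0$ shows that both alternatives with $k\ge 1$ produce a weight strictly longer than $\lambda$, hence excluded by $\mu\le_a\lambda$. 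Only $\mu=\lambda$ survives, contributing $\dim N_\lambda$, and the same length bound eliminates the $\lambda$-weight of $\RR^1\sH_{s_\alpha}(N)$, so $\dim\sH_{s_\alpha}(N)_\lambda=\dim N_\lambda$. Finally, the kernel of $\sH_{s_\alpha}(N)\to N$, being an iterated extension of the kernels of the maps $\sH_{s_\alpha}(\sk_{\mu_i})\to\sk_{\mu_i}$, has weights only of the form $\mu-k\alpha$ with $k\ge 1$ and $\mu$ a weight of $N$; equating such to $\lambda$ again forces the excluded $\mu=\lambda+k\alpha$, so the map is injective on the $\lambda$-weight space and the dimension equality upgrades it to an isomorphism. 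The hard part will be the Bruhat combinatorics of the previous paragraph, where the $\bp$-dominance of $\lambda$ is essential to avoid the ``bad'' case of the lifting property.
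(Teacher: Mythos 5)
Your proof is correct, and its skeleton is the same as the paper's: factor $\res^\bp_\fb\ind_\fb^\bp$ as $\sH_{s_{1}}\circ\cdots\circ\sH_{s_{l}}$ along a reduced word for $w_0^\bp$, reduce to a single minimal parabolic $\bp_\alpha$, and read everything off the weights of $H^0(\bp_\alpha/\fb,\Ll(\mu))$. The two halves are argued differently, though. For the weight containment, the paper never compares $s_\alpha\mu$ with $\lambda$ directly: it observes that $(\alpha^\vee,\mu)\geq0$ forces $s_\alpha\mu\leq_a\mu$ (this is exactly Lemma \ref{lem:aed} combined with Lemma \ref{lem:ea}), so every weight $\nu$ of $\ind_\fb^{\bp_\alpha}(\sk_\mu)$ satisfies $\nu\leq_a\mu\leq_a\lambda$ by transitivity of $\leq_a$ --- in particular the $\bp$-dominance of $\lambda$ is not needed for this half, and no lifting property is invoked. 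Your route through minimal coset representatives, the claim $s_\alpha z_0>z_0$, and Deodhar's lifting property is a correct but heavier hands-on substitute for that transitivity; note only that your stated dichotomy ``$s_\alpha z_0\in z_0W_I$ or $s_\alpha z_0\in W^I\setminus\{z_0\}$'' is not literally exhaustive ($s_\alpha z_0$ need not be a minimal coset representative), though the argument you give in the second branch works verbatim for the intended case $s_\alpha\lambda\neq\lambda$. For the isomorphism on the $\lambda$-weight space, the paper cites the single-weight case from \cite[Propositions 17, 20]{FvdK} and inducts on the number of weights, whereas your Demazure/Euler-characteristic computation together with the kernel-weight analysis is self-contained and equally valid; it has the mild bonus of exhibiting $(\RR^1\ind_\fb^{\bp_\alpha}N)_\lambda=0$ explicitly, which the paper instead extracts from the bound $\nu<_a\mu$ on the weights of $\RR^1$.
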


\begin{proof}First let $\sk$ be a field.
Consider a reduced decomposition  $s_{1}\cdots s_{l}$ of $w_0^\bp$. 
Recall that $\res^\bp_\fb\ind_\fb^\bp=\sH_{s_{1}}\circ\cdots\circ\sH_{s_{l}}$. 
 Therefore we may assume $\bp$ is a minimal parabolic, say
$\bp=\bp_\alpha$. Thus $\bp/\fb$ is a projective line and we may use \cite[II Proposition 5.2]{Jan}.
If $\mu$ is a weight and $(\alpha^\vee,\mu)\geq0$,
then $s_\alpha \mu\leq_a \mu$, $\RR^1\ind_\fb^\bp(\mu)=0$, 
$(\ind_\fb^\bp(\mu))_\mu=\sk_\mu$ and all weights $\nu$ of $\ind_\fb^\bp(\mu)$ lie on the line segment joining $\mu $ with 
$s_\alpha \mu$. In particular, they satisfy $\nu\leq_a\mu$. And if $(\alpha^\vee,\mu)<0$,
then $\ind_\fb^\bp(\mu)=0$ and all weights $\nu$ of
$\RR^1\ind_\fb^\bp(\mu)$ lie strictly between the endpoints of the line segment, which implies $\nu<_a\mu$ for such $\nu$.
If $N$ has just weight $\lambda$, then it is  well known that $\ind_\fb^\bp(N)_\lambda\to N_\lambda$
is an isomorphism. 
Now use induction on the number of weights of $N$.  This settles the Lemma when $\sk$ is a field.

Moreover, if $\sk$ is a field, then $\RR^1\ind_\fb^\bp(\sk_\nu)_\lambda=0$ for $\nu \leq_a \lambda$ because $\RR^1\ind_\fb^\bp(Q(\nu))$ vanishes by 
Proposition \ref{prop:Schubert module}, so that
$\RR^1\ind_\fb^\bp(\sk_\nu)$
is a quotient of $\ind_\fb^\bp(Q(\nu)/\sk_\nu)$.
If $\sk=\Z$, observe that by the Universal coefficient Theorem $\RR^1\ind_\fb^\bp(N)_\lambda=0$. Then use 
the Universal coefficient Theorem once more to finish.
\end{proof}

 %--------------------------------------------------------------
\subsection{\unboldmath Generating \unboldmath$\rep( \bp )$}\label{subsec:repP-generation}
%--------------------------------------------------------------

  \subsubsection*{\unboldmath The set \unboldmath$\{\hat M_v\}_{v\in W^\bp}$}
  For $\bp$-dominant $\lambda$ we use $\hat\nabla_\lambda$ as another notation for $\ind_\fb^\bp\sk_\lambda$. 
   Let us be given sets $\{\hat M_v\}_{v\in W^\bp}$,  $\{\hat N_v\}_{v\in W^\bp}$
   of objects of $\rep_\fr( \bp )$  with the following properties.
   For each ${v\in W^\bp}$  there is a diagram of $\bp$-modules
   $$\hat M_v\stackrel{g}\longrightarrow \hat N_v\stackrel{f}\longleftarrow \hat\nabla_{e_v} $$
   such that every weight $\mu$ of $\kernel(f)$, $\kernel(g)$, $\coker(f)$, $\coker(g)$ satisfies $\mu<_a e_v$.

  Examples of possible choices of $\hat M_v$ are $\hat\nabla_{{e_v}}$ with $f=g=\id$, $\ind_\fb^\bp Q(e_v)$ with $g=\id$, $P(-e_v)^*$
  with $f=\id$. 
  
  The multiplicity of the weight $e_v$ is one in $\hat M_v$, $\hat N_v$, $\hat\nabla_{e_v}$. 
  Every weight $\lambda$ of $\hat M_v$ satisfies $\lambda\leq_a e_v$.

\

  The following Theorem is similar to \cite[Theorem 2]{Ana}, which is proved directly in the context of $\bg$ and $\bp$. Our argument differs in that it refers back to the proof for the $\bp=\fb$ case.
  \begin{theorem}[Generation]\label{th:Pgeneration}
  The smallest strictly full additive subcategory of $\rep(\bp)$ that 
  \begin{itemize}
  \item contains the $\hat M_v$,  
  \item has the
  2 out of 3 property and 
  \item contains with every $\hat\nabla_\lambda$ also
   $\hat\nabla_\lambda\otimes\nabla_{\omega_\alpha}$ for every fundamental representation $\nabla_{\omega_\alpha}$,
  \end{itemize}
   is the category $\rep( \bp )$ of finitely generated $ \bp $-modules.

  \end{theorem}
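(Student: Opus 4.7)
The plan is to mirror the proof of the Borel analog Theorem \ref{th:generation}, with the one-dimensional $\fb$-module $\sk_\lambda$ replaced everywhere by the parabolic module $\hat\nabla_\lambda = \ind_\fb^\bp(\sk_\lambda)$ for $\lambda \in X^\bp_+$. Let $\Cc \subseteq \rep(\bp)$ denote the subcategory in the statement. I would proceed by induction along the antipodal excellent order $<_a$ restricted to $X^\bp_+$, aiming to prove the key claim that for every $\lambda \in X^\bp_+$ the module $\hat\nabla_\lambda$ lies in $\Cc$. Once this is established, $\Cc = \rep(\bp)$ follows: any finite-dimensional $\bp$-module $M$ admits a filtration whose subquotients are extensions of $\hat\nabla_\mu$'s (with $\mu$ ranging over the $\bp$-dominant weights of $M$), so repeated use of the $2$-out-of-$3$ property places $M$ in $\Cc$.

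For the inductive step at $\lambda$, I would strengthen the inductive hypothesis to the statement that every $\bp$-module with all weights strictly $<_a \lambda$ lies in $\Cc$; this follows from ``$\hat\nabla_\mu \in \Cc$ for all $\mu \in X^\bp_+$ with $\mu <_a \lambda$'' by the same weight-by-weight bootstrap. If $\lambda = e_v$ is a Steinberg weight, then by Lemma \ref{lem:PisP} necessarily $v \in W^\bp$, and the given diagram $\hat M_v \xrightarrow{g} \hat N_v \xleftarrow{f} \hat\nabla_{e_v}$ has kernels and cokernels with all weights $<_a e_v$, hence in $\Cc$. Since $\hat M_v \in \Cc$ by hypothesis, two successive applications of $2$-out-of-$3$ (first around $g$ to place $\hat N_v \in \Cc$, then around $f$) yield $\hat\nabla_{e_v} \in \Cc$.

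If $\lambda$ is not a Steinberg weight, I would adapt Corollary \ref{cor:if not Steinberg then tensor}: it produces a fundamental weight $\omega_j$ and a weight $\mu = w(\varpi - \omega_j)$ with $(\mu,\mu) < (\lambda,\lambda)$ such that the $\fb$-module $L := \nabla_{\omega_j} \otimes \sk_\mu$ has $\lambda$ as a weight of multiplicity one and all other weights $\leq_a \lambda$. I then transfer to the parabolic setting via $\ind_\fb^\bp$: by Lemma \ref{lem:indpreserve}, the $\bp$-module $\ind_\fb^\bp(L)$ still has $\lambda$ with multiplicity one and all other weights $\leq_a \lambda$, and the tensor identity identifies $\ind_\fb^\bp(L) \cong \nabla_{\omega_j} \otimes \hat\nabla_\mu$. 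By the inductive hypothesis $\hat\nabla_\mu \in \Cc$, whence $\nabla_{\omega_j} \otimes \hat\nabla_\mu \in \Cc$ by the third closure property of $\Cc$, and a final $2$-out-of-$3$ argument (using that $\lambda$ has multiplicity one in the tensor product and the remaining weights are $<_a \lambda$) concludes $\hat\nabla_\lambda \in \Cc$.

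The main obstacle is reconciling the Borel-level choice of $\mu$ with $\bp$-dominance: the tensor identity forces $\ind_\fb^\bp(L) = 0$ whenever $\mu$ is not $\bp$-dominant, which would contradict Lemma \ref{lem:indpreserve}. I expect that when $\lambda \in X^\bp_+$ is not a Steinberg weight, the Borel-constructed $\mu = w(\varpi - \omega_j)$ is automatically $\bp$-dominant---this should follow by a direct computation from $\lambda$ being $\bp$-dominant (so that the minimal $w$ lies in $W^\bp$ by Lemma \ref{lem:minimal-coset-representative}) together with the specific selection of $\omega_j$ in Propositions \ref{prop:touch} and \ref{prop:fartouch}. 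Should this fail in any case, one would replace $\mu$ by the unique $\bp$-dominant weight $\bar\mu \in W_\bp \mu$; by Lemma \ref{lem:ea} one still has $\bar\mu <_a \lambda$, and a more delicate verification is needed to show that $\nabla_{\omega_j} \otimes \hat\nabla_{\bar\mu}$ still carries $\lambda$ as a multiplicity-one weight with all remaining weights $\leq_a \lambda$.
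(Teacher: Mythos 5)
Your skeleton matches the paper's: induction along $<_a$, the Steinberg case handled via the diagram $\hat M_v\to\hat N_v\leftarrow\hat\nabla_{e_v}$ together with Lemma \ref{lem:PisP}, and the non-Steinberg case via $N=\ind_\fb^\bp(\nabla_{\omega_j}\otimes e^\mu)=\nabla_{\omega_j}\otimes\hat\nabla_\mu$ and Lemma \ref{lem:indpreserve}. Moreover, the ``main obstacle'' you flag is not an obstacle at all, and your own sentence already contains its resolution: since Lemma \ref{lem:indpreserve} gives $\ind_\fb^\bp(L)_\lambda\cong L_\lambda\neq0$, the tensor identity forces $\ind_\fb^\bp(e^\mu)\neq0$, hence $\mu$ is automatically $\bp$-dominant. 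This is exactly how the paper argues; you should state it as a proof by contradiction rather than as an expectation, and the fallback with $\bar\mu\in W_\bp\mu$ is unnecessary.

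The genuine gap is at the end. You assert that any finite-dimensional $\bp$-module admits a filtration whose subquotients are (extensions of) $\hat\nabla_\mu$'s, and that a ``final $2$-out-of-$3$ argument'' extracts $\hat\nabla_\lambda$ from $N$. Neither is routine for a parabolic: unlike the Borel case, where every module is filtered by one-dimensional weight modules, a $\bp$-module is in general \emph{not} filtered by the $\hat\nabla_\mu$ (restricted to the Levi these are costandard modules, and in positive characteristic simple Levi modules are not costandard). What is true is that every $\bp$-module lies in the $2$-out-of-$3$ closure, but establishing this is precisely Steps 2 and 3 of the paper's proof of Theorem \ref{th:intervalgeneration}: one must cut $N$ (resp.\ $V$) down by the submodules $N^1,N^2,N^3$ defined by the pairings $(\mu,\omega_\alpha)$ for $\alpha\in\Pi^{\notin\bp}$ so as to force the unipotent radical of $\bp$ to act trivially, observe that the projection onto the extremal weight space is then $\bt$- and $\bg_\bp$-equivariant, hence $\bp$-equivariant, and compare with $\ind_\fb^\bp(V_\sigma)=\hat\nabla_\sigma\otimes(V_\sigma)_{\triv}$ via \cite[Proposition 17]{FvdK} and Lemma \ref{lem:indpreserve}, the kernel and cokernel being handled by induction. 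Without this reduction both your final step and your strengthened induction hypothesis (``every $\bp$-module with all weights $<_a\lambda$ lies in $\Cc$'') are unsupported.
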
  
  
  More specifically
  
  \begin{theorem}[Generation of initial interval]\label{th:intervalgeneration}
  Let $\tau\in X^\bp_+$. Let $V\in\rep(\bp)$ such that every weight $\mu$ of $V$ satisfies $\mu<_a\tau$.
Then  $V$ is an object of
  the smallest strictly full additive subcategory $\tS$ of $\rep(\bp)$ 
  satisfying 
  \begin{itemize}
  \item $\tS$ contains $\hat M_v$ for $v\in W^\bp$ with $e_v<_a\tau$,  
  \item  $\tS$ has the
  2 out of 3 property,
 \item  If  $\tS$ contains $\hat\nabla_\lambda$, then $\tS$ also contains
   $\hat\nabla_\lambda\otimes\nabla_{\omega_\alpha}$, for every fundamental representation $\nabla_{\omega_\alpha}$. 
  \end{itemize}
  \end{theorem}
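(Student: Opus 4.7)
The plan is to mimic the proof of Theorem \ref{th:generation}, with the induced modules $\hat\nabla_\sigma = \ind_{\fb}^{\bp}(\sk_\sigma)$ for $\sigma \in X^{\bp}_+$ playing the role of the one-dimensional $\fb$-modules $e^\sigma$. By Lemma \ref{lem:indpreserve}, each $\hat\nabla_\sigma$ has $\sigma$ as a multiplicity-one weight with every other weight strictly less than $\sigma$ in the antipodal excellent order $<_a$, making it the natural parabolic surrogate. The whole argument is a single induction along $<_a$ restricted to $X^{\bp}_+$: given $\sigma <_a \tau$ in $X^{\bp}_+$, I would assume the theorem holds for every $V'$ with weights $<_a \sigma$ and deduce it for every $V$ with weights $\leq_a \sigma$.

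The heart of the inductive step is to show $\hat\nabla_\sigma \in \tS$. If $\sigma = e_v$ is a Steinberg weight, Lemma \ref{lem:PisP} forces $v \in W^{\bp}$, so the prescribed module $\hat M_v$ belongs to $\tS$ by hypothesis, and the given diagram $\hat M_v \to \hat N_v \leftarrow \hat\nabla_{e_v}$ has all four of $\kernel$ and $\coker$ with weights $<_a e_v$; these subquotients lie in $\tS$ by the inductive hypothesis, and two applications of the $2$-out-of-$3$ property yield $\hat\nabla_{e_v} \in \tS$. If $\sigma$ is not a Steinberg weight, I follow the non-Steinberg strategy of Corollary \ref{cor:if not Steinberg then tensor}: Propositions \ref{prop:touch} and \ref{prop:fartouch} supply a fundamental weight $\omega_j$ and a weight $\mu = w(\varpi - \omega_j)$ so that $\sk_\mu \otimes \nabla_{\omega_j}$ has $\sigma$ as a multiplicity-one weight with every other weight $<_a \sigma$. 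Passing from $\sk_\mu$ to the $\bp$-module $\hat\nabla_{\mu^+}$ associated with the $\bp$-dominant representative $\mu^+$ of $W_{\bp}\cdot \mu$, and using the Tensor Identity together with Lemma \ref{lem:indpreserve}, one obtains a $\bp$-module $\hat\nabla_{\mu^+}\otimes \nabla_{\omega_j}$ in $\tS$ in which $\sigma$ still occurs with multiplicity one; isolating the $\sigma$-contribution via $2$-out-of-$3$ against the inductive hypothesis then delivers $\hat\nabla_\sigma\in \tS$.

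Once $\hat\nabla_\sigma \in \tS$ is known, the conclusion for an arbitrary $V\in\rep(\bp)$ with weights $\leq_a \sigma$ follows by a nested induction on $\dim_\sk V$. If no weight of $V$ equals $\sigma$, the outer inductive hypothesis applies directly. Otherwise pick a $<_a$-maximal weight $\sigma'$ of $V$. A short verification using Lemmas \ref{lem:aed} and \ref{lem:ea} shows $\sigma'$ is automatically $\bp$-dominant: for $\alpha\in \Pi^{\bp}$, $s_\alpha \sigma'$ is also a weight of $V$ by $\bp_\alpha$-invariance, and maximality in $<_a$ forces $s_\alpha\sigma' \leq_d \sigma'$, i.e. $(\alpha^\vee,\sigma')\geq 0$. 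Every composition factor of $V$ as a $\bp$-module is a simple $\bp$-module with highest weight $\leq_a\sigma$, and each such simple sits as socle of the corresponding $\hat\nabla_{\sigma''}$ with quotient of weights $<_a\sigma''$, so combining Step 1 with the inductive hypothesis and the $2$-out-of-$3$ property along the composition series shows $V\in \tS$.

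The main obstacle I anticipate is the non-Steinberg case of Step 1. The non-parabolic Propositions \ref{prop:touch} and \ref{prop:fartouch} already rest on a delicate orbit-path analysis for $\sk_\mu \otimes \nabla_{\omega_j}$, and passing to $\hat\nabla_{\mu^+}\otimes \nabla_{\omega_j}$ introduces additional weights via the $W_{\bp}$-symmetrization built into induction; all of these must be verified to remain strictly below $\sigma$ in $<_a$. Handling this cleanly will likely require revisiting the proofs of those two propositions in the parabolic setting, exploiting that $w$ is a minimal coset representative in $W^{\bp}$ and that Lemma \ref{lem:indpreserve} controls weights of induced modules precisely with respect to $<_a$.
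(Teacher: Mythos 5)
Your overall architecture matches the paper's: induction along $<_a$, with the crux being $\hat\nabla_\sigma\in\tS$, split into the Steinberg and non-Steinberg cases, followed by a reduction of general $V$ to the $\hat\nabla_\sigma$'s. Your treatment of the Steinberg case via the defining diagram $\hat M_v\to\hat N_v\leftarrow\hat\nabla_{e_v}$ and two applications of $2$-out-of-$3$ is correct and in fact slightly more direct than the paper's. But the non-Steinberg case, which you yourself flag as "the main obstacle," is a genuine gap, and your proposed route makes it harder than it is. You suggest replacing $\sk_\mu$ by $\hat\nabla_{\mu^+}$ for the $\bp$-dominant representative $\mu^+$ of $W_\bp\mu$ and then re-proving Propositions \ref{prop:touch} and \ref{prop:fartouch} parabolically to control the extra weights; this is neither carried out nor needed. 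The paper's resolution is: take $N=\ind_\fb^\bp(e^\mu\otimes\nabla_{\omega_j})$ and apply Lemma \ref{lem:indpreserve} to the \emph{whole} $\fb$-module $L=e^\mu\otimes\nabla_{\omega_j}$ with $\lambda=\sigma$ (legitimate because $\sigma\in X^\bp_+$ and all weights of $L$ are $\leq_a\sigma$ by Corollary \ref{cor:if not Steinberg then tensor}); this gives in one stroke that all weights of $N$ are $\leq_a\sigma$ and that $N_\sigma=\sk_\sigma$. Since $N_\sigma\neq 0$ and $N=\nabla_{\omega_j}\otimes\ind_\fb^\bp(e^\mu)$ by the tensor identity, $\ind_\fb^\bp(e^\mu)\neq0$, so $\mu$ is \emph{already} $\bp$-dominant and $\ind_\fb^\bp(e^\mu)=\hat\nabla_\mu$ lies in $\tS$ by the inductive hypothesis (all its weights are strictly shorter than $\sigma$). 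No parabolic analogue of the orbit-path analysis is required.

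Two further steps are too glib. First, having produced \emph{some} $N\in\tS$ with $N_\sigma=\sk_\sigma$ and weights $\leq_a\sigma$, you cannot immediately "isolate the $\sigma$-contribution via $2$-out-of-$3$": the map $N\to\hat\nabla_\sigma$ comes from Frobenius reciprocity applied to a $\fb$-equivariant projection $N\to\sk_\sigma$, and such a projection need not exist until one cuts $N$ down. The paper's Step 2 does this by passing to subquotients $N^1\subset N^2\subset N$ defined by the pairings $(\mu,\omega_\alpha)$ for $\alpha\in\Pi^{\notin\bp}$ and then to the submodule generated by $N_\sigma$, so that the unipotent radical acts trivially and the projection becomes $\bp$-equivariant; only then do Lemma \ref{lem:indpreserve} and $2$-out-of-$3$ give $\hat\nabla_\sigma\in\tS$. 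Second, your final reduction of a general $V$ via composition series of simple $\bp$-modules works only over a field, whereas the theorem is also used over $\Z$ (see the remark following Theorem \ref{th:Pgeneration}); moreover a general $V$ with weights $<_a\tau$ need not have all weights $\leq_a$ a single $\sigma$, so the composition factors need not all have highest weight $\leq_a\sigma$. The paper instead runs the same $V^1,V^2,V^3$ dissection on $V$ itself and maps onto $\ind_\fb^\bp(V_\sigma)=\hat\nabla_\sigma\otimes(V_\sigma)_\triv$, inducting on the set of weights of $V$, which avoids both problems.
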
  

\begin{proof}
We argue by induction along the well-ordered partial order $\leq_a$. So we assume the Theorem when $\tau$ is replaced by 
a $\sigma\in X_+^\bp$ with $\sigma<_a\tau$. Let $\sigma\in X_+^\bp$ with $\sigma<_a\tau$.
Thus $V\in \tS$ when all weights $\xi$
of $V$ satisfy $\xi<_a\sigma$.

Step 1. First we  wish to show that $\tS$ contains at least one $\bp$-module $N$ with $N_\sigma=\sk_\sigma$ and with weights $\xi$
that satisfy $\xi\leq_a\sigma$.
If $\sigma$ is a Steinberg weight $e_v$, then $v\in W^\bp$ by Lemma \ref{lem:PisP}
and we simply take $N=\hat M_v$.
If $\sigma$ is not a Steinberg weight,  then by Remark \ref{cor:if not Steinberg then tensor}
 there is a fundamental weight $\omega_\beta$ and a weight $\mu$ with $(\mu,\mu)<(\sigma,\sigma)$,
 so that the $\fb$-module $L=\nabla_{\omega_\beta}\otimes \sk_\mu$  has
$\sigma$ as a weight  of  multiplicity one, and so that all weights  $\nu$ of $L$ satisfy 
$\nu\leq_a\sigma$. Now take $N=\ind_\fb^\bp L=\nabla_{\omega_\beta}\otimes\ind_\fb^\bp \sk_\mu$. 
By Lemma \ref{lem:indpreserve} every weight $\nu$ of $N$ satisfies $\nu\leq_a\sigma$ and 
$N_\sigma=\sk_\sigma$. In particular, $\ind_\fb^\bp \sk_\mu$ is nonzero, so  $\mu\in X^\bp_+$ and $\ind_\fb^\bp \sk_\mu=\hat\nabla_\mu$.
And every weight $\nu$ of $\hat\nabla_\mu$ satisfies $(\nu,\nu)<(\sigma,\sigma)$, hence $\nu<_a\sigma$, so $\hat\nabla_\mu$  lies in $\tS$ and  
$N=\nabla_{\omega_\beta}\otimes\hat\nabla_\mu$ lies in $\tS$.

Step 2. Next we wish to show that $\hat\nabla_\sigma$ lies in $\tS$. To this end we look for more $\bp$-modules $N$ 
in $\tS$ with $N_\sigma=\sk_\sigma$ 
and such that the weights $\xi$ of $N$
satisfy $\xi\leq_a\sigma$.
Start with the $N$ from Step 1.
Let $N^1$ be the span of the weight spaces $N_\mu$ with $(\mu,\omega_\alpha)<(\sigma,\omega_\alpha)$ for at least one
$\alpha\in \Pi^{\notin \bp}$. Then $N^1$ is a $\bp$-submodule that lies in $\tS$. So we may replace $N$ with
$N/N^1$ and further assume $N^1=0$. 
Let $N^2$ be the span of the weight spaces $N_\mu$ with $(\mu,\omega_\alpha)\leq(\sigma,\omega_\alpha)$ for all
$\alpha\in \Pi^{\notin \bp}$. Then $N^2$ is a $\bp$-submodule and $N/N^2$ lies in $\tS$. So we may replace $N$ with
$N^2$ and further assume $N^1=0$, $N=N^2$. 
Let $N^3$ be the $\bp$-submodule of $N$ generated by $N_\sigma$.
Then $N/N^3$ is in $\tS$, so we may replace $N$ with $N^3$. Now the unipotent radical of $\bp$ acts trivially
on $N$ and the projection $p$ of $N$ onto its weight space $N_\sigma$ is $\fb$-equivariant. By Lemma \ref{lem:indpreserve} the map $\ind_\fb^\bp(p):N\to \hat\nabla_\sigma$ induced by $p$ has kernel and cokernel in $\tS$. So $\hat\nabla_\sigma$ lies  in $\tS$.

Step 3.
Finally we want to show that $\tS$ contains every  $\bp$-module $V$ all whose weights $\mu$ satisfy $\mu<_a\tau$.
Consider such a $V$.
We may and shall assume  that $\tS$ contains every  $\bp$-module
 whose set of weights is a proper subset of the set of weights of $V$.
Say $V$ is nonzero. Choose an extremal weight $\sigma$ of $V$ that is $\bp$-dominant.
Let $V^1$ be the span of the weight spaces $V_\mu$ with $(\mu,\omega_\alpha)<(\sigma,\omega_\alpha)$ for at least one
$\alpha\in \Pi^{\notin \bp}$. This is a $\bp$-submodule, and if $V^1$ is nonzero, then $V$ is in $\tS$ because both
$V^1$ and $V/V^1$ are. 
Let $V^2$ be the span of the weight spaces $V_\mu$ with $(\mu,\omega_\alpha)\leq(\sigma,\omega_\alpha)$ for all
$\alpha\in \Pi^{\notin \bp}$. This is a $\bp$-submodule, and if $V^2\not= V$, then $V$ is in $\tS$ because both
$V^2$ and $V/V^2$ are. 
So we further assume $V^1=0$ and $V=V^2$. Then  the unipotent radical of $\bp$ acts trivially
on $V$. Let $V^3$ be the $\bp$-submodule generated by $V_\sigma$.
Then $V/V^3$ has fewer weights, so $V/V^3\in \tS$. So if $V^3\in\tS$, then $V\in\tS$.
 So we may assume $V=V^3$. The weights $\xi$ of $V$ now
satisfy $\xi\leq_a\sigma$.
The projection $p$ of $V$ onto its weight space $V_\sigma$ is $\fb$-equivariant.
 We have  $\ind_\fb^\bp(V_\sigma)=\hat\nabla_\sigma\otimes (V_{\sigma})_\triv$, 
where $(V_{\sigma})_\triv$ is $V_{\sigma}$
with trivial $\bp$-action, cf.\ \cite[Proposition 17]{FvdK}. Consider the map $f:V\to \ind_\fb^\bp(V_\sigma)=\hat\nabla_\sigma\otimes (V_{\sigma})_\triv$, corresponding with $p$. Both $\kernel(f)$ and $\coker(f)$ lie in $\tS$. 
As $\hat\nabla_\sigma$ lies in $\tS$, so does $\hat\nabla_\sigma\otimes (V_{\sigma})_\triv$.
So $V$ lies in $\tS$.
\end{proof}

\begin{remark}
    In  Theorem \ref{th:Pgeneration}
    and
    Theorem \ref{th:intervalgeneration} we deal with $\rep(\bp)$,
    even over $\Z$, while Ananyevskiy deals with 
     $R(\bp)$ in \cite{Ana}. Of course it is easier to work in $R(\bp)$, but basically the arguments are the same and we get the same understanding of Steinberg weights and a similar constructive decomposition of elements of $R(\bp)$ over $R(\bg)$ as in the proofs of Ananyevskiy.
\end{remark}

\begin{definition}
For $v\in W^\bp$ put $\hat Q(e_v)=\ind_\fb^\bp Q(e_v)$. And then for $p\in W^\bp$
\

\begin{itemize}

\item $\hat{\sf Q}_{\succeq p}:=\hull(\{\nabla_\lambda\otimes \hat Q(e_v)\}_{v\succeq p,  v\in W^\bp,\lambda\in X(\bt)_+})$

\item $\hat{\sf Q}_{\succ p}:=\hull(\{\nabla_\lambda\otimes \hat Q(e_v)\}_{v\succ p,  v\in W^\bp,\lambda\in X(\bt)_+})$

\item $\hat{\sf P}_{\preceq p}:=\hull(\{\nabla_\lambda\otimes P(-e_v)^*\}_{v\preceq p,  v\in W^\bp,\lambda\in X(\bt)_+})$

\item $\hat{\sf P}_{\prec p}:=\hull(\{\nabla_\lambda\otimes P(-e_v)^*\}_{v\prec p,  v\in W^\bp,\lambda\in X(\bt)_+})$

\end{itemize}
 \end{definition}

We get from Theorem \ref{th:Pgeneration} the following replacement of Theorem \ref{th:derived_generation} 

\begin{theorem}\label{th:Pderived_generation}
Given a $p\in W^\bp$, the triangulated hull in $\Dd(\Rep(\bp))$ of the two categories $\hat{\sf P}_{\preceq p}$ and 
$\hat{\sf Q}_{\succ p}$ 
is $\Dd^b (\rep ( P ))$. 
\end{theorem}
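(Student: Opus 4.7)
The plan is to mirror the proof of Theorem \ref{th:derived_generation} in the parabolic setting, by reducing the statement to the abelian-level generation result of Theorem \ref{th:Pgeneration}. The main task is to choose the family $\{\hat M_v\}_{v\in W^\bp}$ so that its thick triangulated hull matches $\hull(\hat{\sf P}_{\preceq p}\cup\hat{\sf Q}_{\succ p})$, and then to propagate generation from $\rep(\bp)$ to $\Dd^b(\rep(\bp))$.

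First I would specialize the family by setting $\hat M_v:=\hat Q(e_v)$ for $v\succ p$ and $\hat M_v:=P(-e_v)^*$ for $v\preceq p$. Both choices appear on the list of admissible examples given before Theorem \ref{th:Pgeneration}: for $\hat Q(e_v)$ one takes $g=\id$ and $f$ the natural map $\hat\nabla_{e_v}\to \hat Q(e_v)=\ind_\fb^\bp Q(e_v)$, and for $P(-e_v)^*$ one takes $f=\id$ and $g$ the map $P(-e_v)^*\to\hat\nabla_{e_v}$ dual to the socle inclusion $\sk_{-e_v}\hookrightarrow P(-e_v)$ (which is a map of $\bp$-modules by Lemma \ref{lem:PisP}). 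Control of the kernels and cokernels reduces in each case to weight comparisons for $P(-e_v)$ and $Q(e_v)$ together with Lemma \ref{lem:indpreserve}, which guarantees that $\ind_\fb^\bp$ does not produce weights exceeding $e_v$ in the antipodal excellent order.

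Next I would translate the abelian statement of Theorem \ref{th:Pgeneration} into the derived category. In $\Dd^b(\rep(\bp))$ the 2-out-of-3 property becomes closure under cones, and closure under tensoring $\hat\nabla_\lambda$ with fundamental representations $\nabla_{\omega_i}$ is automatic because the hull $\hat{\sf D}:=\hull(\hat{\sf P}_{\preceq p}\cup\hat{\sf Q}_{\succ p})$ is $\bg$-linear by the parabolic analogue of Proposition \ref{prop:G linear hull}. Thus every object of $\rep(\bp)$, viewed as a pure complex, lies in $\hat{\sf D}$, and so does every shift of such an object.

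Finally, an arbitrary object $M\in \Dd^b(\rep(\bp))$ is built as an iterated cone from its cohomology objects via the canonical (or stupid) truncation, as in the end of the proof of Theorem \ref{th:derived_generation}; see also \cite[2.5]{Kuzbasech}. Since each cohomology lies in $\hat{\sf D}$ and $\hat{\sf D}$ is triangulated, so does $M$. The step I expect to require the most care is the verification in the first paragraph: one must check that $\hat Q(e_v)$ and $P(-e_v)^*$ genuinely fit the diagrammatic template $\hat M_v\stackrel{g}\to \hat N_v\stackrel{f}\leftarrow \hat\nabla_{e_v}$ with kernels and cokernels having all weights strictly below $e_v$ in the antipodal excellent order, which is where the weight-combinatorial work of Section \ref{sec:caseP} (in particular Lemma \ref{lem:PisP} and Lemma \ref{lem:indpreserve}) is crucial.
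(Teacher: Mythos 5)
Your proposal is correct and is exactly the argument the paper intends: the theorem is stated as an immediate consequence of Theorem \ref{th:Pgeneration} ("We get from Theorem \ref{th:Pgeneration} the following replacement of Theorem \ref{th:derived_generation}"), and your deduction mirrors the proof of Theorem \ref{th:derived_generation} — choose $\hat M_v=\hat Q(e_v)$ for $v\succ p$ and $\hat M_v=P(-e_v)^*$ for $v\preceq p$ (both on the paper's list of admissible examples), use $\bg$-linearity of the hull and the 2-out-of-3 property to get all pure objects, then truncate. The only cosmetic imprecision is that the map $g\colon P(-e_v)^*\to\hat\nabla_{e_v}$ is the Frobenius-reciprocity adjoint of the surjection $P(-e_v)^*\onto\sk_{e_v}$ dual to the socle inclusion, rather than literally that dual map itself.
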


 We also have the following replacement of Theorem \ref{th:indPQ}
 
 \begin{theorem}\label{th:PindPQ}
Let $v,w\in W^\bp$.
\begin{enumerate}
\item $\RR^i\ind_\bp^\bg (P(-e_v)\otimes \hat Q(e_v))=\begin{cases}\sk &\text{if $i=0$,}\\
0&\text{else.}\end{cases}$
\item If $w\not\leq v$ then $\RR^i\ind_\bp^\bg (P(-e_v)\otimes \hat Q(e_w))=0$ for all $i$.
\end{enumerate}
\end{theorem}

\begin{proof}By the Universal coefficient Theorem \ref{th:universal coefficients}, it suffices to treat the case where $\sk$ is a field.
By Proposition \ref{prop:Schubert module} the module $Q(e_w)$ is $\ind_\fb^\bp$ acyclic. Lemma \ref{lem:PisP}, together with the Generalized Tensor Identity \cite[I Proposition 4.8]{Jan}, give
$$\RR^j\ind_\fb^\bp (P(-e_v)\otimes Q(e_w))=P(-e_v)\otimes\RR^j\ind_\fb^\bp ( Q(e_w))=\begin{cases}P(-e_v)\otimes\hat  Q(e_w)&\text{if $j=0$,}\\
0&\text{else.}\end{cases}$$
So
$\RR^i\ind_\fb^\bg (P(-e_v)\otimes  Q(e_w))=\RR^i\ind_\bp^\bg (P(-e_v)\otimes \hat Q(e_w))$.
But the left hand side is known from Theorem \ref{th:indPQ}.
\end{proof}
\begin{proof}[\rm \bf Proof of Theorems \ref{th:Z-X^P_p-G-exceptional-coll},  \ref{th:Psemi-orthogonal}, \ref{th:Pcoh-semi-orthogonal}]

With these replacements in hand, we now proceed as in the case $\bp=\fb$. For $p\in W^\bp$, let $\hat X_p$ be the image of $P(-e_p)^*$ under the left adjoint of the inclusion of $\hat{\sf Q}_{\succeq p}$ 
into $\D=\Dd ^b(\rep( \bp))$. 
And let $\hat Y_p$ be the image of $\hat Q(e_p)$ under the 
right adjoint of the inclusion of
$\hat{\sf P}_{\preceq p}$ into $\D$. 

Let $f_p$ be the natural map from $P(-e_p)^*$ to $Q(e_p)$. Recall that $f_p$ factors as a surjection $P(-e_p)^*\to\sk_{e_p}$, followed by an injection $\sk_{e_p}\to Q(e_p)$. Let $\hat f_p:P(-e_p)^*\to \hat Q(e_p)$ be induced by $f_p$. Then $\hat f_p$ induces an isomorphism of $\bt$-modules 
$(P(-e_p)^*)_{e_p}\to (\hat Q(e_p))_{e_p}$ and the weights $\mu$ of $\kernel (\hat f_p)$, $\coker(\hat f_p)$ satisfy $\mu<_a e_p$.
We have an exact triangle 
$$ \ker(\hat f_p)[1]\to \cone(\hat f_p)\to \coker(\hat f_p)$$ in $\Dd ^b(\rep( \bp))$.
Take $\hat M_v=Q(e_v)$ for $p\prec v\in W^\bp$ and $\hat M_v=P(-e_v)^*$ for  $p\succ v\in W^\bp$. It does not matter what we choose for $\hat M_p$ itself.

Using Theorem \ref{th:intervalgeneration} instead of Theorem \ref{th:cone generation}, we find that 
$\cone(\hat f_p)$ lies in the triangulated hull of $\{\hat M_v\otimes \nabla_\lambda \mid v\in W^\bp,\ e_v <_a e_p,\ \lambda\in X(\bt)_+\}$. 
So $\cone(\hat f_p)$ lies in the triangulated hull of $\hat{\sf P}_{\prec p}\cup\hat{\sf Q}_{\succ p}$.
One uses this to show that $\hat X_p$ and $\hat Y_p$ are isomorphic.
Mutatis mutandis the old constructions and proofs go through and 
Theorems \ref{th:Z-X^P_p-G-exceptional-coll}, \ref{th:Psemi-orthogonal}, \ref{th:Pcoh-semi-orthogonal} follow.
\end{proof}
It remains to prove Theorem \ref{th:P and B}.

\begin{lemma}\label{lem:inducing Q}
Let $s$ be a simple reflection and $\bp_s$ the corresponding minimal parabolic. 
\begin{itemize}
\item If $s\lambda<_d\lambda$ then there is an exact sequence
$0\to Q(s\lambda)\to{\sH_s}(Q(\lambda))\to Q(\lambda)\to 0$,
\item If $s\lambda=\lambda$ then ${\sH_s}(Q(\lambda))=Q(\lambda)$,
\item If $s\lambda>_d\lambda$ then ${\sH_s}(Q(\lambda))=0$.
\end{itemize}
\end{lemma}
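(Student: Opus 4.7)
The plan is to apply the left-exact functor $\sH_s$ to the defining short exact sequence
$$0\to Q(\lambda)\to H^0(\fX_w,\Ll(\lambda^+))\to H^0(\partial\fX_w,\Ll(\lambda^+))\to 0$$
(where $\lambda=w\lambda^+$ with $w$ minimal) and to read off $\sH_sQ(\lambda)$ from the resulting long exact sequence. Proposition~\ref{prop:Leray}, applied to the minimal parabolic $\bp_s$, ensures that $R^i\sH_s$ vanishes on the two outer terms and identifies $\sH_sH^0(S,\Ll(\lambda^+))$ with $H^0(\bp_sS,\Ll(\lambda^+))$ for every $\fb$-invariant closed $S\subset\bg/\fb$. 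Using $\bp_s\fX_w=\fX_{s\star w}$, the sequence reduces to
$$0\to\sH_sQ(\lambda)\to H^0(\fX_{s\star w},\Ll(\lambda^+))\to H^0(\bp_s\partial\fX_w,\Ll(\lambda^+))\to R^1\sH_sQ(\lambda)\to 0.$$

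Next I would split into cases according to the sign of $w^{-1}\alpha$ (where $s=s_\alpha$), which coincides with that of $(\alpha^\vee,\lambda)$. When $sw<w$ (equivalent to $s\lambda>_d\lambda$), $s\star w=w$ so $\bp_s\fX_w=\fX_w$, and since $\fX_{sw}\subseteq\partial\fX_w$ together with $\bp_s\fX_{sw}=\fX_{s\star sw}=\fX_w$ one also has $\bp_s\partial\fX_w=\fX_w$; the restriction map collapses to the identity, forcing $\sH_sQ(\lambda)=0$. When $sw>w$ one has $\bp_s\fX_w=\fX_{sw}$, and the key geometric step is the identity $\partial\fX_{sw}=\bp_s\partial\fX_w\cup\fX_w$ with intersection $\partial\fX_w$. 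The union follows from the Bruhat lifting property (any $u<sw$ with $u\nleq w$ satisfies $u>su$ and $su\leq w$ by the subword characterization applied to a reduced expression for $sw$ starting with $s$), whence $\fX_u=\bp_s\fX_{su}\subseteq\bp_s\partial\fX_w$; the reverse inclusion in the intersection follows from a dimension argument ruling out $\fb w\fb/\fb\subseteq\bp_s\partial\fX_w$, using $\dim\bp_s\partial\fX_w\leq\ell(w)<\ell(sw)$. Combining Mayer--Vietoris with the surjectivity in Lemma~\ref{lem:Schubert module} identifies the kernel of $H^0(\partial\fX_{sw},\Ll(\lambda^+))\to H^0(\bp_s\partial\fX_w,\Ll(\lambda^+))$ with $Q(\lambda)$, producing
$$0\to K\to\sH_sQ(\lambda)\to Q(\lambda)\to 0,\qquad K:=\ker\bigl[H^0(\fX_{sw},\Ll(\lambda^+))\to H^0(\partial\fX_{sw},\Ll(\lambda^+))\bigr].$$

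Finally, one identifies $K$ in each of the two remaining cases. If $s\lambda<_d\lambda$, then $(w^{-1}\alpha^\vee,\lambda^+)=(\alpha^\vee,\lambda)>0$ shows that $w^{-1}\alpha$ lies outside the root subsystem of the stabilizer $W_I$ of $\lambda^+$; since $s$ negates only $\alpha$ among positive roots, this yields $sw(\beta)>0$ for every simple $\beta$ with $s_\beta\in W_I$, so $sw$ is the minimum of its coset $swW_I$ and $K=Q(s\lambda)$ by the very definition of that module. If $s\lambda=\lambda$, minimality of $w$ forces $w^{-1}\alpha$ to be a \emph{positive} root inside $W_I$'s subsystem (otherwise $w(-w^{-1}\alpha)=-\alpha>0$ contradicts minimality), so $sw=ws_{w^{-1}\alpha}$ and $w$ belong to the same coset $wW_I$; in particular $sw\lambda^+=\lambda$. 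Consequently the sets of extremal weights of $H^0(\fX_{sw},\Ll(\lambda^+))$ and $H^0(\partial\fX_{sw},\Ll(\lambda^+))$ coincide---the weight $sw\lambda^+=\lambda$ is already contributed by $w<sw$---and Proposition~\ref{prop:multone} forces the two modules to share the same relative Schubert filtration and character; the surjection between them must therefore have kernel $K=0$, giving $\sH_sQ(\lambda)=Q(\lambda)$. The main obstacle is this final vanishing: it hinges on the extremal-weight multiplicity-one statement in Proposition~\ref{prop:multone}, applied to the subtle fact that no new extremal weight is introduced when passing from $\fX_w$ to the non-minimal Schubert variety $\fX_{sw}$.
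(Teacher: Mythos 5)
Your proof is correct, but it follows a genuinely different route from the paper's. The paper never touches the geometry of $\partial\fX_{sw}$: for $s\lambda\leq_d\lambda$ it computes the extremal weights of $\sH_s(Q(\lambda))$ (only $\lambda$ and $s\lambda$ survive, the other weights of $Q(\lambda)$ being too short), invokes Lemma~\ref{lem:kerdecomp} to get a relative Schubert filtration with exactly those sections, and pins down the order of the two sections by locating the socle of the submodule $\sH_s(\sk_\lambda)$; for $s\lambda>_d\lambda$ it applies the first bullet to $s\lambda$ and concludes by the idempotency $\sH_s\sH_s=\sH_s$ together with the $\sH_s$-acyclicity of $Q(\lambda)$. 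You instead apply $\sH_s$ to the defining sequence of $Q(\lambda)$, use Proposition~\ref{prop:Leray} to convert everything into sections over $\bp_s$-saturations, and extract the answer from the decomposition $\partial\fX_{sw}=\fX_w\cup\bp_s\partial\fX_w$ with intersection $\partial\fX_w$ via Mayer--Vietoris. Your treatment of the third bullet ($\bp_s\partial\fX_w=\fX_w$ forces the restriction map to be the identity) is arguably more self-contained, since it does not need the acyclicity input from \cite{WvdK}; the price is the extra Bruhat-combinatorial work on boundaries, which the paper's filtration argument avoids. In the case $s\lambda=\lambda$ both arguments ultimately rest on the same multiplicity-one statement (Proposition~\ref{prop:multone} / Lemma~\ref{lem:kerdecomp}).

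One step needs repair. To get $\fX_w\cap\bp_s\partial\fX_w=\partial\fX_w$ you must rule out $\fX_w\subseteq\bp_s\partial\fX_w$, and the dimension bound $\dim\bp_s\partial\fX_w\leq\ell(w)$ does not do this: an irreducible locally closed set of dimension $\ell(w)$ can perfectly well sit inside a closed set of the same dimension (e.g.\ for $\bg=\mathrm{SL}_3$, $w=s_1$, $s=s_2$, one has $\bp_s\partial\fX_w=\fX_{s_2}$ of dimension $\ell(w)=1$, and containment fails for Bruhat reasons, not dimension reasons). The correct argument: the components of $\bp_s\partial\fX_w$ are the $\fX_{s\star u}$ with $u<w$, so $\fX_w\subseteq\bp_s\partial\fX_w$ would force $w\leq s\star u$ for some such $u$; since $\ell(s\star u)\leq\ell(u)+1\leq\ell(w)$ this gives $w=s\star u$, hence $u=w$ or $u=sw$, and both are excluded ($u<w$ and $sw>w$). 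With that one-line fix the proof goes through.
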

\begin{proof}
If $\sk=\Z$ then the first part is \cite[Lemma 7.2.3]{WvdKTata}. So the first part follows from Lemma \ref{lem:HwQk}.
Now let $s\lambda=\lambda$. We want to show that the natural map 
${\sH_s}(Q(\lambda))\to Q(\lambda)$ is an isomorphism.
By Lemma \ref{lem:HwQk} we may assume $\sk$ is a field.
By definition $Q(\lambda)$ is the kernel of a surjective map
 $P(\lambda)\rightarrow {H}^0(\partial X_w,\Ll (\lambda ^+))$, and we get by  \cite[Proposition 2.24]{WvdK} and Proposition \ref{prop:Schubert module} a relative Schubert filtration on 
 ${\sH}_s(Q(\lambda))$, with layers described by the extremal weights. 
The only extremal weight of ${\sH}_s(Q(\lambda))$ is the extremal
weight  $\lambda$ of ${\sH}_s(\sk_\lambda)$, because the weights $\nu$ of $Q(\lambda)$ different from $\lambda$ are too short to contribute, cf. proof of Lemma \ref{lem:indpreserve}. 

The last part follows in the same way, as ${\sH}_s(\sk_\lambda)$ vanishes when $s\lambda>_d\lambda$.
\end{proof}

\begin{lemma}\label{lem:ind of Q(e_w)}
Let $s$ be a simple reflection and 
let $w\in W$.
\begin{itemize}
\item If $se_w<_de_w$ then $ws\succ w$, $se_w=e_{ws}$, and there is an exact sequence
$$0\to Q(e_{ws})\to{\sH_s}(Q(e_w))\to Q(e_w)\to 0,$$
\item If $se_w=e_w$ then ${\sH_s}(Q(e_w))=Q(e_w)$,
\item If $se_w>_de_w$ then ${\sH_s}(Q(e_w))=0$.
\end{itemize}
\end{lemma}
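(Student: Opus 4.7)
The lemma will follow from Lemma \ref{lem:inducing Q} applied to $\lambda=e_w$, once I verify two additional facts in the first case ($se_w<_d e_w$): namely, that $ws\succ w$ and that $se_w=e_{ws}$ (so that the exact sequence supplied by Lemma \ref{lem:inducing Q} takes the stated form). The second and third bullets of the conclusion will be direct translations of the corresponding bullets of Lemma \ref{lem:inducing Q}, so there is nothing extra to do there.

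For the length inequality, the plan is the following. The hypothesis $se_w<_de_w$ says that $(\alpha^\vee,e_w)>0$, where $\alpha$ is the simple root with $s=s_\alpha$. Writing $e_w=w^{-1}\varpi$ for the dominant weight $\varpi=we_w$, one has
\[
(\alpha^\vee,e_w)=(w\alpha^\vee,\varpi)=((w\alpha)^\vee,\varpi).
\]
Since $\varpi$ is dominant, the pairing $((w\alpha)^\vee,\varpi)$ has the sign of $w\alpha$: it is $\geq 0$ if $w\alpha$ is positive and $\leq 0$ if it is negative. Strict positivity therefore forces $w\alpha>0$, and Lemma \ref{lem:ws shorter} gives $\ell(ws)>\ell(w)$; as $\prec$ refines the Bruhat order, $ws\succ w$.

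The harder step is to show $se_w=e_{ws}$. Since $se_w=sw^{-1}\varpi=(ws)^{-1}\varpi$, it suffices to prove that $(ws)^{-1}=sw^{-1}$ is the minimal length element of its coset modulo $W_I=\mathrm{Stab}_W(\varpi)$ (from the left). Concretely, I need $sw^{-1}\alpha_{s'}>0$ for every $s'\in I$. The characterization of $e_w$ recalled in the proof of Proposition \ref{prop:extremalQ} tells us that $w^{-1}$ is already minimal in $w^{-1}W_I$, so $w^{-1}\alpha_{s'}>0$ for every $s'\in I$. The simple reflection $s_\alpha$ sends any positive root to a positive root, with the single exception of $\alpha$ itself, so the only possible obstruction is $w^{-1}\alpha_{s'}=\alpha$, i.e.\ $w\alpha=\alpha_{s'}$ for some $s'\in I$. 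But in that case $((w\alpha)^\vee,\varpi)=(\alpha_{s'}^\vee,\varpi)=0$, since $s_{s'}\in W_I$ fixes $\varpi$; this contradicts the hypothesis $(\alpha^\vee,e_w)>0$. Hence the obstruction never occurs and $se_w=e_{ws}$.

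The main obstacle, as the outline shows, is the second claim $se_w=e_{ws}$: the combinatorics of the Steinberg weight interacts with the action of $s$ through the single delicate exclusion $w\alpha\in\Pi\cap \alpha_I$, and ruling it out uses the positivity hypothesis in an essential way. Once both claims are in place, applying Lemma \ref{lem:inducing Q} with $\lambda=e_w$ produces the exact sequence in the first bullet (after rewriting $Q(se_w)=Q(e_{ws})$), while the second and third bullets come for free.
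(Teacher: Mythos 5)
Your proof is correct and follows essentially the same route as the paper's: both reduce all three bullets to Lemma \ref{lem:inducing Q} with $\lambda=e_w$ and devote the real work to showing $\ell(ws)>\ell(w)$ and $se_w=e_{ws}$; your derivation of $w\alpha>0$ from $(\alpha^\vee,e_w)=((w\alpha)^\vee,\varpi)>0$ is a slightly more elementary substitute for the paper's appeal to Lemma \ref{lem:aed} and the excellent order. One imprecision in the framing: minimality of $sw^{-1}$ in its coset modulo $W_I=\mathrm{Stab}_W(\varpi)$ does not by itself imply that $sw^{-1}\varpi$ is the Steinberg weight $e_{ws}$ (the identity is minimal in $W_{\mathrm{Stab}_W(\omega_1)}$, yet $\omega_1$ is not a Steinberg weight); what is actually needed is equality of the sets $\{i\mid w^{-1}\alpha_i<0\}$ and $\{i\mid sw^{-1}\alpha_i<0\}$, and your minimality check supplies only one of the two inclusions. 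The missing inclusion is immediate from $w\alpha>0$, which you already established: if $w^{-1}\alpha_i<0$ but $sw^{-1}\alpha_i>0$, then $w^{-1}\alpha_i=-\alpha$, i.e.\ $w\alpha=-\alpha_i<0$, a contradiction — so the argument closes in one line (the paper is equally terse at exactly this point).
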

\begin{proof}We take $\lambda=e_w$ in Lemma \ref{lem:inducing Q}. Recall that $w^{-1}$ is a minimal coset representative of the stabilizer in $W$ of the dominant weight $we_w$.
    If $se_w<_de_w$, then $se_w>_ee_w$ by Lemma \ref{lem:aed}, so $\ell(sw^{-1})=\ell(w^{-1})+1$ and $sw^{-1}$ is also a minimal coset representative of the stabilizer in $W$ of the dominant weight $we_w$.
    So $sw^{-1}\alpha>0$ for simple roots $\alpha$ perpendicular to  $we_w$.  
Now 
    consider a simple root $\beta$ that is not perpendicular to $we_w$. From the definition of $e_w$ it follows that $w^{-1}\beta<0$. Then also $sw^{-1}\beta<0$, because otherwise $sw^{-1}$  would make fewer positive roots negative than $w^{-1}$ does and 
    thus $\ell(sw^{-1})$ would be less than $\ell(w^{-1})$ by \cite[1.6, 1.7]{Humphreys}.
    It follows that $se_w$
    is the Steinberg weight $e_{ws}$. The other points are clear from Lemma \ref{lem:inducing Q}.
\end{proof}

\begin{lemma}\label{lem:indP of Q(e_w)}
Let $w\in W^\bp$.
Then there is a surjective map $\hat Q(e_w)\to Q(e_w)$ whose kernel lies in ${\sf Q}_{\succ w}$.
\end{lemma}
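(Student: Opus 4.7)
The plan is to identify the restriction of $\hat Q(e_w)$ to $\fb$ with $\sH_{w_0^\bp}(Q(e_w))$, where $w_0^\bp$ is the longest element of $W_\bp$, and then to iterate Lemma \ref{lem:ind of Q(e_w)} along a reduced expression of $w_0^\bp$. Since $\bp/\fb=\overline{\fb w_0^\bp\fb}/\fb$, the definition of $\sH_w$ together with Lemma \ref{lem:Hwstar} give $\res^\bp_\fb\circ\ind_\fb^\bp = \sH_{w_0^\bp}$, so $\hat Q(e_w)\cong \sH_{w_0^\bp}(Q(e_w))$ as $\fb$-modules and the map $\hat Q(e_w)\to Q(e_w)$ is the counit of the adjunction $\ind_\fb^\bp\dashv\res^\bp_\fb$. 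Fixing a reduced expression $w_0^\bp=s_1\cdots s_l$, we get $\sH_{w_0^\bp}=\sH_{s_1}\circ\cdots\circ \sH_{s_l}$ and the counit factors as the composition of the natural surjections $\sH_{s_i}(M)\to M$ coming from the minimal-parabolic adjunctions.

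Next I would verify that when we iterate, the ``surviving'' branches of Lemma \ref{lem:ind of Q(e_w)} only produce $Q(e_v)$ with $v\geq w$ in the Bruhat order. Indeed, for any simple $s\in W_\bp$, the assumption $w\in W^\bp$ gives $ws>w$, hence $w\alpha_s>0$ by Lemma \ref{lem:ws shorter}, and therefore $(e_w,\alpha_s^\vee)=(we_w,(w\alpha_s)^\vee)\geq 0$ (as $we_w$ is dominant and $w\alpha_s$ is a positive root). Thus $se_w\leq_d e_w$ and only the first two branches of Lemma \ref{lem:ind of Q(e_w)} can occur when $\sH_s$ is applied to $Q(e_w)$. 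For the subsequent sections $Q(e_v)$ with $v\geq w$ that show up during the iteration, Lemma \ref{lem:ind of Q(e_w)} tells us that a new kernel section $Q(e_{vs})$ is produced only when $vs\succ v$ (and $vs>v$ in the Bruhat order), which together with $v\geq w$ gives $vs>w$; the case $se_v>_d e_v$ merely annihilates that section.

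I would then run induction on $l$. Every $Q(\mu)$ is acyclic for all $\sH_s$ by \cite[Theorem 1.9.(a)(ii)]{WvdK}, so short exact sequences with $Q$-sections remain exact after applying $\sH_s$. Successively applying $\sH_{s_l},\sH_{s_{l-1}},\dots,\sH_{s_1}$ to $Q(e_w)$ therefore builds a filtration of $\hat Q(e_w)$ whose sections are among the $Q(e_v)$ with $v\geq w$ in the Bruhat order, with exactly one copy of $Q(e_w)$ appearing as the outermost quotient (produced by following the ``$\to Q(\lambda)$'' quotient at every step). The iterated counit $\hat Q(e_w)\to Q(e_w)$ is precisely the projection onto this top section, so its kernel is filtered by $Q(e_v)$ with $v>w$ in the Bruhat order, hence $v\succ w$ in the refinement $\prec$. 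Taking $\lambda=0$ in the definition of ${\sf Q}_{\succ w}$ then places the kernel in ${\sf Q}_{\succ w}$, as required.

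The main obstacle, though bookkeeping rather than conceptual, is keeping track through the $l$ iterations of precisely which branch of Lemma \ref{lem:ind of Q(e_w)} is used on each section, so as to confirm that $Q(e_w)$ really appears exactly once as a quotient and that every other section has its index strictly above $w$ in the Bruhat order. The acyclicity of $Q(\mu)$ for the $\sH_s$ is what makes this bookkeeping possible, since it guarantees that the filtration is transported exactly (not just after taking cohomology) by each $\sH_s$.
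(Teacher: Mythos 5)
Your proposal is correct and follows essentially the same route as the paper: factor $\res^\bp_\fb\ind_\fb^\bp$ as $\sH_{s_1}\circ\cdots\circ\sH_{s_l}$ along a reduced expression of $w_0^\bp$, iterate Lemma \ref{lem:ind of Q(e_w)}, and use the $\sH_s$-acyclicity of the $Q(\lambda)$ to transport the resulting filtrations. Your explicit check that $se_w\leq_d e_w$ for simple $s\in W_\bp$ (so the top quotient $Q(e_w)$ survives every step and all kernel sections are $Q(e_v)$ with $v>w$) is exactly the bookkeeping the paper leaves implicit.
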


\begin{proof}
First $\sk$ be a field.
Recall that there is a reduced decomposition  $w_0^{\bp}=s_{1}\cdots s_{t}$ of $w_0^\bp$ and that $\res^\bp_\fb\ind_\fb^\bp=\sH_{s_{1}}\circ\cdots\circ\sH_{s_{t}}$. 
Put $M_i=\sH_{s_{i}}\circ\cdots\circ\sH_{s_{l}}(Q(e_w))$ for $1\leq i\leq t$.
We claim that the maps $M_i\to Q(e_w)$ are surjective, with a kernel that has a relative Schubert filtration with layers of type $Q(e_v)$ with $v\succ w$, $e_v<_ae_w$. 
We will show this by descending induction on $i$.
As $w$ is $\bp$-dominant, the claim holds for $M_t$ by Lemma 
\ref{lem:ind of Q(e_w)}.
By Proposition \ref{prop:Schubert module} all these modules with relative Schubert filtration are 
$\sH_{s}$-acyclic for a simple reflection  $s$.
Now assume the claim for $M_{i+1}$. From the $\sH_{s_{i}}$-acyclicity and
Lemma \ref{lem:ind of Q(e_w)} it follows that
the map $M_i=\sH_{s_{i}}(M_{i+1})\to \sH_{s_{i}}(Q(e_w))$ is surjective, with a kernel that
has relative Schubert filtration with layers of type $Q(e_v)$ with $v\succ w$, $e_v<_ae_w$. 
Similarly the map 
$\sH_{s_{i}}(Q(e_w))\to Q(e_w)$ is surjective, 
because $w$ is $\bp$-dominant.
Its kernel also
has relative Schubert filtration with layers of type $Q(e_v)$ with $v\succ w$, $e_v<_ae_w$. 
The kernel of the composite map
$M_{i}\to \sH_{s_{i}}(Q(e_w))\to Q(e_w)$ is an extension of the two kernels.
The claim follows for $M_i$.
For the case that $\sk$ is  a field the Lemma follows from the claim for $M_1$.
When $\sk=\Z$ one may use Lemma \ref{lem:Qexercise} to conclude that 
$\hat Q(e_w)$ has a relative Schubert filtration with the appropriate layers.  
The canonical map $\hat Q(e_w)\to  Q(e_w)$ is surjective at weight $e_w$ by Lemma \ref{lem:indpreserve}. By equation (\ref{eq:ExtQQ}) it annihilates the  $Q(e_v)$ with $e_v<_a e_w$. Now use equation (\ref{eq:ExtQQ}) repeatedly.
\end{proof}

\begin{lemma}\label{lem:inclusion of hat}
For $w\in W^\bp$ one has $\hat{\sf Q}_{\succ w}\subset {\sf Q}_{\succ w}$.
\end{lemma}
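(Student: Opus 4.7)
The plan is to reduce everything to a statement about the single module $\hat Q(e_v)$, then use $\bg$-linearity to conclude.

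First I would show that for every $v\in W^\bp$, the module $\hat Q(e_v)=\ind_\fb^\bp Q(e_v)$ lies in the triangulated hull of $\{Q(e_u)\}_{u\succeq v}$ inside $\Dd^b(\rep(\fb))$. To do this, I would pick a reduced expression $s_1\cdots s_l$ for $w_0^\bp$ and analyse the iterated application $\sH_{s_1}\circ\cdots\circ\sH_{s_l}(Q(e_v))=\hat Q(e_v)$ step by step using Lemma \ref{lem:ind of Q(e_w)}. At each step only three things can happen: the module is killed, the module is preserved, or a short exact sequence $0\to Q(e_{us})\to \sH_s(Q(e_u))\to Q(e_u)\to 0$ appears with $us\succ u$. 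Iterating, one obtains a finite filtration of $\hat Q(e_v)$ whose subquotients are of the form $Q(e_u)$ with $u\succeq v$ in the Bruhat order (hence in $\prec$). This is precisely the content packaged by Lemma \ref{lem:indP of Q(e_w)}.

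Next, given $v\succ p$, every index $u$ appearing in that filtration satisfies $u\succeq v\succ p$, so each $Q(e_u)$ is one of the generators of ${\sf Q}_{\succ p}$. Consequently $\hat Q(e_v)\in {\sf Q}_{\succ p}$ for every $v\in W^\bp$ with $v\succ p$.

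Finally, by Proposition \ref{prop:G linear hull} the subcategory ${\sf Q}_{\succ p}$ is $\bg$-linear, so it is closed under tensoring with any $\nabla_\lambda$ with $\lambda\in X(\bt)_+$. Thus $\nabla_\lambda\otimes \hat Q(e_v)\in {\sf Q}_{\succ p}$ for all $v\succ p$ in $W^\bp$ and all dominant $\lambda$. Taking triangulated hulls gives $\hat{\sf Q}_{\succ p}\subset {\sf Q}_{\succ p}$, as required.

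The only step requiring any care is the bookkeeping in the iterated application of the $\sH_{s_i}$: one must track that each exact sequence introduced has its new term indexed by a strictly longer Weyl group element, so the filtration indices stay $\succeq v$. This is not a genuine obstacle since it follows directly from Lemma \ref{lem:ind of Q(e_w)}, but it is where all of the combinatorial input enters. Once this is in place, the $\bg$-linearity argument for the tensor factors $\nabla_\lambda$ is immediate.
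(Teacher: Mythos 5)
Your proof is correct and follows essentially the same route as the paper, whose one-line argument (descending induction along $\prec$ using Lemma \ref{lem:indP of Q(e_w)}) is exactly the step-by-step analysis of $\sH_{s_1}\circ\cdots\circ\sH_{s_l}$ via Lemma \ref{lem:ind of Q(e_w)} plus $\bg$-linearity that you spell out. The only detail worth making explicit in your bookkeeping step is the $\sH_s$-acyclicity of the modules $Q(\lambda)$, which is what allows the filtration to survive each successive application of $\sH_{s_i}$.
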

\begin{proof}
By descending induction along $\succ$, using Lemma \ref{lem:indP of Q(e_w)}.
\end{proof}

\begin{proof}[\rm\bf Proof of Theorem \ref{th:P and B}] 
Take $p\in W^\bp$. 
That $\tX_p=\pi^*\hat\tX_p$ will be clear once we have that ${\hat{X}} _p$ equals ${{X}} _p$.
By equation (\ref{eq:def_of_Y_p}) we have 
\begin{equation}
    Y_p=i^{\sf P{\ !}}_{\preceq p}(Q(e_p)).
\end{equation}
 By Lemma \ref{lem:indP of Q(e_w)} we have 
\begin{equation}
i^{\sf P{\ !}}_{\preceq p}(Q(e_p))=i^{\sf P{\ !}}_{\preceq p}(\hat Q(e_p)).\end{equation}
As ${\sf cone}(\hat Y_p\rightarrow \hat Q(e_p))\in \hat{\sf P}_{\preceq p}^{\perp}=
\hat{\sf Q}_{\succ p}\subset  {\sf Q}_{\succ p}$, we also have \begin{equation}
i^{\sf P{\ !}}_{\preceq p}(\hat Q(e_p))=i^{\sf P{\ !}}_{\preceq p}(\hat Y_p).    
\end{equation} 
Further $\hat Y_p=\hat X_p\in \hat{\sf P}_{\preceq p}\subset {\sf P}_{\preceq p}$, so that 
\begin{equation}
i^{\sf P{\ !}}_{\preceq p}(\hat Y_p)=\hat Y_p.    
\end{equation}
Taken together, 
$Y_p=i^{\sf P{\ !}}_{\preceq p}(Q(e_p))=i^{\sf P{\ !}}_{\preceq p}(\hat Q(e_p))=i^{\sf P{\ !}}_{\preceq p}(\hat Y_p)=\hat Y_p$.
\end{proof}

%--------------------------------------------------------------
\section{\unboldmath Dual exceptional collections}\label{subsec:dualFEC}
%--------------------------------------------------------------
Let $\sk$ be a field.
Let $X$ be a smooth projective variety, and assume given a full exceptional collection $(E_0,\dots,E_n)$ in $\Dd ^{b}(X)$. By \cite{Ef} there is {a right dual exceptional collection} $(F_n,\dots, F_0)$ to $(E_0,\dots,E_n)$ and {a left dual exceptional collection} $(G_n,\dots, G_0)$ to $(E_0,\dots,E_n)$.
They are both full exceptional collections, and they are characterized by

\begin{proposition}\cite[Proposition 2.15]{Ef}\label{prop:dual_exc_coll_characterization}
Let $(E_0,\dots,E_n)$ be a full exceptional collection  in a $\sk$-linear 
triangulated category $\D$. The left dual exceptional collection $( F_n, \dots, F_0)$ is uniquely determined by the following property:
\begin{equation}
\Hom _{\D}(E_i,F _j[l]) = \left\{
   \begin{array}{l}
    {\sf k}, \quad {\rm for} \quad l=0, \  i=j,\\
    0, \quad {\rm otherwise}. \\
   \end{array}
  \right.
\end{equation}
Similarly, the right dual exceptional collection $( G_n,\dots, G _0) $ is uniquely determined by the following property:
\begin{equation}
\Hom _{\D}(G_i,E_j[l]) = \left\{
   \begin{array}{l}
    {\sf k}, \quad {\rm for} \quad l=0, \  i=j,\\
    0, \quad {\rm otherwise}. \\
   \end{array}
  \right.
\end{equation}
\end{proposition}

%----------------------------------------------------------------
\subsection{\unboldmath Borel-Weil-Bott theorem}\label{th:BWB_van}
%----------------------------------------------------------------

Define $$\overline C _{\mathbb Z}=\{\lambda\in X(\bt) \ | \ 0\leq ( \lambda + \rho,\beta ^{\vee}) \ {\rm for \ all} \ \beta \in \Phi^+\}$$ if ${\rm char}(k)=0$ and $$\overline C _{\mathbb Z}=\{\lambda\in X(\bt) \ | \ 0\leq ( \lambda + \rho,\beta ^{\vee})\leq p \ {\rm for \ all} \ \beta \in \Phi^+\}$$ if ${\rm char}(k)=p>0$.

\begin{theorem}\cite[II Corollary 5.5]{Jan}\label{th:Bott-Demazure_th}

\begin{itemize}

\item[(a)] If $\lambda \in \overline C _{\mathbb Z}$ with $\lambda \notin X(\bt)_+$, then 
$$
{H}^i({ \bg}/{\fb},\Ll ({w\cdot\lambda}))=0
$$
for all $w\in W$. Here $\cdot$ is the dot--action of the Weyl group $W$ on 
$X$.

\

\item[(b)] If $\lambda \in  \overline C _{\mathbb Z}\cap  X(\bt)_+$, then 
for all $w\in W$ and $i\in \mathbb N$

$$
{H}^i({ \bg}/{\fb},\Ll ({w\cdot \lambda}))={H}^0({ \bg}/{\fb},\Ll ({\lambda})),
$$

\

if $i=l(w)$ and otherwise ${\rm H}^i({ \bg}/{\fb},\Ll ({w\cdot \lambda}))=0$. Here $l(w)$ is the length function.

\end{itemize}

\end{theorem}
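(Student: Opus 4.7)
The plan is to prove both parts simultaneously by induction on $\ell(w)$, reducing to a rank-one ($SL_2$) computation via the minimal parabolic fibrations $\pi_\alpha\colon \bg/\fb \to \bg/\bp_\alpha$. This follows the classical approach carried out in \cite[II.5]{Jan}.

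For the base case $w=e$, part (b) is tautological, and part (a) requires $\RR^i \ind_\fb^\bg(\sk_\lambda) = 0$ for all $i \geq 0$ when $\lambda \in \overline{C}_{\mathbb Z} \setminus X(\bt)_+$. In degree zero, $\nabla_\lambda = 0$ for non-dominant $\lambda$ by definition. The higher vanishing follows from Kempf's vanishing combined with the observation that if $\lambda$ lies on a wall $\langle \lambda+\rho,\beta^\vee\rangle = 0$ for some positive root $\beta$, then $\Ll_\lambda$ is acyclic along the projection $\bg/\fb \to \bg/\bp_\beta$ because its restriction to a $\mathbb P^1$-fibre is $\Oo_{\mathbb P^1}(-1)$, which has no cohomology.

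For the inductive step, I would pick a simple root $\alpha$ with $\ell(s_\alpha w) = \ell(w)-1$ and exploit the Leray spectral sequence for the $\mathbb P^1$-bundle $\pi_\alpha$. Write $\mu = w\cdot\lambda$ and $m = \langle\mu,\alpha^\vee\rangle$. The hypothesis $\lambda \in \overline{C}_{\mathbb Z}$, and in particular the bound $\langle\lambda+\rho,\beta^\vee\rangle \leq p$ in positive characteristic, keeps $m$ in the range where the fibrewise computation on $\bp_\alpha/\fb = \mathbb P^1$ is clean: either $\pi_{\alpha*}\Ll_\mu$ is a vector bundle with $\RR^1\pi_{\alpha*}\Ll_\mu = 0$, or both pushforwards vanish, or the identity $\RR^1\pi_{\alpha*}\Ll_{s_\alpha\cdot\mu} \cong \pi_{\alpha*}\Ll_\mu$ (up to the appropriate cohomological shift) lets one exchange $\Ll_{w\cdot\lambda}$ for $\Ll_{s_\alpha w\cdot\lambda}$ at the cost of one degree. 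Combined with the inductive hypothesis applied at $s_\alpha w$, both (a) and (b) follow, with the non-vanishing cohomology concentrating in degree $\ell(w)$ after a reduced expression's worth of such shifts (and with the final value identified with $\nabla_\lambda = H^0(\bg/\fb,\Ll_\lambda)$ by Kempf).

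The main obstacle is the positive-characteristic bookkeeping: verifying that the alcove bound $\langle\lambda+\rho,\beta^\vee\rangle \leq p$ is compatible with each inductive step, so that the intermediate weights $s_{i_1}\cdots s_{i_k}\cdot\lambda$ encountered always land in the regime where the $\mathbb P^1$-pushforward yields exactly one nonzero cohomology sheaf on the fibre. This is handled in Jantzen through a careful analysis of the interaction between the dot-action and the alcove structure (the translation principle), and I would defer to \cite[II.5.2--II.5.5]{Jan} for the precise verification.
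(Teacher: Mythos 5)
The paper does not prove this statement: it is quoted verbatim from Jantzen \cite[II.5.5]{Jan}, so the only "paper proof" to compare against is the one in the cited source. Your sketch correctly reproduces exactly that argument — Demazure's induction on $\ell(w)$ via the $\mathbb P^1$-fibrations $\pi_\alpha$, with the base case of part (a) settled by the $\Oo_{\mathbb P^1}(-1)$ acyclicity on a simple wall (note that for $\lambda\in\overline C_{\mathbb Z}$ non-dominant the wall is automatically realized by a \emph{simple} root, which is needed since $\bp_\beta$ only exists for $\beta$ simple) and the characteristic-$p$ alcove bookkeeping deferred to Jantzen, where it is indeed carried out.
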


\subsection{\unboldmath  The left dual exceptional collection to the \unboldmath$X_v$'s, conjecturally}\label{sec:dualFEC_conj}
%--------------------------------------------------------------

Recall the notation $\tP_v=\Ll(P(-e_v))$ and $\tQ_v=\Ll(Q(e_v))$ for $v\in W$.
%Unknown notation. Either $\tP_v^*$ or $\Ll(P(-e_v)^*)$.
Some evidence suggests that the following could be true:
\begin{equation}\label{eq:indPQ-rho} R\Gamma (\bg/\fb,\tP_v\otimes \tQ_{w_0v}\otimes \Ll (-\rho))= {\sf k}[-\ell(w_0v)].
\end{equation}
We have in any case, provided $0\in\overline C _{\mathbb Z}$,
\begin{equation}\label{eq:indStSt-rho}
 R\Gamma (\bg/\fb,\Ll(-e_v)\otimes \Ll(e_{w_0v})\otimes \Ll (-\rho))= {\sf k}[-\ell(w_0v)] .
\end{equation}
This uses that 
$-e_v+e_{w_0v}-\rho=(w_0v)^{-1}\rho-\rho$ and Theorem \ref{th:Bott-Demazure_th}.

There is also evidence that 
\begin{equation}\label{eq:QQ-rho}R\Gamma ({\bg/\fb},\tQ_v^{\ast}\otimes \Ll (-\rho)\otimes \tQ_w) \mbox{ vanishes unless } w_0 v \geq w,\end{equation}
and 
\begin{equation}\label{eq:QQ-rho=}R\Gamma ({\bg/\fb},\tQ_v^{\ast}\otimes \Ll (-\rho)\otimes \tQ_{w_0 v})={\sf k}[-\ell(w_0 v)],
\end{equation}
and\begin{equation}\label{eq:PP-rho} R\Gamma ({\bg/\fb}, \tP_v\otimes \Ll (-\rho)\otimes \tP_w^{\ast}) \mbox{ vanishes unless } w_0 v \leq w,\end{equation}
and\begin{equation}\label{eq:PP-rho=} R\Gamma ({\bg/\fb}, \tP_v\otimes \Ll (-\rho)\otimes \tP_{w_0 v}^{\ast})={\sf k}[-\ell(w_0 v)].\end{equation}

Assuming (\ref{eq:QQ-rho}), (\ref{eq:QQ-rho=}), (\ref{eq:PP-rho}), (\ref{eq:PP-rho=}), it is not difficult to see with equation (\ref{eq:X in Q and P}) that 
$\RHom_{\bg/\fb}(\tX_v\otimes \Ll (\rho), \tX_w)$ vanishes unless $w_0 v = w$.
And is not difficult to see
that $\RHom_{\bg/\fb}(\tX_{w_0 v}\otimes \Ll (-\rho),\tX_{v})={\sf k}[-\ell(w_0v)]$.

We can now state:

\begin{conjecture}\
The left dual exceptional collection to the $\tX_v$'s consists of the $\tX_{w_0v}\otimes\Ll(\rho)[-\ell(w_0v)]$.
\end{conjecture}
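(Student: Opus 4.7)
By Proposition \ref{prop:dual_exc_coll_characterization}, to establish the conjecture it suffices to verify that the candidate family $\tY_v := \tX_{w_0 v}\otimes \Ll(\rho)[-\ell(w_0 v)]$, $v\in W$, satisfies the characteristic orthogonality
\begin{equation*}
\RHom_{\bg/\fb}(\tX_u, \tY_v[l]) = \begin{cases} \sk & \text{if } u = v \text{ and } l = 0,\\ 0 & \text{otherwise.}\end{cases}
\end{equation*}
As the excerpt already observes, once the four conjectural vanishings (\ref{eq:QQ-rho})--(\ref{eq:PP-rho=}) are in hand this characteristic property follows by combining them with equation (\ref{eq:X in Q and P}) and the defining triangles (\ref{eq:def_triang_X_p}), (\ref{eq:def_triang_Y_p}) of $X_p$ and $Y_p$, which let one propagate orthogonality relations from the generators $\tP_v^\ast$ and $\tQ_w$ to the exceptional objects $\tX_v$. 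The whole problem therefore reduces to proving the four statements (\ref{eq:QQ-rho})--(\ref{eq:PP-rho=}).

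My plan for these four statements is to adapt the triangularity machinery of Section \ref{sec:triangular} to the presence of the twist by $\Ll(-\rho)$. The Euler-characteristic shadow (\ref{eq:indStSt-rho}) is already immediate from Theorem \ref{th:Bott-Demazure_th}, since the weight $-e_v + e_{w_0 v} - \rho = (w_0 v)^{-1}\rho - \rho$ is $W$-conjugate to $-\rho$ by the length-$\ell(w_0 v)$ element $w_0 v$ and therefore contributes cohomology in exactly that one degree. The upgrade from the Steinberg characters $\sk_{e_v}$ to the full modules $Q(e_v)$ and $P(-e_v)$ should proceed by combining the relative Schubert filtration of Proposition \ref{prop:multone} with the $\fb$-cohomological vanishing of Corollary \ref{cor:PQnabla}. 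The effect of the twist by $\Ll(-\rho)$ should manifest itself as a uniform shift of the extremal weights of $\tQ_v^\ast \otimes \tQ_w$ and $\tP_v \otimes \tP_w^\ast$, and an extremal-weight analysis analogous to Propositions \ref{prop:extremalQ}--\ref{prop:w notless vw_0} should then confirm that the only surviving contributions are those indexed by $w \leq w_0 v$ (respectively $w \geq w_0 v$).

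For the off-diagonal vanishings (\ref{eq:QQ-rho}) and (\ref{eq:PP-rho}), the cleanest path is through the Graham--Kumar orthogonality of the Schubert and opposite Bruhat cell bases in $\sK_\bt(\bg/\fb)$. One expresses $[\tQ_v^\ast]\cdot[\Ll(-\rho)]$ in the opposite Bruhat cell basis and $[\tQ_w]$ in the Schubert basis, reducing (\ref{eq:QQ-rho}) to a statement about the combined transition matrix. The Bruhat constraint $w_0 v \geq w$ should emerge from the triangularity of Theorem \ref{th:triang} after twisting, with Lemma \ref{lew0reverse} converting the original condition $v\,w_0 \leq w$ into its $w_0$-conjugated counterpart. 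A parallel argument, with $\tP$ and $\tQ$ interchanged and the matrix $\alpha_{vw}$ of Section \ref{subsec:alpha} replacing $\beta_{vw}$ of Section \ref{subsec:beta}, handles (\ref{eq:PP-rho}).

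\textbf{Main obstacle.} The key technical difficulty is that $\Ll(-\rho)$ is anti-ample on the fibres of each projection $\pi_\alpha\colon \bg/\fb \to \bg/\bp_\alpha$, so the Kempf vanishing and Mathieu-theorem inputs behind Theorems \ref{th:highInd} and \ref{th:highIndZ} do not apply verbatim to the triple products $\tQ_v^\ast \otimes \Ll(-\rho) \otimes \tQ_w$ and $\tP_v \otimes \Ll(-\rho) \otimes \tP_w^\ast$. Consequently the cohomology will sit in mixed degrees rather than degree zero, and Proposition \ref{prop:Leray} cannot be used verbatim to reduce to sections on Schubert subvarieties. The expected substitute is a Bott--Demazure-style recursion that tracks the cohomological degree as one applies the functors $\sH_s$ step by step, with $-\rho = -\sum\omega_i$ contributing one additional cohomological shift at each $\pi_\alpha$ --- precisely matching the length statistic $\ell(w_0 v)$ in (\ref{eq:QQ-rho=}) and (\ref{eq:PP-rho=}). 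Carrying this book-keeping out uniformly over $\Z$ and in all characteristics, while remaining compatible with the base-change Theorem \ref{th:universal coefficients}, is likely to be the hardest part of the argument.
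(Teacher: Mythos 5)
The statement you are addressing is stated in the paper as a \emph{conjecture}, and the paper offers no proof of it: Section \ref{sec:dualFEC_conj} only records that the claim would follow, via Proposition \ref{prop:dual_exc_coll_characterization} and equation (\ref{eq:X in Q and P}), from the four vanishing statements (\ref{eq:QQ-rho})--(\ref{eq:PP-rho=}), which the paper itself presents only as being supported by ``evidence''. Your reduction of the conjecture to those four statements is exactly the paper's own observation, so that part is fine but adds nothing. Everything beyond it is a plan, not a proof, and the plan has a genuine gap that you yourself identify: the twist by $\Ll(-\rho)$ destroys the degree-zero concentration $\RR^{i}\ind_{\fb}^{\bg}=0$ for $i>0$ that underlies Theorem \ref{th:highInd}, Corollary \ref{cor:P-Q_pairing} and Proposition \ref{prop:Leray}. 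Without that concentration, the Graham--Kumar pairing only controls Euler characteristics, so the triangularity argument of Section \ref{sec:triangular} cannot by itself separate the individual cohomology degrees needed for (\ref{eq:QQ-rho=}) and (\ref{eq:PP-rho=}); the proposed ``Bott--Demazure-style recursion tracking the cohomological degree'' is precisely the missing ingredient and is nowhere carried out. As it stands, the proposal proves nothing beyond what the paper already asserts conditionally.

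Two smaller points. First, in your treatment of (\ref{eq:indStSt-rho}) the weight $(w_0v)^{-1}\rho-\rho$ is not ``$W$-conjugate to $-\rho$''; it is $(w_0v)^{-1}\cdot 0$ for the dot action (the dot-orbit of $-\rho$ is the single singular point $\{-\rho\}$), and the correct invocation of Theorem \ref{th:Bott-Demazure_th} is that $0$ is dominant and the relevant Weyl group element has length $\ell(w_0v)$. Second, before investing in the hard analytic work you should pin down the indexing and shift conventions: the passage from the paper's displayed identities $\RHom_{\bg/\fb}(\tX_{w_0v}\otimes\Ll(-\rho),\tX_v)=\sk[-\ell(w_0v)]$ to the characterization $\RHom(\tX_u,\tY_v[l])$ of Proposition \ref{prop:dual_exc_coll_characterization} involves a relabeling $v\mapsto w_0v$ and a shift whose signs must be checked against Definition \ref{def:left_and_right_dual_exceptional_collections}; your proposal takes this bookkeeping for granted, and it is not automatic.
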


\end{document}